\documentclass [10.5pt,oneside,a4paper,mathscr]{amsart}
\usepackage{geometry}
\newgeometry{margin=1.1in}

\usepackage{amsfonts, amsmath, amssymb, amsthm,mathtools, stmaryrd}
\usepackage[numbers]{natbib}  
\usepackage{verbatim}
\usepackage[normalem]{ulem}
\usepackage{tikz-cd}
\usepackage{enumitem}
\usepackage{array}

\usepackage{hyperref}

\theoremstyle{plain}
\newtheorem{thm}{Theorem}[section]
\newtheorem{cor}[thm]{Corollary}
\newtheorem{theorem}[thm]{Theorem}

\newtheorem{prop}[thm]{Proposition}

\numberwithin{equation}{section}

\newtheorem{conjecture}{Conjecture}  % Conjecture labeled by letters
\newtheorem{conj}[conjecture]{Conjecture}  % Conjecture labeled by letters

\theoremstyle{definition}
\newtheorem{defn}[thm]{Definition}
\newtheorem{example}[thm]{Example}
\newtheorem{lemma}[thm]{Lemma}
\newtheorem{rmk}[thm]{Remark}

\theoremstyle{remark}

\newcommand{\BA}{{\mathbb{A}}}

\newcommand{\BC}{{\mathbb{C}}}

\newcommand{\BE}{{\mathbb{E}}}

\newcommand{\BH}{{\mathbb{H}}}

\newcommand{\BL}{{\mathbb{L}}}

\newcommand{\BQ}{{\mathbb{Q}}}
\newcommand{\BR}{{\mathbb{R}}}

\newcommand{\BZ}{{\mathbb{Z}}}

\newcommand{\CA}{{\mathcal A}}

\newcommand{\CC}{{\mathcal C}}
\newcommand{\CD}{{\mathcal D}}

\newcommand{\CH}{{\mathcal H}}
\newcommand{\Chow}{{\mathrm{C}\mathrm{H}}}

\newcommand{\CL}{{\mathcal L}}

\newcommand{\CO}{{\mathcal O}}
\newcommand{\CP}{{\mathcal P}}

\newcommand{\CX}{{\mathcal X}}
\newcommand{\CY}{{\mathcal Y}}

\newcommand{\dual}{{^\vee}}

\newcommand{\Z}{\mathbb Z}

\newcommand{\e}{\mathbf e}

\newcommand{\pt}{{\mathsf{p}}}

\DeclareFontFamily{OT1}{rsfs}{}
\DeclareFontShape{OT1}{rsfs}{n}{it}{<-> rsfs10}{}
\DeclareMathAlphabet{\curly}{OT1}{rsfs}{n}{it}

\newcommand{\p}{\mathbb{P}}

\newcommand\Spec{\operatorname{Spec}}
\newcommand{\Mbar}{{\overline M}}
\newcommand{\vir}{{\text{vir}}}

\newcommand\ev{\operatorname{ev}}

\newcommand{\QMod}{\mathsf{QMod}}
\newcommand{\Mod}{\mathsf{Mod}}

\newcommand{\id}{\mathrm{id}}

\newcommand{\CAlev}{\CA^{\mathrm{lev}}}

\newcommand{\wt}{\mathsf{wt}}
\newcommand{\mult}{\mathsf{mult}}

\newcommand{\SL}{\mathrm{SL}}
\newcommand{\GL}{\mathrm{GL}}

\newcommand{\Mp}{\mathrm{Mp}}

\newcommand{\Sp}{\mathrm{Sp}}

\newcommand{\pr}{\mathrm{pr}}
\DeclareMathOperator{\Wt}{\mathsf{WT}}

\newcommand{\taut}{{\mathrm{taut}}}

\newcommand{\rk}{{\mathrm{rk}}}

\newcommand{\Lift}{{\mathrm{Lift}}}

\newcommand{\NL}{{\mathsf{NL}}}
\newcommand{\Hum}{{\mathsf{Hum}}}

\newcommand{\NS}{{\mathrm{NS}}}

\newcommand{\Rvert}{{R_{\mathsf{vert}}}}

\renewcommand{\div}{{\mathrm{div}}}

\begin{document}
	\baselineskip=14pt
	\title[Gromov-Witten theory of abelian varieties in families]{Gromov-Witten theory of abelian varieties in families\\ and modular forms}

	\author{Georg Oberdieck}
	\address{KTH Royal Institute of Technology, Department of Mathematics}
	\email{georgo@kth.se}
	\date{\today}

\begin{abstract}
This is the first paper in a series on the Gromov-Witten theory of the universal abelian variety over the moduli space of principally polarized abelian varieties of dimension $h$. We conjecture that the generating series of Gromov-Witten classes, when summed over the degree against the principal polarization, is a cycle-valued quasimodular form for $\SL_2(\mathbb{Z})$ and satisfy a holomorphic anomaly equation. These conjectures generalize the quasimodularity of the Gromov-Witten theory of elliptic curves to higher dimension and raise interesting questions regarding enumerative mirror symmetry for abelian varieties. In genus $1$ it specializes to a conjecture of Greer and Lian which was proven by Iribar Lopez after tautological projection. We also discuss a special family of abelian varieties with a conjectural relation to Siegel quasimodular forms of higher genus.

The main result of the paper is a proof of the conjectures in genus $2$ after tautological projection.
For that we introduce quotient Gromov-Witten invariants which are indexed by the characteristic polynomial of the curve class and are shown to determine all descendent Gromov-Witten invariants satisfying a degree conditions. We then give an explicit formula for all genus $2$ quotient invariants after tautological projection as the Shimura lift of the product of two Eisenstein series. The formula is based on a curious modular identity derived in a joint appendix with Brandon Williams.

\end{abstract}

	\maketitle

\setcounter{tocdepth}{1} 
\tableofcontents

\section{Introduction}
\subsection{Overview}
Enumerative mirror symmetry predicts that the Gromov-Witten invariants of a Calabi-Yau manifold
are strongly related to nearly-holomorphic sections of line bundles over the moduli space of complex structures on the mirror manifold \cite{BCOV}. This prediction was first confirmed by Dijkgraaf \cite{Dijkgraaf} for elliptic curves, who showed (with further generalizations by Okounkov-Pandharipande \cite{OP_VirCurves} and others \cite{HAE}) that the generating series of Gromov-Witten invariants of an elliptic curve $E$
\[
F_g^E(\tau_{k_1}(\gamma_1) \cdots \tau_{k_n}(\gamma_n)) = \sum_{d = 0}^{\infty} \langle \tau_{k_1}(\gamma_1) \cdots \tau_{k_n}(\gamma_n) \rangle^E_{g,d} q^d \] 
are quasimodular forms for $\SL_2(\BZ)$ and satisfy a holomorphic anomaly equation which is a recursive equation that determines the quasimodular form up to a purely modular form.
The Gromov-Witten invariants here are defined by integration over the virtual class of the moduli space of $n$-marked genus $g$ stable maps to an elliptic curve in degree $d$,
\[ \langle \tau_{k_1}(\gamma_1) \cdots \tau_{k_n}(\gamma_n) \rangle^E_{g,d}
=
\int_{[\Mbar_{g,n}(E,d)]^{\vir}} \prod_i \psi_i^{k_i} \ev_i^{\ast}(\gamma_i), \]
where $\psi_i$ are the cotangent line classes.

For an abelian variety $X$ of higher dimension the Gromov-Witten invariants vanish in all non-trivial curve classes due to the existence of non-trivial holomorphic $2$-forms, see \cite{BOPY} for a discussion. Heuristically, the abelian variety $X$ can be deformed to a complex torus without algebraic curves and so by deformation invariance the Gromov-Witten invariants vanish. Physically, one says that $X$ has extra supersymmetry.
A reduced Gromov-Witten theory of $X$ has been defined and studied intensively over the last $25$ years for both abelian surfaces and threefolds \cite{BL2, BOPY, CarBlomme2, OP_MCF, O_K3xP1, Blomme1, Blomme4, OP2} (the theory vanishes for abelian varieties of higher dimension \cite{BOPY}).
The key property of these invariants is a multiple cover formula that conjecturally expresses invariants in imprimitive curve classes through invariants for primitive class, see \cite[Appendix A]{O_NLGW} and \cite[Sec.7.6]{BOPY}.
Moreover, the generating series of primitive classes are known to be quasimodular \cite{BOPY}.
While the multiple cover formula has been proven recently for abelian surfaces in most cases \cite{CarBlomme2,OP_MCF}, the interpretation in terms of mirror symmetry remains mysterious and a uniform formulation of modularity is missing.

In this paper we propose a new viewpoint on the Gromov-Witten invariants of abelian varieties.
Instead of studying invariants of a fixed abelian variety itself, we study {\em family invariants}.
We consider the universal family of abelian varieties over the moduli space $\CA_h$ of principally polarized abelian varieties of dimension $h$ and define the invariants by pushforward of the standard (non-reduced!) virtual class to the base. The invariants hence become cohomology classes on $\CA_h$.
This is parallel to equivariant Gromov-Witten invariants of a variety with a torus action which take values in the cohomology of $BT$, where $T$ is the torus acting.
The Gromov-Witten invariants often vanish for a fixed abelian variety, so the family invariants restrict to zero on each point, but are non-trivial in general.
The cohomology of $\CA_1$ is generated by the fundamental class, hence in dimension $1$ one recovers the case of a fixed elliptic curve.
In higher dimension the Gromov-Witten invariants are naturally supported on the Noether-Lefschetz cycles in $\CA_h$ and yield connections to the theory of special cycles.

The main conjecture of this paper is that the family invariants of abelian varieties are in a uniform way cycle-valued quasimodular forms and satisfy a holomorphic anomaly equation, directly generalizing the case of elliptic curves.
This strongly suggests an interpretation for a family version of enumerative mirror symmetry relating the invariants of the family over $\CA_h$ to the mirror family in the sense of \cite{GLO} (which is a $h$-fold self-product of an elliptic curve).

The genus $1$ case of our conjecture recovers earlier independent predictions of Greer and Lian 
which is proven by Iribar Lopez after tautological projection.
The main result in this paper is the proof of quasimodularity and holomorphic anomaly equation in genus $2$ after tautological projection by an explicit computation. This yields strong evidence for the general framework.
In two sequel papers the case of family invariants of abelian surfaces and threefolds will be considered.

\subsection{Gromov-Witten classes}
Let $\CA_h$ be the moduli stack of principally polarized abelian varieties of dimension $h$, let
$\pi : \CX \to \CA_h$
be the universal abelian variety and let\footnote{All our Chow groups are taken with rational coefficients throughout. The rational class $\theta$ becomes an integral Chow class only on a cover of $\CA_h$, or when restricting to fibers.}
\[ \theta \in \Chow^1(\CX) \]
be a symmetrized principal polarization which is rigidified along the zero section.

The $\pi$-relative moduli space of stable maps
%Consider the $\pi$-relative moduli space
\[ \rho : \Mbar_{g,n}(\pi,d) \to \CA_h \]
parametrizes $n$-marked genus $g$ stable maps to fibers of $\pi$ with degree $d$ against the principal polarization.\footnote{In particular, the fiber of $\Mbar_{g,n}(\pi,d)$ over a point $[(X,\theta)] \in \CA_h$ is
$\bigsqcup_{\substack{\beta \in H_2(X,\BZ), \beta \cdot \theta = d}} \Mbar_{g,n}(X,\beta)$.
}
The moduli space has a virtual fundamental class
\[
[\Mbar_{g,n}(\pi,d) ]^{\vir} \in \Chow_{\mathrm{vd}}(\Mbar_{g,n}(\pi,d))
\]
which is
of dimension
\[ \mathrm{vd} = \dim(\CA_h) + (h-3)(1-g) + n. \]

Evaluating stable maps at the markings defines the evaluation morphism
\[ \ev : \Mbar_{g,n}(\pi,d) \to \CX^{\times n} := \underbrace{\CX \times_{\CA_h} \times \ldots \times_{\CA_h} \CX}_{n \text{ times }}.
\]

For $2g-2+n>0$ there is also a forgetful morphism to the moduli space of curves
\[ \tau : \Mbar_{g,n}(\pi,d) \to \Mbar_{g,n}. \]

For any $\Gamma \in \Chow^{\ast}(\CX^n)$ and $2g-2+n>0$ we define the Gromov-Witten classes:
\[ \CC_{g,d}(\Gamma) := (\tau \times \rho)_{\ast}( \ev^{\ast}(\Gamma) \cap [\Mbar_{g,n}(\pi,d) ]^{\vir} ) \in \Chow^{\ast}(\Mbar_{g,n} \times \CA_h). \]

\subsection{Main conjectures}
The $\BC$-algebra of quasimodular forms is the free polynomial ring 
\[ \QMod=\BC[G_2,G_4,G_6] \]
where for even $k \geq 2$ the $G_k(q)$ are the classical weight $k$ Eisenstein series defined by
\[ G_k(q) = - \frac{B_k}{2 k} + \sum_{n \geq 1} \sum_{d|n} d^{k-1} q^n. \]
While $G_2(q)$ is quasimodular, $G_k$ is modular for $k \geq 4$. The ring $\QMod$ is graded by weight.

The first main conjecture of this paper is the quasimodularity for the generating series of the Gromov-Witten classes. Define the series
\[
\CC_{g}(\Gamma) = \sum_{d \geq 0} \CC_{g,d}(\Gamma) q^d.
\]

\begin{conj} \label{conj:quasimodularity}
Every $\CC_{g}(\Gamma)$ is a cycle-valued quasimodular form:
\[
\CC_{g}(\Gamma) \in \Chow^{\ast}(\Mbar_{g,n} \times \CA_h) \otimes \QMod.
\]
\end{conj}
\vspace{7pt}

The second main conjecture will determine the dependence of the quasimodular form $\CC_g(\Gamma)$ on the non-modular generator $G_2$ through a holomorphic anomaly equation.
The key ingredient in the formula is the Lefschetz dual correspondence.
Consider the relative Lefschetz operator
\[ e_\theta : \Chow^{\ast}(\CX) \to \Chow^{\ast}(\CX), \quad \alpha \mapsto \alpha \cdot \theta \]
Consider the canonical Lefschetz dual correspondence
\[ f_\theta \in \Chow^{h-1}(\CX \times_{\CA_h} \CX) \]
as constructed by K\"unneman \cite{Kuennemann}, see Section \ref{subsec:motivic decompositions}.
The correspondence defines an action
\[ f_\theta : \Chow^{\ast}(\CX) \to \Chow^{\ast}(\CX). \]
We view here $f_{\theta}$ both as an operator on Chow as well as a cycle on $\CX^{\times 2}$.
More generally, we denote by $f_\theta^{(i)}$ the action of $f_\theta$ on $\Chow^{\ast}(\CX^n)$ in the $i$-th factor.

Consider the gluing map along the last two marked points,
\[ \iota : \Mbar_{g-1, n+2} \to \Mbar_{g,n}. \]
For any decompositions $g = g_1 + g_2$ and $\{ 1 , \ldots, n \} = S_1 \sqcup S_2$ let
\[
j : \Mbar_{g_1, S_1 \sqcup \{ n+1 \}}
\times \Mbar_{g_2, S_2 \sqcup \{ n+2 \}} \to \Mbar_{g, n}
\]
be the map which glues the points marked by $n+1$ and $n+2$,
where $\Mbar_{g_i, S_i}$ is the moduli space of stable
curves with markings in the set $S_i$.

\begin{conj}[Holomorphic anomaly equation] \label{conj:HAE}
Assuming that Conjecture~\ref{conj:quasimodularity} holds,
consider $\CC_g( \Gamma )$
as a polynomial in $G_2,G_4,G_6$ with coefficients cycles on $\Mbar_{g,n} \times \CA_h$.
Then we have:
\begin{align*}
		\frac{d}{dG_2} \CC_g( \Gamma )
		\ =\  & 
		(\iota \times \id_{\CA_h})_{\ast} \CC_{g-1}( \Gamma \boxtimes f_\theta )
		\\
		& + \sum_{\substack{ g= g_1 + g_2 \\ \{1,\ldots, n\} = S_1 \sqcup S_2}}
	(j \times \id_{\CA_h})_{\ast} \left( 
	\CC_{g_1,S_1 \sqcup \{ n+1 \} } \times \CC_{g_2, S_2 \sqcup \{ n+ 2 \}}(\Gamma \boxtimes f_{\theta})  \right) \\
		& - 2 \sum_{i=1}^{n} \psi_i \cdot \CC_g( f_{\theta}^{(i)}\Gamma ).
	\end{align*}
\end{conj}

We refer here to Section~\ref{subsec:defn of product GW classes} for the definition
of the product cycle $\CC_{g_1,I_1} \times \CC_{g_2,I_2}$.
The middle sum runs only over terms where all classes are defined, i.e. such that $2 g_i -2 + |S_i| + 1 > 0$ for $i=1,2$.

The holomorphic anomaly equation is very useful because it restricts strongly the quasimodular forms which can show up. It also predicts their weight
 as follows.
For every $m \in \BZ$ let 
\[ [m] := [m]^{\times n} : \CX^{\times n} \to \CX^{\times n} \]
be the multiplication by $m$ on the fibers of the abelian scheme $\CX^{\times n} \to \CA_h$.
By work of Deninger-Murre \cite{DM} the induced action of $\BZ$ on $\Chow^{\ast}(\CX^n)$ is simultaneously diagonalizable:
\begin{equation} \Chow^{\ast}(\CX^n) = \bigoplus_{i=0}^{2hn} \Chow^{\ast}(\CX^n)_i \label{motivic decomposition} \end{equation}
where\footnote{This is non-standard notation, compare \cite[Section 2.17]{DM}.}
\[ \Chow^{\ast}(\CX^n)_i = \{ \Gamma \in \Chow^{\ast}(\CX^n) | [m]^{\ast}(\Gamma) = m^i \Gamma \}. \]
For $\Gamma \in \Chow^{\ast}(\CX^n)_i$ we write $i = \mult(\Gamma)$ and call $\Gamma$ an eigenvector of multiplicity $i$.

\begin{cor} \label{cor:weight statement as a consequence of HAE}
	Assume Conjecture~\ref{conj:HAE} holds.
If $\Gamma \in \Chow^{\ast}(\CX^n)$ is an eigenvector, then $\CC_g(\Gamma)$ is of weight $h(2g-2)+\mult(\Gamma)$:
\[ \CC_g(\Gamma) \in \Chow^{\ast}(\Mbar_{g,n} \times \CA_h) \otimes  \QMod_{h(2g-2)+\mult(\Gamma)} \]
\end{cor}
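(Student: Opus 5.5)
Two ingredients make the argument work. First, the holomorphic anomaly equation lowers the $G_2$-degree, so it gives an induction. Second, the multiplication maps $[m]$ act on the moduli spaces and produce a scaling argument.

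\textbf{Scaling.} For $m\neq 0$, composing a stable map with the fiberwise isogeny $[m]:\CX\to\CX$ multiplies its degree against $\theta$ by $m^2$, since $[m]^*\theta = m^2\theta$ for the symmetric polarization. The relation I would aim for is
\[
[m]^{\ast}\text{-pullback of }\Gamma \ \text{ (of multiplicity } i) \ \Longrightarrow\ \CC_{g,m^2 d}([m]^{\ast}\text{-twisted}) \ \text{ related to } \ m^{?}\,\CC_{g,d}(\Gamma).
\]
I expect this scaling to be messy to make exact: one has to relate the virtual classes under the isogeny and keep track of the degree of $[m]$ on $\CX^{\times n}$. For that reason I would run the argument through the anomaly equation instead.

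\textbf{Strategy.} Induct on $(g,n)$ and on the $G_2$-degree. Write $\CC_g(\Gamma)=\sum_{k} G_2^k\, P_k$ with $P_k$ polynomials in $G_4,G_6$, which is allowed by Conjecture~\ref{conj:quasimodularity}. Set $w = h(2g-2)+\mult(\Gamma)$. We must show that $\CC_g(\Gamma)$ is homogeneous of weight $w$; equivalently, $\frac{d}{dG_2}\CC_g(\Gamma)$ is homogeneous of weight $w-2$, and the $G_2$-free part $P_0$ is homogeneous of weight $w$.

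\textbf{Step 1: weight bookkeeping in the anomaly equation.} I check that each term on the right side of Conjecture~\ref{conj:HAE} has weight $w-2$ by the induction hypothesis. This needs the multiplicity of $f_\theta$. By Künnemann's construction, $f_\theta$ lies in the piece where $([m]\times[m])^*$ acts by $m^{2h-2}$. The reason is that $e_\theta$ raises multiplicity by $2$, while $f_\theta$ is the dual operator of $\mathfrak{sl}_2$-weight $-2$, so as an operator it lowers multiplicity by $2$ and as a cycle on $\CX^2$ it has multiplicity $2h-2$.
\begin{itemize}
\item First term: the genus drops by one and $\mult(\Gamma\boxtimes f_\theta)=\mult(\Gamma)+2h-2$. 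The weight is therefore $h(2g-4)+\mult\Gamma+2h-2 = w-2$.
\item Splitting terms: the weights add, and $h(2g_1-2)+h(2g_2-2)+\mult\Gamma+2h-2 = w-2$.
\item Last term: $\mult(f_\theta^{(i)}\Gamma)=\mult\Gamma-2$, so the weight is again $w-2$.
\end{itemize}
To apply the induction to the splitting terms, I also need that the product classes $\CC_{g_1}\times\CC_{g_2}$ from Section~\ref{subsec:defn of product GW classes} behave multiplicatively on weight. I would also use that the Deninger–Murre decomposition is compatible with exterior products and with the decomposition of $f_\theta$, so that these terms are sums of eigenvector contributions.

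\textbf{Step 2: the constant term $P_0$.} The anomaly equation does not control the holomorphic-modular ambiguity, and I expect this to be the main obstacle. To handle it I would use the scaling by $[m]$. Pulling back along $[m]$ acting on the whole family corresponds to the substitution $q\mapsto q^{m^2}$ combined with Hecke-type operators. Comparing the coefficient growth of $P_0$ with that of a modular form of fixed weight then forces the weight to be $w$.

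Alternatively, and this is what I would try first, the corollary may be intended only as the weight statement implied formally by the grading conventions. Then Step 1, together with the normalization in genus $0$ and $1$ (elliptic curves: weight $\mult$ when $h=1$), suffices, provided one knows the pure-modular part vanishes or has the right weight from the dimension constraint. Here one uses $\mathrm{vd}$: the cycle degree and $\mult(\Gamma)$ are tied together, because pushforward to $\CA_h$ sees only specific Deninger–Murre pieces.
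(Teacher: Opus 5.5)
\textbf{There is a genuine gap, and it sits in your Step~2, which carries the whole content of the corollary.}

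Your Step~1 only controls $\frac{d}{dG_2}\CC_g(\Gamma)$, so it only fixes the weight of the $G_2$-dependent part. As you note yourself, the $G_2$-free part $P_0$ is unconstrained by the anomaly equation. Neither of your remedies closes this.

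\begin{enumerate}
\item The $[m]$-scaling relation is never established. Even granting some Hecke-type identity, comparing coefficient growth cannot rule out lower-weight components of a mixed-weight form, because those are dominated by the top-weight part.
\item The fallback to ``grading conventions'' or the dimension constraint is not an argument. The corollary is a genuine homogeneity statement. Cycle-degree constraints relate the insertions to the codimension of the class on $\Mbar_{g,n}\times\CA_h$, not to the weight of the $q$-series: multiplying a coefficient series by $G_4$ changes the weight but no cycle degree.
\end{enumerate}

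A concrete test case shows the problem. For $\CC_1(\tau_0(\sigma))$ all three terms of the anomaly equation vanish, as the paper notes in the genus~$1$ discussion. Your argument therefore yields only $\CC_1(\tau_0(\sigma))\in\Mod\otimes\Chow^{h-1}(\CA_h)$, whereas the corollary asserts weight $2h$, which is the Greer--Lian prediction. Your induction is also circular. The hypothesis used in Step~1 is the full weight statement in smaller cases, which already needs Step~2. In addition, genus~$1$ base cases for $h\ge 2$ are not available.

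\textbf{The missing idea is the $\mathfrak{sl}_2$-commutator.} On $\QMod$ the operator $[\frac{d}{dG_2},D_\tau]$, with $D_\tau=q\frac{d}{dq}$, acts on the weight-$k$ part by $-2k$. It therefore suffices to show
\[
[\tfrac{d}{dG_2},D_\tau]\,\CC_g(\Gamma)=-2\bigl(h(2g-2)+\mult(\Gamma)\bigr)\CC_g(\Gamma).
\]
This fixes the weight of every component, including $P_0$, with no induction at all.

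Both operators have geometric meaning.
\begin{itemize}
\item $D_\tau$ comes from the divisor equation: $D_\tau\CC_g(\Gamma)=(p\times\id)_*\CC_g(\Gamma\boxtimes\theta)$, where $p\colon\Mbar_{g,n+1}\to\Mbar_{g,n}$ forgets the last marking.
\item Apply the anomaly equation to $\CC_g(\Gamma\boxtimes\theta)$ and push forward along $p$. The $\iota$-term and most splitting terms reproduce $D_\tau$ of the corresponding terms for $\CC_g(\Gamma)$, so they cancel in the commutator.
\end{itemize}

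Three contributions survive:
\begin{enumerate}
\item[(a)] The splitting terms with a genus-$0$ bubble carrying markings $i$ and $n+1$, counted in both orderings, give $2\sum_i\CC_g(f_\theta^{(i)}e_\theta^{(i)}\Gamma)$.
\item[(b)] In the term $-2\psi_i\CC_g(f_\theta^{(i)}\Gamma\boxtimes\theta)$, the comparison $\psi_i=p^*\psi_i+D_{i,n+1}$ produces $-2\sum_i\CC_g(e_\theta^{(i)}f_\theta^{(i)}\Gamma)$.
\item[(c)] The $i=n+1$ term, using $f_\theta(\theta)=h$ and the dilaton equation, gives $-2h(2g-2+n)\CC_g(\Gamma)$.
\end{enumerate}

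Now apply K\"unnemann's identity $[e_\theta,f_\theta]=\sum_j(j-h)\pi_j$ to $\CX^{\times n}$ with polarization $\theta^{(n)}$. It turns (a)$+$(b) into $-2(\mult(\Gamma)-hn)\CC_g(\Gamma)$. Adding (c) gives the total $-2(h(2g-2)+\mult(\Gamma))\CC_g(\Gamma)$.

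This is the elliptic-curve argument of \cite[Sec.2.6]{HAE} that the paper invokes, with K\"unnemann's identity replacing the explicit computation on $H^*(E)$.
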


\subsection{Earlier work}
The presented conjectures were motivated by two main sources.
The first is the case of elliptic curves.
Indeed, in dimension $h=1$ the moduli stack $\CA_{1}$ is (a finite quotient of) $\BA_1$. Thus the conjectures reduce to the quasimodularity and holomorphic anomaly equation of the Gromov-Witten classes of a fixed elliptic curve. Both of these statements were then proven in cohomology in \cite{HAE}. We expect that these results also hold in Chow since the cycle $\Gamma$ is defined over the whole moduli space and so the degeneration methods of \cite{HAE} should apply.

The second motivation comes from work of Iribar Lopez on Noether-Lefschetz cycles. Let
\[ \sigma = \frac{\theta^h}{h!}. \]
be the class of the zero section.
It is immediate that for $h>0$ one has
%The class $\CC_{1}(\sigma) \in  \Chow^{h-1}(\CA_h)$ for $h>0$ is easily computed to be
\[ \CC_{1}(\sigma) = \frac{1}{24} (-1)^h \lambda_{h-1} + \sum_{d \geq 1} \sum_{\delta|d} \sigma(d/\delta) \NL_{h,\delta} q^d. \]
where $\NL_{h,\delta}$ are the class of the loci of abelian varieties $(X,\theta)$ containing an elliptic curve $E \subset X$ with $\theta|_{E}$ of degree $\delta$, see Section~\ref{subsec:NL loci}.
Conjecture~\ref{conj:quasimodularity} and~\ref{conj:HAE} imply
\begin{equation} \CC_{1}(\sigma) \in \Mod_{2h} \otimes \Chow^{h-1}(\CA_h) \label{Greer-Lian} \end{equation}
which matches precisely 
a conjecture of Greer-Lian \cite[Conjecture 1]{GreerLian}, see also the refinement \cite[Conjecture 1]{AitorNL}.

The main evidence for this conjecture is the following.
Let $\BE_h \to \BA_h$ the Hodge bundle which is of rank $h$ and consider its Chern classes
\[ \lambda_i = c_i(\BE_h). \]
The {\em tautological ring} of $\CA_h$ is the subring of $\Chow^{\ast}(\CA_h)$ generated by the $\lambda_i$.
By a result of van der Geer we have
\[ R^{\ast}(\CA_h) = \frac{\BQ[\lambda_1,\ldots,\lambda_h]}{(\lambda_h, c(\BE_h) c(\BE_h^{\vee}) - 1)}. \]
By work of Canning-Molcho-Oprea-Pandharipande \cite{CMOP} there is a tautological projection operator:
\[ \taut : \Chow^{\ast}(\CA_h) \to R^{\ast}(\CA_h).
\]
The map $\taut$ is obtained from taking degrees against the $\lambda$-classes on a compatification, and is similar to taking the degree of a class in projective space.

\begin{thm}[{Iribar Lopez \cite{AitorNL}}]
	\[ \taut\ \CC_1(\sigma) = 
	\frac{(-1)^{h-1}h }{6 B_{2h}} G_{2h}(q) \lambda_{h-1}. 
\]
\end{thm}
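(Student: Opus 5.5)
Given the displayed formula for $\CC_1(\sigma)$, the statement reduces to computing $\taut(\NL_{h,\delta})$ for every $\delta \geq 1$, together with the constant term. The plan is as follows.

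First, use the isogeny description of the Noether--Lefschetz locus. An abelian variety $X$ containing an elliptic curve $E$ with $\theta|_E$ of degree $\delta$ splits up to isogeny as $E \times Y$, with $Y$ the complementary abelian subvariety of dimension $h-1$. This presents $\NL_{h,\delta}$ as the pushforward of the fundamental class along a finite map
\[ \phi_\delta : \CA_1 \times \CA_{h-1}^{(\delta)} \to \CA_h, \]
where $\CA_{h-1}^{(\delta)}$ parametrizes $(h-1)$-dimensional abelian varieties with a polarization of type $(1,\ldots,1,\delta)$, plus gluing data along the $\delta$-torsion. To compute $\taut$ of this pushforward, pair it against the monomials in $\lambda$-classes on a compactification, as in \cite{CMOP}. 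Since $\CC_1(\sigma)$ lies in codimension $h-1$ and $R^{h-1}(\CA_h)$ is one-dimensional, spanned by $\lambda_{h-1}$, we have $\taut\,\NL_{h,\delta} = c_\delta \lambda_{h-1}$. The coefficient $c_\delta$ is detected by a single number: the degree of $\NL_{h,\delta}$ against a complementary class, e.g.\ $\lambda_{h-1}\lambda_1^{\cdot}$-type top products, or $\lambda_1^{\binom{h}{2}}$ suitably normalized on the toroidal compactification.

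Second, compute $c_\delta$. Along $\phi_\delta$ the Hodge bundle pulls back to $\BE_1 \oplus \BE_{h-1}$, since isogenies preserve Hodge bundles up to the rational Chow class. Hence $\phi_\delta^*\lambda$-monomials factor into a product of $\lambda$-integrals on $\CA_1$ and on $\CA_{h-1}^{(\delta)}$. The integral on the non-principal moduli space equals the principal one times the degree of the forgetful/isogeny map to $\CA_{h-1}$, because the $\lambda$-classes are compatible. That degree is a lattice count: it counts sublattices of index $\delta$ with the appropriate gluing. I expect it to equal $\delta^{2h-3}$ times the multiplicative correction $\prod_{p\mid\delta}(1-p^{-2(h-1)})$, or more precisely the number of isotropic-type gluings. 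After combining with the inner sum $\sum_{\delta\mid d}\sigma(d/\delta)$ and the Möbius-like structure of these counts, the $q^d$ coefficient should simplify to a multiple of $\sigma_{2h-1}(d)$.

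The main obstacle is this lattice count. I would first try to verify it by matching orbit counts of $\Sp_{2h}(\BZ/\delta)$ on pairs (elliptic sublattice, complement). Failing that, I would use the Hecke-operator interpretation: $\NL_{h,\delta}$ arises from Hecke correspondences applied to the boundary-type cycle $\CA_1\times\CA_{h-1}$, whose $\lambda$-degrees transform by Hecke eigenvalues.

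Finally, the constant term must be matched. Using the known degrees $\int \lambda_1\cdots\lambda_{h}$-type values (van der Geer), it should be checked that $\frac{1}{24}(-1)^h\lambda_{h-1}$ equals $\frac{(-1)^{h-1}h}{6B_{2h}}\cdot\left(-\frac{B_{2h}}{4h}\right)\lambda_{h-1}$. This holds identically: $\frac{(-1)^{h-1}h}{6B_{2h}}\cdot\frac{-B_{2h}}{4h} = \frac{(-1)^h}{24}$. This fixes the normalization and serves as a consistency check for $c_\delta$, which must therefore equal $\frac{(-1)^{h-1}h}{6B_{2h}}$ times the coefficient of $G_{2h}$.
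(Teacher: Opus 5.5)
Your reduction is the right one, and it is the route behind the cited result: expand $\CC_1(\sigma)$ in the classes $\NL_{h,\delta}$, reduce $\taut\,\NL_{h,\delta}$ to $\delta=1$ via the isogeny maps on the two factors, and resum. But the proposal never determines the coefficient of $\NL_{h,\delta}$, and that coefficient is the whole content of the theorem. The needed input, which the paper later quotes from \cite{AitorNL} and \cite{CMOP}, is
\[ \taut\,\NL_{h,\delta}=\frac{h\,\delta^{2h-1}}{6|B_{2h}|}\prod_{p\mid\delta}(1-p^{2-2h})\,\lambda_{h-1}. \]
Both of its factors are missing or wrong in your plan.

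\emph{The $\delta=1$ constant.} The value $\frac{h}{6|B_{2h}|}=\frac{\gamma_1\gamma_{h-1}}{\gamma_h}$, with $\gamma_k=\prod_{i\le k}\frac{|B_{2i}|}{4i}$, comes from the computation of $\taut[\CA_1\times\CA_{h-1}]$ using the integrals $\int_{\overline{\CA}_k}\lambda_1\cdots\lambda_k=\gamma_k$. It cannot be ``fixed'' by the constant term. The term $\frac{(-1)^h}{24}\lambda_{h-1}$ comes from constant maps alone and carries no information about the $\NL$-coefficients. Matching it with the constant term of $G_{2h}$ and then solving for $c_\delta$ presupposes the conclusion.

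\emph{The $\delta$-dependence.} The factor is $\deg\phi_\delta\cdot\deg\phi_{\tilde\delta}/|\Sp(K(\delta))|$. The $\CA_1$-factor has degree $|\SL_2(\BZ/\delta)|$, which cancels the denominator. For the $(h-1)$-dimensional factor, the fiber over $Y'$ consists of:
\begin{itemize}
\item a choice of cyclic subgroup of order $\delta$ in $Y'[\delta]\cong(\BZ/\delta)^{2h-2}$, of which there are $\delta^{2h-3}\prod_{p\mid\delta}\frac{1-p^{2-2h}}{1-p^{-1}}$;
\item a level structure sending $0\times\BZ/\delta$ to the resulting cyclic Lagrangian, counted by a Borel of $\SL_2(\BZ/\delta)$, of order $\delta^2\prod_{p\mid\delta}(1-p^{-1})$.
\end{itemize}
Altogether this gives $\delta^{2h-1}\prod_{p\mid\delta}(1-p^{2-2h})$. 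Your guess $\delta^{2h-3}\prod_{p\mid\delta}(1-p^{2-2h})$ is off by a factor $\delta^2$, and with it the argument fails. Already for $d=p$ prime the $q^p$-coefficient becomes proportional to $1+p+p^{2h-3}-p^{-1}\neq\sigma_{2h-1}(p)$.

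Even with the correct coefficient, the resummation has to be proved rather than expected. You need
\[ \sum_{\delta\mid d}\sigma_1(d/\delta)\,\delta^{2h-1}\prod_{p\mid\delta}(1-p^{2-2h})=\sigma_{2h-1}(d), \]
which follows, for example, from the Dirichlet series identity $\zeta(s)\zeta(s-1)\cdot\frac{\zeta(s-2h+1)}{\zeta(s-1)}=\zeta(s)\zeta(s-2h+1)$. Combined with $(-1)^{h-1}/B_{2h}=1/|B_{2h}|$ and your (correct) constant-term computation, this gives the statement.

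A smaller point: $R^{h-1}(\CA_h)$ is not one-dimensional for $h\ge 4$. For example, $\lambda_3$ and $\lambda_1\lambda_2$ are independent in $R^3(\CA_4)$. So the proportionality of $\taut\,\NL_{h,\delta}$ to $\lambda_{h-1}$ has to come from the product structure of the locus, as in the computation of \cite{CMOP}, not from a dimension count.
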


\subsection{Tautological insertions}
To make computations manageable we will specialize to tautological insertions and integrate against tautological classes on the moduli space of curves.
Let $\mathsf{m} : \CX \times \CX \to \CX$ be the addition map and define the Mumford class
\[ \eta := \frac{1}{2} (\mathsf{m}^{\ast}(\theta) - \pr_1^{\ast} \theta - \pr_2^{\ast}\theta). \]
Let $\eta_{ij}$ be the pullback along the $(i,j)$-th projection of $\CX^{\times n}$ and let $\eta_{ii} = \pr_i^{\ast}(\theta)$. 
Following \cite{10authors}
the vertical tautological ring $R_{\mathsf{vert}}(\CX^{\times n})$ is the subring of $\Chow^{\ast}(\CX^n)$ generated by the $\eta_{ij}$,
\[ R_{\mathsf{vert}}(\CX^{\times n}) = \BQ[\eta_{ij} | 1 \leq i,j \leq n ] \subset \Chow^{\ast}(\CX^{\times n}). \]

For $k_1,\ldots,k_n \geq 0$ and $\Gamma \in R_{\mathrm{vert}}^{\ast}(\CX^{\times n})$ we define the {\em descendent Gromov-Witten invariants} by
\begin{equation} \label{descendent GW invariants}
\CC_{g,d}(\tau_{k_1\ldots k_n}(\Gamma)) := 
\rho_{\ast}\left( \ev^{\ast}(\Gamma) \prod_{i=1}^{n} \psi_i^{k_i} \cap  [\Mbar_{g,n}(\pi,d) ]^{\vir} \right).
\end{equation}
By definition the Gromov-Witten invariants take values in the Chow ring of $\CA_h$:
\[
\CC_{g,d}(\tau_{k_1 \ldots k_n}(\Gamma))  \in \Chow^{\ast}(\CA_h).
\]

By a simple dimension count all Gromov-Witten invariants vanish in cohomology if $g \geq h \geq 4$ and in Chow for $g \geq h \geq 7$. However one can prove a stronger vanishing after tautological projection:

\begin{prop} \label{thm:vanishing}
If $g \geq 4$ and $h \geq 4$, then 
$\taut\ \CC_{g,d}(\tau_{k_1 \ldots k_n}(\Gamma))$ vanishes.
\end{prop}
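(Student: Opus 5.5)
The plan is to reduce the vanishing to an identity among $\lambda$-classes on $\CA_h$, using two facts: the virtual class of $\Mbar_{g,n}(\pi,d)$ contains an explicit Hodge-type obstruction factor, and the tautological projection $\taut$ of \cite{CMOP} is $R^{\ast}(\CA_h)$-linear. Linearity holds because $\taut$ is defined by pairing against $\lambda$-monomials on a compactification, so multiplying by a $\lambda$-class commutes with $\taut$.

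\textbf{Step 1 (the trivial part of the obstruction theory).} For a stable map $f : C \to X$ to a fibre of $\pi$, the tangent bundle $T_X = \BE_h^{\vee}$ is pulled back from the base. Hence $H^1(C, f^{\ast}T_X)$ contains the bundle $\BE_g^{\vee} \boxtimes \BE_h^{\vee}$ of rank $gh$, where $\BE_g$ is the Hodge bundle of the curve. Only a part of this piece is cancelled by the deformations of the base $\CA_h$. Concretely, the composition $\BE_h \xrightarrow{f^{\ast}} \BE_g$ (pullback of holomorphic $1$-forms), together with the symmetric part coming from $T\CA_h = \mathrm{Sym}^2 \BE_h^{\vee}$, accounts for the cancellation. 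I would try to make this precise through a standard compatibility-of-obstruction-theories argument, in the form
\[ [\Mbar_{g,n}(\pi,d)]^{\vir} = c_{\mathrm{top}}(\mathcal{K}) \cap [\Mbar_{g,n}(\pi,d)]^{\mathrm{red}}. \]
Here $\mathcal{K}$ is a K-theory class built from $\BE_g^{\vee} \otimes \BE_h^{\vee}$ and $\mathrm{Sym}^2\BE_h^{\vee}$ (essentially $\wedge^2 \BE_h^{\vee}$ plus a $\BE_g/\BE_h$ contribution). In the case $g \geq h$, which is our situation, the map $\BE_h \to \BE_g$ is generically injective.

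\textbf{Step 2 (pushing forward).} Expand $c_{\mathrm{top}}(\mathcal{K})$ in Chern classes of $\BE_g$ and $\BE_h$. The classes $\lambda_i(\BE_g)$ are pulled back from $\Mbar_{g,n}$, and together with the $\psi$'s and $\ev^{\ast}\Gamma$ they push forward under $\rho$. Since the $\lambda_j(\BE_h)$ are pulled back from $\CA_h$, the projection formula writes $\CC_{g,d}(\tau_{k_1\ldots k_n}(\Gamma))$ as a sum of terms $P(\lambda_1,\ldots,\lambda_h) \cdot \rho_{\ast}(\cdots)$. By $R^{\ast}$-linearity, $\taut$ of each term equals $P(\lambda) \cdot \taut(\rho_{\ast}(\cdots))$.

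\textbf{Step 3 (vanishing in $R^{\ast}(\CA_h)$).} The claim is then that every polynomial $P$ occurring lies in the ideal $(\lambda_h, c(\BE_h)c(\BE_h^{\vee})-1)$ after multiplying by anything. Equivalently, the degree of $P$ exceeds the socle degree $h(h-1)/2$ of van der Geer's presentation of $R^{\ast}(\CA_h)$, or $P$ is divisible by $\lambda_h$ modulo the Mumford relation. The count is as follows. The rank of $\BE_g^{\vee} \otimes \BE_h^{\vee}$ is $gh$, while $T\CA_h$ cancels only $h(h+1)/2$. For $g \geq 4$ and $h \geq 4$ the surviving top Chern class should contribute $\lambda$-degree on $\CA_h$ at least $h(h-1)/2 + 1$. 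This uses the Mumford relations on both $\Mbar_{g,n}$ (in particular $\lambda_g^2 = 0$) and $\CA_h$ to control the mixed terms in the splitting principle expansion. The dimension threshold $4$ should appear exactly where $gh - h(h+1)/2 - (\text{curve-side } \lambda\text{-degree that can be absorbed})$ exceeds the socle degree.

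\textbf{Main obstacle.} I expect Step 1 to be the hard part: producing a rigorous splitting of the family obstruction theory, including the cosection and reduced class in the family setting, with the correct $K$-theory class $\mathcal{K}$. Step 3's combinatorics of mixed Chern roots is the second risk. If the clean factorisation fails, my first fallback is to apply Mumford's relation $c(\BE_g)c(\BE_g^{\vee}) = 1$ directly to the obstruction term $e(\BE_g^{\vee} \boxtimes \BE_h^{\vee})$, in the style of $\lambda_g$-vanishing arguments. I would then check the degree bound against the socle of $R^{\ast}(\CA_h)$ by hand for the boundary cases $g = h = 4$.
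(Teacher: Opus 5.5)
There is a genuine gap in Step~1, and Steps~2--3 depend on it. The family virtual class has no global factorization $c_{\mathrm{top}}(\mathcal K)\cap[\,\cdot\,]^{\mathrm{red}}$ with $\mathcal K$ built from $\BE_g$ and $\BE_h$.

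The quotient of $H^1(C,f^{\ast}T_X)=\BE_g^{\vee}\otimes\BE_h^{\vee}$ by the image of $T\CA_h$ also receives the deformations of the domain curve, with rank jumping from point to point. So it is genuinely obstruction sheaf, not a bundle whose top Chern class can be split off. What can be split off depends on the open and closed piece of $\Mbar_{g,n}(\pi,d)$. Such a piece is determined by the rank $u$ of the curve class (the dimension of the smallest abelian subvariety $B$ a translate of which contains $f(C)$) and by the type $\delta$ of $\theta|_B$. On that piece the removable factor is $(\BE_g/\BE_u)^{\vee}\otimes\BE_{h-u}^{\vee}$, with $\BE_u,\BE_{h-u}$ the Hodge bundles of $B$ and of its complement. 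The remaining class is the virtual class of maps into the $u$-dimensional family over $\CA_{u,\delta}$, pushed forward along $\varphi_{h,\delta}$; this is \eqref{full rank decomposition}.

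This breaks your steps in three ways:
\begin{itemize}
\item For $u=h$ the factor is trivial. So your count ``rank $gh$ minus $h(h+1)/2$'' is not the rank of anything that multiplies the virtual class.
\item For $u<h$ the factor involves Chern classes of $\BE_u$ and $\BE_{h-u}$. These live on $\CA^{\mathrm{lev}}_{u,\delta}\times\CA^{\mathrm{lev}}_{h-u,\tilde\delta}$ and are not pulled back from $\CA_h$. So Step~2 cannot extract a polynomial $P(\lambda_1,\ldots,\lambda_h)$ by the projection formula, and the (correct) $R^{\ast}(\CA_h)$-linearity of $\taut$ does not help.
\item The hypothesis is $g\ge 4$ and $h\ge 4$, not $g\ge h$. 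For $g<h$ the map $\BE_h\to\BE_g$ is never injective. Even for $g\ge h$ it fails to be injective on every piece with $u<h$, and these pieces are open and closed.
\end{itemize}

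The idea you are missing is translation invariance, applied stratum by stratum. The virtual class of the $u$-dimensional family is pulled back from its quotient by translations. Hence Lemma~\ref{lemma:vanishing1b} forces $\deg\Gamma_1+\sum_i k_i\ge u+n$ for the surviving insertion $\Gamma_1$. This puts the resulting class on $\CA_{u,\delta}$ in codimension at least $g(u-3)+3$. For $g\ge u\ge 4$ this exceeds the socle degree $u(u-1)/2$. The compatibility \eqref{taut vs taut} of $\taut$ with $\varphi_{h,\delta\ast}$ then carries the vanishing to $\CA_h$.

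Two cases remain. For $u=0$ your fallback is exactly right: $c_{gh}(\BE_g\otimes\BE_h)=0$ for $g,h\ge4$ by Lemma~\ref{lemma:e(Hodge Hodge) eval}. For $u=1,2,3$ one needs an analysis of the Euler factor $e((\BE_g/\BE_u)\otimes\BE_{h-u})$ against the small socle degrees.

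No count of $\lambda$-degrees on $\CA_h$ alone can replace the translation argument. For $n=0$ and $g=h=4$ the class $\rho_{\ast}[\Mbar_{4}(\pi,d)]^{\vir}$ has codimension $3$, below the socle degree $6$. It vanishes only because $\rho$ factors through the quotient by $\CX$, which has relative dimension $h$.
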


The proof uses that a map $f : C \to X$ from a genus $g$ curve to an abelian variety of dimension $h$ must be supported on a translate of an abelian subvariety of dimension $\leq g$. This will lead to a key decomposition of the Gromov-Witten classes as pushforward of explicit classes from Noether-Lefschetz loci, see Section~\ref{sec:NL and GW theory}.

We also have a strong constraint on the degrees of the insertions:
\begin{prop} \label{prop:dim constraint}
Let $h \geq 3$ and let $\Gamma \in \Rvert(\CX^{\times n})$ be homogeneous.
\begin{itemize}
	\item[(i)] If $g=1$, then $\taut\ \CC_g(\tau_{k_1 \ldots k_n}(\Gamma))$ is non-zero only if $\sum_i k_i + \deg(\Gamma) = h + n - 1$.
	\item[(ii)] If $g \geq 2$, then $\taut\ \CC_g(\tau_{k_1 \ldots k_n}(\Gamma))$ is non-zero only if $\sum_i k_i + \deg(\Gamma) = h + n$. 
\end{itemize}
\end{prop}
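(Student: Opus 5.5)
The plan is to reduce the statement to a degree count on Noether--Lefschetz loci and then use a vanishing property of the tautological projection. First, the naive count. Since $\Gamma$ and the $\psi_i$ are homogeneous, $\CC_g(\tau_{k_1\ldots k_n}(\Gamma))$ lies in a single codimension of $\CA_h$:
\[
c \;=\; \dim \CA_h - \mathrm{vd} + \textstyle\sum_i k_i + \deg(\Gamma) \;=\; (h-3)(g-1) - n + \textstyle\sum_i k_i + \deg(\Gamma).
\]
So the claim is equivalent to a single codimension being allowed: $c = h-1$ if $g=1$, and $c = (h-3)(g-1)+h$ if $g \geq 2$. Because $R^\ast(\CA_h)$ is not concentrated in one degree, this count alone is not enough, and we need the structure of the Gromov--Witten classes.

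Next, apply the decomposition of Section~\ref{sec:NL and GW theory}, which is the same input as in Proposition~\ref{thm:vanishing}. A genus $g$ map to $X$ lands in a translate of an abelian subvariety $Y \subset X$ of dimension $g' \leq g$. Hence $\CC_{g,d}(\tau_{k}(\Gamma))$ is a sum of pushforwards from Noether--Lefschetz loci. These loci are, up to finite covers and isogeny, of the form $\CA_{g'} \times \CA_{h-g'}$ (with polarization type determined by $\delta$). On such a locus the obstruction theory splits. The map-theoretic part on $Y$ gives a class on $\CA_{g'}$; the directions normal to $Y$ contribute an obstruction bundle $\BE_g^\vee \boxtimes \mathrm{Lie}$ of the complementary factor, whose top Chern class is an explicit polynomial in $\lambda$-classes of $\CA_{h-g'}$. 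The vertical tautological insertions $\eta_{ij}$ restrict to sums of the corresponding classes on the two factors, and the complementary factor contributes only through its zero section or fundamental class. Consequently each term has the form $\iota_\ast(\alpha \boxtimes \beta)$ with $\beta$ an explicit $\lambda$-polynomial on $\CA_{h-g'}$.

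The key step, and the main obstacle, is a projection lemma. We must show that $\taut\, \iota_\ast(\alpha \boxtimes \beta)$ vanishes unless $\alpha$ and $\beta$ lie in prescribed degrees. I would compute $\taut$ as in \cite{CMOP}, by pairing against $\lambda$-monomials on a toroidal compactification. Since $c(\BE_h)$ restricts to $c(\BE_{g'})c(\BE_{h-g'})$ and $\lambda_{g'}$, $\lambda_{h-g'}$ vanish on the factors, the pairing factorizes. It is nonzero only if the $\CA_{g'}$-factor pairs with a nonvanishing top-degree tautological class on $\CA_{g'}$, i.e.\ in degree $g'(g'-1)/2$, and similarly on the other factor. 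This pins down $\deg \alpha$, and hence $c$.

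For $g=1$ only $g'=1$ contributes, $\alpha$ is a number, and the projection is proportional to $\taut[\NL_{h,\delta}] \propto \lambda_{h-1}$ (cf.\ Iribar Lopez). This gives $c = h-1$, which is (i). For $g \geq 2$ the contributions with $g' < g$ must either vanish after $\taut$ (using the $\lambda$-class obstruction factor and $\lambda_g^2=0$-type relations) or produce the same codimension. Checking this uniformly, and tracking the extra factor $\lambda$-polynomial of $\BE_g^\vee \boxtimes \mathrm{Lie}$, is where I expect most of the work. It yields $c = (h-3)(g-1)+h$, i.e.\ (ii). Finally, the hypothesis $h \geq 3$ is used to ensure the complementary factor $\CA_{h-g'}$ is nontrivial in the cases that survive Proposition~\ref{thm:vanishing}.
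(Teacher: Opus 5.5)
\textbf{The gap is the projection lemma.} Pairing $\iota_\ast(\alpha\boxtimes\beta)$ against a $\lambda$-monomial $b$ on $\overline{\CA}_h$ does not pin down $\deg\alpha$. The restriction of $b$ to $\CA_{g'}\times\CA_{h-g'}$ has components $b'\boxtimes b''$ in all bidegrees. The pairing only requires $\deg\alpha+\deg b'=\binom{g'}{2}$ for one of them. So $\taut$ kills the part of $\alpha$ above the socle of $R^\ast(\CA_{g'})$, but nothing below it. In fact the lemma is false: for $\alpha=1$ on $\CA_2$ and $h\geq 4$, the pushforward is (a multiple of) $\NL_{h,(1,1)}$, whose tautological projection is a non-zero multiple of $\lambda_{h-1}\lambda_{h-3}$. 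Concretely, in genus $2$ the rank-$2$ terms could a priori contribute through the degree-$0$ component of $\alpha$ on $\CA_{2,\delta}$. That gives a class of codimension $2h-4$, i.e.\ $\sum_i k_i+\deg(\Gamma)=h+n-1$, and nothing in your argument excludes it. So your route, once the postponed $g'<g$ analysis is actually carried out, yields only the upper bound $\sum_i k_i+\deg(\Gamma)\leq h+n$. This is what the paper extracts from \eqref{full rank decomposition} in Section~\ref{subsec:vanishings II}. It uses that the socle of $R^\ast(\CA_{u,\delta})$ sits in degree $1$ for $u=2$ and degree $3$ for $u=3$, that $\Chow^{\geq 4}(\CA_3)=0$, and that everything vanishes for $g,h\geq 4$.

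\textbf{The missing idea is the lower bound,} which comes from the translation action of $\CX$, not from $\taut$ (Lemma~\ref{lemma:vanishing1b}). For $g\geq 2$ the virtual class is pulled back along $p_n\colon \Mbar_{g,n}(\CX,\chi)\to\Mbar_g(\CX,\chi)/\CX$, which has relative dimension $h+n$. Hence $p_{n\ast}(\prod_i\psi_i^{k_i}\ev^\ast\Gamma)=0$ whenever $\sum_i k_i+\deg(\Gamma)<h+n$. For $g=1$ one pushes forward to $\Mbar_{1,1}(\CX,\chi)^0$, of relative dimension $h+n-1$. This holds already in Chow, and it is what kills low-degree parts like the one above (equivalently, non-reduced invariants of a fixed abelian surface vanish). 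The two bounds together give the proposition.

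Two further points in your setup also need fixing.
\begin{enumerate}
\item The normal directions contribute $(\BE_{\Mbar_{g,n}}/\BE_{g'})^\vee\otimes\BE_{h-g'}^\vee$, of rank $(g-g')(h-g')$, not $\BE_g^\vee\otimes\mathrm{Lie}$. You must remove the normal bundle $\BE_{g'}^\vee\otimes\BE_{h-g'}^\vee$ of the Noether--Lefschetz locus. Otherwise the dimensions do not match. Moreover, for $g=g'=1$ you would pick up the factor $\pm\lambda_{h-1}$ from $\CA_{h-1}$, which is zero; this contradicts your own treatment of (i).
\item The rank-$0$ (constant map) terms are missing. They are present already in genus $1$ as $\tfrac{1}{24}(-1)^h\lambda_{h-1}$. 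The paper handles them via $\mult(\Delta^\ast\Gamma)=2h$ and Lemma~\ref{lemma:e(Hodge Hodge) eval}.
\end{enumerate}
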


We expect Propositions~\ref{thm:vanishing} and~\ref{prop:dim constraint} to hold also without applying tautological projection.

\subsection{The genus  $2$ formula}
The main result of this paper is the analogue of the Iribar Lopez formula in genus $2$.
Form the generating series
\[
\CC_{g}\left( \tau_{k_1 \ldots k_n}(\Gamma)  \right)
=
\sum_{d=0}^{\infty} 
\CC_{g,d}(\tau_{k_1 \ldots k_n}(\Gamma))
q^d
%\left\langle \tau_{k_1 \ldots k_n}(\Gamma) \right\rangle^{\pi}_{g,d} q^d.
\]
We will consider the classes
\[
\CC_2(\tau_1(\sigma)) \in \Chow^{2h-3}(\CA_h)
\]
which is by our conjectures expected to be a cycle-valued quasimodular form of weight $4h$, and modular for $h \geq 3$. We will check this prediction here after tautological projection.
%The $\CC_{2,d}(\tau_1(\sigma))$ is a certain linear combination of genus $2$ Noether-Lefschetz classes 
%and thus 

Let $E_{5/2}$ be the Eisenstein series of weight $5/2$ for the Weil representation on the lattice $A_1$, normalized so that the $e_{0}$-coefficient has constant term $1$. 
The first Fourier coefficients read
\begin{equation} \label{E52_1}
	\begin{aligned}
		E_{5/2}^{(1)}(\tau)
		={}&
		\left(
		1 - 70q - 120q^2 - 240q^3 - 550q^4 - 528q^5
		- 720q^6 + \ldots 
		\right) e_{0}
		\\
		&+
		\left(
		-10q^{1/4}
		-48q^{5/4}
		-250q^{9/4}
		-240q^{13/4}
		-480q^{17/4}
		+ \ldots
		\right) e_{1/2}
	\end{aligned}
\end{equation}
and an explicit expression is recalled in Example~\ref{example eisenstein series}.
By a famous result of van der Geer \cite[Thm 8.1]{vdG_SiegelModularThreefold} these Fourier coefficients are exactly minus the degrees of the Humbert divisors (or Noether-Lefschetz divisors) on the moduli space of principally polarized abelian surfaces. 

For a power series
\[ F = \sum_{\gamma \in \frac{1}{2}\BZ/\BZ} \sum_{\substack{n \in \BQ \\ n - \gamma^2 \in \BZ}} c(n,\gamma) q^n e_{\gamma} \]
define the (formal) {\em Shimura lift} of $F$ in weight $k$ by
\begin{align*}
	\Lift_k(F) 
	& = -\frac{c(0,0) B_k}{2k} + \sum_{d \geq 1} \sum_{\ell | d} \ell^{k-1} c\left(\frac{d^2}{4\ell^2} , \frac{d}{2 \ell} \right)  q^{d}.
	%	& = -\frac{c(0,0) B_k}{2k} + \sum_{n \geq 1} \sum_{\ell|n} \ell^{k-1} c\left( \frac{n^2}{4 \delta \ell^2}, \frac{n}{2 \delta \ell} \right) q^n.
\end{align*}
The map $\Lift_k$ is known to take (quasi)modular forms for the Weil representation on $A_1$ of weight $k+\frac{1}{2}$ to (quasi)modular forms for $\SL_2(\BZ)$ of weight $2k$.

\begin{thm} \label{thm:genus 2 formula}
	For every $h \geq 2$ we have
	\[ \taut\ \CC_2(\tau_1(\sigma)) 
	=
	-\frac{1}{360} \frac{(h-1) h}{|B_{2h-2}| |B_{2h}|}
	\Lift_{2h}( G_{2h-2}(q) E_{5/2}^{(1)}) \lambda_{h-1} \lambda_{h-2}.
	\]
	In particular, $\taut\ \CC_2(\tau_1(\sigma))$ is a modular form of weight $4h$ for $h \geq 3$, and is quasimodular if $h=2$.
\end{thm}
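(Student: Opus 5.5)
We will compute $\CC_{2,d}(\tau_1(\sigma))$ by decomposing the moduli space according to the abelian subvariety spanned by the curve, as outlined after Proposition~\ref{thm:vanishing}. A genus $2$ stable map $f : C \to X$ is supported on a translate of an abelian subvariety $B \subset X$ with $\dim B \leq 2$. Maps with $\dim B = 0$ contribute only in degree $0$. Maps with $\dim B = 1$ factor through an elliptic curve, and their contribution is governed by the genus $1$ Noether-Lefschetz classes $\NL_{h,\delta}$. Maps with $\dim B = 2$ factor through an abelian surface $S \subset X$, and these contributions are supported on genus $2$ Noether-Lefschetz loci.

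On each locus the universal abelian variety splits up to isogeny as $B \times B^{\perp}$. The first step is to express the Gromov-Witten class as the pushforward of the product of two pieces: the reduced or family invariants of $B$, and an obstruction bundle factor coming from the complementary directions. We expect the latter to be a product of $\lambda$-classes of the complementary Hodge bundle, contributing top Chern classes $\lambda_{h-1}$ or $\lambda_{h-2}$ pieces.

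We then introduce \emph{quotient invariants}, indexed by the characteristic polynomial of the curve class, that is, the induced polarization type on $B$. The descendent invariant $\CC_{2,d}(\tau_1(\sigma))$ will be written as a finite sum over these types. Its degree constraint is $1 + h = h+n$, consistent with Proposition~\ref{prop:dim constraint}. For $\dim B = 2$ with polarization type $(1,\delta)$ or a non-principal type, the inner invariant is the genus $2$ reduced invariant $\langle\tau_1(\mathsf p)\rangle$ of the abelian surface. This invariant is known to be a multiple cover/divisor-sum formula, as in \cite{BOPY} and the multiple cover results \cite{CarBlomme2,OP_MCF}.

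The next step is tautological projection. By \cite{CMOP}, $\taut$ of a pushforward from a Noether-Lefschetz locus is computed through the tautological projection of Humbert-type cycles. We expect $\taut(\NL)$ for the genus $2$ loci to be proportional to $\lambda_{h-1}\lambda_{h-2}$ or a related class. The coefficients of this proportionality should be products of a genus $1$ Eisenstein coefficient, coming from the elliptic factor as in Iribar Lopez's theorem, and a coefficient of $E^{(1)}_{5/2}$, namely van der Geer's Humbert degrees, coming from the abelian surface factor. Assembling the sum over polarization types and multiple covers then yields a double sum. This double sum must be recognized as the Shimura lift $\Lift_{2h}(G_{2h-2}E^{(1)}_{5/2})$.

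We expect the main obstacle to be this last identification. Matching the raw combinatorial sum of divisor sums against Humbert degrees with the coefficients of $\Lift_{2h}$ of the product $G_{2h-2}E_{5/2}$ requires a nontrivial modular identity. We would attempt to prove it by realizing both sides as lifts, or as Hecke-equivariant Eisenstein series, and then comparing finitely many coefficients using Sturm bounds; this is the role of the appendix identity. Modularity for $h\ge3$ then follows because $\Lift$ sends weight $2h-\tfrac12$ modular forms to weight $4h$ forms. For $h=2$ the factor $G_2$ is only quasimodular, so the result is quasimodular.
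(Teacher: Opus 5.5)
Your overall architecture matches the paper's proof. You split $\CC_{2,\chi}(\tau_1(\sigma))=2N_{2,\chi}$ by the rank $u\le 2$ of $\chi$, use the Noether--Lefschetz decomposition with obstruction factor $e\bigl((\BE_{\Mbar_{2,1}}/\BE_u)\otimes\BE_{h-u}\bigr)$, evaluate the abelian-surface part through reduced genus $2$ counts \cite{BOPY} and Humbert degrees, project with \cite{CMOP,AitorNL}, and finish with a modular identity.

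The genuine gap is that last identification, which carries the real content. The method you propose for it cannot work. A Sturm-bound comparison requires both $q$-series to be known members of one finite-dimensional space of modular forms. In rank $2$, however, the geometric side is a sum over all types $\delta_1\mid\delta_2$ of the coefficients $c^{(\delta_2/\delta_1)}\bigl(\frac{d^2-4a}{4\delta_1\delta_2k^2},\frac{d}{2\delta_2k}\bigr)$. These are Eisenstein coefficients for the different lattices $A_1(\delta_2/\delta_1)$, weighted by $k^3\sigma_1\bigl(\frac{a}{\delta_1\delta_2k^2}\bigr)$ and by the factors $\delta_1^{2h-5}\delta_2^{2h-1}\prod_p(\cdots)$ coming from $\taut\,\varphi_{h,\delta*}(p_\delta^*\lambda_1)$. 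Three problems follow:
\begin{enumerate}
\item Its modularity is exactly what has to be proven.
\item It is not visibly a finite combination of lifts: a lift from $A_1(\delta)$ is only modular for $\Gamma_0(\delta)$, and infinitely many $\delta$ occur.
\item The identity is needed for every $h$, and the Sturm bound grows with the weight $4h$, so no finite verification suffices.
\end{enumerate}

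The missing idea is an exact, coefficientwise argument, which the paper carries out in three steps.
\begin{enumerate}
\item The index-raising operators of \cite{BCJ19} give $E^{(\delta)}_{5/2}=\delta^{-2}\prod_{p\mid\delta}(1+p^{-2})^{-1}\sum_{a^2\mid\delta}\mu(a)\,E^{(1)}_{5/2}|U_a|V_{\delta/a^2}$. Here a finite check is legitimate, since both sides lie in the Eisenstein space and only constant terms need comparing. This rewrites every $c^{(\delta)}$ in terms of $c^{(1)}$.
\item Reindex by $\ell=ab\delta_1k$, $\delta_2=a^2b\delta_1c$, $i=bj/c$. Multiplicative identities such as $\sum_{ab\mid\ell}\sum_{c\mid j}a^{1+t}b^tc^{1+t}\mu(a)\sigma_1(jb/c)P_t(a^2bc)=\ell^t\sigma_{1+t}(j)$, with $P_t(n)=\prod_{p\mid n}(1-p^{-t})$, then collapse the rank-$2$ contributions to (the common prefactor times) $\sum_{\ell\mid d}\ell^{2h-1}\sum_{j\ge1}\sigma_{2h-3}(j)\,c^{(1)}\bigl(\frac{d^2}{4\ell^2}-j,\frac{d}{2\ell}\bigr)$.
\item The rank $0$ and rank $1$ terms produce the $j=0$ part, via $\Lift_2(E^{(1)}_{5/2})=-10\,G_4$ and M\"obius inversion.
\end{enumerate}

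This also corrects two of your heuristics:
\begin{enumerate}
\item The coefficients $\sigma_{2h-3}(j)$ of $G_{2h-2}$ come from the abelian-surface loci: the $\sigma_1$ of \cite{BOPY} combines with the degrees of the level covers. The elliptic (rank $1$) loci only supply the constant term of $G_{2h-2}$.
\item The Humbert degrees that enter are those on $\CA_{2,(1,\delta)}$ \cite{BruinierKuehn}, not only van der Geer's on $\CA_2$. They are tied to the family invariants by the GW/NL correspondence of \cite{GWNL}, a step you do not state.
\end{enumerate}

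Two smaller points:
\begin{enumerate}
\item $G_{2h-2}E^{(1)}_{5/2}$ has weight $2h+\frac12$, not $2h-\frac12$.
\item When tracking constants, the degree-zero term of $\CC_2(\tau_1(\sigma))$ is $\frac{1}{2880}\lambda_{h-1}\lambda_{h-2}$, whereas the displayed right-hand side starts with $\frac{1}{5760}\lambda_{h-1}\lambda_{h-2}$. So the displayed prefactor is the one for $\taut F_2(1)=\frac12\taut\,\CC_2(\tau_1(\sigma))$.
\end{enumerate}
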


For the proof we write the Gromov-Witten cycles as linear combinations of genus $1$ and $2$ Noether-Lefschetz cycles in the moduli space of $\CA_h$.\footnote{In fact, the linear combination may be seen as providing the correct "weights" of the genus $2$ Noether-Lefschetz cycles to make their generating series modular.} Their tautological projections are known by work of \cite{AitorNL}.
The result is a fairly elaborate generating series involving the degrees of the Humbert cycles on the moduli spaces of abelian surfaces of polarization type $(1,\delta)$, which are encoded in the Eisenstein series $E_{5/2}^{(\delta)}$ for the lattice $A_1(\delta) = (2 \delta)$.
By a curious modular identity, obtained in joint work with B. Williams in the appendix, this yields the above formula.

\subsection{Quotient invariants and genus $2$ invariants in general}
An alternative way to define Gromov-Witten invariants of abelian varieties is to consider stable maps up to translation. This will yield quasimodularity results in genus $2$ for a bigger class of insertions.
Since it is cumbersome to use insertions up to translation, we will instead capture the full theory by refining the notion of degree of a curve class.
For $(X,\theta)$ a principally polarized abelian variety and curve class $\beta \in H_2(X,\BZ)$
the {\em characteristic polynomial} of $\beta$ is defined to be
\[ \chi_{\beta}(t) = \int_{X} \frac{(t \theta - v_{\beta})^{h}}{h!} \]
where $v_{\beta} \in H^2(X,\BZ)$ is the image of $\beta$ under the chain of isomorphism
\[ H_2(X,\BZ) \cong H^2(\widehat{X},\BZ) \xrightarrow{\varphi_{\theta}^{\ast}} H^2(X,\BZ) \]
where $\widehat{X}$ is the dual abelian variety and $\varphi_{\theta} : X \to \widehat{X}$ is the isomorphism induced by the polarization.
The polynomial $\chi_{\beta}$ is monic, has integral coefficients, its $t^{h-1}$-coefficient is the negative of the degree of $\beta$, and if $\beta$ is effective then $\chi_{\beta}$ has non-negative real roots. 

Let $\chi(t)$ be a degree $h$ polynomial. Let
\[ \Mbar_{g,n}(\pi,\chi) \]
be the moduli space of stable maps $f : C \to X$ to fibers of $\pi$ such that the curve class $\beta=f_{\ast}[C]$ has characteristic polynomial $\chi_{\beta} = \chi$. This defines a decomposition into open and closed components
\[ \Mbar_{g,n}(\pi,d) = \bigsqcup_{\substack{\chi = t^h - d t^{h-1} + \ldots}} \Mbar_{g,n}(\pi,\chi). \]
The translation action of $\CX$ on itself induces an action on $\Mbar_{g,n}(\CX,\chi)$.
If $g \geq 2$ the action has finite stabilizer and the quotient
$\Mbar_{g,n}(\pi,\chi)/\CX$ is a proper DM stack.
In Section~\ref{subsec:vanishing reln 1} equation \eqref{virtual class quotient} we define a virtual class on the moduli space.
The {\em quotient Gromov-Witten invariants} of the family $\pi$ are then defined by
\[
N_{g,\chi}^{\CA_h} := \rho_{\ast} [ \Mbar_{g,n}(\pi,\chi)/\CX ]^{\vir} \ \in \Chow^{gh - (3g-3)}(\CA_h).
\]

It is not difficult to show that all quotient invariants are determined by the descendent Gromov-Witten invariants as defined in \eqref{descendent GW invariants}, see Section~\ref{sec:quotient GW invariants}. 
Conversely we have the following reconstruction result which shows that the descendent and quotient Gromov-Witten invariants essentially contain the same information\footnote{At least under the expected degree condition on the insertion of a descendent Gromov-Witten invariant, see Proposition \ref{prop:dim constraint}.}:

\begin{prop}
Let $g \geq 2$, let $k_1,\ldots,k_n \geq 0$ and let $\Gamma \in \Rvert(\CX^{\times n})$ be a tautological class satisfying the expected degree constraint $\sum_i k_i + \deg(\Gamma) = h + n$ (see Proposition \ref{prop:dim constraint}).
Then the Gromov-Witten invariant
$\CC_{g,d}(\tau_{k_1 \ldots k_n}(\Gamma))$
can be effectively reconstructed from the quotient invariants $N_{g,\chi}^{\CA_h}$
through the intersection theory of $\psi$-classes of the moduli space of curves
and the invariant theory of the symplectic group.
\end{prop}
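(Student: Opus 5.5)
We reduce the descendent class $\CC_{g,d}(\tau_{k_1\ldots k_n}(\Gamma))$ to quotient invariants by splitting the moduli space into the components $\Mbar_{g,n}(\pi,\chi)$ and factoring out the translation action of $\CX$. Fix $\chi$ with $t^{h-1}$-coefficient $-d$. The group scheme $\CX$ acts on $\Mbar_{g,n}(\pi,\chi)$ with finite stabilizers since $g\geq 2$. The first step is to show that the quotient map $q:\Mbar_{g,n}(\pi,\chi)\to \Mbar_{g,n}(\pi,\chi)/\CX$ is a torsor under $\CX$ up to finite stabilizers. We then check that the virtual class is pulled back, $[\Mbar_{g,n}(\pi,\chi)]^{\vir} = q^{\ast}[\Mbar_{g,n}(\pi,\chi)/\CX]^{\vir}$, which should follow from the construction \eqref{virtual class quotient}. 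The $\psi_i$ are pulled back from the quotient as well, since translation does not change the marked curve.

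The next step is to rewrite the insertion $\ev^{\ast}\Gamma$ in translation-invariant form. We use the isomorphism $\CX^{\times n}\cong \CX\times_{\CA_h}\CX^{\times (n-1)}$ given by $(x_1,\ldots,x_n)\mapsto (x_1, x_2-x_1,\ldots,x_n-x_1)$. Under this isomorphism $\Mbar_{g,n}(\pi,\chi)\cong \CX\times_{\CA_h}(\Mbar/\CX)$ up to the finite stabilizer factor, and $\ev$ becomes $\ev_1\times\overline{\ev}$, where $\overline{\ev}$ records the differences $x_i-x_1$. The pullback of each $\eta_{ij}$ is a polynomial in classes pulled back from the first factor and from the difference coordinates. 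For example, $\theta(x_1+y)=\theta(x_1)+\theta(y)+2\eta(x_1,y)$.

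Pushing forward along the $\CX$-factor, which is an abelian fibration of relative dimension $h$, keeps only the part of $\Gamma$ of top fibre degree in $x_1$. This gives $\CC_{g,d}=\rho_{\ast}\big(\prod\psi_i^{k_i}\,\overline{\ev}^{\ast}\Gamma'\cap[\Mbar/\CX]^{\vir}\big)$ for an explicit tautological class $\Gamma'$ on $\CX^{\times(n-1)}$.

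The second half of the argument, and the main obstacle, is to evaluate $\overline{\ev}^{\ast}\Gamma'$ on the quotient in terms of the curve class. The expected degree constraint $\sum k_i+\deg\Gamma=h+n$ says that, after the $h$ degrees absorbed by $x_1$, exactly $n$ degrees remain. By dimension, $\Mbar/\CX$ has virtual dimension $\dim \CA_h + (h-3)(1-g)+n-h$ relative to $N^{\CA_h}_{g,\chi}$ sitting in its own codimension. So the remaining insertion should integrate fibrewise over the marked points, meaning along the forgetful map to $\Mbar_{g,0}(\pi,\chi)/\CX$.

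We argue that on a fixed curve $f:C\to X$, the classes $\overline{\ev}^{\ast}\eta_{ij}$ restricted to fibres are determined by the intersection pairing of $f^{\ast}$ on $H^1$. This pairing is governed by the class $\beta$ through the symplectic form, so fibre integrals of monomials in $\eta$ against $\psi$-classes become polynomials in the invariants of $v_\beta$ relative to $\theta$. By $\Sp$-invariant theory these invariants are the coefficients of $\chi_\beta$. Concretely, we would compute $\int \prod\psi_i^{k_i}\prod \eta_{ij}$ over the fibres of $\Mbar_{g,n}\to\Mbar_{g}$, using the string and dilaton equations together with the Abel–Jacobi pullbacks of $\theta$ and $\eta$. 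The Abel–Jacobi pullbacks are expressed through $f^{\ast}\theta$ and the degree $d$. The result is $\CC_{g,d}=\sum_{\chi}P_{\Gamma,k}(\chi)\,N^{\CA_h}_{g,\chi}$ with $P_{\Gamma,k}$ an explicit polynomial in the coefficients of $\chi$.

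The delicate point is justifying that no correction terms arise from higher-codimension classes on the base. These could appear because $\eta$ pulled back along $\overline{\ev}$ is only fibrewise determined. We would handle this by the degree constraint: any base-class contribution would raise the degree beyond what the fibre integral allows, so it vanishes for dimension reasons.
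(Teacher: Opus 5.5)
Your outline matches the paper's proof of Theorem~\ref{thm:reconstruction primary}: the virtual class is pulled back from $\Mbar_g(\CX,\chi)/\CX$ via \eqref{relation of virtual classes}, the degree constraint makes the pushforward of $\prod_i\psi_i^{k_i}\ev^{\ast}(\Gamma)$ a degree-$0$ class that can be computed on a fibre $X\times C[n]$, and symplectic invariant theory turns the fibre integral into a polynomial in $e_1,\ldots,e_h$; the only difference is that you split the fibre into the translation factor and $C[n]$, whereas the paper integrates over $X\times C[n]$ at once using $\varphi$. The step to sharpen is the integral over $C[n]$: string and dilaton alone cannot handle insertions at every marking with arbitrary $\psi$-powers, so you need the universal formula \eqref{143rwef} for $q_{\ast}\prod_i\psi_i^{k_i}$ on $C^n$ in terms of diagonals and $\kappa=c_1(\omega_C)$, together with $f_{\ast}[C]=\beta$ and $f_{\ast}\kappa=(2g-2)\sigma$, which reduces everything to integrals over $\beta^{\times m}$ handled by Lemma~\ref{lemma:reconstruction}.
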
 

For explicit formulas in particular cases (e.g. the primary case) we refer to Section~\ref{sec:quotient GW invariants}.

The advantage of the quotient invariants is that they are easier to work with in computations.

Quasimodularity and holomorphic anomaly equation
for descendent Gromov-Witten invariants implies parallel constraints for the quotient invariants.
For every polynomial $P$ in formal variables $e_1,\ldots,e_h$
define the generating series of quotient invariants by
\[ F_g^{\CA_h}(P(e_1,\ldots,e_h)) = \sum_{\chi} P(e_1,\ldots,e_h) N_{g,\chi}^{\CA_h} q^{e_1} \]
where we sum over all characteristic polynomials $\chi = t^h - e_1 t^{h-1} + \ldots + e_h (-1)^h$.

\begin{thm} \label{thm:hae for quotient}
Let $g \geq 2$.
Assume that Conjectures~\ref{conj:quasimodularity} and Conjecture~\ref{conj:HAE} hold.
Then
\[ F_{g}^{\CA_h}(e_{r_1} \cdots e_{r_s}) \in \QMod_{2gh + 2 \sum_{i=1}^{h} r_i} \otimes \Chow^{\ast}(\CA_h). \]
If moreover $h \geq 3$, then:
	\[
	\frac{d}{d G_2}F_g^{\CA_h}( P )
	=
	2 \delta_{h=3} F_{g-1}^{\CA_h}( e_2 \cdot P ) 
	+ F_g^{\CA_h}( \CL_{g,h} P )
	\]
	where $\CL_{g,h}$ is the following differential operator on the ring $\BC[e_1,\ldots,e_h]$, with the convention that $e_0=1$ and $e_k=0$ for $k>h$,
	\begin{equation*}
		\begin{aligned}
			\CL_{g,h}
			={}&-2\sum_{i,j=1}^h
			\left(\sum_{a=0}^{\min(i,j)-1}(i+j-1-2a)e_a e_{i+j-1-a}\right)
			\frac{d}{d e_i} \frac{d}{d e_j} \\
			&\quad -4\sum_{i=1}^h (g-i+1)(h-i+1)e_{i-1} \frac{d}{de_i} .
		\end{aligned}
	\end{equation*}
\end{thm}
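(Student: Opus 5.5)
We derive the statement from the two conjectures by expressing the quotient invariants as explicit Gromov-Witten classes with tautological insertions. The translation-rigidification of Section~\ref{sec:quotient GW invariants} is the starting point. For $g\ge 2$ we pull back along the quotient map $\Mbar_{g,1}(\pi,\chi)\to\Mbar_{g}(\pi,\chi)/\CX$, which is a $\CX$-torsor up to finite stabilizers. Inserting the zero-section class $\sigma$ at one marking then gives $N_{g,\chi}^{\CA_h}$ up to the forgetful-map factor.

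Polynomials in $e_1,\ldots,e_h$ are to be realised through insertions of the Mumford classes $\eta_{ij}$ and $\theta$ at additional markings. The mechanism is as follows. For a map $f:C\to X$ of class $\beta$, the pairing $\int_\beta \theta$ and the higher invariants of $v_\beta$ appear via the divisor equation and its relative analogues. In the relative setting, the pushforward of $\eta_{ij}$-monomials along $\ev$ yields symplectic invariants of the $H_1$-image, i.e.\ the coefficients of $\chi_\beta$.

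Concretely, I would identify $e_r N_{g,\chi}$ with a combination of invariants $\CC_{g}(\tau_{0\ldots0}(\sigma\boxtimes \theta^{a}\boxtimes\cdots))$, corrected by lower products of $e$'s. This expresses $F_g^{\CA_h}(e_{r_1}\cdots e_{r_s})$ as a linear combination of series $\CC_g(\Gamma)$, each with $\Gamma$ an eigenvector.

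Quasimodularity and the weight then follow from Conjecture~\ref{conj:quasimodularity} and Corollary~\ref{cor:weight statement as a consequence of HAE}. The series $\CC_g(\Gamma)$ has weight $h(2g-2)+\mult(\Gamma)$. The class $\sigma$ has multiplicity $2h$, and each $e_r$-insertion contributes multiplicity $2r$. This gives weight $2gh+2\sum r_i$, matching the claim. One must check that the correction terms are homogeneous of the same weight, which holds because the change of basis between insertions and $e$-monomials respects multiplicity.

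For the anomaly equation with $h\ge3$, I apply Conjecture~\ref{conj:HAE} to the same expressions and push forward to $\CA_h$ after integrating $\psi$-classes appropriately.

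\emph{Separating node term.} Here a genus-$g_i$ factor carries a component that is not a translate-rigid curve in genus $\ge2$. By the vanishing results (Proposition~\ref{prop:dim constraint}, dimension counting) such factors vanish unless one side is genus $0$, where constant maps contribute nothing to positive degree. I expect these terms to vanish, leaving the non-separating term and the $\psi$-term.

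\emph{Non-separating node term.} The term $\iota_*\CC_{g-1}(\Gamma\boxtimes f_\theta)$ pushes forward to an invariant in genus $g-1$ with the extra insertion $f_\theta$, a cycle of multiplicity $2h-2$. After rigidification, the diagonal-type class $f_\theta$ in the last two markings should be rewritten as $e_2$ times the quotient invariant. The dimension constraint of Proposition~\ref{prop:dim constraint} forces a nonzero contribution only when $h=3$, giving the $2\delta_{h=3}F_{g-1}(e_2P)$ term.

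\emph{$\psi$-term and the operator $\CL_{g,h}$.} The terms $-2\psi_i\CC_g(f_\theta^{(i)}\Gamma)$ are where $\CL_{g,h}$ is produced. The operator $f_\theta$ acts on $\Rvert$ as the $\mathfrak{sl}_2$-lowering operator, i.e.\ a Lefschetz contraction. Acting on products of $\eta$'s, it gives first-order terms, and it gives second-order terms when contracting two insertions. Translating back through the dictionary with $e$'s, this should yield the second-order part with coefficients $(i+j-1-2a)e_ae_{i+j-1-a}$. That coefficient pattern is a Newton-identity-type expression for contracting coefficients of a characteristic polynomial. The first-order part, with coefficient $(g-i+1)(h-i+1)$, arises from the $\psi$-integrals via the string and dilaton equations, which contribute factors like $2g-2+n$, combined with $h-i+1$ from $\mathfrak{sl}_2$ weights.

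The main obstacle is exactly this bookkeeping. It requires an explicit formula for $f_\theta$ acting on the monomials chosen to represent $e_r$, and verifying that the combined contribution is independent of auxiliary markings and equals $\CL_{g,h}$. My first attempt would be a generating-function computation: encode all $e_r$ at once via $\chi_\beta(t)$ as the insertion $\exp(t\theta)$-type class, and use the $\mathfrak{sl}_2$ commutation relations $[e_\theta,f_\theta]=$ weight operator to compute $f_\theta$ on the whole generating series in closed form.
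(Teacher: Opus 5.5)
\textbf{Main gap: the separating-node term does not vanish.} You correctly note that, by Lemma~\ref{lemma:vanishing1b}, only $g_2=0$ can survive, but your reason for then discarding it is wrong. The genus-$0$ factor consists of constant (degree $0$) maps, while the whole degree sits on the genus-$g$ factor. Since $[\Mbar_{0,3}(\pi,0)]^{\vir}=[\Mbar_{0,3}\times\CX]$ (there are no obstructions in genus $0$), a bubble carrying two of the original markings $i<j$ contributes the nonzero term $2\sum_{i<j}\CC_g\bigl(\tau_1(\sigma)\,\tau_0(f_\theta^{(i)}\Delta_{ij}^{\ast}\Gamma)\bigr)$.

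This bubble term, and not the $\psi$-term, is where $f_\theta$ contracts insertions sitting at two different markings. On the $\eta$-cycles $\Gamma_r=\eta_{12}\eta_{23}\cdots\eta_{r1}$ one gets $f_\theta(\Delta_{ij}^{\ast}\Gamma_r)=-2\Gamma_{r-1}-\tfrac12\Gamma_{j-i}\Gamma_{r-(j-i)-1}-\tfrac12\Gamma_{j-i-1}\Gamma_{r-(j-i)}$, with an analogous joining formula when $i,j$ lie in different cycles. In the $\psi$-term, $f_\theta^{(i)}$ acts at a single marking, $f_\theta^{(1)}\Gamma_r=-\tfrac12(\Gamma_{r-1})_{2\cdots r}$. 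After the dilaton equation, and Lemma~\ref{lemma:tau2 reduce to quotient} for the $\tau_2(\theta^{h-1}/(h-1)!)$ term coming from the $\sigma$-marking, it contributes only $-2\Psi(2g-1+\Wt)$ in power-sum variables. Here $\Psi=\sum_r r\,p_{r-1}\partial_r$ and $\Wt=\sum_r r\,p_r\partial_r$.

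The $\Wt$ there is exactly the dependence on the number of markings that you flagged must disappear. It is cancelled by a $+2\Psi\Wt$ produced by the bubble terms. Those same bubble terms also supply the $p_kp_{r-1-k}\partial_r$ and $k\ell\,p_{k+\ell-1}\partial_k\partial_\ell$ parts of $\CL_{g,h}$. So dropping the separating term leaves you with a wrong, marking-dependent operator.

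\textbf{Secondary gap: the dictionary is never made concrete.} Your correspondence between insertions and $e$-monomials is not specified, and the proposed $\tau_0(\sigma)$ with $\theta^a$-insertions is not shown to generate all monomials. The paper instead uses $\tau_1(\sigma)\prod_a\tau_{0^{r_a}}(\Gamma_{r_a})$. By Lemma~\ref{lemma:tau1 sigma reduce to quotient} and Lemma~\ref{lemma:intersection computation} this equals $(2g-2)(-\tfrac12)^{\sum_a(r_a-1)}p_{r_1}\cdots p_{r_s}N_{g,\chi}$.

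Since the power sums generate $\BC[e_1,\ldots,e_h]$, this gives the weight statement along the lines you describe. It also makes all three HAE terms computable in closed form; for instance, the non-separating term yields $(p_1^2-p_2)=2e_2$ when $h=3$. Finally, the $e$-form of $\CL_{g,h}$ is obtained by a separate change of variables from the $p$-basis operator (Appendix~\ref{appendix:an operator identity}), not read off from $\mathfrak{sl}_2$-weights.
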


Our main result towards this conjecture is the check in genus $2$.
We give explicit formulas for all the genus $2$ quotient invariants
in Corollary~\ref{cor:genus 2 quotient invariants}
which yields the following:

\begin{thm} \label{thm:genus 2 quotient quasimodularity}
After tautological projection all genus $2$ quotient invariants $F_{2}^{\CA_h}(P)$ are quasimodular forms of weight $4h+2a+4b$ and satisfy the holomorphic anomaly equation of Theorem~\ref{thm:hae for quotient}. Concretely,
\[ \taut \ F_{2}^{\CA_h}(e_1^{a} e_2^b)
=
-\frac{1}{360} \frac{(h-1) h}{|B_{2h-2}| |B_{2h}|}
D_{\tau}^a \Lift_{2(h+b)}\left( D_{\tau}^b(G_{2h-2}) E_{5/2}^{(1)} \right) \lambda_{h-1} \lambda_{h-2}
\]
where $D_{\tau} = q \frac{d}{dq}$.
\end{thm}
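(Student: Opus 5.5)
The plan is to first compute the genus $2$ quotient invariants $N_{2,\chi}^{\CA_h}$ explicitly after tautological projection; the statement follows from the resulting closed formula (Corollary~\ref{cor:genus 2 quotient invariants}).

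\emph{Reduction to Noether--Lefschetz loci.} A genus $2$ stable map to $X$, taken up to translation, factors through an abelian subvariety $B \subset X$ of dimension $\leq 2$. The dimension-$1$ case contributes only when the curve class is a multiple of an elliptic curve, and then $\chi = t^{h-1}(t-d)$. In that case I would express the contribution through the genus $1$ Noether--Lefschetz classes $\NL_{h,\delta}$ together with a local genus $2$ elliptic-curve factor, which is a Hodge-integral/$\lambda$-class computation. The dimension-$2$ case gives $\chi = t^{h-2}(t^2 - e_1 t + e_2)$. Its contribution is the pushforward from the genus $2$ Noether--Lefschetz locus of abelian varieties $X \sim S \times Y$, where $S$ is an abelian surface with polarization $\theta|_S$ of type $(1,\delta)$. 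It is weighted by the quotient invariant of $S$ in the class $\beta$, i.e.\ the reduced genus $2$ count of curves in $S$ up to translation, multiplied by the obstruction contribution $e(\text{normal Hodge part})$. This normal factor should produce a $\lambda_{h-2}$-type class.

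\emph{Tautological projection.} I would apply $\taut$ using the formulas of \cite{AitorNL} for the tautological projections of these loci. These are products of $\lambda_{h-1}\lambda_{h-2}$ with degrees of Humbert cycles on the moduli of $(1,\delta)$-polarized surfaces, and those degrees are the coefficients of $E_{5/2}^{(\delta)}$. For the remaining factor I would use the known genus $2$ counts in abelian surfaces up to translation, given by Bryan--Leung/BOPY type formulas that are multiplicative in $\delta$ and in the divisor structure. Summing over $\chi$ with the weight $e_1^a e_2^b$ then turns $e_1$ into $D_\tau$ on the $q$-variable, and turns $e_2 = \beta^2/2$ into the discriminant variable of the Weil-representation series. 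Keeping track of where these variables enter in the Shimura sum should yield the $D_\tau^b$ on $G_{2h-2}$ and the shift of lift weight to $2(h+b)$.

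\emph{Main obstacle.} The generating series obtained this way is a double sum over $\delta$ involving the series $E_{5/2}^{(\delta)}$ for the lattices $A_1(\delta)$. Collapsing it to the single lift $\Lift(G_{2h-2}E_{5/2}^{(1)})$ is the hardest step. It needs the modular identity of the appendix, which I would prove by comparing both sides as Jacobi/vector-valued forms via Hecke-type operators $V_\delta$ and Eisenstein coefficient formulas (divisor sums times $L$-values). With the identity in hand, quasimodularity follows because $\Lift$ sends weight $k+\tfrac12$ forms to weight $2k$ forms and $D_\tau$ raises weight by $2$. This gives weight $2(h+b) + 2h - 2 + \tfrac52 + \ldots$; the bookkeeping should come out to $4h + 2a + 4b$.

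\emph{Holomorphic anomaly check.} Finally I would verify the holomorphic anomaly equation of Theorem~\ref{thm:hae for quotient} directly by differentiating with $d/dG_2$, using $[d/dG_2, D_\tau] = -2\cdot\mathrm{wt}$ and the fact that $d/dG_2$ acts on $D_\tau^b G_{2h-2}$ and passes through the lift. Both sides are then identified as the action of $\CL_{2,h}$ on $e_1^a e_2^b$. For $h = 3$ the extra genus-$1$ term $2F_1(e_2P)$ has to be matched using Iribar Lopez's theorem.
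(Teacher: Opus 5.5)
Your outline follows the paper's architecture: the rank decomposition \eqref{full rank decomposition}, genus-$2$ elliptic-curve invariants for rank $1$, Humbert divisors with BOPY's reduced counts for rank $2$, tautological projection, the appendix identity via Hecke operators, $e_1\mapsto D_\tau$, and the HAE from Theorem~\ref{thm:shimura lift}. However, the rank-$2$ step as you describe it produces the wrong class.

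\textbf{Rank 2.}
\begin{itemize}
\item For $u=g=2$ the excess bundle $(\BE_{\Mbar_{2,1}}/\BE_2)\otimes\BE_{h-2}$ has rank $0$. So there is no Hodge normal factor and no $\lambda_{h-2}$. Indeed $\NL_{h,(\delta_1,\delta_2)}$ has codimension $2h-4$, so an extra $\lambda_{h-2}$ would land in codimension $3h-6$, whereas $N_{2,\chi}\in\Chow^{2h-3}(\CA_h)$.
\item The missing codimension one lives on the surface factor. The genus-$2$ family class over $\CA_{2,(1,\delta_2/\delta_1)}$ is a sum of refined Humbert divisors $\Hum_{s,d,\ell}$ weighted by $\langle\tau_1(\sigma)\rangle^{\mathrm{red}}_{2,s,\ell}$, by the argument of \cite{GWNL}. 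After the multiple-cover rearrangement these become $\Hum_{s,d}=-c^{(\delta)}(n,\gamma)\lambda_1$ (Lemma~\ref{lemma:Humbert degree}).
\item Hence the rank-$2$ term is $\varphi_{h,\delta*}(p_\delta^*\lambda_1)/\deg\varphi_{h,\delta}$, and you need its tautological projection. This is not among the Noether--Lefschetz formulas of \cite{AitorNL}, which give $\taut\,\NL_{h,(\delta_1,\delta_2)}\propto\lambda_{h-1}\lambda_{h-3}$. The paper proves it separately in Lemma~\ref{lemma:some taut projections}, by reducing to $\delta=(1,1)$, where $[\CA_1\times\CA_1]=10\lambda_1$ on $\CA_2$ and \cite{CMOP} computes $\taut[\CA_1\times\CA_1\times\CA_{h-2}]$.
\item For $h\geq 4$ the surface can carry any type $(\delta_1,\delta_2)$ with $\delta_1\mid\delta_2$, not only $(1,\delta)$. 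These terms are rescaled to type $(1,\delta_2/\delta_1)$, with $\delta_1\mid d$ and $\delta_1\delta_2\mid e_2$. Note that $e_2=\delta_1\delta_2\,(\gamma\cdot\gamma)/2$ by Lemma~\ref{lemma:a and s}, not $(\gamma\cdot\gamma)/2$. These terms are exactly what the extra $\delta$-sum in Proposition~\ref{prop:identity2} absorbs, so dropping them breaks the final identity.
\end{itemize}

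\textbf{Constant maps.} You omit the constant-map contribution $N_{2,t^h}=\frac{1}{5760}\lambda_{h-1}\lambda_{h-2}$. It comes from $e(T_{\CX/\CA_h}\otimes\BE^\vee_{\Mbar_{2,1}})=\mu_1\mu_2\lambda_{h-1}\lambda_{h-2}$ and $\int_{\Mbar_{2,1}}\psi_1\mu_1\mu_2=\frac{1}{2880}$. It is precisely the constant term of the prefactor times $\Lift_{2h}(G_{2h-2}E^{(1)}_{5/2})$, so the case $a=b=0$ does not close without it.

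\textbf{HAE for $(g,h)=(2,3)$.} There is no genus-$1$ term to match. In the proof of Theorem~\ref{thm:hae for quotient text}, the nonseparating-node term $I_1$ vanishes for $g=2$ by the genus-$1$ dimension constraint unless $h=2$, so Iribar Lopez's theorem plays no role. The HAE follows directly from the closed formula, using Theorem~\ref{thm:shimura lift}, the fact that $[\frac{d}{dG_2},D_\tau]$ acts by $-2k$ in weight $k$, and $F_2(e_3P)=0$ since only ranks $\leq 2$ occur.

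\textbf{Weight.} The count is simply this: $D_\tau^b(G_{2h-2})E^{(1)}_{5/2}$ has weight $2(h+b)+\frac12$, its lift has weight $4(h+b)$, and $D_\tau^a$ adds $2a$.
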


As a corollary we obtain the quasimodularity of all genus $2$ Gromov-Witten invariants:
\begin{cor} \label{cor:genus 2}
For any $\Gamma \in \Rvert(\CX^{\times n})$ and $k_1,\ldots,k_n \geq 0$ we have
\[ \taut\ \CC_{2}^{\CA_h}\left( \tau_{k_1 \ldots k_n}(\Gamma)  \right) \in \QMod_{2h + 2 \deg(\Gamma)} \otimes \BC\lambda_{h-1} \lambda_{h-2} . \]
\end{cor}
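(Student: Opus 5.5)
The plan is to combine the reconstruction result for descendent invariants (the Proposition above) with the explicit genus $2$ quotient formula of Theorem~\ref{thm:genus 2 quotient quasimodularity}, and then track weights.

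First reduce to the expected degree. By Proposition~\ref{prop:dim constraint}(ii) with $g=2$, $h\geq 3$, the class $\taut\,\CC_2(\tau_{k_1\ldots k_n}(\Gamma))$ vanishes unless $\sum_i k_i+\deg\Gamma = h+n$. So we may assume this constraint. The case $h=2$ needs a separate check: the dimension count there is direct and $\CA_2$ has small tautological ring. By linearity we may also assume $\Gamma$ is a monomial in the $\eta_{ij}$.

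Next invoke the reconstruction Proposition. It expresses $\CC_{2,d}(\tau_{k_1\ldots k_n}(\Gamma))$ as a finite combination $\sum_\chi P_\Gamma(e_1(\chi),\ldots,e_h(\chi))\, N^{\CA_h}_{2,\chi}$ with $e_1(\chi)=d$. The coefficients come from $\psi$-integrals on $\Mbar_{2,n}$ and symplectic invariant theory. The mechanism is as follows. After quotienting by translation, the insertions $\eta_{ij}$ pulled back along $\ev$ are evaluated on the curve class, and so become polynomials in the invariants of $v_\beta$ relative to $\theta$, i.e.\ in the $e_r$. Summing over $d$ then gives
\[
\CC_2(\tau_{k_1\ldots k_n}(\Gamma)) = F_2^{\CA_h}(P_\Gamma)
\]
for an explicit polynomial $P_\Gamma$.

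The key weight bookkeeping is to show that $P_\Gamma$ is weighted-homogeneous, with $e_r$ of weight $r$, of total weight $\deg\Gamma - h$. Each $\eta_{ij}$ is quadratic in the first Chern class of the translation direction. Pairing against $\beta$ twice, via $v_\beta$, contributes degree one in the $e$'s per factor of degree $1$ in $\Gamma$ beyond what is absorbed by the $\sigma$-type point conditions. Equivalently, one can use $[m]$-eigenvalues: $\mult$ of a degree-$k$ monomial in the $\eta$'s is $2k$, and quotient weight $2r$ corresponds to $e_r$.

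Then apply Theorem~\ref{thm:genus 2 quotient quasimodularity}. Each monomial $e_1^a e_2^b$ contributes weight $4h+2a+4b$. I expect that, by the structure of genus $2$ (maps factor through at most an abelian surface, so only $e_1,e_2$ survive after $\taut$), all monomials involving $e_r$ with $r\geq 3$ vanish. These should vanish because the genus $2$ Noether–Lefschetz decomposition forces $\chi$ to have at most two nonzero roots, so $e_r(\chi)=0$ for $r\ge 3$. Hence $\taut\,\CC_2$ is a combination of quasimodular forms of weight
\[
4h + 2(a+2b) = 4h + 2(\deg\Gamma - h) = 2h + 2\deg\Gamma,
\]
each multiplied by $\lambda_{h-1}\lambda_{h-2}$. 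This is the claim.

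The main obstacle is establishing the homogeneity statement, namely that $P_\Gamma$ has weighted degree exactly $\deg\Gamma - h$ rather than a mixture of degrees. I would try first to deduce it from Corollary~\ref{cor:weight statement as a consequence of HAE}-style reasoning done unconditionally. The multiplication map $[m]$ acts on $\Mbar_{2,n}(\pi,\chi)$ by sending $\chi(t)$ to $m^{2h}\chi(t/m^2)$, i.e.\ $e_r \mapsto m^{2r}e_r$. Comparing $[m]^*\Gamma = m^{\mult\Gamma}\Gamma$ with the induced transformation of the reconstruction formula, and accounting for the degree $m^{2h}$ of $[m]$ on fibers, forces the grading.
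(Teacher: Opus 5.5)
Your argument is correct and is essentially the paper's proof. The paper uses Proposition~\ref{prop:dim constraint} to impose $\sum_i k_i+\deg\Gamma=h+n$ and then applies Corollary~\ref{cor:converse modularity}, which is exactly your combination of the reconstruction Theorem~\ref{thm:reconstruction primary} with the genus $2$ quotient formula of Theorem~\ref{thm:genus 2 quotient quasimodularity}; monomials containing $e_r$ with $r\geq 3$ vanish by the rank bound, as you say. The homogeneity you flag as the main obstacle is already stated in Theorem~\ref{thm:reconstruction primary}: the polynomial has weighted degree $n-\sum_i k_i=\deg\Gamma-h$ because the fiber integrals have that degree in $\beta$ (the scaling argument of Lemma~\ref{lemma:invariants pfaffian}), so you do not need the harder-to-justify $[m]$-action on the moduli space.
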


\subsection{Upcoming work}
In genus $g=2$ we have obtained convincing results.\footnote{The genus $1$ invariants are essentially all determined by the single series $\CC_1(\tau_0(\sigma))$ and thus not as rich.}
In a follow-up paper we will treat the case of abelian surfaces in arbitrary genus. The main result is as follows:

\begin{thm}[{\cite{O_Ab2}}] \label{thm:abelian surfaces}
For $\Gamma$ a vertical tautological class the series $\CC_{g}^{\CA_2}(\tau_{k_1 \ldots k_n}(\Gamma))$ is the Shimura lift of the product of the generating series of reduced Gromov-Witten invariants of an abelian surface in primitive classes with $E_{5/2}^{(1)}(q)$.
In particular, $\CC_{g}^{\CA_2}(\tau_{k_1 \ldots k_n}(\Gamma))$ is a quasimodular form.
\end{thm}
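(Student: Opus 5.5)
The plan is to decompose the family invariants over Noether--Lefschetz (Humbert) loci in $\CA_2$, and to express each contribution through reduced Gromov--Witten invariants of a fixed abelian surface. The Shimura lift should then arise from two ingredients: the convolution of these reduced invariants with the Humbert degrees, and the multiple cover formula.

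\emph{Step 1: splitting by characteristic polynomial.} We decompose $\Mbar_{g,n}(\pi,d)=\bigsqcup_\chi \Mbar_{g,n}(\pi,\chi)$ with $\chi=t^2-dt+e_2$. For a class $\beta$ of degree $d$ we have $\beta^2=2e_2$. The key invariant is the discriminant $\Delta=d^2-4e_2 \geq 0$, which is the discriminant of the rank-two lattice $\langle \theta, v_\beta\rangle$. The locus in $\CA_2$ where $\beta$ stays a Hodge class is therefore the Humbert surface $\Hum_\Delta$, a divisor because $h^{2,0}=1$. (When $\Delta$ is a square, in particular when $e_2=0$, this is the locus of surfaces containing an elliptic curve, so the elliptic case is covered by the same formalism.)

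\emph{Step 2: comparison of virtual classes.} Over $\Hum_\Delta$, I would compare the standard virtual class of the family with the reduced class of the fibers. The obstruction theory of the $\pi$-relative space, restricted to the component where $\beta$ is Hodge, splits off the trivial quotient $H^{0,2}$, whose section cuts out $\Hum_\Delta$ with multiplicity one. This should give
\[ \CC_{g,\chi}(\tau_{k_1\ldots k_n}(\Gamma)) = \langle \tau_{k_1\ldots k_n}(\Gamma)\rangle^{X,\mathrm{red}}_{g,\beta}\,[\Hum_\Delta]. \]
By deformation invariance, the reduced invariant depends only on $\beta^2$ and the divisibility of $\beta$. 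Here the insertion $\Gamma$ is restricted fiberwise, which is legitimate since vertical tautological classes are defined over all of $\CA_2$. A dimension check confirms this: both sides have degree matching $\mathrm{vd}=3+(g-1)+n$ minus the insertion degree.

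\emph{Step 3: Humbert degrees and the multiple cover formula.} Since $\Chow^1(\CA_2)=\BQ\lambda_1$, no tautological projection is needed, and we have $[\Hum_\Delta]=-c(\Delta/4,\cdot)\lambda_1$ by van der Geer. For $\beta$ of divisibility $\ell$, we write $\beta=\ell\beta'$, and the multiple cover formula (Blomme, Oberdieck--Pandharipande) reduces the reduced invariant to the primitive one. The primitive invariant is a coefficient $a(\beta'^2/2)$ of the primitive generating series, multiplied by a factor $\ell^{k-1}$, where $k$ is determined by the insertion degree. Summing over $\chi$ with fixed $d$ and fixed $\ell$, the condition $e_2/\ell^2+\Delta/(4\ell^2)=d^2/(4\ell^2)$ turns the sum into the convolution coefficient $\sum_r a(r)\,c(d^2/4\ell^2-r)$. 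This is exactly $C(d^2/4\ell^2, d/2\ell)$ for the product $F\cdot E_{5/2}^{(1)}$, and the $e_{\gamma}$-component is fixed by $\Delta \bmod 4$. Summing over $\ell\mid d$ reproduces the coefficient formula for $\Lift_k$. The constant term must then be checked separately against the degree-zero contribution (Hodge integrals and $\lambda$-classes).

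\emph{Main obstacle.} I expect Step 2 and the use of the multiple cover formula in full generality to be the hardest part. The virtual class comparison must also be carried out on the elliptic components, where $\beta$ is supported on an elliptic curve and the Humbert locus is non-reduced or has extra components. Moreover, the multiple cover formula is only proven ``in most cases,'' so it must be established for all vertical tautological descendent insertions. I would first reduce arbitrary tautological insertions to quotient invariants $N_{g,\chi}$ via the reconstruction proposition, and then prove the multiple cover formula just for these invariants, by degeneration or via the Bryan--Leung-type computation.

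Once the lift identity holds, quasimodularity follows from the stated fact that $\Lift_k$ sends (quasi)modular forms of weight $k+\frac12$ to (quasi)modular forms of weight $2k$. The primitive series $F$ is quasimodular of the appropriate weight by Bryan--Oberdieck--Pandharipande--Yin.
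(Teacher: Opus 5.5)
Your overall route is the one the paper indicates: decompose over Humbert divisors via the GW/NL correspondence, read off their degrees from $E_{5/2}^{(1)}$, apply the multiple cover formula, and resum into $\Lift$. The paper carries this out for $g=2$ and $\tau_1(\sigma)$ in the $h=2$ case of the proof of Theorem~\ref{thm:main formula genus 2}.

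\textbf{Divisibility bookkeeping.} As stated, your Steps 2--3 are wrong. A single $\chi$ is shared by classes of different divisibility. Step~2 must therefore read $\CC_{g,\chi}=\sum_{\ell}\langle\tau_{k_1\ldots k_n}(\Gamma)\rangle^{\mathrm{red}}_{g,s,\ell}\,\Hum_{s,d,\ell}$ with $s=e_2$, using the refined Humbert divisors. The multiple cover formula also does not assign $\ell^{k-1}a(\beta'^2/2)$ to a class of divisibility $\ell$. It gives the divisor sum $\sum_{m\mid\ell}m^{k-1}a(s/m^2)$; in genus $2$ this is $2\sum_{m\mid\ell,\,m^2\mid s}m^3\sigma_1(s/m^2)$.

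Your end formula is right only because you also attach the unrefined degree $-c^{(1)}(d^2/4\ell^2-r,d/2\ell)\lambda_1$ to the locus of classes of exact divisibility $\ell$. That locus is really $\Hum_{s,d,\ell}=\Hum_{s/\ell^2,d/\ell,1}$, whose degree is a M\"obius combination of such terms. The correct derivation is the paper's resummation:
\begin{itemize}
\item insert the divisor sum and write $\ell=m\tilde\ell$;
\item use $\Hum_{s,d,m\tilde\ell}=\Hum_{s/m^2,d/m,\tilde\ell}$;
\item use $\sum_{\tilde\ell}\Hum_{s',d',\tilde\ell}=\Hum_{s',d'}$.
\end{itemize}
Only after this do unrefined Humbert divisors, and hence the coefficients of $E^{(1)}_{5/2}$, appear.

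\textbf{The case $\Delta=0$, $d>0$.} These are the classes with $v_\beta$ a multiple of $\theta$. Their Hodge locus is all of $\CA_2$, not a divisor, so your cutting-out argument does not apply. Their contribution is the reduced invariant times the Euler class $-\lambda_1$ of the $H^{0,2}$ quotient of the obstruction theory. This justifies the convention $\Hum_{s,d}=-\lambda_1$ and supplies the $c(0,0)=1$ terms of the convolution. It is a separate issue from the $d=0$ constant term you mention.

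\textbf{Your `main obstacle' plan.} You should drop it. The theorem is explicitly conditional on the multiple cover formula (Blomme--Carocci in most cases, Oberdieck--Pandharipande assuming GW/PT), so that formula is assumed, not proved. The GW/NL relation applies directly to descendent insertions of vertical tautological classes, which are monodromy invariant, so no reduction is needed.

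The reduction you propose also cannot work on $\CA_2$. Theorem~\ref{thm:reconstruction primary}, which is stated for $h\geq 3$, needs $\deg(\Gamma)+\sum_i k_i=h+n=2+n$ so that $p_{n\ast}$ produces a degree-zero class. On $\CA_2$, however, the invariants vanish unless $\deg(\Gamma)+\sum_i k_i=g+n$. For $g\geq 3$ these two conditions are incompatible. Indeed $N^{\CA_2}_{g,\chi}\in\Chow^{3-g}(\CA_2)$ vanishes for $g\geq 4$, so quotient invariants cannot capture the $\CA_2$ theory in higher genus.
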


The result is conditional on the multiple cover formula for the reduced Gromov-Witten invariants of an abelian surface
recently proven by Blomme-Carocci \cite{CarBlomme2} (in most cases) and proven a second time in full generality but relying on a conjectural GW/PT correspondence in \cite{OP_MCF}.
Theorem~\ref{thm:abelian surfaces} can be viewed as a modular interpretation for the multiple cover formula for abelian surface. The multiple cover formula is precisely what is needed to express the invariants of the (universal) abelian surface as the Shimura lift of the reduced series.

As a basic example, let $\mu_k = c_k(\BE_{\Mbar_{g,n}})$ be the Chern classes of the Hodge bundle on the moduli space of curves.\footnote{We denoted the Chern classes of the Hodge bundle on $\CA_h$ by $\lambda_i$, so the usual name for the $\lambda$-classes is occupied.}
Define the Gromov-Witten invariants twisted by the $\mu$-classes:
\[ \CC_{g}^{\CA_2}( \tau_{1}(\sigma) \mu_{g-2}) :=
\sum_{d \geq 0}
\rho_{\ast}\left( \ev_1^{\ast}(\sigma) \psi_1 \mu_{g-2} \cap [\Mbar_{g,1}(\pi,d)]^{\vir} \right) q^d. \]
Then one finds the explicit evaluation (see \cite{O_Ab2})
\[
\CC_{g}^{\CA_2}( \tau_{1}(\sigma) \mu_{g-2})
=
\frac{-2}{(2g-3)!} \Lift( G_{2g-2}(q) E_{5/2}^{(1)} ) \lambda_1. \]
It is interesting (but not unexpected) that up to scalar
this is the $q$-series appearing in Theorem~\ref{thm:genus 2 formula}.

The case of abelian threefolds is special since here the quotient Gromov-Witten invariants are defined and non-zero for arbitrary genus. The invariants of this Calabi-Yau threefold family 
will be studied in forthcoming joint work with N. Ludwig.

\subsection{Mirror symmetry}
The modularity prediction of Conjecture~\ref{conj:quasimodularity} can be made plausible through mirror symmetry.
Classically mirror symmetry says that generating series $F_g$ of genus $g$ Gromov-Witten invariants of a Calabi-Yau manifold $X$
are the asymptotic expansions at a cusp of "almost-holomorphic" sections of the $(2g-2)$-th power of the Hodge line bundle $\CL$ on the moduli space of the mirror Calabi-Yau $X^{\vee}$.
For an elliptic curve $E$ the mirror $E^{\vee}$ is again an elliptic curve which has moduli space $\CA_1 = [\BH_1/\SL_2(\BZ)]$ and the Hodge bundle $\BE_1$. The prediction gives that $F_g$ are quasimodular forms of weight $2g-2$, see \cite{Dijkgraaf}.
In dimension greater than one, the Gromov-Witten invariants of an abelian variety are almost always zero, so this prediction holds but for trivial reasons.

To gain insight we can instead consider Hodge-theoretic mirror symmetry for families of polarized abelian varieties as worked out in \cite{GLO}. The idea is that mirror symmetry interchanges the role of the Mumford-Tate group (which acts horizontally) with the action of the Neron-Severi part of the Loijenga-Lunts-Verbitsky Lie algebra (which acts vertically). This is similar in spirit to mirror symmetry for K3 surfaces.
It has been argued in \cite[Proposition 9.6.1]{GLO} that the family of polarized abelian varieties $\CX_h \to \CA_h$ is mirror to the family of $h$-fold powers of elliptic curves:
\[ \pi : \CX_h \to \CA_h \quad \overset{\text{mirror}}{\Longleftrightarrow} \quad \widehat{\pi} : \CX_1^{h} \to \CA_1. \]
To give some intuition for this, recall that mirror symmetry interchanges complex K\"ahler and complex deformation parameters of the families.
The K\"ahler parameter of a generic element of $\pi$ is
\[ \tau \theta, \quad \theta \in \BH_1 = \BR + i \BR_{>0}. \]
The complex deformation parameters of the family are given by $\mathrm{Sp}(2h,\BZ)$-equivalence classes of an element $Z \in \BH_h$ in the genus $h$ Siegel upper half space $\BH_h$ of symmetric $h \times h$-matrices with positive definite imaginary part.
On the mirror side, for $E$ a generic elliptic curve, $\mathrm{NS}(E^h)$ can be naturally identified with symmetric half-integral $h \times h$-matrices with integral diagonal entries by
\[
%M_{h \times h, \BZ}^{\mathrm{sym}} \cong \NS(E^h), \quad 
( m_{ij} )_{i,j} \mapsto \sum_{i,j} m_{ij} \eta_{ij}.
\] 
It follows that K\"ahler parameters are now indexed by elements in $\BH_h$, while deformation parameters are elements in $[\BH_1/\SL_2(\BZ)]$.
The Hodge bundle of the mirror family $\widehat{\pi}$ is $\CL = \det(\BE_1^{\oplus h}) = \BE_1^{\otimes h}$.
Moreover, as discussed in \cite{GLO} the fiberwise $\SL_2(\BZ)$ action on the Chow group of $\pi : \CX \to \CA_h$ %as constructed in \cite{Beauville} (based on earlier work of Mukai) 
should exactly correspond to the monodromy of the mirror family and hence give rise to the $\SL_2(\BZ)$-invariance (=modularity) of the section of $\CL$.
We see that (ignoring insertions) the proposed quasimodularity of $\CC_{g}^{\CA_h}$ of weight $h(2g-2)$ matches perfectly with the expectations that they should be related to asymptotic expansion of an almost-holomorphic section of $\CL^{2g-2}$.
It would be very interesting to understand better this relationship and in particular, to clarify the role that the cycle part of $\CC_{g}^{\CA_h}$ plays under mirror symmetry.

Fully aware of the author's ignorance in these matters, this viewpoint nevertheless invites natural generalizations of the quasimodularity conjecture for more special families.
We highlight a beautiful case. Consider the family of $k$-fold products of principally polarized abelian varieties of genus $h$:
\[ \CX_h^{\times k} \to \CA_h. \]
The mirror family is then
\[ \CX_k^h \to \CA_k. \]
The degree of a curve class $\beta \in H_2(X^k,\BZ)$ is the symmetric half-integral $k \times k$-matrix $d$ given by $d_{ij} = \eta_{ij} \cdot \beta$. 
For $\beta$ to be effective (or zero) we need $d$ to be positive-semidefinite.
We write
$\CC_{g,d}^{\CX^k/\CA_h}$ for the Gromov-Witten classes of degree $d$ and form the generating series
\[ \CC_{g}^{\CX^k/\CA_h}(\Gamma) = \sum_{d \geq 0} \CC_{g,d}^{\CX^k/\CA_h}(\Gamma) \exp(2 \pi i \mathrm{Tr}(d \cdot Z)) \]
where $Z \in \BH_k$
and
the sum is over positive-semidefinite symmetric half-integral $k \times k$ matrices $d$ with integral diagonals.

\begin{conj}
Each $\CC_{g}^{\CX^k/\CA_h}(\Gamma)$ is a cycle-valued Siegel-quasimodular form of genus $k$ and satisfies a holomorphic anomaly equation.
\end{conj}

In genus $1$ a similar Siegel modularity conjecture has been recently proposed by Greer-Tayou \cite{greer2026modularityspecialcyclesshimura}.

\subsection{Future directions}
Aside from computing the remaining cases for the invariants of the family $\CA_h$
and the questions on mirror symmetry, we mention three interesting directions:
\begin{itemize}
	\item[(i)] Let $\overline{\pi} : \overline{\CX} \to \overline{\CA_h}$ be a toroidal compactification of the family $\pi$. Gromov-Witten classes of the family $\overline{\pi}$ can be defined using log Gromov-Witten theory. It would be very interesting to explore how much of the quasimodularity conjecture extends to the boundary. Already for genus $1$ this is a difficult problem, see \cite{MR4329889}.
	\item[(ii)] Over the product $\CA_g \times \CA_h$ there exists a natural space of maps
	\[ \mathrm{Maps}_{(g,h),\chi} \to \CA_g \times \CA_{h} \]
	parametrizing triples $((A,\theta_A), (B,\theta_B), \varphi)$ where $(A,\theta_A) \in \CA_g$, $(B,\theta_B) \in \CA_h$ and $\varphi : B \to A$ is a homomorphism such that $\varphi^{\ast}(\theta_A)$ has characteristic polynomial $\chi$ with respect to $\theta_B$.
	By Hodge theory the expected codimension of $\mathrm{Maps}_{(g,h),\chi}$ is $gh$ and there are associated virtual cycles \cite{GreerLian}. By the theory of Kudla-Milson they are expected to have modular behaviour of weight $2gh$. However as shown in \cite{GreerLian} their images in $\Chow^{\ast}(\CA_g \times \CA_h)$ is mostly zero. 	In the case $g \leq 3$ the Torelli map $M_g \to \CA_g$ is a birational morphism.
	The virtual class of $\Mbar_{g}(\pi,\chi)/\CX$ can then be viewed as providing a natural extension of the virtual cycles of $\mathrm{Maps}_{(g,h),\chi}$ to the compactification $\Mbar_{g} \times \CA_h$.
	It is interesting to find natural compactifications of the virtual cycles of $ \mathrm{Maps}_{(g,h),\chi}$ for arbitrary $g,h \geq 4$ and understand their (quasimodular) properties. This would extend the Gromov-Witten classes considered here outside of the range prescribed by Proposition~\ref{thm:vanishing}.
	In fact, the vanishing of Proposition~\ref{thm:vanishing} may be seen as reflecting the fact that the image of the Torelli map has positive codimension for $g \geq 4$.
	\item[(iii)] One can consider the Gromov-Witten invariants of the universal K3 surface over a moduli space of lattice-polarized K3 surfaces, or more generally of the universal family over a moduli space of lattice-polarized hyperk\"ahler varieties.
	Multiple cover formulas in these cases were proposed in \cite{ObPand, O_NLGW} and show that the Gromov-Witten cycles in codimension $1$ should behave like theta-lifts of quasimodular forms.
	Under certain conditions these theta-lifts are quasimodular forms for the orthogonal group \cite{OWilliams}.
	However as discussed in \cite{OWilliams} these conditions are usually not met for K3 surfaces and so we expect more general automorphic objects such as higher Green functions or Eichler integrals of modular forms.
\end{itemize}

\subsection{Acknowledgements}
I would like to thank Rahul Pandharipande
for an inspiring walk along the Triest coastline in Spring 2025
and repeatedly promoting the family viewpoint. Many thanks also to Jan Bruinier, Aitor Iribar-Lopez, Niklas Ludwig, Aaron Pixton, Benjamin Sung and Brandon Williams for useful discussions about abelian varieties and modular forms.
The author was supported by the starting grant 'Correspondences in enumerative geometry: Hilbert schemes, K3 surfaces and modular forms', No 101041491 of the European Research Council,
and through the Collaborative Research Centre TRR 326 'Geometry and Arithmetic of Uniformized Structures', project number 444845124 by the Deutsche Forschungsgemeinschaft (DFG).

\section{Preliminaries on the Chow ring of abelian varieties}
\subsection{Definitions}
%Useful references are :
%[The action of SL2 on abelian varieties
%Arnaud BEAUVILLE, Theorem 3.3] \\
Let $\pi : \CX \to B$ be an abelian scheme of genus $h$, let $\sigma : B \to \CX$ be the zero section, let $\theta \in \Chow^1(\CX)$ be an ample divisor which is symmetric (i.e. $[-1]^{\ast} \theta = \theta$) and rigidified along the zero section, i.e. $\sigma^{\ast} \theta = 0$.
We assume that $\theta$ is a principal polarization and hence defines (after passing to a cover where it becomes integral) an isomorphism $\varphi_{\theta} : \CX \xrightarrow{\cong} \widehat{\CX}$, which we will use to identify $\CX$ with its dual.
The theta divisor satisfies the basic relations
\begin{equation} \frac{\theta^h}{h!} = \sigma, \quad \theta^{h+1} = 0. \label{sef3423} \end{equation}

The $n$-fold fiber product of the family $\CX \to B$ is denoted
\[ \CX^{\times n} =  \CX\times_B \ldots \times_B \CX. \]
Given a list of integers $I=(i_1,\ldots,i_k)$ with $1 \leq i_j \leq n$ for all $j$,
we have an associated morphism
\[ \pr_I : \CX^{\times n} \to \CX^{\times k}, \quad \pr_I(x_1,\ldots,x_n) = (x_{i_1},\ldots,x_{i_k}). \]
If $I$ is a list of distinct indices, then $\pr_I$ is just the projection.
However, we allow the indices to coincide. For example $\pr_{11} = \Delta \circ \pr_1$, where $\Delta : \CX \to \CX^2$ is the diagonal morphism.

For a class $\alpha \in \Chow^{\ast}(\CX^{\times k})$ we denote its pullback by $\pr_I$ by
\[ \alpha_I := \pr_I^{\ast}(\alpha). \]

\subsection{Motivic decompositions} \label{subsec:motivic decompositions}
The Lefschetz operator is the correspondence
\[ e_{\theta} = \Delta_{\ast}(\theta) \in \Chow^{h+1}(\CX \times_B \CX). \]
A central result of K\"unnemann is that there is a Lefschetz dual correspondence,
which by \cite[Lemma 3.1]{Kuennemann}
is given by the correspondence
\[ f_{\theta} = \pr_{13 \ast}\left( \Gamma_{\mathsf{m}} \cdot \pr_2^{\ast}\left(\frac{\theta^{h-1}}{(h-1)!} \right) \right) = \mathsf{d}^{\ast}\left( \frac{\theta^{h-1}}{(h-1)!} \right)
\in \Chow^{h-1}(\CX \times_B \CX) \]
where
\begin{itemize}
\item $\mathsf{m} : \CX \times_{B} \CX \to \CX$, $(x,y) \mapsto x+y$, is the addition map and $\Gamma_{\mathsf{m}}$ is its graph,
\item $\mathsf{d} : \CX \times_B \CX \to \CX$ is the difference map $(x,y) \mapsto x-y$.
\end{itemize}
In particular, the Lefschetz dual $f_{\theta}$ acts by 
\[ f_{\theta}(\beta) = \mathsf{m}_{\ast}\left(\frac{\theta^{h-1}}{(h-1)!} \times \beta \right). \]

\begin{example}
	For $h=1$ we have $f_{\theta} = \pi^{\ast} \pi_{\ast}$.
	Indeed, both sides are degree zero classes in $\CX \times_{B} \CX$ so the claim can be checked by restricting to a fiber over $B$, where it follows by a direct computation.
\end{example}

As an abelian scheme itself the $n$-fold product $\pi_n : \CX^n \to B$
is principally polarized by the symmetric and rigidified divisor
\[ \theta^{(n)} = \sum_{i=1}^{n} \theta_i. \]
The Lefschetz operators are easily checked to be
\[ e_{\theta(n)} = \sum_{i=1}^{n} e_{\theta_i} = \sum_{i=1}^{n} e_{\theta}^{(i)}. \]
and
\[ f_{\theta^{(n)}} = \sum_{i=1}^{n} f_{\theta}^{(i)}. \]
where $f_{\theta}^{(i)}$ is the Lefschetz dual $f_{\theta} \in \Chow(\CX^{\times 2})$ acting on the $i$-th factor.

\subsection{Multiplicity}
For an integer $m \in \BZ$ we let $[m] : \CX \to \CX$ be the morphism given by multiplication by $m$. 
If $\Gamma \in \CH^{\ast}(\CX)$ a common eigenvector under $[m]$ satisfying $[m]^{\ast}\Gamma = m^{i} \Gamma$, we write $i = \mult(\Gamma)$ and call it the multiplicity.

\begin{example} \label{example:multiplicity under homomorphism}
	For any homomorphism $\varphi : \CX \to \CY$ of abelian schemes, we have $\mult(\varphi^{\ast}(\Gamma)) = \mult(\Gamma)$ by  \cite[Prop 3.3]{DM}.
	In particular, if $\Delta : \CX \to \CX^{\times n}$ is the diagonal morphism, then $\mult(\Delta^{\ast}(\Gamma)) = \mult(\Gamma)$.
	%	(Since $\Delta$ is a homomorphisms this follows by \cite[Prop 3.3]{DM}).
\end{example}

\begin{example} \label{example:pushforward lemma}
	Let $\pi : \CX \to \CA_h$ be the projection. If $\Gamma$ is an eigenvector which is not of multiplicity $\mult(\Gamma)=2h$,
	then $\pi_{\ast} \Gamma = 0$. (By \cite[Lemma 2.18]{DM} we have $[m]_{\ast} \Gamma = m^{2h-\mult(\Gamma)} \Gamma$. Applying $\pi_{\ast}$ and using that $\pi_{\ast} \circ [m]_{\ast}  = \pi_{\ast}$, the claim follows.)
\end{example}

\begin{example}
	By the theorem of the cube $\mult(\theta) = 2$.
\end{example}

\begin{example}
	On $\CX \times \CX$ we have:
	\begin{gather*}
		\mult(\Delta) = \mult(\mathsf{d}^{\ast}(\theta^h/h!)) = \mult( \theta^h/h!) = 2h \\
		\mult(e_{\theta}) = \mult(\theta) + \mult(\Delta) = 2h+2 \\
		\mult(f_{\theta}) = \mult(\mathsf{d}^{\ast}(\theta^{h-1}/(h-1)!)) = 2h-2.
	\end{gather*}
	Moreover, $\mult(e_{\theta} \Gamma) = \mult(\Gamma) + 2$ and $\mult( f_{\theta} \Gamma) = \mult(\Gamma) - 2$.
	%We have $([m] \times [m])^{-1}(\Delta) = \sqcup_{a \in A[m]} [\Gamma_{\mathrm{tr}_a}]$, so need $[\Gamma_{\mathrm{tr}_a}] = [\Delta]$.
\end{example}

\subsection{Tautological rings} \label{subsec:tautological rings}
Define the Mumford class
\[ \eta = \frac{1}{2} (\mathsf{m}^{\ast}(\theta) - \theta_1 - \theta_2). \]
By the theorem of the cube one has\footnote{On a finite \'etale cover of $\CA_h$ where $\theta$ becomes integral, one has
$\eta = \frac{1}{2} c_1(\CP)$, where 
$\CP := \mathsf{m}^{\ast}(\CO(\theta)) \otimes \CO(\theta_1)^{-1} \otimes \CO(\theta_2)^{-1}$ is the Poincar\'e bundle. By the theorem of the cube one has $(1 \times [m])^{\ast} \CP = \CP^{\otimes m}$
which implies $(1 \times [m])^{\ast}\eta=m \eta$ on the cover, so also on $\CA_h$.}
%\[ (1 \times [m])^{\ast} \CP = \CP^{\otimes m} \]
\[ (1 \times [m])^{\ast}\eta=m \eta. \]
Also observe that
\[ \Delta^{\ast}(\eta) = \theta. \]
In particular,
\[ \eta_{ii} = \theta_i. \]

The tautological subring of the base $R^{\ast}(B)$ is the subring generated by the Chern classes $\lambda_i$ of the Hodge bundle $\BE = \pi_{\ast} \Omega_{\CX/B}$.

\begin{defn}[{\cite{10authors}}] The vertical tautological ring of $\CX^{\times n}$ is
	the subalgebra
\[ R^{\ast}_{\mathsf{vert}}(\CX^{\times n}) = \BQ[ \eta_{kl} | 1 \leq k,l \leq n ] \subset \Chow^{\ast}(\CX^{\times n}). \]
The tautological ring of $\CX^{\times n}$ is the subring $R^{\ast}(\CX^{\times n})$ generated by $R^{\ast}_{\mathsf{vert}}(\CX^{\times n})$ and $R^{\ast}(B)$.
\end{defn}

\begin{lemma}[{\cite{10authors}}] \label{lemma:10 author lemma}
(i) For the universal family $\pi : \CX \to \CA_h$ we have
\[ R^{\ast}(\CX^{\times n}) \cong R^{\ast}_{\mathsf{vert}}(\CX^{\times n}) \otimes R^{\ast}(B). \]
(ii) For any $b \in B$ with fiber $(X,\theta)$ over $b$, there is an isomorphism
\[ R^{\ast}_{\mathsf{vert}}(\CX^{\times n}) \to R^{\ast}_{\mathsf{vert}}(X^n) \to H^{\ast}(X^n)^{\mathrm{Sp}(2h,\BZ)}
=
\left( \bigwedge^{\ast} V^{\oplus n} \right)^{\mathrm{Sp}(V)}, \]
where $V = H^1(X,\BQ)$.
\end{lemma}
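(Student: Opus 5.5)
The plan is to reduce both parts to statements about a single fiber and then use the fact that the $\eta_{kl}$ restrict to $\Sp(V)$-invariant classes.

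For (i), consider the natural multiplication map $R^{\ast}_{\mathsf{vert}}(\CX^{\times n}) \otimes R^{\ast}(B) \to R^{\ast}(\CX^{\times n})$, $\alpha \otimes \beta \mapsto \alpha \cdot \pi_n^{\ast}\beta$. It is surjective by definition of the tautological ring. The content is injectivity, which we prove via the Deninger–Murre decomposition. Since $\eta_{kl}$ has multiplicity $2$ (by Example~\ref{example:multiplicity under homomorphism} and $\mult(\theta)=2$, or directly from the cube relation $(1\times[m])^{\ast}\eta = m\eta$ combined with its symmetric counterpart), a monomial of degree $p$ in the $\eta$'s lies in $\Chow^p(\CX^{\times n})_{2p}$. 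Pullbacks from $B$ have multiplicity $0$. Thus a relation $\sum_p \alpha_p \cdot \pi^{\ast}\beta_p = 0$ splits by multiplicity into relations with $\alpha_p$ homogeneous of fixed degree $p$.

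To separate the base coefficients, we pair against a dual basis. Choose a basis $\{\alpha_{p,j}\}$ of $R^p_{\mathsf{vert}}$ together with vertical classes $\alpha^\vee_{p,j}$ of complementary degree $hn-p$ such that $\pi_{n\ast}(\alpha_{p,j}\alpha^\vee_{p,k}) = \delta_{jk}$. This is possible if the fiberwise pairing is perfect on the image described in (ii): $(\bigwedge^\ast V^{\oplus n})^{\Sp(V)}$ satisfies Poincaré duality because $\Sp(V)$ is reductive and the top form is invariant. Then $\pi_{n\ast}(\alpha_{p,j}\alpha^\vee_{p,k})$ is a class on $B$ of degree $0$, hence a constant equal to its fiber value. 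Applying $\pi_{n\ast}(\,\cdot\,\alpha^\vee_{p,k})$ to the relation recovers $\beta_{p,k}=0$ by the projection formula.

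For (ii), surjectivity onto the invariants comes from classical invariant theory. By the first fundamental theorem for $\Sp(V)$, the invariants in $\bigwedge^{\ast}(V\otimes\BQ^n)$ are generated by the quadratic invariants $\omega_{kl}$ pairing the $k$-th and $l$-th copies. The restriction of $\eta_{kl}$ to a fiber is exactly this class, up to normalization, since $\theta$ corresponds to the symplectic form and $\mathsf{m}^{\ast}$ acts as the diagonal on $H^1$. Injectivity of $R_{\mathsf{vert}}(\CX^{\times n})\to H^{\ast}(X^n)$ is the main obstacle, since relations in cohomology (second fundamental theorem, e.g.\ $\theta^{h+1}=0$ and the Pfaffian-type relations in the $\omega_{kl}$) must be shown to hold already in Chow over $\CA_h$. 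Here I would try first to produce these relations in $\Chow$ using Fourier transform and Beauville's $\mathfrak{sl}_2$-action. The relevant facts are the Lefschetz operators $e_{\theta^{(n)}}$ and $f_{\theta^{(n)}}$ from Section~\ref{subsec:motivic decompositions} together with the $\GL_n$-symmetry permuting the factors. These generate a $\mathfrak{sp}_{2n}$-type action (Howe duality), under which $R_{\mathsf{vert}}$ is the subalgebra generated from the fundamental class $1$. Its structure as an $\mathfrak{sp}_{2n}$-module is the irreducible one forced by $\mathrm{vanishing}$ of $f$ on $1$ and $[-1]$-symmetry, and it therefore coincides with its cohomological image. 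This identifies $R_{\mathsf{vert}}(\CX^{\times n})$ with $R_{\mathsf{vert}}(X^n)$ and with the invariants.
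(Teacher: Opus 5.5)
The paper does not prove this lemma itself; it quotes it from \cite{10authors}, and the sentence after \eqref{3sdf03} indicates the mechanism: the relations \eqref{3sdf03} already hold in Chow and generate all relations. Your reduction of (i) to (ii) is sound. Vertical monomials of degree $p$ have multiplicity $2p$, $\pi_{n\ast}$ only sees multiplicity $2hn$, and the pairing on $\Sp(V)$-invariants is perfect for the reason you give. Surjectivity in (ii) via the first fundamental theorem is also fine. However, the dual-basis step uses that $\Rvert(\CX^{\times n}) \to H^{\ast}(X^n)$ is injective. So everything rests on the step you flag as the main obstacle, and that step is not established.

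Your $\mathfrak{sp}_{2n}$ argument has two holes.
\begin{enumerate}
\item The action is not constructed. K\"unnemann gives one $\mathfrak{sl}_2$ per polarization, and permuting factors gives only $S_n$; the $\GL_n(\BZ)$-action by automorphisms of $\CX^{\times n}$ does not by itself yield a $\mathfrak{gl}_n$-action. You would need lowering correspondences dual to $\eta_{kl}$ for $k \neq l$, and a verification of all $\mathfrak{sp}_{2n}$ commutation relations in $\Chow^{\ast}(\CX^{\times n} \times_{\CA_h} \CX^{\times n})$. That is the real content here, not a formality.
\item Even granting the action, irreducibility does not follow from what you cite. A module generated by a vector killed by the lowering operators, on which the Levi acts by a character, is only a quotient of a generalized Verma module, and such quotients are typically reducible. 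The $[-1]$-symmetry plays no role. What would make it work is finite-dimensionality of $\Rvert(\CX^{\times n})$ together with Weyl's complete reducibility: a cyclic, completely reducible lowest-weight module is irreducible, hence injects under the nonzero map to cohomology. But finite-dimensionality requires the $\eta_{kl}$ to be nilpotent in Chow, i.e.\ Chow-level relations of exactly the type \eqref{3sdf03}.
\end{enumerate}

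That is the missing idea, and it makes the Lie-theoretic detour unnecessary. In $\Chow^{\ast}(\CX)$ one has $\theta^{h+1}=0$, since $[m]^{\ast}\theta^{h+1} = m^{2h+2}\theta^{h+1}$ while multiplicities lie in $[0,2h]$. Pull back along $\varphi_a : (x_i) \mapsto \sum_i a_i x_i$ and use \eqref{pullback relation of theta}. This gives $(\sum_{i,j} a_i a_j \eta_{ij})^{h+1} = 0$ in $\Chow^{\ast}(\CX^{\times n})$ for all $a \in \BZ^n$, so every $a_I$-coefficient vanishes in Chow.

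The invariant-theoretic input is then the second fundamental theorem for skew $\Sp(V)$-invariants (the statement quoted from \cite{10authors} after \eqref{3sdf03}). It says these coefficients generate the full kernel $I$ of $\BQ[\eta_{kl}] \to (\bigwedge^{\ast} V^{\oplus n})^{\Sp(V)}$. Hence the surjections
\[ \BQ[\eta_{kl}]/I \twoheadrightarrow \Rvert(\CX^{\times n}) \twoheadrightarrow R_{\mathsf{vert}}(X^n) \twoheadrightarrow H^{\ast}(X^n)^{\Sp(V)} \]
compose to an isomorphism, so each map is an isomorphism, which is (ii). Your dual-basis argument then gives (i). If you instead import a Chow-level $\mathfrak{sp}_{2n}$-action from the literature, your route can be completed with the finite-dimensionality and Weyl argument above. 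It then trades the second fundamental theorem for that action, but it still needs the Chow relations for finite-dimensionality.
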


In particular, to understand relations among the classes $\eta_{ij}$ we may work with a fixed abelian variety of our choice.
For example, we may specialize to the product of elliptic curves.

For any symmetric $n \times n$ matrix $M = (m_{ij})$ there is an associated divisor
$\sum_{i,j} m_{ij} \eta_{ij}$.
By Grothendieck-Riemann-Roch one has the following evaluation \cite{10authors}:
\begin{equation} \int_{\CX^{n}} \frac{1}{(hn)!}\left( \sum_{1 \leq i,j \leq n} m_{ij} \eta_{ij} \right)^{hn} = \det(M)^h. \label{integral formula} \end{equation}

Consider the morphism
\[ \varphi_{a_1,\ldots,a_n} : \CX^{\times n} \to \CX, \quad (x_1,\ldots,x_n) \mapsto \sum_i a_i x_i. \]
One has \cite{10authors}
\begin{equation} \varphi^{\ast}(\theta) = \sum_{1 \leq i,j \leq n} a_i a_j \eta_{ij}. \label{pullback relation of theta} \end{equation}
This implies the vanishing:
\begin{equation} \left(\sum_{1 \leq i,j \leq n} a_i a_j \eta_{ij} \right)^{h+1} = 0. \label{3sdf03} \end{equation}
Treating $a_i$ here as formal variables,
for any string $I$ of integers from $\{ 1, \ldots, n \}$ of length $2h+2$ the coefficient of $a_I = \prod_{i \in I} a_i$ in \eqref{3sdf03} spans all the relations in $R_{\mathrm{vert}}(\CX^{\times n})$, see \cite{10authors}.
In particular, this says that for any $I$ we have
\begin{equation} \sum_{(J_1,\ldots,J_{h+1})} \eta_{J_1} \cdots \eta_{J_{h+1}} = 0 \label{explicit vanishing} \end{equation}
where the sum is over all possible partitions $(J_1,\ldots, J_{h+1})$ of $I$ into $h+1$ parts, each consisting of a string of length $2$.

\begin{example} \label{example:E}
Consider an elliptic curve $E$ with class of a point $\omega \in H^2(E)$ and diagonal class $\Delta \in \Chow^{\ast}(E \times E)$. Then
\[ \eta = \frac{1}{2} \left( - \Delta + \omega_1 + \omega_2 \right). \]
The relations for the strings $(1112)$, $(1122)$, $(1123)$, $(1234)$ give the relations
\begin{gather} \omega_1 \eta = \omega_2 \eta = 0, \quad \eta \cdot \eta = -\frac{1}{2} \omega_1 \omega_2  \label{basicidentity2cxx} \\
 \eta_{12} \eta_{13} = -\frac{1}{2} \theta_1 \eta_{23} \label{basic identity}  \\
 \eta_{12} \eta_{34} + \eta_{13} \eta_{24} + \eta_{14} \eta_{23} = 0 \notag
 \end{gather}
which of course can also be checked directly by hand. Iterating \eqref{basic identity} one finds
\begin{equation} \label{cycle integral}
\int_{E^r} \eta_{12} \eta_{23} \cdots \eta_{r-1} \eta_{r 1} = \left( -\frac{1}{2} \right)^{r-1}
\end{equation}
\end{example}

\begin{example} \label{example:some eta relation}
By applying \eqref{explicit vanishing} for the string $(1^{2h} 23)$ we have 
\[ \frac{\theta_1^{h-1}}{(h-1)!} \eta_{12} \eta_{13} = -\frac{1}{2} \frac{\theta_1^h}{h!} \eta_{23} \]
\end{example}

\begin{comment}
\begin{lemma}
Let $\Gamma \in \Rvert(\CX^{\times n})$ such that the degree in the first factor is exactly $2h$, that is $\Gamma|_{X^n} \in H^{2h}(X) \otimes H^{\ast}(X^{n-1})$ for $(X,\theta) \in \CA_h$ any point. Then
\[ \Gamma = \theta_1^{h} P \]
where $P$ is a polynomial in the classes $\eta_{ij}$ for $i,j \geq 2$.
\end{lemma}
\begin{proof}
We can assume $\Gamma =  \prod_{i \leq j} \eta_{ij}^{b_{ij}} $.
By assumption we have $2 b_{11} + \sum_{j \geq 2} b_{1j} = 2h$.
We argue by increasing induction on $k$ that
\[ \Gamma = \theta_1^k P_k \]
for some polynomials $P_{k}$ in $\{ \eta_{ij} \}_{i,j \geq 1}$.
In the case $k=0$ this is no restriction, so this case holds. The case $k=h$ is what we want to prove since then $P_h$ can not depend on $\eta_{1j}$.

So by the induction hypothesis for $k$ we may assume that 
\[ \Gamma = \theta_1^k \eta_{1 i_1} \cdots \eta_{1 i_{2(h-k)}}. \]
Consider the string 
\[ I = (1^{2k+m} i_1 \cdots i_m) \]
where $m=h-k+1$, so the string is of length $2k+2m=2h+2$.
In a partition $(J_1,\ldots,J_{h+1})$ if all the $i_1,\ldots, i_m$ pair with elements $1$, then we get the contributing factor $\Gamma$. Otherwise, we get a factor of the form $\theta_1^{k+1} \cdot P'$ for a polynomial $P'$. Hence the induction hypothesis holds for $k+1$.
\end{proof}
\end{comment}

\begin{lemma} \label{lemma:phi pullback eval}
For $a \in \BZ$ consider
$\varphi : X^{n+1} \to X^n, (x_0,\ldots,x_n) \mapsto (a x_0+ x_1,\ldots,ax_0 + x_n)$. Then for all $i,j \geq 1$ we have
\[ \varphi^{\ast}(\theta_i) = a^2 \theta_0 + 2 a \eta_{0i} + \theta_i \]
\[ \varphi^{\ast}(\eta_{ij}) = a^2 \theta_0 + a \eta_{0i}+ a \eta_{0j} + \eta_{ij} \]
\end{lemma}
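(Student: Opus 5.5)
The key observation is that $\varphi$ factors through homomorphisms whose $\theta$-pullbacks are already known, namely equation \eqref{pullback relation of theta}. Composing $\varphi$ with the $i$-th projection $\pr_i : X^n \to X$ gives
\[ \pr_i \circ \varphi = \varphi_{a,0,\ldots,0,1,0,\ldots,0} : X^{n+1} \to X, \quad (x_0,\ldots,x_n) \mapsto a x_0 + x_i, \]
with coefficient $a$ in slot $0$ and coefficient $1$ in slot $i$. Since $\theta_i = \pr_i^{\ast}\theta$, we get $\varphi^{\ast}(\theta_i) = (\pr_i\circ\varphi)^{\ast}\theta$. Applying \eqref{pullback relation of theta} with coefficient vector $(a,0,\ldots,1,\ldots,0)$, and using $\eta_{00}=\theta_0$, $\eta_{ii}=\theta_i$ and the symmetry $\eta_{0i}=\eta_{i0}$, this equals
\[ a^2\eta_{00} + 2a\eta_{0i} + \eta_{ii} = a^2 \theta_0 + 2 a \eta_{0i} + \theta_i. \]
This is the first formula.

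For the second formula there are two cases.

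\emph{Case $i\neq j$.} By definition $\eta_{ij} = \tfrac12(\pr_{ij}^{\ast}\mathsf{m}^{\ast}\theta - \theta_i - \theta_j)$, and $\mathsf{m}\circ \pr_{ij}$ is the map $x \mapsto x_i + x_j$. Hence
\[ \mathsf{m}\circ\pr_{ij}\circ\varphi \colon (x_0,\ldots,x_n) \mapsto 2a x_0 + x_i + x_j. \]
By \eqref{pullback relation of theta}, its $\theta$-pullback is
\[ 4a^2\theta_0 + 4a\eta_{0i} + 4a\eta_{0j} + \theta_i + \theta_j + 2\eta_{ij}. \]
From this we subtract $\varphi^{\ast}\theta_i + \varphi^{\ast}\theta_j$, computed by the first formula, and halve. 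The result is $a^2\theta_0 + a\eta_{0i} + a\eta_{0j} + \eta_{ij}$, as claimed.

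\emph{Case $i=j$.} Here $\eta_{ii}=\theta_i$, so the second formula must reduce to the first. Indeed, substituting $j=i$ gives $a^2\theta_0 + 2a\eta_{0i} + \theta_i$, which is the first formula, so there is nothing further to check.

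There is no real obstacle. The only point to watch is the bookkeeping convention in \eqref{pullback relation of theta}: the sum runs over ordered pairs $(i,j)$, so cross terms appear twice, and $\eta_{ii}$ is identified with $\theta_i$. Since everything is pulled back along homomorphisms, the argument works equally in the relative setting over $B$ as for a single abelian variety $X$.
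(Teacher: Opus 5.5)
Your proof is correct and takes the same approach as the paper, which simply says the lemma follows from \eqref{pullback relation of theta} and the definition of $\eta_{ij}$. You write out that computation explicitly: you apply the relation to $\pr_i\circ\varphi$ and to $\mathsf{m}\circ\pr_{ij}\circ\varphi$, subtract, halve, and note that the case $i=j$ reduces to the first formula.
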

\begin{proof}
This follows directly from \eqref{pullback relation of theta} and the definition of $\eta_{ij}$.
\end{proof}

The tautological ring contains the class of the diagonal and of the Lefschetz correspondences $e_{\theta}, f_{\theta}$:
\begin{align*}
\Delta & = d^{\ast}\left( \frac{\theta^h}{h!} \right) = \frac{(\theta_1+\theta_2 - 2\eta)^h}{h!} \\
f_{\theta} & = 
d^{\ast}\left( \frac{\theta^{h-1}}{(h-1)!} \right) = \frac{(\theta_1+\theta_2 - 2\eta)^{h-1}}{(h-1)!} \\
e_{\theta} & = \Delta \theta_1
\end{align*}

We will need some basic computations later on:
\begin{lemma} \label{lemma:F theta computation}	Let $i_1,\ldots, i_4 \geq 1$ be integers.
\begin{enumerate}
	\item[(a)] $f_{\theta}(\theta^k) = k (h+1-k) \theta^{k-1}$.
	\item[(b)] $f_{\theta}^{(0)}( \eta_{0 i_1} \eta_{0 i_2} ) = - \frac{1}{2} \eta_{i_1 i_2}$
	\item[(c)] $f_{\theta}^{(0)}(\theta_0 \eta_{0 i_1} \eta_{0 i_2}) = (h-2) \eta_{0 i_1} \eta_{0 i_2} - \frac{1}{2} \theta_0 \eta_{i_1 i_2}$
	\item[(d)] $f_{\theta}^{(0)}(\eta_{0 i_1} \eta_{0 i_2} \eta_{0 i_3} \eta_{0 i_4}) = -\frac{1}{2} \sum_{1 \leq a < b \leq 4} \eta_{0 i_a} \eta_{0 i_b} \eta_{i_c i_d}$, where $\{c , d \}$ is the complement to $\{ a , b \}$. Equivalently,
	\begin{align*} f_{\theta}^{(0)}(\eta_{0 i_1} \eta_{0 i_2} \eta_{0 i_3} \eta_{0 i_4}) = -\frac{1}{2}( \eta_{0 i_1} \eta_{0 i_2} \eta_{i_3 i_4} + \eta_{0 i_1} \eta_{0 i_3} \eta_{i_2 i_4} + \eta_{0 i_1}\eta_{0 i_4} \eta_{i_2 i_3} \\
	 + \eta_{0 i_2} \eta_{0 i_3} \eta_{i_1 i_4} + \eta_{0 i_2}\eta_{0 i_4} \eta_{i_1 i_3} + \eta_{0 i_3} \eta_{0 i_4} \eta_{i_1 i_2}). \end{align*}
\end{enumerate}
\end{lemma}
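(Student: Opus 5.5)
We propose to prove all four identities by reducing to a computation in the cohomology of a fixed abelian variety, and to organize that computation with the $\mathfrak{sl}_2$-structure generated by $e_\theta$ and $f_\theta$. By Lemma~\ref{lemma:10 author lemma}(ii), a relation in $\Rvert(\CX^{\times n})$ holds as soon as it holds in $H^{\ast}(X^n)^{\Sp}$ for a single $(X,\theta)$. Both sides of (a)--(d) lie in $\Rvert$: the left-hand sides do because $f_\theta = \mathsf{d}^{\ast}(\theta^{h-1}/(h-1)!)$ is tautological and pushforward along a projection preserves $\Rvert$. So we may take $X = E^h$ with $\theta = \sum_k \omega^{(k)}$, or simply work in $\bigwedge^{\ast} V^{\oplus n}$.

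For (a) we use that, by Künnemann's theorem, $(e_\theta, f_\theta, [e_\theta,f_\theta])$ is an $\mathfrak{sl}_2$-triple and $[e_\theta,f_\theta]$ acts on $H^{2k}$ by $k-h$. The class $1$ is primitive of weight $-h$, so $\theta^k = e_\theta^k(1)$. The standard identity $f e^k v = k(\lambda - k+1) e^{k-1} v$ for a lowest-weight vector $v$ of weight $-\lambda$ (up to the sign convention, which we fix by checking $h=1$, where $f_\theta = \pi^{\ast}\pi_{\ast}$ gives $f(\theta) = 1$) then yields $f_\theta(\theta^k) = k(h+1-k)\theta^{k-1}$. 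If one prefers to avoid the sign bookkeeping, a direct computation works too: write $f_\theta(\beta) = \pr_{2\ast}\bigl(\beta_1 \cdot (\theta_1+\theta_2-2\eta_{12})^{h-1}/(h-1)!\bigr)$, expand, and keep only the terms of degree $2h$ in the integrated variable. These are evaluated by \eqref{integral formula} and the relation $\theta_1^{h-1}\eta_{12}^2/(h-1)! = -\tfrac12\,\theta_1^h\eta_{22}/h!$ (Example~\ref{example:some eta relation}).

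For (b)--(d) we apply the same $\mathfrak{sl}_2$ reasoning factorwise in the $0$-th factor, with the other factors as parameters. By Lemma~\ref{lemma:10 author lemma}(ii) we view classes as $\Sp(V)$-invariants in $\bigwedge^{\ast}V_0 \otimes \bigwedge^{\ast}(\oplus_{i\ge1}V_i)$. Here $\eta_{0i}$ corresponds to the symplectic pairing contracting $V_0$ with $V_i$, and $\theta_0$ to the symplectic form on $V_0$.

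Each product $\eta_{0i_1}\cdots\eta_{0i_{2m}}$ decomposes in the $V_0$-variable as a primitive part plus $\theta_0$ times a lower term. Concretely, we write $\eta_{0i_1}\eta_{0i_2} = P + c\,\theta_0\eta_{i_1i_2}$ with $P$ primitive in degree $2$. The constant $c$ is determined by multiplying by $\theta_0^{h-1}$ and pushing forward along $x_0$, using Example~\ref{example:some eta relation}. Then $f(P)=0$ and $f(\theta_0)=h$ give (b). We expect $c = -\tfrac{1}{2h}$, so that $f$ returns $-\tfrac12\eta_{i_1i_2}$.

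For (c) we write $\theta_0\eta\eta = e(P) + c\,\theta_0^2\eta_{i_1 i_2}$. We then use $f e P = (h-2)P$, since $P$ has weight $2-h$, together with (a) for $k=2$; reassembling $P = \eta\eta - c\,\theta_0\eta_{i_1i_2}$ gives the stated formula. For (d) we decompose the quartic in $V_0$ into a primitive degree-$4$ piece, $\theta_0$ times a primitive degree-$2$ piece, and $\theta_0^2$ times a scalar. The Sp-invariant-theory (Wick-type) pairing structure forces the primitive projections to be sums over pairings, with the coefficients again fixed by Example~\ref{example:some eta relation}.

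The main obstacle is (d): correctly computing the primitive decomposition of the quartic and tracking the $h$-dependent constants so that they cancel to the $h$-independent answer. As a fallback, we would verify (d) on $E^h$ with $h$ small (using \eqref{basic identity} and \eqref{cycle integral}). We would then argue polynomiality in $h$ via the $\mathfrak{sl}_2$ formulas to conclude for all $h$.
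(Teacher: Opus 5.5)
Your route differs from the paper's. The paper specializes to $X=E_1\times\cdots\times E_h$ with $\theta=\sum_k\omega^{(k)}$ and $\eta=\sum_k\eta^{(k)}$. On this model $f_\theta=\mathsf{d}^{\ast}(\theta^{h-1}/(h-1)!)$ is the sum over $k$ of the elliptic-curve operator $\pi^{\ast}\pi_{\ast}$ acting on the $k$-th factor. Everything then reduces to the relations of Example~\ref{example:E}, uniformly in $h$.

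You instead use the Lefschetz $\mathfrak{sl}_2$ and Lefschetz decompositions in the $0$-th factor. For (a)--(c) this works as you describe. Example~\ref{example:some eta relation} gives $c=-\frac{1}{2h}$, and $f_\theta(\theta_0)=h$ then gives (b). For (c), combine $f_\theta e_\theta P=(h-2)P$ with $f_\theta(\theta_0^2)=2(h-1)\theta_0$. A merit of your route is that it explains the $h-2$ as the $\mathfrak{sl}_2$-weight of a primitive degree-$2$ class. One slip: $[e_\theta,f_\theta]$ acts on $H^{2k}$ by $2k-h$, not $k-h$. Your later weights $-h$ and $2-h$ already use the correct normalization.

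For (d), the step ``coefficients fixed by Example~\ref{example:some eta relation}'' does not close. Write $Q=\eta_{0i_1}\eta_{0i_2}\eta_{0i_3}\eta_{0i_4}=P_4+\theta_0P_2+\theta_0^2P_0$ with $P_j$ primitive of degree $j$ in the $0$-th factor. Put $T=\sum_{a<b}\eta_{0i_a}\eta_{0i_b}\eta_{i_ci_d}$ and $S=\eta_{i_1i_2}\eta_{i_3i_4}+\eta_{i_1i_3}\eta_{i_2i_4}+\eta_{i_1i_4}\eta_{i_2i_3}$.

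Your invariant-theory claim that $P_0\in\BQ S$ and $P_2\in\BQ T+\BQ\theta_0S$ is fine. Fixing the coefficients, however, requires $\theta_0^{h-2}Q=\theta_0^hP_0$ and $\theta_0^{h-3}Q=\theta_0^{h-2}P_2+\theta_0^{h-1}P_0$. These encode the Lefschetz pairing on primitive degree-$2$ classes. Example~\ref{example:some eta relation} only concerns $\theta_0^{h-1}$ times a quadratic, so it cannot see this.

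The missing input is \eqref{explicit vanishing} for the string $(0^{2h-2}i_1i_2i_3i_4)$. It reads $4(h-1)(h-2)\theta_0^{h-3}Q+2(h-1)\theta_0^{h-2}T+\theta_0^{h-1}S=0$. Combine it with $\theta_0^{h-1}T=-\frac{1}{h}\theta_0^hS$, which comes from Example~\ref{example:some eta relation}. For $h\geq 3$ this gives $P_0=\frac{1}{4h(h-1)}S$ and $P_2=-\frac{1}{2(h-2)}T-\frac{1}{2h(h-2)}\theta_0S$. Hence $f_\theta^{(0)}(Q)=(h-2)P_2+2(h-1)\theta_0P_0=-\frac12T$, as claimed.

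For $h\le 2$ this derivation degenerates; for $h=2$ the relation only gives $2T+\theta_0S=0$. So $P_0$ must be computed directly, for instance on $E_1\times E_2$, where $Q=\frac18\theta_0^2S$.

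Your fallback does not repair this: checking small $h$ and then arguing polynomiality in $h$ is not a proof as stated. Nothing bounds the degree of the a priori rational dependence on $h$. The paper's product-of-elliptic-curves computation avoids the issue because it is valid for all $h$ at once.
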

\begin{proof}
To compute this we can specialize to $X = E_1 \times \cdots \times E_h$. We write $\omega^{(i)}$ for the point class on $E_i$ and denote by this also the pullback to the product. We denote by $\eta^{(i)}$ the class $\eta$ on $E_i \times E_i$, and view it also as a class on $X \times X$ by pullback.
We take the principal polarization
\[ \theta = \sum_i \omega^{(i)} \]
which gives us
\[ \eta = \sum_i \eta^{(i)}. \]
The claims then follow by a straightforward computation using the relations of Example~\ref{example:E}.
\end{proof}

\begin{rmk}
The tautological ring and the classes $\eta$ can be defined identically also for
families $\pi : \CX \to B$ of polarized varieties (not only for principal polarizations). The definitions
are identical and (with $\mathrm{Sp}(2h,\BZ)$ replaced by the subgroup of $\GL(H^1(X,\BZ))$ respecting the symplectic form defined by the polarization) Lemma~\ref{lemma:10 author lemma}(ii) holds likewise.
\end{rmk}

\subsection{The characteristic polynomial}
The characteristic polynomial associated to any element $v \in H^2(X,\BZ)$ is
\[ \chi_{v}(t) = \frac{1}{h!} \int_{X} (t \theta - u)^h. \]
This is a monic polynomial with integral\footnote{Indeed, if $\alpha \in H^2(X,\BZ)$ then there exists a basis $e_i,f_i$ such that $\alpha = \sum_i \alpha_i e_i \wedge f_i$. This shows that $\alpha^k/k!$ is integral for all $k \geq 1$.} coefficients:
\[ \chi_{v}(t) = t^{h} - e_{1} t^{h-1} + \ldots + (-1)^h e_h. \]
If $v \in \NS(X)$, then $\chi_v(t)$ is also the characteristic polynomial of the associated endomorphism of $X$ (over $\BQ$) defined by
%\footnote{The square $\chi_v(t)^2$ is the characteristic polynomial of the induced action on $H^1(X,\BQ)$, so $\chi }
\[ X \xrightarrow{\varphi_{v}} \hat{X} \xrightarrow{\varphi_{\theta}^{-1}} X. \]
Since this endomorphism is self-adjoint, all roots of $\chi_v$ are real.
If $v$ is effective, then the roots are all non-negative, see \cite[Theorem 5.2.4]{BL} for the case if $v$ is ample and \cite[Theorem 2.1(c)]{DebarreCurves} how to reduce to the ample case.

\begin{lemma} \label{lemma:bound on e_k} 
 Let $v \in \NS(X)$ be effective and write
	$\chi_{v}(t) = t^{h} - e_{1} t^{h-1} + \ldots + (-1)^h e_h$. Then
	\[ e_k \leq \binom{h}{k} \left( \frac{e_1}{h} \right)^k. \]
\end{lemma}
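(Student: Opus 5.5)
Since $v$ is effective, the roots $x_1,\ldots,x_h$ of $\chi_v$ are real and non-negative, as recalled just before the lemma. By definition of the characteristic polynomial, $e_k$ is the $k$-th elementary symmetric polynomial in the roots,
\[ e_k = \sigma_k(x_1,\ldots,x_h) = \sum_{i_1<\cdots<i_k} x_{i_1}\cdots x_{i_k}, \qquad e_1 = x_1+\cdots+x_h. \]
The claim is therefore a purely algebraic inequality for elementary symmetric polynomials of non-negative reals. It is Maclaurin's inequality in its weakest form, comparing the $k$-th mean with the first:
\[ \left(\frac{e_k}{\binom{h}{k}}\right)^{1/k} \leq \frac{e_1}{h}. \]

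I would prove this directly rather than via the full Newton–Maclaurin chain. Observe that $e_k$ is a sum of $\binom{h}{k}$ products, each of $k$ non-negative numbers. Apply AM–GM to each product: $x_{i_1}\cdots x_{i_k} \leq \big(\tfrac{1}{k}\sum_j x_{i_j}\big)^k$. This bounds each term, but summing the bounds does not immediately give the desired estimate.

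A cleaner route is to note that, for fixed sum $e_1$ on the non-negative orthant, $\sigma_k$ is maximized at the equal point $x_i = e_1/h$. Here the value is exactly $\binom{h}{k}(e_1/h)^k$. To justify the maximization, I would use a smoothing argument. Replacing two coordinates $x_1,x_2$ by their average keeps $e_1$ fixed. It changes $\sigma_k$ by
\[ \Big(\big(\tfrac{x_1+x_2}{2}\big)^2 - x_1x_2\Big)\,\sigma_{k-2}(x_3,\ldots,x_h) \geq 0, \]
since $\sigma_k = x_1x_2\sigma_{k-2}(\hat x) + (x_1+x_2)\sigma_{k-1}(\hat x) + \sigma_k(\hat x)$. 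The smoothing therefore never decreases $\sigma_k$. By compactness of the simplex $\{x_i\geq 0,\ \sum x_i = e_1\}$ a maximizer exists. If a maximizer had two unequal coordinates, averaging them would give another maximizer, so one needs care with the case where $\sigma_{k-2}(\hat x) = 0$. It is simpler to cite Maclaurin's inequality, which is classical and follows from Newton's inequalities $E_{k-1}E_{k+1}\leq E_k^2$ for the normalized means $E_k = e_k/\binom{h}{k}$.

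The only geometric input is the non-negativity and reality of the roots, which the paper has already stated (via \cite{BL} and \cite{DebarreCurves}). No step is a real obstacle; the one point needing care is the degenerate case where some roots vanish. Maclaurin's inequality holds for all non-negative reals by continuity, so this case is harmless.
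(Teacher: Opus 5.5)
Your proposal is correct and follows the paper's own proof: it identifies $e_k$ as the $k$-th elementary symmetric polynomial in the real, non-negative roots of $\chi_v$ and then applies Maclaurin's inequality. The AM--GM and smoothing detours are not needed, since citing Maclaurin's inequality already completes the argument.
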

\begin{proof}
	The coefficient 
	$e_k = e_k(b_1,\ldots,b_h)$
	is the elementary symmetric polynomial in the roots $b_1,\ldots,b_h$ of the characteristic polynomial.
	Thus the result follows from the Maclaurin-inequality for elementary symmetric polynomials:
	\[
	\frac{e_k(b_1,\ldots,b_h)}{\binom{h}{k}} \leq \left( \frac{e_1(b_1,\ldots,b_h)}{h} \right)^k.
	\]
\end{proof}

Basic invariance theory of the symplectic group gives the following:
\begin{lemma} \label{lemma:invariants pfaffian}
For any polynomial $P(x_1,\ldots,x_n)$ and tautological class $\Gamma \in R_{\mathsf{vert}}(X^n)$ there exists a polynomial $R$ such that for any $v \in H^2(X,\BZ)$ we have
\[
\int_{X^n} P(v_1,\ldots,v_n) \cdot \Gamma = R(e_1,\ldots,e_h)
\]
where $\chi_{v}(t) = t^{h} - e_{1} t^{h-1} + \ldots + (-1)^h e_h$ is the characteristic polynomial.
If $P$ has degree $m$, then $R$ has weighted degree $m$ where $e_i$ has weight $i$.
\end{lemma}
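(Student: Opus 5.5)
The plan is to reduce the integral to a function on $H^2(X,\BQ)=\bigwedge^2 V^\vee$ and then use invariant theory of $\Sp(V)$, where $V=H^1(X,\BQ)$. By Lemma~\ref{lemma:10 author lemma}(ii) we may identify $\Gamma$ with an $\Sp(V)$-invariant element of $\bigwedge^{\ast}(V^{\oplus n})$. Here $v_i$ denotes the pullback of $v$ along the $i$-th projection. Expanding $P$ into monomials, the function
\[
f(v) = \int_{X^n} P(v_1,\ldots,v_n)\,\Gamma
\]
is a polynomial function of $v \in \bigwedge^2 V$, homogeneous of degree $m$ if $P$ is (treat each homogeneous piece separately). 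Since $\Gamma$ and the orientation of $X^n$ are $\Sp(V)$-invariant and $g\in\Sp(V)$ acts compatibly on all factors, we get $f(g v)=f(v)$. So $f$ is an $\Sp(V)$-invariant polynomial on $\bigwedge^2 V$, where $\dim V=2h$.

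The key step is the classical first fundamental theorem for this representation. Identify $\bigwedge^2 V$, via the symplectic form $\omega=\theta$, with the space of endomorphisms $A_v$ that are self-adjoint with respect to $\omega$. The conjugation action of $\Sp(V)$ on this space is the adjoint-type action on the symmetric space $\mathfrak{gl}_{2h}/\mathfrak{sp}_{2h}$. Its invariant ring is a polynomial ring generated by the coefficients of the Pfaffian characteristic polynomial $\mathrm{Pf}(t\omega - v)$. Each $A_v$ has eigenvalues of even multiplicity, and $\det(t - A_v)=\mathrm{Pf}(t\omega-v)^2$.

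Concretely, I would prove the restriction statement directly (Chevalley-type argument). A Zariski-dense set of $v$ is $\Sp(V)$-conjugate to a normal form $\sum_i b_i\, e_i\wedge f_i$ in a symplectic basis: diagonalize the self-adjoint $A_v$, whose generic eigenspaces are $2$-dimensional and symplectic. On this normal form, $f$ is a polynomial in $b_1,\ldots,b_h$. It is symmetric, because permuting the symplectic pairs lies in $\Sp(V)$. Hence it is a polynomial $R$ in the elementary symmetric functions of the $b_i$. Now
\[
\frac{(t\theta - v)^h}{h!} = \prod_i (t-b_i)\,[\mathrm{pt}],
\]
so these elementary symmetric functions are exactly $e_1,\ldots,e_h$ of $\chi_v$. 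The identity $f(v)=R(e_1(v),\ldots,e_h(v))$ then holds on a dense set, and both sides are polynomials in $v$, so it holds for all $v$. In particular it holds for integral $v$.

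For the weight claim, note that $e_i$ is homogeneous of degree $i$ in $v$. So $R$ can be taken weighted-homogeneous of degree $m$ after discarding the other weighted components, which must vanish identically because the $e_i$ are algebraically independent on the normal forms.

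The main obstacle is the density and normal-form step: showing that generic $v\in\bigwedge^2 V\otimes\BC$ is conjugate to a diagonal form. I would handle it by working over $\BC$ with $A_v$ having distinct eigenvalue pairs. For two eigenvalues $b\neq b'$, the eigenspaces $V_b$ and $V_{b'}$ are $\omega$-orthogonal; for eigenvectors $x\in V_b$, $y\in V_{b'}$ this follows from $b\,\omega(x,y)=\omega(A_v x,y)=\omega(x,A_v y)=b'\,\omega(x,y)$. Hence $V=\bigoplus_b V_b$ is a $\omega$-orthogonal decomposition into nondegenerate $2$-planes, and choosing symplectic bases of the $V_b$ gives the required symplectic basis.
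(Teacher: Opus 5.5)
Your proof is correct, but it takes a more self-contained route than the paper. The paper only observes that the integral is an $\Sp(V)$-invariant polynomial function on $\bigwedge^2 V$. It then cites classical invariant theory (the symmetric pair of type AII, via Goodman--Wallach) for the fact that all such invariants are polynomials in the coefficients of the Pfaffian $\chi_v$. Its weight statement is the same scaling argument $v\mapsto\lambda v$ that you use.

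You instead prove the invariant-theoretic input directly, by a Chevalley-restriction argument:
\begin{enumerate}
\item restrict to the ``Cartan subspace'' of forms $\sum_i b_i\,e_i\wedge f_i$;
\item use the permutations of the symplectic pairs to get a symmetric polynomial in the $b_i$;
\item identify the elementary symmetric functions of the $b_i$ with $e_1,\ldots,e_h$ via $(t\theta-v)^h/h!=\prod_i(t-b_i)[\mathrm{pt}]$;
\item extend to all $v$ by Zariski density.
\end{enumerate}
This in effect reproves the relevant case of the first fundamental theorem. It also makes explicit the simultaneous normal form for $(\theta,v)$ that the paper uses elsewhere anyway (in the proof that $e_1=\int_\beta\theta$ and in Lemma~\ref{lemma:intersection computation}). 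The citation buys only brevity.

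There are two details to tighten.

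\textbf{Diagonalizability.} In the normal-form step you pass from orthogonality of eigenspaces to $V=\bigoplus_b V_b$ without showing that $A_v$ is diagonalizable. Run your orthogonality argument on generalized eigenspaces instead: for $b\neq b'$, the operator $A_v-b$ is $\omega$-self-adjoint and invertible on the $b'$-generalized eigenspace, so the two generalized eigenspaces are orthogonal. On your generic locus $\det(t-A_v)=\mathrm{Pf}(t\omega-v)^2$ has $h$ distinct double roots. Hence each generalized eigenspace is a nondegenerate $2$-plane. An $\omega$-self-adjoint endomorphism of a symplectic $2$-plane is a scalar, because skew forms on a $2$-dimensional space are multiples of $\omega$. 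So the generalized eigenspaces are the eigenspaces, as you need.

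\textbf{Invariance over $\BC$.} Since you conjugate over $\BC$, you need $f(gv)=f(v)$ for all $g\in\Sp(V_{\BC})$. This holds because $\Gamma$ is a polynomial in the $\eta_{ij}$, which are invariant under the whole algebraic group. Alternatively, use Zariski density of the rational points.

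Also, with $V=H^1(X,\BQ)$ one has $H^2(X,\BQ)=\bigwedge^2 V$, not $\bigwedge^2 V^\vee$ as written in your first sentence.
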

\begin{proof}
Consider $V=H^1(X)$ viewed as a symplectic vector space defined via $\theta$.
Then $H^2(X) = \wedge^2 V$ can be identified with skew-symmetric endomorphisms of $V$,
and $\chi_v(t)$ is the Pfaffian of an element $v \in \wedge^2 V$. By invariant theory of the symplectic group all the invariants of $\wedge^2 V$ are polynomials in the coefficients of the Pfaffian, see for example \cite[Section 12.4.3, Type AII]{GoodmanWallach}.
%Roe Goodman and Nolan R. Wallach, Symmetry, Representations, and Invariants, Graduate Texts in Mathematics 255, Springer, 2009, §12.4.2, Theorem 12.4.5, and §12.4.3, Type AII, pp. 592–594.
Since $\eta_{ij}$ are invariant under $\Sp(V)$ also the integral is, so it is an invariant.
For the second claim, assume that $P$ has degree $k$. If $v$ is replaced by $\lambda v$, then $\chi_{\lambda v} = t^h - e_1' t^{h-1} + \ldots + (-1)^h e'_h$ where $e'_k = \lambda^k e_k$. So we find
\[
\lambda^k R(e_1,\ldots,e_h) = 
\int_{X^n} P( (\lambda v)_1,\ldots, (\lambda v)_n)
=
R(e'_1,\ldots,e'_h) = R(\lambda^1 e_1, \ldots, \lambda^h e_h),
\]
so $R$ is of weighted degree $k$.
\end{proof}

\subsection{Characteristic polynomial of a curve class}
Let $\beta \in H_2(X,\BZ)$ be a class.
We will write $v_{\beta} \in H^2(X,\BZ)$ for the image of $\beta$ under the natural isomorphism
\[ H_2(X,\BZ) \cong H^2(X,\BZ)^{\ast} \cong H^2(\widehat{X},\BZ) \xrightarrow{\varphi_{\theta}^{\ast}} H^2(X,\BZ). \]
%For $\beta \in H_2(X,\BZ)$ we write $v_{\beta} \in H^2(X,\BZ)$ be the associated divisor class.
We define the characteristic polynomial of $\beta$ to be the one of $v_{\beta}$. We write $\chi_{\beta} := \chi_{v_{\beta}}$.

\begin{lemma}
	Let $\beta \in H_2(X,\BZ)$. Let $\chi_{\beta} = t^h - e_1 t^{h-1} + \ldots + (-1)^h e_h$. Then
	\[ e_1 = \int_{\beta} \theta. \]
\end{lemma}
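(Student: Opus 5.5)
We must show that $e_1=\int_\beta\theta$. By definition $\chi_\beta(t)=\frac{1}{h!}\int_X (t\theta-v_\beta)^h$. Expanding the binomial, the $t^{h-1}$ coefficient is $-\frac{h}{h!}\int_X\theta^{h-1}v_\beta=-\int_X\frac{\theta^{h-1}}{(h-1)!}\,v_\beta$. So the claim reduces to the identity
\[ \int_X \frac{\theta^{h-1}}{(h-1)!}\cdot v_\beta = \int_\beta \theta. \]
Both sides are linear in $\beta$. Both are also invariant under the symplectic group acting on $H^1(X)$, because $\theta$ is fixed and the isomorphism $\beta\mapsto v_\beta$ is equivariant. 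Hence it suffices to check the identity on a convenient basis. Alternatively, we can argue directly in a symplectic basis, as follows.

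Choose a symplectic basis $e_1,\dots,e_h,f_1,\dots,f_h$ of $H^1(X,\BZ)$ with $\theta=\sum_i e_i\wedge f_i$; this is possible since $\theta$ is principal. Then $H_2(X,\BZ)\cong(\wedge^2H^1)^\ast$ has the dual basis, and we compute $v_\beta$ explicitly. The map $\varphi_\theta^\ast:H^1(\widehat X)\to H^1(X)$ identifies the dual of $H^1(X)$ (via $H^1(\widehat X)\cong H^1(X)^\ast$, which comes from $H_1$) with $H^1(X)$ through the symplectic form $\theta$. It therefore sends the dual basis vector $e_i^\ast$ to $\pm f_i$ and $f_i^\ast$ to $\mp e_i$. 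Taking $\wedge^2$, the class $\beta$ dual to $e_i\wedge f_i$ maps to $v_\beta=e_i\wedge f_i$ (the two signs cancel), and more generally $v_\beta$ is the "contraction" of $\beta$ with $\theta$. With this established, the check is immediate for $\beta=(e_i\wedge f_i)^\ast$. The right side is $\int_\beta\theta=1$. The left side is $\int_X\frac{\theta^{h-1}}{(h-1)!}e_i\wedge f_i=\int_X\prod_j e_j\wedge f_j=1$, since $\frac{\theta^{h-1}}{(h-1)!}=\sum_k\prod_{j\neq k}e_j\wedge f_j$. For the remaining basis elements, such as $(e_i\wedge e_j)^\ast$ or $(e_i\wedge f_j)^\ast$ with $i\ne j$, both sides vanish. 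On the right, $\theta$ has no such component. On the left, $v_\beta$ is a non-diagonal monomial, whose product with $\theta^{h-1}$ is zero.

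A cleaner route that avoids the sign bookkeeping: for $h=1$ the map $v$ is just Poincaré duality up to sign, and positivity ($e_1\ge0$ for effective $\beta$, as already noted) fixes the sign. In general, specialize to $X=E_1\times\dots\times E_h$ with $\theta=\sum\omega^{(i)}$. For $\beta=[E_i]$, the dual map is the product of the elliptic-curve dual maps, so $v_\beta=\omega^{(i)}$. This gives $\chi_\beta=(t-1)t^{h-1}$, so $e_1=1=\int_{E_i}\theta$. The main obstacle is pinning down the normalization of $\beta\mapsto v_\beta$, that is, that it is the identity on diagonal classes rather than minus it. I would settle this either by the effectivity and positivity statement of the paper or by the explicit product computation above. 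Combined with equivariance and linearity (the $\Sp$-invariant linear functionals on $\wedge^2V$ are one-dimensional, spanned by contraction with $\theta$), the identity holds for all $\beta$.
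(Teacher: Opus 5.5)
Your proof is correct, but it is organized differently from the paper's.

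The paper treats a real class $\beta$ whose characteristic polynomial has distinct roots. It quotes a simultaneous normal form from Hotchkiss et al., writing $\theta=\sum_i dx_i\wedge dy_i$ and $v_\beta=\sum_i b_i\,dx_i\wedge dy_i$. It then transports the problem to $E_1\times\cdots\times E_h$ with $\beta=\sum_i b_i[E_i]$, reads off $e_1=\sum_i b_i=\int_\beta\theta$, and extends to all $\beta$ by density.

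You instead observe that $e_1=\int_X\frac{\theta^{h-1}}{(h-1)!}v_\beta$ is linear in $\beta$. So it suffices to check a basis of $H_2$. Alternatively, since the $\Sp$-invariant functionals on $\wedge^2V$ form a line, a single class with $\int_\beta\theta\neq 0$ suffices. This avoids both the normal-form lemma and the density step, and gives the identity for every $\beta$ directly. The paper's route makes sense there because the same reduction is reused in Lemma~\ref{lemma:intersection computation}, where $p_r$ is nonlinear in $\beta$ and a basis check is not available. Both arguments rest on the same normalization $[E_i]\mapsto\omega^{(i)}$, which the paper simply asserts.

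Your ``positivity'' way of fixing the sign is not independent. Knowing $e_1\geq 0$ for effective $\beta$ already presupposes that $\beta\mapsto v_\beta$ sends effective curve classes to effective divisor classes, which is exactly the sign in question. You do not need it, though. As your symplectic-basis computation shows, the composite $H_1(X)\to H^1(X)$ is $\pm$ the duality induced by $\theta$, and $\wedge^2$ does not see that sign. Hence $(e_i\wedge f_i)^\ast\mapsto e_i\wedge f_i$ regardless of the sign convention chosen for $H^1(\widehat X)\cong H_1(X)$. The ``main obstacle'' you flag is therefore already settled by your first route.
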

\begin{proof}
Observe that the statement makes sense for any $\beta \in H_2(X,\BR)$. We will work in this greater generality.
We assume first that the characteristic polynomial of $\beta$ has no multiple roots. 
	By \cite[Lemma A.3]{hotchkiss2024periodindexconjectureabelianthreefolds}
	there exists a basis $\{ dx_i, dy_i \}_{i=1,\ldots, h}$ of $H^1(X,\BR)$ such that
	\[ \theta = \sum_i dx_i \wedge dy_i, \quad v_{\beta} = \sum_i b_i dx_i \wedge dy_i. \]
	
	Since the question is intersection-theoretic, we may hence assume that 
	\[ X=E_1 \times \cdots \times E_h, \quad \theta=\sum_i \omega^{(i)},
	\quad v_{\beta} = \sum_i b_i \omega^{(i)} \]
	where $E_i$ are elliptic curves and $\omega^{(i)} \in H^2(E_i,\BZ)$
	are the point classes on $E_i$ viewed here as classes on $X$ via pullback along the projection $X \to E_i$.
	Moreover, if we identify $\widehat{E_i} = E_i$ in the natural way, then $\varphi_{\theta} : \prod_i E_i \to \widehat{X} = \prod_i E_i$ is just the identity.
	The isomorphism $H_2(X,\BZ) \cong H^2(\widehat{X},\BZ)$ 
	restricts to $H_2(E_i,\BZ) \subset H_2(X,\BZ)$ as the map taking $[E_i] = \prod_{\ell \neq i} \omega^{(\ell)}$ to 
	$\omega^{(i)}$. Thus
	\[ \beta = \sum_i b_i \cdot \prod_{\ell \neq i} \omega^{(\ell)}
	\]
	Then
	\[ e_1 = \frac{1}{(h-1)!} \int_{E_1 \times \cdots \times E_h} \theta^{h-1} v_{\beta} = b_1 + \ldots + b_h = \int_{\beta} \theta. \]
	The claim follows in general since the locus with no multiple roots is dense.
\end{proof}

We have the following analogue of Lemma~\ref{lemma:invariants pfaffian}:
\begin{lemma} \label{lemma:reconstruction}
	For any tautological class $\Gamma \in R_{\mathsf{vert}}(X^n)$ there exists a polynomial $R(z_1,\ldots,z_h)$
	of weighted degree $n$ (with $z_i$ having weight $i$) such that for any $\beta \in H_2(X,\BZ)$ we have
	\[
	\int_{[\beta] \times \cdots \times [\beta]} \Gamma = R(e_1,\ldots,e_h)
	\]
	where $\chi_{\beta}(t) = t^{h} - e_{1} t^{h-1} + \ldots + (-1)^h e_h$ is the characteristic polynomial of $\beta$.
\end{lemma}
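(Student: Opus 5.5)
The plan is to reduce to Lemma~\ref{lemma:invariants pfaffian} by replacing the homology class $[\beta]\times\cdots\times[\beta]$ on $X^n$ with a cohomology class built from $v_\beta$. Under Poincaré duality, $\beta\in H_2(X,\BZ)$ corresponds to a class $\beta^{\vee}\in H^{2h-2}(X,\BQ)$. The first step is to express $\beta^\vee$ through $v_\beta$ and $\theta$. Concretely, I would show
\[ \beta^\vee = \frac{\theta^{h-2}}{(h-2)!}\, v_\beta - c\, \frac{\theta^{h-1}}{(h-1)!}\cdot \Big(\int_\beta \theta\Big) \]
for a suitable universal constant $c$, or more invariantly $\beta^\vee = f_\theta$-type Lefschetz dual applied to $v_\beta$. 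Since the identification $\beta\mapsto v_\beta$ is $\Sp(V)$-equivariant, such a formula can be checked on the dense locus where $v_\beta$ is diagonal, $X=E_1\times\cdots\times E_h$, $\theta=\sum\omega^{(i)}$, $v_\beta=\sum b_i\omega^{(i)}$. This is exactly as in the proof of the preceding lemma, where $\beta=\sum_i b_i\prod_{\ell\ne i}\omega^{(\ell)}$. In that model one reads off $\beta^\vee = \sum_i b_i\prod_{\ell\neq i}\omega^{(\ell)}$ and writes it as $Q(\theta,v_\beta)$ for a universal polynomial $Q$. This polynomial is linear in $v_\beta$ and has coefficients polynomial in $\theta$ and in $e_1=\int_\beta\theta$. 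The identity $\prod_{\ell\ne i}\omega^{(\ell)}$-sum $=\theta^{h-2}v_\beta/(h-2)! - (e_1-\ldots)$ is a short symmetric-function computation.

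With $\beta^\vee = Q(\theta, v_\beta)$, in which $v_\beta$ appears linearly, we get
\[ \int_{[\beta]^{\times n}}\Gamma = \int_{X^n} \Gamma\cdot \prod_{i=1}^n Q(\theta_i,(v_\beta)_i). \]
Expanding the product gives a sum of terms $\Gamma\cdot\Gamma'\cdot P(v_1,\dots,v_n)$. Here $\Gamma'$ is a monomial in the $\theta_i=\eta_{ii}$, hence tautological, and $P$ is a monomial in the $v_i$ with degree at most $n$, multiplied by powers of $e_1$. Lemma~\ref{lemma:invariants pfaffian} turns each such term into a polynomial in $e_1,\dots,e_h$, and summing gives $R$.

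For the weighted-degree claim I would use the scaling argument of Lemma~\ref{lemma:invariants pfaffian} directly rather than tracking terms. Replacing $\beta$ by $\lambda\beta$ multiplies the left side by $\lambda^n$ by multilinearity, and it sends $e_k\mapsto\lambda^k e_k$. A polynomial identity $\lambda^n R(e)=R(\lambda^k e_k)$, valid on a Zariski-dense set of $(e_1,\dots,e_h)$, then forces $R$ to be weighted homogeneous of degree $n$. Density holds because the characteristic polynomials of classes in $H_2(X,\BQ)$, extended by homogeneity, fill out a dense set.

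The main obstacle is the first step, which needs both the correct formula for $\beta^\vee$ in terms of $v_\beta$ and the justification that a single universal polynomial expression holds for all $\beta$. If the explicit Lefschetz formula is awkward, I would instead argue invariantly. The map $H^2\to H^{2h-2}$, $v_\beta\mapsto\beta^\vee$, is $\Sp(V)$-equivariant. Hard Lefschetz decomposes $H^2=\BQ\theta\oplus P^2$, and on each summand the map is a scalar multiple of $\theta^{h-2}\cdot$ (on $P^2$) or $\theta^{h-1}$ (on $\BQ\theta$). This gives the required shape, and the diagonal model fixes the scalars.
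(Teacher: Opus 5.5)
Your argument is correct, but it takes a different route from the paper's.

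The paper never writes the Poincaré dual of $\beta$ explicitly. It applies the Fourier--Mukai transform $F=\exp(2\eta)$ and uses $F(\beta)=-v_\beta$ together with $\int_X F(\alpha_1)F(\alpha_2)=(-1)^h\int_X\alpha_1\alpha_2$, factorwise on $X^n$. This gives in one step $\int_{X^n}\beta_1\cdots\beta_n\,\Gamma=(-1)^{hn+n}\int_{X^n}v_1\cdots v_n\,F(\Gamma)$. Since $F$ preserves the tautological ring, Lemma~\ref{lemma:invariants pfaffian} then applies directly with $P=x_1\cdots x_n$.

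You instead write the Poincaré dual as a Lefschetz-type combination of $\theta$ and $v_\beta$ and expand into $2^n$ terms. Each term is handled by Lemma~\ref{lemma:invariants pfaffian} after multiplying $\Gamma$ by powers of $\theta_i=\eta_{ii}$, which is trivially tautological.

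\begin{itemize}
\item Your route is more elementary. You need no properties of the Fourier transform, only a linear identity that can be checked in the split model (by linearity in $\beta$ and density), or obtained by Schur's lemma on $\BQ\theta\oplus P^2$.
\item Your route also makes the weighted degree visible term by term. Each term is $e_1^{|S|}$ times a polynomial of weighted degree $n-|S|$, so your separate scaling and density argument is not actually needed.
\item The paper's route is shorter and produces a single formula instead of a sum.
\end{itemize}

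One correction: your displayed ansatz has the wrong sign, and no constant $c$ makes it true. In the split model each monomial of $\theta^{h-2}/(h-2)!$ times $b_k\omega^{(k)}$ survives only for the two missing indices. This gives $\frac{\theta^{h-2}}{(h-2)!}v_\beta=\sum_m(e_1-b_m)\prod_{\ell\ne m}\omega^{(\ell)}$, hence
\[
\beta^\vee \;=\; e_1\,\frac{\theta^{h-1}}{(h-1)!}\;-\;\frac{\theta^{h-2}}{(h-2)!}\,v_\beta \qquad (h\ge 2),
\]
and simply $\beta^\vee=e_1$ when $h=1$. Pairing with $\theta$ confirms this: $he_1-(h-1)e_1=e_1$. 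Your argument only uses the shape of this formula (linear in $v_\beta$, with coefficients polynomial in $\theta$ and $e_1$), so the slip does not affect its validity.
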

\begin{proof}
	By Poincar\'e duality we may view $\beta$ as a class in $H^{2h-2}(X,\BZ)$.
	The integral on the left hand side is hence
	\[ \int_{X^n} \beta_1 \cdots \beta_n \cdot \Gamma. \]
The Fourier Mukai transform $F$ is the correspondence $\exp(2 \eta) = \exp(c_1(\CP))$, where $\CP$ is the Poincar\'e bundle on $X \times X$.
We have $F(\beta) = -v$, where we write $v := v_{\beta}$.
%	The class $u:=v_{\beta}$ is $(-1)$ times the Fourier transform of $\beta$.
Moreover for $\alpha_1,\alpha_2 \in H^{\ast}(X)$
	\[ \int_{X} F(\alpha_1) F(\alpha_2) = (-1)^h \int_{X} \alpha_1 \cdot \alpha_2. \]
Moreover, write also $F$ for the action of the Fourier-Mukai transform on $X^n$ which acts factor-wise.
Then 
%	Since the Fourier-Mukai transform acts factorwise 
$F(\beta_1 \cdots \beta_n) = (-1)^n v_{1} \cdots v_n$. So we get
	\[ \int_{X^n} \beta_1 \cdots \beta_n \cdot \Gamma = (-1)^{hn + n} \int_{X^n} v_{1} \cdots v_n \cdot F(\Gamma). \]
	Moreover, $F$ sends the tautological ring to the tautological ring (since pushforward along projections $X^n \to X^m$ preserves the tautological ring) So the claim follows from Lemma~\ref{lemma:invariants pfaffian}.
\end{proof}

We give a concrete example of Lemma~\ref{lemma:reconstruction} that will be used repeatedly below.
Let $r \geq 1$. An $r$-cycle of $\eta$-classes is the class
\[ \Gamma_r = \eta_{12} \eta_{23} \cdots \eta_{r-1, r} \eta_{r1}. \]
In particular
\[ \Gamma_1 = \eta_{11} = \theta_1, \quad \Gamma_2 = \eta_{12}^2, \quad \Gamma_3 = \eta_{12} \eta_{23} \eta_{31}. \]

\begin{lemma} \label{lemma:intersection computation}
	Let $\beta \in H_2(X,\BZ)$ with characteristic polynomial 
	$\chi_{\beta}(t) = t^{h} - e_{1} t^{h-1} + \ldots + (-1)^h e_h$. Then
	\[
	\int_{\beta^{\times r}} \Gamma_r = \left( -\frac{1}{2} \right)^{r-1} p_r(e_1,\ldots,e_h),
	\]
	where $p_r$ is the $r$-th power sum symmetric function written in terms of the elementary symmetric functions $e_1,\ldots,e_h$.
\end{lemma}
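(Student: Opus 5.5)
Both sides are polynomial in the coordinates of $\beta$ and intersection-theoretic. Following the proof of the lemma computing $e_1=\int_\beta\theta$, I reduce to a split situation. First assume $\chi_\beta$ has distinct roots. Using \cite[Lemma A.3]{hotchkiss2024periodindexconjectureabelianthreefolds}, choose a symplectic basis with $\theta=\sum_i dx_i\wedge dy_i$ and $v_\beta=\sum_i b_i\,dx_i\wedge dy_i$. The integral $\int_{X^r}\beta_1\cdots\beta_r\cdot\Gamma_r$ depends only on cohomology classes, and by Lemma~\ref{lemma:10 author lemma}(ii) the classes $\eta_{ij}$ are determined by the symplectic structure. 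So we may replace $X$ by $E_1\times\cdots\times E_h$ with $\theta=\sum_i\omega^{(i)}$ and $\beta=\sum_i b_i[E_i]$, where $[E_i]=\prod_{\ell\neq i}\omega^{(\ell)}$. The general case then follows by density, since both sides are polynomials in $\beta$ (the right side via the $e_k$, which are polynomial in $v_\beta$).

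On the product, write $\eta=\sum_k\eta^{(k)}$, where $\eta^{(k)}$ is the pullback of the class $\eta$ on $E_k\times E_k$. Then $\eta_{j,j+1}=\sum_k\eta^{(k)}_{j,j+1}$, and expanding $\Gamma_r$ gives a sum over functions assigning a factor index $k_j$ to each edge $(j,j+1)$ of the $r$-cycle. Pair this with $\beta^{\times r}=\prod_{j=1}^r\bigl(\sum_{m}b_m[E_m]_j\bigr)$, so each copy $X_j$ picks an index $m_j$. The class $[E_{m_j}]$ restricted to copy $j$ is a point in every factor except $E_{m_j}$. Also $\omega^{(k)}\eta^{(k)}=0$ by \eqref{basicidentity2cxx}. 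Hence on copy $j$ the two incident edge classes $\eta^{(k_{j-1})}$ and $\eta^{(k_j)}$ must both live in the factor $E_{m_j}$, which forces $k_{j-1}=k_j=m_j$. Since the cycle is connected, all indices coincide with a single $m$.

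The surviving term for each $m$ is $b_m^r\int_{E_m^r}\eta_{12}\eta_{23}\cdots\eta_{r1}$, times trivial point integrals on the other factors. By \eqref{cycle integral} this equals $b_m^r(-\tfrac12)^{r-1}$. Summing over $m$ gives $(-\tfrac12)^{r-1}\sum_m b_m^r=(-\tfrac12)^{r-1}p_r(b_1,\ldots,b_h)$. Since the $b_m$ are the roots of $\chi_\beta$, this is $p_r$ written in terms of $e_1,\ldots,e_h$, as claimed.

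The case $r=1$ is the earlier lemma $e_1=\int_\beta\theta$, and $r=2$ uses $\eta\cdot\eta=-\tfrac12\omega_1\omega_2$. The main point needing care is the vanishing argument that kills mixed-factor terms, together with checking sign and orientation conventions in identifying $\beta$ with $\sum_i b_i[E_i]$. Both are already fixed by the previous lemma.
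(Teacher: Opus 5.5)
Your proposal is correct and follows the paper's proof essentially step for step. You reduce via the simultaneous diagonalization to $E_1\times\cdots\times E_h$ with $\beta=\sum_i b_i[E_i]$, expand $\eta=\sum_k\eta^{(k)}$, and use $\omega^{(k)}\eta^{(k)}=0$ to force all factor indices around the cycle to coincide. You then finish with \eqref{cycle integral} and a density argument. Your per-vertex formulation of the index matching is only a symmetric rephrasing of the paper's sequential walk around the cycle.
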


Concretely, if we let
\[ E(z) = 1 + e_1 z + e_2 z^2 + \ldots + e_h z^h = (1+b_1 z) \ldots (1 + b_h z) \]
be the generating series of elementary symmetric polynomials in variables $(b_1,\ldots,b_h)$, then
$p_r = \sum_i b_i^r$, so \[
E(z) = \exp\left( \sum_{r \geq 1} \frac{(-1)^{r-1}}{r} z^r p_r \right).
\]
For example,
\begin{align*}
	p_1 & = e_1 \\
	p_2 & = e_1^2 - 2 e_2 \\
	p_3 & = e_1^3 - 3 e_1 e_2 + 3 e_3.
%	p_4 & = e_1^4 - 4 e_1^2 e_2 + 2 e_2^2 + 4 e_1 e_3 - 4 e_4 
\end{align*} 

\begin{proof}
	As before we may assume that
	\[ X=E_1 \times \cdots \times E_h, \quad \theta=\sum_i \omega^{(i)},
	\quad \beta = \sum_i b_i \cdot \prod_{\ell \neq i} \omega^{(\ell)},
	\quad \eta = \sum_i \eta^{(i)}. \]
	where $\eta^{(i)}$ are the $\eta$-classes for $E_i$ with respect to the unique principal polarization, given here explicitly as
	\[ \eta^{(i)} = \frac{1}{2} (- \Delta_{E_i} + \omega^{(i)}_1 + \omega^{(i)}_2 ) \in H^2(E_i \times E_i) \subset H^2(X \times X). \]
	Observe that
	\begin{equation*} \omega_1^{(i)} \eta^{(i)} = \omega_2^{(i)} \eta^{(i)} = 0. \end{equation*}
	
	We find
	\[
	\int_{\beta^{\times r}} 
	\eta_{12} \eta_{23} \cdots \eta_{r1}
	=
	\sum_{i_1,\ldots,i_r = 1}^{h} 
	\int_{\beta^{\times r}}
	\eta_{12}^{(i_1)} \eta_{23}^{(i_2)} \cdots \eta_{r1}^{(i_r)}. \]
	We expand the first factor of $\beta$ as $\sum_i b_i [E_i]$.
	For the contribution from the summand $b_i [E_i]$ to be non-zero we must have $i_1=i$ by \eqref{basicidentity2cxx}.
	But then in the second factor $\beta$ only $b_i [E_i]$ can contribute, which then forces $i_2=i$. Continuing in this way the above integral becomes
	\[
	\sum_{i=1}^{h} b_i^r \int_{E_i^r} \eta_{12}^{(i)} \eta_{23}^{(i)} \cdots \eta_{r1}^{(i)}
	=
	p_r \cdot \left( -\frac{1}{2} \right)^{r-1}
	\]
	where we used \eqref{cycle integral}.
	%
	%By \eqref{integral formula} we have the identity for integrals over an elliptic curve $E$:
	%\[ \frac{1}{r!} \int_{E^r} (2 m_{12} \eta_{12} + 2 m_{23} \eta_{23} + \ldots + 2 m_{r1} \eta_{r1})^r
	%=
	%\det \begin{pmatrix} 0 & m_{12} & & \cdots & m_{n1} \\
		%	m_{12} & 0 & m_{23} & \\
		%	\vdots & m_{23} & 0 & \ddots & \vdots  \\
		%	 & & \ddots & \ddots &  m_{n-1,n} \\
		%	m_{n,1} & \cdots & & m_{n-1,n} & 0 
		%	\end{pmatrix} 
	%\]
	%By the sum over partitions formula for the determinant, extracting the coefficient of $m_{12} m_{23} \cdots m_{r1}$ gives
	%\[
	%2^r \int_{E^r}\eta_{12} \eta_{23} \cdots \eta_{r1} = 2 (-1)^{r-1}.
	%\]
	This proves the formula for a Zariski open subset of $\beta$'s, so the formula holds in general.
\end{proof}

\subsection{Basic evaluation}
For $\BE_h\to \CA_h$ and $\BE_{\Mbar_{g,n}} \to \Mbar_{g,n}$ the Hodge bundles we set
\[ \lambda_i = c_i(\BE_h), \quad \mu_i = c_i(\BE_{\Mbar_{g,n}}). \]
%We let $\lambda_i = c_i(\BE_h)$ and $\mu_i = c_i(\BE_{\Mbar_{g,n}})$.
By definition $\lambda_0 = \mu_0 = 1$.
\begin{lemma} \label{lemma:e(Hodge Hodge) eval}
	Let $g \geq 1$ and $h \geq 2$.
	On $\Mbar_{g,n} \times \CA_h$ we have
	\[
	c_{gh}( \BE_{\Mbar_{g,n}} \otimes \BE_h )
	=
	\begin{cases}
		\mu_1 \lambda_{h-1} & \text{ if } g=1 \\
		\mu_{2} \mu_1 \lambda_{h-1} \lambda_{h-2} & \text{ if } g=2 \\
		\mu_{3} \mu_2 \lambda_{1} & \text{ if } g=3, h = 2 \\
		\mu_{3} \mu_2 \mu_1 \lambda_{h-1} \lambda_{h-2} \lambda_{h-3}  & \text{ if } g=3, h \geq 3 \\
		\mu_g \mu_{g-1} \lambda_1 & \text{ if } g \geq 4, h=2 \\
		\mu_g \mu_{g-1} \mu_{g-2} \lambda_2 \lambda_1 & \text{ if } g \geq 4, h=3 \\
		0 & \text{ if } g \geq 4 \text{ and } h \geq 4 \\
		%
		% \mu_{2} \mu_1 \lambda_{h-1} \lambda_{h-2} & \text{ if } g=2, h \geq 2 \\
		% \mu_1 \lambda_{h-1} & \text{ if } g=1, h \geq 2 \\
	\end{cases}
	\]
	Moreover, $c_{gh}(\BE_{\Mbar_{g,n}} \otimes \BE_h) = \mu_{g}$ if $h=1$.
\end{lemma}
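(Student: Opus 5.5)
We compute the Chern class via the splitting principle together with the relations in the tautological rings on the two sides. Write formal Chern roots $x_1,\ldots,x_g$ for $\BE_{\Mbar_{g,n}}$ and $y_1,\ldots,y_h$ for $\BE_h$. Then
\[ c_{gh}(\BE_{\Mbar_{g,n}}\otimes\BE_h)=\prod_{a,b}(x_a+y_b). \]
On both sides the Hodge bundle satisfies Mumford's relation $c(\BE)c(\BE^\vee)=1$. This holds on $\Mbar_{g,n}$ by Mumford, and on $\CA_h$ by van der Geer. Consequently the Chern character of $\BE$ has vanishing even-degree components in positive degree, i.e. $\mathrm{ch}_{2k}(\BE)=0$ for $k\ge1$.

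The plan is to use the classical identity for a top Chern class of a tensor product of two ranks, which is a ``Cauchy'' type determinant formula. Write
\[ \prod_{a,b}(x_a+y_b)=\sum_{\lambda\subseteq (h^g)} s_\lambda(x)\,s_{\tilde\lambda}(y), \]
where $\tilde\lambda$ is the conjugate of the complement of $\lambda$ in the $g\times h$ box (dual Cauchy identity). Mumford's relation says that the ring generated by the $\mu_i$ (resp. $\lambda_i$) is a quotient of the cohomology of a Lagrangian Grassmannian, in the sense that $c(\BE)c(\BE^\vee)=1$. In that quotient the Schur polynomials $s_\lambda$ of the Hodge classes vanish unless $\lambda$ is a strict partition up to the standard identification $s_\lambda(\mathbb E)=$ a $\tilde Q$-polynomial (Pragacz). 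Concretely, in such a ring $s_\lambda=0$ unless $\lambda$ has distinct parts after conjugation with bounded size. Moreover, $\lambda_h=0$ on $\CA_h$ and $\mu_g$ survives on $\Mbar_{g,n}$, while $\lambda_h$ does not survive on $\CA_h$.

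Combining these, the pairs $(\lambda,\tilde\lambda)$ for which both $s_\lambda(x)$ and $s_{\tilde\lambda}(y)$ survive are severely restricted. On the $\Mbar$ side, $\lambda$ must be a staircase-compatible partition with distinct parts $\le g$. On the $\CA_h$ side, the complement must have distinct parts $\le h-1$ (since $\lambda_h=0$ kills anything involving the full column). In a $g\times h$ box this forces the decomposition into a staircase: the $\mu$-part is $(g,g-1,\ldots,g-k+1)$ and the $\lambda$-part is $(h-1,\ldots,h-m)$, with rows and columns matching. For $g\ge4,h\ge4$, the degree bound $\sum$ of distinct parts $\le h-1$ is at most $h(h-1)/2$. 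The remaining bound on the $\mu$-side forces the total $gh$ to be exceeded, so no term survives and the class vanishes. In the small cases the unique surviving shape gives the listed monomial. For instance, $g=2$ gives $\mu_2\mu_1\lambda_{h-1}\lambda_{h-2}$, using $\mu_1^2=2\mu_2$ and $s_{(2,1)}$-type identities to identify $\tilde Q$-polynomials with these products of Chern classes. The $h=1$ case is immediate, since $\BE_1$ is a line bundle with $\lambda_1^2=0$, so $\prod_a(x_a+\lambda_1)=\mu_g+\mu_{g-1}\lambda_1$. Here $\lambda_1$ on $\CA_1$ is zero rationally in Chow, as $\CA_1$ is a quotient of $\BA^1$, which leaves $\mu_g$.

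The main obstacle is the bookkeeping: verifying the coefficient exactly, i.e. that the surviving term has coefficient $1$ rather than a power of $2$. I would handle this by the cleaner route of working in the universal ring $\BQ[\mu]/(c(\mu)c(\mu^\vee)-1)\otimes\BQ[\lambda]/(\lambda_h, c(\lambda)c(\lambda^\vee)-1)$. There the $\tilde Q$-basis $\tilde Q_\rho$ indexed by strict partitions, with $\tilde Q_{(g,\ldots,1)}=\mu_g\cdots\mu_1$ (Pragacz), is available, and I would expand $\prod(x_a+y_b)$ via Pragacz's formula $c_{top}(E\otimes F)=\sum_\rho \tilde Q_\rho(E)\tilde Q_{\rho'}(F)$ for Lagrangian-type bundles, then check the small cases by direct expansion.
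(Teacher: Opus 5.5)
\textbf{The proposal has a genuine gap.} Your dual Cauchy expansion $\prod_{a,b}(x_a+y_b)=\sum_{\lambda\subseteq(h^g)}s_\lambda(x)s_{\tilde\lambda}(y)$ is correct, but the step meant to cut it down is false.

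In $\BQ[c_1,\ldots,c_g]/(c(E)c(E^\vee)-1)$, the relation $E(t)E(-t)=1$ says exactly that $h_k=e_k$ for all $k$. So the involution $\omega$ acts as the identity and $s_\lambda=s_{\lambda'}$. Nothing forces Schur classes of non-strict partitions, or of partitions with non-strict conjugate, to vanish. Counterexamples:
\begin{itemize}
\item $s_{(2)}(x)=s_{(1,1)}(x)=\mu_2\neq 0$.
\item For $g=3$, $s_{(3,1,1)}(x)=\mu_3(\mu_1^2-\mu_2)=\mu_3\mu_2\neq 0$ on $\Mbar_{3,n}$, although $(3,1,1)$ is self-conjugate and not strict. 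This term really occurs in your expansion, e.g.\ for $(g,h)=(3,5)$. It drops out only because its partner $s_{(3,3,2,2)}(y)=\lambda_4^2\lambda_2$ vanishes on $\CA_5$.
\end{itemize}
So the staircase description of the surviving pairs is unjustified. Deciding which terms die, using $e_k=0$ above the rank, $\lambda_h=0$, $\mu_g^2=\lambda_{h-1}^2=0$ and the bound $3g-3$ on the $\mu$-degree, is the actual content of the lemma, and the proposal does not carry it out. The fallback formula $c_{\mathrm{top}}(E\otimes F)=\sum_\rho\tilde Q_\rho(E)\tilde Q_{\rho'}(F)$ is not an identity you can cite; for strict $\rho$ the conjugate $\rho'$ is usually not strict. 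It does not close the gap.

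The vanishing for $g,h\geq 4$ also cannot come from degrees alone. The $\lambda$-subring of $\Chow^*(\CA_h)$ stops in degree $h(h-1)/2$ and the $\mu$-classes stop in degree $3g-3$. But $h(h-1)/2+3g-3\geq gh$ for $(g,h)=(4,h)$ with $h\geq 6$; at $(4,6)$ both sides equal $24$. The vanishing must come from relations, applied in a definite order as in the paper:
\begin{itemize}
\item Expand $c_{gh}=\prod_a(\lambda_h+x_a\lambda_{h-1}+\cdots+x_a^h)$. Then $\lambda_h=0$ extracts a factor $\mu_g$.
\item $\mu_g^2=0$ forces at least one factor to contribute $\lambda_{h-1}$, and $\lambda_{h-1}^2=0$ makes it exactly one.
\item The bound $3g-3$ on the $\mu$-degree leaves total $x$-degree at most $g-2$ in the remaining $g-1$ factors $\lambda_{h-2}+x_j\lambda_{h-3}+\cdots$.
\item For $h\geq 3$, $\lambda_{h-1}\lambda_{h-2}^2=0$ forces exactly one $\lambda_{h-2}$ and a term $x_j\lambda_{h-3}$ from every other factor. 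This gives $\mu_g\lambda_{h-1}\lambda_{h-2}\lambda_{h-3}^{g-2}$ times a symmetric function of the $x_a$ that is congruent to $\mu_{g-1}\mu_{g-2}$ modulo $\mu_g$.
\item For $g,h\geq 4$ this vanishes because $\lambda_{h-1}\lambda_{h-2}\lambda_{h-3}^2=0$, which follows from Mumford's relation in degree $2h-6$.
\end{itemize}
The case $h=2$ is handled directly, since then the inner factors are $1$. Your $h=1$ argument is fine.
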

\begin{proof}
	Let $h \geq 2$.
	By the splitting principle we may assume $\BE_{\Mbar_{g,n}} = L_1 \oplus \ldots \oplus L_g$ with $L_i$ line bundles with $l_i = c_1(L_i)$.
	Then
	\[ c_{gh}(\BE_{\Mbar_{g,n}} \otimes \BE_h)
	= (\lambda_h + \ell_1 \lambda_{h-1} + \ldots + \ell_1^h) \cdots (\lambda_h + \ell_g \lambda_{h-1} + \ldots + \ell_g^h) \]
	Now using that $\lambda_h = 0$ we can pull out a factor of $l_1 \ldots l_g = \mu_g$.
	Using $\mu_g^2=0$ we further pull out a factor of $\lambda_{h-1}$.
	Then using $\lambda_{h-1}^2 = 0$ (part of the $c(\BE_h) c(\BE_h^{\vee}) = 1$ relation) the above simplifies to
	\[ \mu_g \lambda_{h-1} \sum_{i=1}^{g} \left( \prod_{j \neq i} \ell_j \right) \left( \prod_{j \neq i} \lambda_{h-2} + \ell_j \lambda_{h-3} + \ldots + \ell_j^{h-2} \right) \]
	If $g=1$ we are done. For $g \geq 2$ the $\mu_i$ are pulled back from $\Mbar_{g}$, so the total degree in the $\mu_i$ is at most $3g-3$.
	Hence the inner product must be of total degree $g-2$. If $g=2$ the inner term contributes a $\lambda_{h-2}$ and we are done.
	Let hence $g \geq 3$. If $h=2$ then the inner product is $1$ and we get $\mu_g \mu_{g-1} \lambda_{h-1}$.
	So let $h \geq 3$ also. Since the inner product has $g-1$ terms but is of degree $g-2$ in the $\ell_i$ we must have one factor of  $\lambda_{h-2}$; since $\lambda_{h-1} \lambda_{h-2}^2 = 0$ in fact we can have only one such factor. Hence by degree reasons again the remaining terms must be all of the form $\lambda_{h-3} \ell_j$. So we end up with
	\[
	\mu_g \lambda_{h-1} \lambda_{h-2} \lambda_{h-3}^{g-2} \sum_{i=1}^{h} \ell_1 \cdots \widehat{\ell_i} \cdots \ell_g \sum_{j \neq i} \ell_1 \cdots \widehat{\ell_i} \cdots \widehat{\ell}_j \cdots \ell_g.
	\]
	Using 
\[ \mu_g \mu_{g-1} \mu_{g-2} = \mu_g \sum_{i=1}^{h} \ell_1 \cdots \widehat{\ell_i} \cdots \ell_g \sum_{j \neq i} \ell_1 \cdots \widehat{\ell_i} \cdots \widehat{\ell}_j \cdots \ell_g, \] and $\lambda_{h-1} \lambda_{h-1} \lambda_{h-2}^{2} = 0$ for $h \geq 3$ the claim follows.
\end{proof}

\begin{lemma}
	For any $u < g$ as a formal expression in $\mu_1,\ldots,\mu_g, \lambda_1,\ldots,\lambda_h$ we have
	\[ c_{(g-u)h}( \BE_{\Mbar_{g}} \otimes \BE_{\CA_h}) = c_{(g-u)h}(\BE_{\Mbar_{g-u}} \otimes \BE_{\CA_h}) \]
	modulo $\mu_{g}, \mu_{g-1}, \ldots, \mu_{g-u+1}$.
\end{lemma}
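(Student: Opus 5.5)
The plan is to treat both sides as universal polynomials in the Chern roots and to realize ``modulo $\mu_g,\ldots,\mu_{g-u+1}$'' as a specialization of those roots. Write $\BE_{\Mbar_g}=L_1\oplus\cdots\oplus L_g$ formally, with roots $\ell_1,\ldots,\ell_g$, so that $\mu_k=e_k(\ell_1,\ldots,\ell_g)$. The ring of symmetric polynomials in $g$ variables is $\BQ[\mu_1,\ldots,\mu_g]$. The quotient by $(\mu_{g-u+1},\ldots,\mu_g)$ maps isomorphically onto the ring of symmetric polynomials in $\ell_1,\ldots,\ell_{g-u}$, via the substitution $\ell_{g-u+1}=\cdots=\ell_g=0$. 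Under this substitution $\mu_k\mapsto e_k(\ell_1,\ldots,\ell_{g-u})$ for $k\le g-u$, and these are precisely the formal Chern classes of $\BE_{\Mbar_{g-u}}$. So it suffices to compute $c_{(g-u)h}(\BE_{\Mbar_g}\otimes\BE_h)$ after setting the last $u$ roots to zero and to compare the result with $c_{(g-u)h}(\BE_{\Mbar_{g-u}}\otimes\BE_h)$.

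For the computation I would use multiplicativity, as in the proof of Lemma~\ref{lemma:e(Hodge Hodge) eval}:
\[ c(\BE_{\Mbar_g}\otimes\BE_h)=\prod_{i=1}^g c(L_i\otimes\BE_h),\qquad c(L_i\otimes \BE_h)=\sum_{j=0}^h \lambda_j(1+\ell_i)^{h-j}. \]
When $\ell_i=0$ the factor $c(L_i\otimes\BE_h)$ becomes $c(\BE_h)$. Hence modulo the ideal we get
\[ c(\BE_{\Mbar_g}\otimes\BE_h)\equiv c(\BE_{\Mbar_{g-u}}\otimes\BE_h)\cdot c(\BE_h)^u. \]
Taking the degree $(g-u)h$ part gives
\[ c_{(g-u)h}(\BE_{\Mbar_{g-u}}\otimes\BE_h)+\sum_{k\ge1}c_{(g-u)h-k}(\BE_{\Mbar_{g-u}}\otimes\BE_h)\,[c(\BE_h)^u]_k. \]
The claim therefore reduces to showing that the correction sum vanishes in the formal ring in which the statement is meant.

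This last step is the main obstacle. As a bare identity in free variables it is false: already for $g=1$, $u=1$ the left side is $c_0=1$ while the right side is $\lambda_0=1$, so that case is fine, but in general the terms $\lambda_k c_{(g-u)h-k}$ survive. I would therefore first check which reading of ``formal expression'' the paper uses later, where the lemma is applied to the top-degree classes from Lemma~\ref{lemma:e(Hodge Hodge) eval}.

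If the intended classes are the Euler classes $\prod_i\bigl(\sum_j\lambda_{h-j}\ell_i^j\bigr)$, the same substitution gives a factor $\lambda_h$ for each vanishing root, and the argument closes only after one imposes the relations of $R^*(\CA_h)$ and tracks exactly which terms survive. If instead a relation like $c(\BE_h)c(\BE_h^\vee)=1$ is meant to absorb the correction, I would try to show that the correction terms lie in the ideal generated by those relations. Either way, I would verify the resulting identity on the cases $g\le 3$ listed in Lemma~\ref{lemma:e(Hodge Hodge) eval} before asserting it in general.
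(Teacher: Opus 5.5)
Your reduction is the same as the paper's first step. Modulo $(\mu_{g-u+1},\ldots,\mu_g)$ one may take $\BE_{\Mbar_g}=\BE'\oplus\CO^{\oplus u}$ with $c(\BE')=1+\mu_1+\cdots+\mu_{g-u}$, and you correctly observe that then $c(\BE_{\Mbar_g}\otimes\BE_{\CA_h})\equiv c(\BE'\otimes\BE_{\CA_h})\,c(\BE_{\CA_h})^u$. But the proposal stops there. It neither proves nor disproves that the correction sum vanishes, so as written it establishes nothing. Your claim that the identity fails in free variables is not backed by an example: your only test case, $g=u=1$, is excluded by $u<g$ and agrees anyway.

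The decisive computation is the smallest admissible case, $(g,u,h)=(2,1,2)$. Write $\BE_{\Mbar_2}\equiv L\oplus\CO$ with $c_1(L)=\mu_1$. Then
\[ c_2(\BE_{\Mbar_2}\otimes\BE_{\CA_2})\equiv\mu_1^2+3\mu_1\lambda_1+\lambda_1^2+2\lambda_2,\qquad c_2(\BE_{\Mbar_1}\otimes\BE_{\CA_2})=\mu_1^2+\mu_1\lambda_1+\lambda_2. \]
The difference is $2\mu_1\lambda_1+\lambda_1^2+\lambda_2$. Impose $\lambda_2=0$ and $\lambda_1^2=2\lambda_2$ from $R^*(\CA_2)$; Mumford's relation on $\Mbar_2$ only constrains $\mu_1^2$, which does not occur. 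What remains is $2\mu_1\lambda_1$, which is nonzero because the ideal $(\mu_2)$ contains nothing of degree $1$ in the $\mu$'s. In general the correction contains the term $u\lambda_1\,c_{(g-u)h-1}(\BE'\otimes\BE_{\CA_h})$.

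So neither of your proposed rescues works. The relations $\lambda_h=0$ and $c(\BE_{\CA_h})c(\BE_{\CA_h}^{\vee})=1$ do not absorb the correction. Reading the left side as the top class $c_{gh}$ introduces the factor $\lambda_h^u=0$, so the left side vanishes while the right side does not (e.g.\ it equals $\mu_1\lambda_{h-1}$ when $g-u=1$).

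The paper's proof takes exactly the step you balked at. After splitting off $\CO^{\oplus u}$ it writes $c_{(g-u)h}(\BE_{\Mbar_g}\otimes\BE_{\CA_h})=c_{(g-u)h}(\BE'\otimes\BE_{\CA_h})$, silently discarding the summand $\CO^{\oplus u}\otimes\BE_{\CA_h}=\BE_{\CA_h}^{\oplus u}$. It then only uses that Mumford's relation $c(\BE')c(\BE'^{\vee})=1$ survives modulo the ideal, so that the reductions in the proof of Lemma~\ref{lemma:e(Hodge Hodge) eval} can be rerun for the rank $g-u$ bundle $\BE'$.

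By the computation above, that equality is false for $h\geq 2$. The argument therefore only proves the statement with the left side reinterpreted as the Euler class $e(\BE'\otimes\BE_{\CA_h})$, and then the identity with $c_{(g-u)h}(\BE_{\Mbar_{g-u}}\otimes\BE_{\CA_h})$ holds by definition of the formal expression. You were right that the correction terms are the crux. The missing step was to carry this through: conclude that the literal statement is false, and identify the Euler-class version as what can actually be proved.
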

\begin{proof}
	By the splitting principle we may assume $\BE_{\Mbar_{g,n}} = \BE' \oplus \CO^{\oplus u}$ where $c(\BE') = c(\BE_{\Mbar_{g,n}}) = 1 + \mu_1 + \ldots + \mu_{g-u}$, so $c_{(g-u)h}( \BE_{\Mbar_{g}} \otimes \BE_{\CA_h}) = c_{(g-u)h}( \BE' \otimes \BE_{\CA_h})$.
	Moreover, Mumfords relation applies also to the Chern classes of $E'$.
	By the proof of the last lemma applied to $\BE'$ we get hence the same result as for
	$c_{(g-u)h}(\BE_{\Mbar_{g-u}} \otimes \BE_{\CA_h})$.
	%It follows that the proof of the last proposition goes through exactly the same way as for $ 
\end{proof}

\section{Generalities on Gromov-Witten invariants}
\subsection{Conventions}
Consider a monic polynomial with integral coefficients of degree $h$ and with non-negative real roots
\[ \chi = t^h - e_1 t^{h-1} + e_2 t^{h-2} + \ldots + (-1)^h e_h. \]
We define the degree-refined Gromov-Witten invariants by
\[
\CC_{g,\chi}(\tau_{k_1 \ldots k_n}(\Gamma)) := 
\rho_{\ast}\left( \ev^{\ast}(\Gamma) \prod_{i=1}^{n} \psi_i^{k_i} \cap  [\Mbar_{g,n}(\pi,\chi) ]^{\vir} \right)
\]
The usual Gromov-Witten invariants are recovered by summation
\[
\CC_{g,d}(\tau_{k_1 \ldots k_n}(\Gamma)) =  \sum_{\substack{\chi = t^h - e_1 t^{h-1} + \ldots \\ e_1 = d}}
\CC_{g,\chi}(\tau_{k_1 \ldots k_n}(\Gamma))
\]
where the sum is finite by Lemma~\ref{lemma:bound on e_k}.

If the class $\Gamma \in \Chow^{\ast}(\CX^{\times n})$ splits as a product
\[ \Gamma = \pr_{I_1}^{\ast}(\Gamma_1) \pr_{I_2}^{\ast}(\Gamma_2) \]
for some decomposition $\{ 1, \ldots, n \} = I_1 \sqcup I_2$ we
write 
\[ \tau_{k_1 \ldots k_n}(\Gamma)
=
\tau_{(k_i)_{i \in I_1}}(\Gamma_1) \, \tau_{(k_i)_{i \in I_2}}(\Gamma_2), \]
so for example $\tau_{a} \tau_b(\theta_1 \theta_2) = \tau_a(\theta) \tau_b(\theta)$.

\subsection{String and dilaton equation}
We record the standard string and dilaton equations.
\begin{lemma} \label{lemma:string dilaton equation}
	Let $\Gamma \in H^{\ast}(\CX^{n})$ and let $\widetilde{\Gamma}$ be its pullback to $\CX^{n+1}$ via the projection that forgets the last factor. Let $\alpha=(\alpha_1,\ldots,\alpha_n)$. Then
	\[
	\CC_{g,\chi}(  \tau_{(\alpha_1,\ldots,\alpha_n,0)}(\widetilde{\Gamma}) )
	=
	\sum_{i=1}^{n} \CC_{g,\chi}\left( \tau_{(\alpha_1,\ldots,\alpha_{i-1}, \alpha_i - 1, \alpha_{i+1}, \ldots \alpha_n)}(\Gamma) \right)
	\]
	\[
	\CC_{g,\chi}\left( \tau_{(\alpha_1,\ldots,\alpha_n,0)}(\widetilde{\Gamma} \eta_{1, (n+1)}) \right)
	=
	\sum_{i=1}^{n} \CC_{g,\chi}\left( \tau_{(\alpha_1,\ldots,\alpha_{i-1}, \alpha_i - 1, \alpha_{i+1}, \ldots \alpha_n)}(\Gamma \eta_{1 i}) \right)
	\]
	
	\[
	\CC_{g,\chi}\left( \tau_{(\alpha_1,\ldots,\alpha_n,1)}(\widetilde{\Gamma}) \right)
	= (2g-2+n)
	\CC_{g,\chi}\left( \tau_{\alpha}(\Gamma) \right)
	\]
	where the terms involving $\tau_{\alpha}$ with $\alpha_i<0$ are defined to vanish.
\end{lemma}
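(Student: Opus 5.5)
The plan is to reduce all three identities to the standard comparison results for the forgetful morphism
\[ p : \Mbar_{g,n+1}(\pi,\chi) \to \Mbar_{g,n}(\pi,\chi) \]
that forgets the last marking. Two features make this work. First, $p$ is compatible with $\rho$, and the characteristic polynomial of $f_\ast[C]$ does not change when a marking is forgotten. Second, the virtual class satisfies $p^\ast[\Mbar_{g,n}(\pi,\chi)]^{\vir} = [\Mbar_{g,n+1}(\pi,\chi)]^{\vir}$. This holds because the relative obstruction theory for $\pi$-relative stable maps is pulled back from the universal curve, exactly as for a fixed target.

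Next we use the comparison $\psi_i = p^\ast\psi_i + D_i$, where $D_i$ is the divisor on which markings $i$ and $n+1$ lie on a contracted rational bubble. We have $D_i \cong \Mbar_{g,n}(\pi,\chi)$, with $\ev_{n+1}|_{D_i} = \ev_i$, $\psi_i|_{D_i}=0$, and $D_i \cdot D_j = 0$ for $i\neq j$. Expanding, we get $\psi_i^{a} = p^\ast\psi_i^a + D_i\, p^\ast\psi_i^{a-1}$.

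For the \emph{string equation}, the class $\widetilde\Gamma$ satisfies $\ev^\ast\widetilde\Gamma = p^\ast\ev^\ast\Gamma$. Expanding $\prod_i\psi_i^{\alpha_i}$ and applying the projection formula, the term $p_\ast(p^\ast(\cdot)\cap p^\ast[\,]^{\vir})$ vanishes for dimension reasons, since $p$ has relative dimension $1$. The $D_i$ terms restrict to $\Mbar_{g,n}(\pi,\chi)$ with $\psi_i^{\alpha_i-1}$, which gives the stated sum.

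For the second identity we argue the same way, now with the insertion $\eta_{1,n+1}$. On the main term, $p_\ast\ev_{n+1}^\ast\eta_{1,n+1}$ is the pushforward along the fibers of the universal curve. This equals $\ev_1^\ast$ of the slant product of $\eta$ with $\beta$, which is the multiplicity computation $\eta\cdot(x\times\beta)$. By $(1\times[m])^\ast\eta=m\eta$ and Example~\ref{example:pushforward lemma}, this vanishes fiberwise because $\eta$ has odd multiplicity in the second factor. Equivalently, on each fiber the class $\eta$ lies in $H^1\otimes H^1$, and pairing the second $H^1$ factor with a curve class that is itself pushed to $H^{2}$ gives zero. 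Hence only the $D_i$ terms survive, and on $D_i$ the class $\eta_{1,n+1}$ restricts to $\eta_{1i}$, giving the stated formula. One should read the statement with the understanding that $\Gamma\eta_{1i}$ is pulled back to the appropriate product. I expect this vanishing of the main term to be the only nontrivial point. My first attempt is the direct argument that $\int_{C}\ev^\ast\eta_{1,n+1}$ computes a Künneth component of type $H^2\otimes H^0$ of $\eta$ paired with $\beta$, and this component is zero because $\eta$ has no such component (using $\sigma^\ast\theta=0$ and the normalization of $\eta$).

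For the \emph{dilaton equation}, we write $\psi_{n+1} = \omega_p(\sum_i D_i)$ on the universal curve. We then use $p^\ast\psi_i^{\alpha_i}$ as above, together with $D_i\psi_{n+1}=0$, so that
\[ p_\ast\Big(\psi_{n+1}\prod_i \psi_i^{\alpha_i}\Big) = \deg(\omega_p(\textstyle\sum D_i))\prod_i\psi_i^{\alpha_i} = (2g-2+n)\prod_i\psi_i^{\alpha_i}. \]
Combining this with the pullback property of the virtual class and pushing forward along $\rho$ finishes the argument.
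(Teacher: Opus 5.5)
Your proof is correct, and it is the standard forgetful-map argument the paper has in mind: $p^\ast$ of the virtual class, $\psi_i = p^\ast\psi_i + D_i$, and $\psi_{n+1} = c_1(\omega_p(\sum_i D_i))$ (the paper records these identities as standard without writing out a proof). The one input beyond the fixed-target case is the vanishing of the fiberwise degree of $\ev_{1,n+1}^\ast\eta$, and you handle it correctly: $\eta|_{\{x\}\times X}$ is algebraically trivial, equivalently $\eta$ has no K\"unneth component in $H^0\otimes H^2$ (you wrote $H^2\otimes H^0$, but by the symmetry of $\eta$ both components vanish).
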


\subsection{Vanishing relation I} \label{subsec:vanishing reln 1}
For $n \geq 1$ define the subspace
\[ \Mbar_{g,n}(\CX,\chi)^0 = \ev_1^{-1}(\sigma) \subset \Mbar_{g,n}(\CX,\chi) \]
which parametrizes stable maps $(f:C \to X,p_1,\ldots,p_n)$ such that $f(p_1)=0_X$.
There is a natural projection map
\[ t: \Mbar_{g,n}(\CX,\chi) \to \Mbar_{g,n}(\CX,\chi)^0, \quad (f,p_1,\ldots,p_n) \mapsto (t_{-f(p_1)} \circ f, p_1,\ldots,p_n) \]
which gives rise to the product decomposition
\[ \Mbar_{g,n}(\CX,\chi) = \Mbar_{g,n}(\CX,\chi)^0 \times_{\CA_h} \CX. \]
There is a fiber diagram
\[
\begin{tikzcd}
	\Mbar_{g,n}(\CX,\chi) \ar{r}{\ev} \ar{d}{t} & \CX^{\times n} \ar{d}{\xi}  \\
	\Mbar_{g,n}(\CX,\chi)^0 \ar{r}{\ev} & \CX^{n-1}
\end{tikzcd}
\]
where $\xi(a_1,\ldots,a_n) = (a_2-a_1, \ldots,a_n-a_1)$.
The virtual class is pulled back by $t$: 
\[ \exists \alpha \in \Chow(\Mbar_{g,n}(\CX,\chi)^0) : t^{\ast}(\alpha) = [\Mbar_{g,n}(\CX,\chi)]^{\vir}. \]

For $g \geq 2$ consider the morphism
\[ p_n : \Mbar_{g,n}(\CX,\chi) \to \Mbar_{g}(\CX,\chi)/\CX. \]
Following \cite{BOPY} we define the virtual class on the quotient by
\begin{equation} \label{virtual class quotient}
[ \Mbar_{g}(\CX,\chi)/\CX ]^{\vir} := \frac{1}{2g-2} p_{1 \ast}( \psi_1 \ev_1^{-1}(\sigma) \cap [\Mbar_{g,1}(\CX,\chi)]^{\vir}).
\end{equation}
By the same argument as in \cite[Lemma 17]{BOPY} we have
\begin{equation} \label{relation of virtual classes}
p_n^{\ast} [\Mbar_{g,n}(\CX,\chi)/\CX]^{\vir} = [\Mbar_{g,n}(\CX,\chi)]^{\vir}.
\end{equation}

The above leads to the following vanishing:
\begin{prop} \label{prop:vanishing in low multiplicity}
	Let $\Gamma \in \Chow^{\ast}(\CX^{\times n})$ be an eigenvector with $\mult(\Gamma) < 2h$. Then $\CC_{g,\chi}(\Gamma) = 0$.
\end{prop}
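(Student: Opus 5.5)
The plan is to use the translation-invariance structure of Section~\ref{subsec:vanishing reln 1}: the virtual class is pulled back along $t$, so the pushforward of $\ev^{\ast}\Gamma$ factors through a fiber integral along $\xi$. Work with $\CC_{g,\chi}(\Gamma) = (\tau\times\rho)_{\ast}(\ev^{\ast}\Gamma\cap[\Mbar_{g,n}(\pi,\chi)]^{\vir})$; the case of descendent insertions is identical, since the $\psi_i$ are also pulled back along $t$ (translation does not change the marked curve).

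The key steps, in order, are as follows.

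\emph{Step 1 (product structure).} Using the identification $\Mbar_{g,n}(\CX,\chi) = \Mbar_{g,n}(\CX,\chi)^0\times_{\CA_h}\CX$, the map $t$ is the projection to the first factor. Under it, $\ev$ corresponds to
\[ (f_0,x)\mapsto (x, x+\ev_2(f_0),\ldots,x+\ev_n(f_0)). \]
Since $[\Mbar_{g,n}(\CX,\chi)]^{\vir}=t^{\ast}\alpha$ and $\tau\times\rho$ factors through $t$, the projection formula gives
\[ \CC_{g,\chi}(\Gamma) = (\tau\times\rho)^0_{\ast}\big( t_{\ast}(\ev^{\ast}\Gamma)\cap\alpha\big). \]
By the fiber square with $\xi$ and flat base change (both $t$ and $\xi$ are smooth with fibers the abelian scheme), $t_{\ast}\ev^{\ast}\Gamma = \ev^{0\ast}\,\xi_{\ast}\Gamma$. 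It therefore suffices to show that $\xi_{\ast}\Gamma=0$.

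\emph{Step 2 (multiplicity).} Change coordinates on $\CX^{\times n}$ by the automorphism $\psi:(a_1,\ldots,a_n)\mapsto(a_1,a_2-a_1,\ldots,a_n-a_1)$. Under it, $\xi$ becomes the projection $\pr:\CX\times_{\CA_h}\CX^{n-1}\to\CX^{n-1}$ forgetting the first factor. By Example~\ref{example:multiplicity under homomorphism}, $\Gamma'=(\psi^{-1})^{\ast}\Gamma$ is again an eigenvector of the same multiplicity, now for the diagonal $[m]$ on $\CX\times\CX^{n-1}$.

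\emph{Step 3 (decomposition and vanishing).} Refine the Deninger--Murre decomposition to a bigrading with respect to $[m]\times 1$ and $1\times[m]$; these operators commute, so they are simultaneously diagonalizable. Write $\Gamma'=\sum_{i+j=\mult\Gamma}\Gamma'_{i,j}$, where $i$ is the multiplicity in the first factor. The analogue of Example~\ref{example:pushforward lemma} for $[m]\times 1$ shows $\pr_{\ast}\Gamma'_{i,j}=0$ unless $i=2h$: one has $([m]\times1)_{\ast}\Gamma'_{i,j}=m^{2h-i}\Gamma'_{i,j}$ and $\pr_{\ast}\circ([m]\times 1)_{\ast}=\pr_{\ast}$. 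For $i=2h$ we would need $j=\mult\Gamma-2h<0$, and negative multiplicities do not occur since $j\geq 0$ in the Deninger--Murre decomposition. Hence $\xi_{\ast}\Gamma=0$.

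The main obstacle is Step 3, specifically justifying the relative bigraded decomposition together with the pushforward identity $([m]\times1)_{\ast}=m^{2h-i}$ on the $(i,\cdot)$ piece. I would handle this by viewing $\CX\times\CX^{n-1}\to\CX^{n-1}$ as an abelian scheme over the base $\CX^{n-1}$ and applying \cite{DM} over that base. This gives the decomposition by first-factor multiplicity and the vanishing of $\pr_{\ast}$ off multiplicity $2h$. It then remains to check that the total multiplicity is at least the first-factor one, i.e.\ that the second-factor grading has nonnegative weights; this holds because $1\times[m]$ acts compatibly, again by \cite{DM} applied to $\CX^{n-1}\to\CA_h$.
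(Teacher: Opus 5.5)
Your argument is correct for $n \geq 1$. Your Step~1 is the same reduction to $\xi_{\ast}\Gamma = 0$ that the paper makes; the difference lies in how that vanishing is proved.

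The paper neither straightens $\xi$ into a projection nor introduces a bigrading. Since $\xi$ is a homomorphism, it commutes with the diagonal $[m]$. From $[m]_{\ast}[m]^{\ast} = m^{2hn}$ one gets $[m]_{\ast}\Gamma = m^{2hn-\gamma}\Gamma$ for $\gamma = \mult(\Gamma)$, hence $[m]_{\ast}\xi_{\ast}\Gamma = m^{2hn-\gamma}\xi_{\ast}\Gamma$. Comparing with $[m]_{\ast}[m]^{\ast} = m^{2h(n-1)}$ on $\CX^{n-1}$ places $\xi_{\ast}\Gamma$ in the summand of multiplicity $\gamma - 2h < 0$, which is zero. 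This uses only the single Deninger--Murre grading over $\CA_h$.

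Your route proves something slightly finer: pushforward along $\pr$ only sees first-factor multiplicity $2h$. It costs a relative decomposition over $\CX^{n-1}$, a second decomposition, and the compatibility of their projectors. One citation needs correcting. The nonnegativity of $j$ should be quoted from Deninger--Murre for the abelian scheme $\CX \times_{\CA_h} \CX^{n-1} \to \CX$ (first projection), whose multiplication by $m$ is $1 \times [m]$. Deninger--Murre for $\CX^{n-1} \to \CA_h$ itself only decomposes $\Chow^{\ast}(\CX^{n-1})$.

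Your proposal omits the case $n = 0$. It occurs for $g \geq 2$, and there every $\Gamma \in \Chow^{\ast}(\CA_h)$ has multiplicity $0 < 2h$. With no first marking, $t$ and $\xi$ are not defined and Step~1 does not apply. The paper handles this case directly. The virtual class of $\Mbar_g(\CX,\chi)$ is the pullback of the quotient virtual class along $p_0 \colon \Mbar_g(\CX,\chi) \to \Mbar_g(\CX,\chi)/\CX$. Moreover $\tau \times \rho$ factors through $p_0$, whose fibers have positive dimension $h$, so the pushforward vanishes. Alternatively, the dilaton equation reduces this case to $n = 1$ with insertion $\psi_1$ and $\Gamma = 1$. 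Your argument then goes through, because $\psi_1$ is pulled back along $t$ and $\xi_{\ast}1 = \pi_{\ast}1 = 0$.
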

\begin{proof}
	By stability we assume $2g-2+n>0$. The case $g=0$ can be treated directly, see Section~\ref{subsec:genus zero class}.
	If $n = 0$, then $g \geq 2$ and we get $\CC_{g,\chi}() = (\tau \times \rho)_{\ast}( p_0^{\ast} [\Mbar_{g,n}(\CX,\chi)/\CX]^{\vir} ) = 0$ since $\rho$ factors through $p_0$ which is of relative dimension $h$.
	If $n>0$, then we have
	\begin{align*}
		\CC_{g,\chi}(\Gamma) 
		& = (\tau \times \rho)_{\ast}( \ev^{\ast}(\Gamma) \cap t^{\ast}(\alpha)) \\
		& = (\tau^0 \times \rho^0)_{\ast}( t_{\ast} \ev^{\ast}(\Gamma) \cap \alpha ) \\
		& = (\tau^0 \times \rho^0)_{\ast}( \ev^{\ast}(\xi_{\ast}(\Gamma)) \cap \alpha ),
	\end{align*}
	where we have written $\tau^0, \rho^0$ for the restriction of $\tau,\rho$ to $\Mbar_{g,n}(\CX,\chi)^0$.
	If $\mult(\Gamma) = \gamma$, then by $[m]_{\ast} [m]^{\ast} = m^{2hn} \id$ we get $[m]_{\ast} \Gamma = m^{2hn-\gamma} \Gamma$. Since $[m] \circ \xi = \xi \circ [m]$ (with $[m]$ the diagonal multiplication), we get
	$[m]_{\ast}(\xi_{\ast} \Gamma) = m^{2hn-\gamma} \xi_{\ast} \Gamma$, and hence
	$[m]^{\ast}( \xi_{\ast}\Gamma) = m^{\gamma - 2h} \xi_{\ast}(\Gamma)$. In particular, if $\mult(\Gamma) < 2h$, then $\xi_{\ast}(\Gamma) = 0$ by the motivic decomposition \eqref{motivic decomposition}. The claim follows.
\end{proof}

\begin{lemma} \label{lemma:vanishing1b}
	%Let $\Gamma \in R_{\mathsf{vert}}^{\ast}(\CX^{\times n})$.
	Let $\Gamma \in \Chow^{\ast}(\CX^{\times n})$.
	\begin{enumerate}
		\item[(i)] $\CC_{1,\chi}(\tau_{k_1 \ldots k_n}(\Gamma)) = 0$ if
		$\deg(\Gamma) + \sum_i k_i < h+n-1$.
		\item[(ii)] $\CC_{g, \chi}(\tau_{k_1 \ldots k_n}(\Gamma))= 0$ if
		$\deg(\Gamma) + \sum_i k_i < h+n$
		and $g \geq 2$
	\end{enumerate}
\end{lemma}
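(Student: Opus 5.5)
The plan is a relative dimension count. I will push the integrand forward along a proper flat morphism through which the virtual class is pulled back, and note that this morphism has relative dimension $h+n$ for $g\geq 2$ and $h+n-1$ for $g=1$. Throughout, write $D=\deg(\Gamma)+\sum_i k_i$ for the codimension of the integrand $\ev^*(\Gamma)\prod_i\psi_i^{k_i}$ on $\Mbar_{g,n}(\pi,\chi)$.

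\emph{Case (ii), $g\geq 2$.} Consider $p_n:\Mbar_{g,n}(\pi,\chi)\to \Mbar_g(\pi,\chi)/\CX$. By \eqref{relation of virtual classes}, $[\Mbar_{g,n}(\pi,\chi)]^{\vir}=p_n^*[\Mbar_g(\pi,\chi)/\CX]^{\vir}$. The morphism $p_n$ is proper and flat: it is the composite of the $\CX$-torsor $\Mbar_{g,n}(\pi,\chi)\to \Mbar_{g,n}(\pi,\chi)/\CX$ with the iterated universal curve over the quotient, up to finite stabilizers. Its fibre over a point $[f:C\to X]$ is $(C^n\times X)$ modulo a finite group, so $p_n$ has relative dimension $n+h$. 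Since $\rho$ factors as $\rho=\bar\rho\circ p_n$, the projection formula (using flat pullback and proper pushforward, with the integrand an operational class) gives
\[
\CC_{g,\chi}(\tau_{k_1\ldots k_n}(\Gamma))=\bar\rho_*\Bigl(p_{n*}\bigl(\ev^*(\Gamma)\textstyle\prod_i\psi_i^{k_i}\bigr)\cap[\Mbar_g(\pi,\chi)/\CX]^{\vir}\Bigr).
\]
Here $p_{n*}$ is the operational pushforward along the proper flat morphism $p_n$ in the sense of Fulton. It lowers codimension by $n+h$, so it sends a class of codimension $D<h+n$ to zero. This proves (ii).

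\emph{Case (i), $g=1$.} The quotient by $\CX$ has positive-dimensional stabilizers, so I will replace it by a two-step factorization. First use $t:\Mbar_{1,n}(\pi,\chi)\to\Mbar_{1,n}(\pi,\chi)^0$, which is a trivial $\CX$-fibration of relative dimension $h$ with $t^*\alpha$ equal to the virtual class. Then compose with the map forgetting markings $2,\ldots,n$,
\[
\Mbar_{1,n}(\pi,\chi)^0\to\Mbar_{1,1}(\pi,\chi)^0 .
\]
This map is stable because the first marking is kept and genus $1$ with one marking is stable, and it is flat and proper of relative dimension $n-1$. The standard compatibility of virtual classes under forgetting markings, restricted to the fibre over $\ev_1^{-1}(\sigma)$, shows that $\alpha$ is pulled back from $\Mbar_{1,1}(\pi,\chi)^0$. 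The composite therefore has relative dimension $h+n-1$, and the same projection-formula argument kills the class whenever $D<h+n-1$.

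The step needing most care is the justification that the pushforward of an operational class along these maps is operational and drops codimension by the relative dimension. For this I will check flatness of the forgetful maps of stable maps and of the torsor quotient; for the latter this is the same input as \cite[Lemma 17]{BOPY}. The $\psi_i$ are not pulled back along $p_n$, but this causes no problem because they are simply part of the integrand being pushed forward.
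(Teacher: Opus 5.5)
Your proposal is correct and follows the paper's proof. For $g\geq 2$ you push the integrand forward along $p_n$, which has relative dimension $h+n$; for $g=1$ you push forward to $\Mbar_{1,1}(\pi,\chi)^0$, which has relative dimension $h+n-1$. Your treatment of flatness, the operational pushforward, and the pullback of $\alpha$ along the forgetful map supplies details the paper leaves implicit. One minor imprecision: the fibre of $p_n$ is $X\times C[n]$, not $X\times C^n$, but this does not change the dimension count.
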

\begin{proof}
	%Since $\Gamma$ is a vertical tautological class we have $\deg(\Gamma) = \frac{1}{2} \mult(\Gamma)$,
	%so the case $n=0$ is a special case of the previous Lemma. 
	Assume first $g \geq 2$. We have
	\[ \CC_g(\tau_{k_1\ldots k_n}(\Gamma)) 
	= 
	\rho^0_{\ast}( p_{n \ast}( \prod_i \psi_i^{k_i} \ev^{\ast}(\Gamma)) \cap [\Mbar_{g}(\CX,\chi)/\CX]^{\vir} ). \]
	However, since $p_n$ is of relative dimension $h+n$ the pushforward vanishes if $\sum_i k_i + \deg(\Gamma) < h+n$.
	The case for $g=1$ is parallel, but here we pushforward to $\Mbar_{1,1}(\CX,d)^0$.
\end{proof}

\begin{cor} \label{cor:Vanishing for g >= h >= 4}
Let $g \geq h \geq 4$. Then 
$\taut\ \CC_{g,\chi}(\tau_{k_1 \ldots k_n}(\Gamma)) = 0$
for all $\Gamma,k_1,\ldots,k_n$.
\end{cor}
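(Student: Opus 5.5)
The plan is a pure codimension count, combined with the fact that the tautological ring $R^{\ast}(\CA_h)$ vanishes above degree $\binom{h}{2}=h(h-1)/2$. That fact follows from van der Geer's presentation $\BQ[\lambda_1,\ldots,\lambda_h]/(\lambda_h, c(\BE_h)c(\BE_h^\vee)-1)$, recalled in the introduction: this ring is Gorenstein with socle $\lambda_1\lambda_2\cdots\lambda_{h-1}$ in degree $h(h-1)/2$. The tautological projection $\taut$ of \cite{CMOP} preserves codimension. So it suffices to show that every class $\CC_{g,\chi}(\tau_{k_1\ldots k_n}(\Gamma))$ lies in $\Chow^{c}(\CA_h)$ with $c > h(h-1)/2$.

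\emph{Step 1: factor through the quotient moduli space.} Since $g\geq 2$, the quotient $\Mbar_{g}(\pi,\chi)/\CX$ is a proper DM stack. By \eqref{relation of virtual classes} the virtual class of $\Mbar_{g,n}(\pi,\chi)$ is pulled back along $p_n$. Exactly as in the proof of Lemma~\ref{lemma:vanishing1b}(ii), this gives
\[ \CC_{g,\chi}(\tau_{k_1\ldots k_n}(\Gamma)) = \overline{\rho}_{\ast}\Bigl( p_{n\ast}\bigl(\textstyle\prod_i\psi_i^{k_i}\ev^{\ast}\Gamma\bigr)\cap[\Mbar_{g}(\pi,\chi)/\CX]^{\vir}\Bigr), \]
where $\overline{\rho}$ is the map from the quotient to $\CA_h$. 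Here $p_n$ has relative dimension $h+n$. The class $p_{n\ast}(\cdots)$ therefore has codimension $\sum_i k_i+\deg\Gamma-(h+n)$. If this is negative it vanishes, which is the content of Lemma~\ref{lemma:vanishing1b}; otherwise it is nonnegative. The same argument applies if $\Gamma$ also carries classes pulled back from the base, since those only raise the degree.

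\emph{Step 2: the codimension count.} The class $\overline{\rho}_{\ast}[\Mbar_{g}(\pi,\chi)/\CX]^{\vir}=N^{\CA_h}_{g,\chi}$ lies in $\Chow^{gh-(3g-3)}(\CA_h)$. This follows from $\mathrm{vd}=\dim\CA_h+(h-3)(1-g)+n$, after subtracting $h+n$ for the quotient by translations and forgetting the points. Capping with a class of nonnegative codimension before pushing forward only raises the codimension. Hence our invariant lies in $\Chow^{c}(\CA_h)$ with
\[ c\ \ge\ gh-3g+3 = g(h-3)+3 \ \ge\ h(h-3)+3, \]
where the last inequality uses $g\geq h$ and $h\geq 3$. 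Finally,
\[ h(h-3)+3-\tfrac{h(h-1)}{2}=\tfrac{(h-2)(h-3)}{2}>0 \quad\text{for } h\ge 4, \]
so $c>h(h-1)/2$. Therefore $\taut$ of the class lies in $R^{c}(\CA_h)=0$.

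The only step needing genuine care is Step 1. There one must check that the reduction to the quotient space used for Lemma~\ref{lemma:vanishing1b} works for arbitrary $\Gamma\in\Chow^{\ast}(\CX^{\times n})$, and that the degree of the pushed-forward insertion is bounded below by $0$ rather than merely shifted. Both follow from the pullback property \eqref{relation of virtual classes}. I also need the vanishing of $R^{>h(h-1)/2}(\CA_h)$, which I take from van der Geer's description of $R^{\ast}(\CA_h)$. If one wants to avoid that description, an alternative is to compare the virtual class with a top Chern class of $\BE_{\Mbar_g}\otimes\BE_h$ and use the vanishing $c_{gh}(\BE_{\Mbar_{g,n}}\otimes\BE_h)=0$ for $g,h\ge4$ from Lemma~\ref{lemma:e(Hodge Hodge) eval}. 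However, the dimension count above seems the most direct route.
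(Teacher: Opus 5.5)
Your proposal is correct and follows essentially the same route as the paper: use Lemma~\ref{lemma:vanishing1b} to reduce to $\deg(\Gamma)+\sum_i k_i\ge h+n$, conclude that the class has codimension at least $g(h-3)+3$, and note that this exceeds the top degree $h(h-1)/2$ of $R^{\ast}(\CA_h)$. Your explicit bound $g(h-3)+3-\tfrac{h(h-1)}{2}\ge \tfrac{(h-2)(h-3)}{2}>0$ also makes clear that the inequality really uses $g\ge h$, not merely $g,h\ge 4$.
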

\begin{proof}
	By Lemma~\ref{lemma:vanishing1b} we can assume that $\deg(\Gamma) + \sum_i k_i \geq h+n$.
	Then $\CC_{g,\chi}(\tau_{k_1\ldots k_n}(\Gamma)) \in \Chow(\CA_h)$ is of dimension
	\[ \dim(\CA_h) + (h-3)(1-g) + n - \deg(\Gamma) - \sum_i k_i \]
	so of codimension
	\[ \deg(\Gamma) + \sum_i k_i -n -h + g (h-3) + 3 \geq g (h-3) + 3. \]
The top degree of the tautological ring of $\CA_h$ is $(h-1)h/2$ and
by an elementary check
\[ g (h-3) + 3 > \frac{(h-1)h}{2} \]
if $g,h \geq 4$,
so the tautological projection vanishes.
\end{proof}

\begin{rmk}
In fact, for $g \geq h$ by dimension reasons we have $\CC_{g,\chi}(\tau_{k_1 \ldots k_n}(\Gamma)) = 0$ if $h \geq 7$, and using the results of \cite[Corollary 3]{MR2946823} for $\CA_4$ and Borel-Serre \cite{MR387495} for $\CA_5$ one has the vanishing of $\CC_{g,\chi}(\tau_{k_1 \ldots k_n}(\Gamma))$ in cohomology for $h=4,5,6$.
\end{rmk}

The group structure on $\CX$ can be used to obtain relations among the Gromov-Witten invariants.
We record them here for convenience.
Parallel relations appeared in the study of the elliptic curve \cite{OP_VirCurves,HAE} or in abelian surfaces in \cite{BOPY}. Consider again the morphism
\[ \xi : \CX^{\times n} \to \CX^{n-1}, \quad \xi(x_1,\ldots,x_n) = (x_2-x_1, \ldots, x_n-x_1). \]
\begin{prop}(Abelian vanishing relation) Let $g \geq 2$ and $\alpha = (\alpha_1,\ldots,\alpha_n)$.
	Let $\Gamma_0 \in R_{\mathrm{vert}}^{\ast}(\CX^{n})$ with $\deg_{\BC}(\Gamma_0) < h$,
	and let $\Gamma_1 \in R_{\mathrm{vert}}^{\ast}(\CX^{n-1})$.
	Then
	\[ \CC_{g,\chi}( \tau_{\alpha}( \Gamma_0 \cdot \xi^{\ast}(\Gamma_1) ) ) = 0. \]
\end{prop}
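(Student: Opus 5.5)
We use the product decomposition $\Mbar_{g,n}(\CX,\chi) = \Mbar_{g,n}(\CX,\chi)^0 \times_{\CA_h} \CX$ from the proof of Proposition~\ref{prop:vanishing in low multiplicity}. The projection $t$ to the first factor is compatible with the fiber square for $\ev$ and $\xi$. The class $\xi^{\ast}(\Gamma_1)$ is pulled back along $\xi$, and the $\psi$-classes are pulled back along $t$, since translation does not change the marked curve. We also have $[\Mbar_{g,n}(\CX,\chi)]^{\vir} = t^{\ast}\alpha$. The projection formula then gives
\[ \CC_{g,\chi}(\tau_\alpha(\Gamma_0 \xi^{\ast}\Gamma_1)) = (\tau^0\times\rho^0)_{\ast}\Big( \prod_i \psi_i^{\alpha_i}\, \ev^{\ast}\big(\Gamma_1 \cdot \xi_{\ast}\Gamma_0\big) \cap \alpha\Big). \]
Here we use $t_{\ast}\ev^{\ast}(\Gamma_0 \xi^{\ast}\Gamma_1) = \ev^{\ast}(\xi_{\ast}(\Gamma_0)\cdot \Gamma_1)$, which follows from the projection formula along $\xi$ and base change in the fiber square. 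Base change requires a flat or lci morphism. The map $\xi$ is smooth: it is the projection $\CX^{\times n} \cong \CX \times_{\CA_h} \CX^{n-1} \to \CX^{n-1}$ after the change of coordinates $(x_1, x_2-x_1, \ldots)$. So flat base change applies.

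It therefore suffices to show that $\xi_{\ast}\Gamma_0 = 0$ whenever $\deg_{\BC}\Gamma_0 < h$. The map $\xi$ is a smooth proper morphism of relative dimension $h$. Under the isomorphism above it is the projection onto the last $n-1$ factors, with fiber the first copy of $\CX$. Pushforward along it lowers codimension by $h$, so a class of codimension $<h$ pushes forward to a class of negative codimension, which is zero.

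If one prefers to stay inside the tautological framework, the same conclusion follows from multiplicities, as in Proposition~\ref{prop:vanishing in low multiplicity}. Tautological classes of degree $k$ have multiplicity $2k$, since each $\eta_{ij}$ has multiplicity $2$. By the argument in that proof, $\xi_{\ast}\Gamma_0$ is then an eigenvector of multiplicity $2k-2h<0$. It therefore vanishes by the motivic decomposition \eqref{motivic decomposition}. The degree argument alone already suffices.

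The only delicate step is the base change identity $t_{\ast}\ev^{\ast} = \ev^{\ast}\xi_{\ast}$ on the cycle $\alpha$. This needs the square to be a genuine fiber square, which the product decomposition provides: it is the base change of $\xi$ along $\ev$. It also needs the $\psi$-classes to be pulled back along $t$. That holds because $t$ only translates the map, so the cotangent lines of the marked curve are unchanged. With these two points, the computation is the same as in Proposition~\ref{prop:vanishing in low multiplicity}.
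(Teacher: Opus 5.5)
Your argument is correct and is essentially the paper's proof. The paper simply invokes the proof of \cite[Lemma 4]{BOPY}, which is exactly your route: factor the virtual class through the translation-normalized space, as in Proposition~\ref{prop:vanishing in low multiplicity}, apply flat base change in the fibre square for $\xi$, and observe that $\xi_*\Gamma_0=0$ because $\xi$ is proper of relative dimension $h$. Your multiplicity variant is also valid but, as you note, unnecessary, since the dimension count already works for any $\Gamma_0\in\Chow^{<h}(\CX^{\times n})$.
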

\begin{proof}
	The same proof as in \cite[Lemma 4]{BOPY} applies.
\end{proof}

\subsection{Definition of product Gromov-Witten classes} \label{subsec:defn of product GW classes}
We define here the product of two Gromov-Witten classes
as used in the statement of the holomorphic anomaly equation (Conjecture~\ref{conj:HAE}).
If one is not used in the application of the HAE this section can be skipped.

For a decomposition $\{ 1, \ldots, n \} = I_1 \sqcup I_2$ and $d=d_1 + d_2$ consider the evaluation map
\[ \ev := \ev_1 \times \cdots \times \ev_n : \Mbar_{g_1,I_1}(\pi, d_1) \times_{\CA_h} \Mbar_{g_2,I_2}(\pi, d_2) \to \CX^{\times n} \]
where if $i \in I_k$ we let $\ev_i$ be the composition of the projection to the $k$-th factor and the evaluation map at the $i$-th marking.
Consider also the projection
\[ \rho :  \Mbar_{g_1,I_1}(\pi, d_1) \times_{\CA_h} \Mbar_{g_2,I_2}(\pi, d_2) \to \Mbar_{g_1,I_1} \times \Mbar_{g_2,I_2} \times \CA_h. \]
We then define for $\Gamma \in \Chow^{\ast}(\CX^{\times n})$ the class
\[
\CC_{g_1,I_1,d_1} \times \CC_{g_2,I_2,d_2}(\Gamma) 
=
\rho_{\ast}\left( 
\ev^{\ast}(\Gamma) \cdot [ \Mbar_{g_1,I_1}(\pi, d_1) \times_{\CA_h} \Mbar_{g_2,I_2}(\pi, d_2) ]^{\vir} \right)
\]
which is an element in $\Chow^{\ast}(\Mbar_{g_1,I_1} \times \Mbar_{g_2,I_2} \times \CA_h)$.
Here the virtual class is defined by
\[ [ \Mbar_{g_1,I_1}(\pi, d_1) \times_{\CA_h} \Mbar_{g_2,I_2}(\pi, d_2) ]^{\vir}
=
\Delta_{\CA_h}^{!} \left( 
[ \Mbar_{g_1,I_1}(\pi, d_1) ]^{\vir}  \times [\Mbar_{g_2,I_2}(\pi, d_2) ]^{\vir} \right)
\]
where $\Delta_{\CA_h} : \CA_h \to \CA_{h}\times \CA_h$ is the diagonal morphism.
%where we let
%\begin{itemize}
%\item $\rho :  \Mbar_{g_1,I_1}(\pi, d_1) \times_{\CA_h} \Mbar_{g_2,I_2}(\pi, d_2) \to \Mbar_{g_1,I_1} \times \Mbar_{g_2,I_2} \times \CA_h$
%be the forgetful map,
%\item the virtual class is defined by
%\end{itemize}
We form the generating series
\[
\CC_{g_1,I_1} \times \CC_{g_2,I_2}(\Gamma) 
=
\sum_{d_1, d_2 \geq 0} q^{d_1 + d_2} \CC_{g_1,I_1,d_1} \times \CC_{g_2,I_2,d_2}(\Gamma).
\]

\subsection{Proof of Corollary~\ref{cor:weight statement as a consequence of HAE}}
The proof is exactly the same as for a fixed elliptic curve,
see \cite[Sec.2.6]{HAE} except that we use a theorem of K\"unnemann that
\[ [e_{\theta}, f_{\theta}] = \sum_{i=0}^{2h} (i-h) \pi_i \]
where 
\[ \Delta = \sum_{i=0}^{2h} \pi_i \in \Chow^{\ast}(\CX \times_B \CX) \]
is the unique decomposition of the class of the diagonal
such that $(\id_{\CX} \times_B [m])^{\ast}(\pi_i) = m^i \pi_i$ for all $m \in \BZ$.
The $\pi_i$ are the projections on the subspaces of eigenvectors with multiplicity $i$.
In particular, $\Gamma$ is an eigenvector for $[e_{\theta}, f_{\theta}]$ with eigenvalue $\wt(\Gamma)$
if and only if it is a simultaneous eigenvector for $[m]$ with eigenvalue $m^{\mult(\Gamma)}$,
where $\mult(\Gamma) = \wt(\Gamma) + h$.
\qed
%
%For any $m \in \BZ$, let $[m] : \CX \to \CX$ be the multiplication by $m$.
%By \cite[Theorem 3.1]{DM} there exists a unique decomposition
%of the class of the diagonal
%\[ \Delta = \sum_{i=0}^{2h} \pi_i \in \Chow^{\ast}(\CX \times_B \CX) \]
%such that $(\id_{\CX} \times_B [m])^{\ast}(\pi_i) = m^i \pi_i$ for all $m \in \BZ$.
%The $\pi_i$ are the projections on the subspaces of eigenvectors with multiplicity $i$.
%The following result of K\"unnemann relates the Lefschetz action and the decomposition of Deninger-Murre:
%\begin{thm}[{\cite{Kuennemann}}]
%	$[e_{\theta}, f_{\theta}] = \sum_{i=0}^{2h} (i-h) \pi_i.$
%\end{thm}

\section{Quotient Gromov-Witten invariants}
\label{sec:quotient GW invariants}
Assume that $g \geq 2$ throughout.
Let $\chi = t^h - e_1 t^{h-1} + \ldots + (-1)^h e_h$ be a polynomial.
Recall the quotient Gromov-Witten invariants
\[ N_{g,\chi} := \rho_{\ast}([ \Mbar_{g}(\CX,\chi)/\CX ]^{\vir}) \in \Chow^{\ast}(\CA_h). \]
The goal of this section is to show that the descendent Gromov-Witten invariants are determined by the quotient invariants if a certain degree condition is satisfied.
Conversely, we show that the quotient invariants are determined by the (unrefined) Gromov-Witten invariants.
We then apply this to derive the modular properties of the quotient invariants.

\subsection{Reconstruction results}
We show here that the Gromov-Witten invariants
can be effectivly reconstructed from the quotient invariants $N_{g,\chi}^{\CA_h}$.
We begin with a case which is a bit easier and where we make the reconstruction explicit. 
The general case follows afterwards.

\begin{lemma}(Point descendent reconstruction) \label{lemma:tau1 sigma reduce to quotient}
	Let $\Gamma \in \Rvert(\CX^{\times n})$ be a tautological class such that
	$\Gamma|_{X^n} \in H^2(X)^{\otimes n}$.
	For all $g \geq 2$ we have
	\[ \CC_{g,\chi}\left( \tau_1(\sigma) \tau_{0^n}(\Gamma) \right)
	=
	(2g-2) Q_{\Gamma}(\chi) N_{g,\chi}
	\]
	where $Q_{\Gamma}$ is the unique polynomial
	such that for all classes $\beta \in H_2(X,\BZ)$
	with characteristic polynomial $\chi_{\beta} = t^h - e_1 t^{h-1} + \ldots + (-1)^h e_h$ we have 
	\[ \int_{\beta^{\times n}} \Gamma = Q_{\Gamma}(e_1,\ldots, e_h). \]
	%where we write $Q_{\Gamma}(\chi) := Q_{\Gamma}(a_1,\ldots,a_h)$ as before if $\chi = t^h - a_1 t^{h-1} + \ldots + (-1)^h a_h$
\end{lemma}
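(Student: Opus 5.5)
The idea is to reduce the descendent invariant to the quotient virtual class via the pullback relation \eqref{relation of virtual classes}, $[\Mbar_{g,n+1}(\CX,\chi)]^{\vir} = p_{n+1}^{\ast}[\Mbar_{g}(\CX,\chi)/\CX]^{\vir}$. After that, only a fiber integral over the fibers of $p_{n+1}$ remains. I would index the markings as $0,1,\ldots,n$, with marking $0$ carrying $\tau_1(\sigma)$. Then
\[
\CC_{g,\chi}(\tau_1(\sigma)\tau_{0^n}(\Gamma)) = \rho_{\ast}\Big( p_{n+1\,\ast}\big(\psi_0 \ev_0^{\ast}(\sigma)\ev^{\ast}(\Gamma)\big) \cap [\Mbar_g(\CX,\chi)/\CX]^{\vir}\Big),
\]
and the task is to show that the pushforward $p_{n+1\,\ast}(\cdots)$, which has degree $0$ since $p_{n+1}$ has relative dimension $h+n+1$ and the integrand has degree $1+h+n$, equals the constant $(2g-2)\,Q_\Gamma(\chi)$.

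This constant can be computed on a single fiber of $p_{n+1}$, namely over a point $[f:C\to X]$ of the quotient with $C$ smooth; any point will do, since we only need a locally constant degree. The fiber is $(C^{n+1}\times X)/\mathrm{Aut}$, or, ignoring the automorphisms which are handled by the stack conventions, $C^{n+1}\times X$ up to the boundary where markings collide. The map is $(p_0,\ldots,p_n,x)\mapsto (t_x\circ f, p_i)$, so $\ev_i = f(p_i)+x$. I would first use the translation to fix $x$ by the condition $f(p_0)+x=0$, which absorbs $\ev_0^{\ast}\sigma$. This is exactly the decomposition $\Mbar_{g,n}(\CX,\chi)=\Mbar_{g,n}(\CX,\chi)^0\times_{\CA_h}\CX$ from the start of the subsection. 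The integral then becomes
\[
\int_{C^{n+1}} \psi_0 \cdot \prod_{i=1}^n (f(p_i)-f(p_0))^{\ast}(\cdot)\,\Gamma .
\]

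The key simplification is that $\Gamma$ is of degree $(1,\ldots,1)$, meaning it lies in $H^2(X)^{\otimes n}$, and $\psi_0$ already uses up the degree in the $p_0$ variable. Here I would be careful that $\psi_0$ on the fiber is not simply $K_C$ pulled back from the $p_0$ factor: it is $K_C(\sum_{i\ge1} D_{0i})$ from the diagonals. I would handle this with the string-type comparison of Lemma~\ref{lemma:string dilaton equation}, or directly. The diagonal corrections $D_{0i}$ would contribute terms with $p_0=p_i$, and there the integrand would need degree $2$ in the remaining factor $i$ from a class of degree $(1,1)$ pulled back under $p_i\mapsto f(p_i)-f(p_0)$. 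I expect these terms to vanish, though this needs a check: the pullback of $\Gamma$ restricted to the diagonal involves $x_i-x_0=0$, giving a class of degree $2$ on the $0$-th factor, which is killed against $\psi$-degree considerations. Granting this, the term involving $\psi_0$ restricted to the factor $C_0$ carries degree $\deg K_C=2g-2$. Since the total degree is saturated, only the component of the pulled-back $\Gamma$ of bidegree $(0,2,\ldots,2)$ with respect to $(p_0;p_1,\ldots,p_n)$ contributes, and that component is just $\prod_i f(p_i)^{\ast}$ applied to $\Gamma$. This gives $(2g-2)\int_{C^n} (f^n)^{\ast}\Gamma=(2g-2)\int_{\beta^{\times n}}\Gamma$.

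Finally, Lemma~\ref{lemma:reconstruction} identifies $\int_{\beta^{\times n}}\Gamma$ with $Q_\Gamma(e_1,\ldots,e_h)$, and $Q_\Gamma$ depends only on $\chi$, so it is constant on $\Mbar_g(\CX,\chi)/\CX$. This gives the claim, since $\rho_\ast[\Mbar_g(\CX,\chi)/\CX]^{\vir}=N_{g,\chi}$. The main obstacle is making the fiber computation rigorous. One reason is that $p_{n+1}$ is not flat with fibers $C^{n+1}\times X$ when $C$ is nodal. The other is the diagonal corrections to $\psi_0$. I would avoid both by doing the computation in cohomology of the universal curve: apply the dilaton/divisor-type formalism as in \cite[Lemma 17]{BOPY}, forgetting the points $1,\ldots,n$ one at a time, using that each $\eta$-factor pulls back as a divisor-type insertion with value $\int_\beta$. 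This reduces to the defining formula \eqref{virtual class quotient}.
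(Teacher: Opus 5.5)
Your argument is correct in substance, but it is not the paper's route for this lemma. It is the route the paper takes afterwards for Theorem~\ref{thm:reconstruction primary}, specialized to the single descendent $\psi_0$ and the insertion $\sigma\boxtimes\Gamma$.

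For the lemma itself the paper never passes to the quotient. Its steps are:
\begin{enumerate}
\item It forgets the markings $1,\ldots,n$ all at once via $P:\Mbar_{g,n+1}(\CX,\chi)\to\Mbar_{g,1}(\CX,\chi)$ and writes $\psi_0=P^{\ast}\psi_0+\sum_S D_S$.
\item It discards all boundary terms because $\sigma_0\cdot\Gamma\cdot\Delta_{0i}=0$, since the $0$-th factor would carry degree $h+1$.
\item It applies the projection formula, using that the degree-zero class $P_{\ast}\ev_{1\ldots n}^{\ast}(\Gamma)$ equals $\int_{\beta^{\times n}}\Gamma$.
\item It is left with $\rho_{\ast}(\psi_0\ev_0^{\ast}(\sigma)\cap[\Mbar_{g,1}(\CX,\chi)]^{\vir})=(2g-2)N_{g,\chi}$, which is the definition \eqref{virtual class quotient}.
\end{enumerate}
So the paper needs only the definition of the quotient class, and $2g-2$ is just its built-in normalization. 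Your route instead relies on the pullback property \eqref{relation of virtual classes}, which the paper only asserts by analogy with BOPY. In exchange, it produces $2g-2$ geometrically as $\deg\omega_C$ via $q_{\ast}\psi_0=\omega_0+\sum_i\Delta_{0i}$ on the fiber $X\times C[n+1]$. It is also the computation that extends to arbitrary descendents.

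Three details in your write-up need correcting.

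\emph{Diagonal terms.} They vanish for a simpler reason than the one you give. On $\Delta_{0i}$ the $i$-th coordinate $f(p_i)-f(p_0)$ of your map $C^{n+1}\to X^n$ is identically $0$. So the integrand factors through the restriction of $\Gamma$ to $\{x_i=0\}$, which is zero because $\Gamma$ has degree one in the $i$-th factor. No $\psi$-degree argument is involved; this is just the translated form of $\sigma_0\Gamma\Delta_{0i}=0$.

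\emph{Smooth domain.} You cannot always take $C$ smooth. On components where a genus $2$ curve maps with degree $1$ onto an elliptic curve, every domain is nodal. You do not need smoothness, however. The map $p_{n+1}$ is flat with fiber $X\times C[n+1]$ over every point, $q:C[n+1]\to C^{n+1}$ is birational, and the computation goes through with $\omega_C$ in place of $K_C$.

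\emph{Fallback argument.} Forgetting points one at a time with the divisor equation is unnecessary. As phrased, it also does not handle classes such as $\eta_{12}^2$, which are not products of single-factor divisors. If you want to forget points, forget all of them at once as the paper does.
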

\begin{proof}
	This is just a special case of the divisor equation.
	We index the markings by $0,1,\ldots,n$. Consider the morphism
	$P : \Mbar_{g,n+1}(\CX,\chi) \to \Mbar_{g,1}(\CX,\chi)$.
	We have
	\[ \psi_0 = P^{\ast}(\psi_0) + \sum_{\substack{S \subset \{ 0, \ldots, n \} \\ 0 \in S, |S| \geq 2}} D_S \]
	where $D_S$ is the virtual Cartier divisor parametrizing stable maps with rational tails carrying the markings labeled by $S$ and the degree of the stable map on the rational tail being zero (which is automatic here).
	Since $\sigma_0 \Gamma_{1,\ldots n} \Delta_{0i} = 0$ by degree reasons for all $i > 0$ we find that
	\[ \ev^{\ast}(\sigma_0 \Gamma_{1 \cdots n}) \cap D_S \cap [\Mbar_{g,n+1}(\CX,\chi) ]^{\vir} = 0.\]
	Thus 
	\[
	\CC_{g,\chi}( \tau_1(\sigma) \tau_{0^n}(\Gamma) )
	=
	\int_{[\Mbar_{g,1}(\CX,\chi)]^{\vir}} \psi_0 \ev_0^{\ast}(\sigma) P_{\ast}(\ev_{1 \ldots n}^{\ast}(\Gamma)).
	\]
	Since $P_{\ast}(\ev_{1 \ldots n}^{\ast}(\Gamma))$ is of degree $0$ it can be evaluated by integrating over the fiber over a point $x=(f : C \to X,p_1)$.
	Let $\beta = f_{\ast}([C])$. Then the evaluation map $\ev_{1 \ldots n} : P^{-1}(x) \to X^{n}$ factors as the birational map $q : P^{-1}((f,p_1)) \to C^{n}$ and followed by $f^{\times n} : C^n \to X^n$, so the integral over the fiber is
	\[
	P_{\ast}(\ev_{1 \ldots n}^{\ast}(\Gamma))|_{x} = 
	\int_{C \times \cdots \times C} (f,\ldots,f)^{\ast}(\Gamma) = \int_{\beta^{\times n}} \Gamma = Q_{\Gamma}(\chi_{\beta}) = Q_{\Gamma}(\chi).
	\]
	The claim hence follows from the case $n=0$ which is the definition of the quotient invariants.
	%Essentially by definition we have
	%\[
	%\int_{[\Mbar_{g,1}(\CX,\chi)]^{\vir}} \psi_0 \ev_0^{\ast}(\sigma) = \langle \tau_1(\sigma) 
	%\]
	%
	%We argue as in Proposition~\ref{prop:reconstruction primary} just that now we need to integrate
	%\[
	%\int_{p_n^{-1}([f])} \psi_1 \alpha^{\ast} \ev^{\ast}(\sigma_1 \Gamma_{2\ldots n}).
	%\]
	%The fiber $p_n^{-1}([f])$ is isomorphic to $X \times C[n]$ where $C[n]$ is the fiber of $[C] \in \Mbar_g$ under the forgetfulmorphism $\Mbar_{g,n}\to \Mbar_g$. This is because all maps from rational curves to $X$ are constant, so the curve $C$ is stable.
	%Let $\pi_i : C[n] \to C$ be the maps forgetting all but the $i$-th marking and let $\pi  = \pi_1 \times \ldots \times \pi_n : C[n] \to C^n$.
	%We have the basic relation on $C[n]$:
	%\[ \psi_1 = \pi^{\ast}( K_1 ) + \sum_{\substack{ S \subset \{ 1, \ldots, n \} \\ 1 \in S, |S| \geq 2}} \]
	%where $D_S$ is the boundary divisor parametrizing degenerations of $C$ with a rational tail carrying the markings $S$
	%and $K = c_1(\omega_C)$ is the canonical class of the curve (with $K_1 = \pr_1^{\ast}(K)$ as usual).
	%Pushing forward by $\pi$ we obtain
	%\[ \pi_{\ast}(\psi_1) = K_1 + \sum_{i=2}^{n} \Delta_{1i}. \]
	%Pulling back to $p_n^{-1}([f])$ we find
	%\[ q_{\ast}(\psi_1) = K_1 + \sum_{i=2}^{n} \Delta_{1i}. \]
	%
	%Assume first that $C$ is smooth. Then $p_n^{-1}([f])$ is just the Fulton-Macpherson compatification of $C$.
\end{proof}

For any fixed principally polarized abelian variety $(X,\theta)$ consider the map
$\varphi : X^{n+1} \to X^n$ defined by 
\[ \varphi(x_0,\ldots,x_n) = (x_0+x_1, x_0+x_2, \ldots, x_0 + x_n). \]

\begin{thm} \label{thm:reconstruction primary}
For any $g \geq 2$, $h \geq 3$, $k_1,\ldots,k_n \geq 0$ and $\Gamma \in \Rvert(\CX^{\times n})$ satisfying $\deg(\Gamma) + \sum_i k_i = h+n$
there exists a polynomial $R_{g,\Gamma,k_1,\ldots,k_n}(z_1,\ldots,z_h)$ of weighted degree $n-\sum_i k_i$ with $\deg z_i = i$, which can be canonically constructed from the intersection theory of the moduli space of curves and the invariant theory of the symplectic group, such that
\[ \CC_{g,\chi}(\tau_{k_1 \ldots k_n}(\Gamma)) = R_{g,\Gamma,k_1,\ldots,k_n}(e_1,\ldots,e_h) N_{g,\chi}. \]

(Primary reconstruction) If $k_i=0$ for all $i$, $R_{g,\Gamma,0,\ldots,0}$ is the unique polynomial such that for all $\beta \in H_2(X,\BZ)$ we have
\[ \int_{[X]\times \beta \times \ldots \times \beta} \varphi^{\ast}(\Gamma) = R_{g,\Gamma,0,\ldots,0}(e_1,\ldots,e_h) \]
where $\chi_{\beta} = t^h - e_1 t^{h-1} + \ldots + (-1)^h e_h$ is the characteristic polynomial of $\beta$.
\end{thm}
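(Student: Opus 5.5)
The plan is to push everything forward along $p_n : \Mbar_{g,n}(\CX,\chi) \to \Mbar_g(\CX,\chi)/\CX$ and use \eqref{relation of virtual classes}. Since
\[ p_n^{\ast}[\Mbar_{g}(\CX,\chi)/\CX]^{\vir} = [\Mbar_{g,n}(\CX,\chi)]^{\vir}, \]
the projection formula gives
\[ \CC_{g,\chi}(\tau_{k_1\ldots k_n}(\Gamma)) = \rho_{\ast}\Big( p_{n\ast}\big(\textstyle\prod_i\psi_i^{k_i}\ev^{\ast}\Gamma\big) \cap [\Mbar_g(\CX,\chi)/\CX]^{\vir}\Big). \]
The relative dimension of $p_n$ is $h+n$. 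Under the degree assumption $\deg\Gamma+\sum k_i = h+n$, the pushforward is therefore a degree-zero class, i.e. a locally constant function. I will compute it fiberwise.

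\textbf{Fiber identification.} Fix a point $[f: C\to X]$ of the quotient. Rational components of $C$ are contracted, and for $g\geq 2$ the stabilization of the domain behaves well, so the fiber $p_n^{-1}([f])$ is, up to the finite stabilizer, $X \times C[n]$. Here $C[n]$ is the fiber of $\Mbar_{g,n}\to\Mbar_g$ over the stabilized domain (if $C$ is not stable one first stabilizes; rational bridges and tails are contracted by $f$). The evaluation map is
\[ (x,c)\mapsto (x+f(c_1),\ldots,x+f(c_n)), \]
which is $\varphi\circ(\id_X\times f^{\times n}\circ\pi)$ with $\pi:C[n]\to C^n$. Hence the fiber integral equals
\[ \int_{X\times C[n]} \prod\psi_i^{k_i}\,(\id\times f^n\pi)^{\ast}\varphi^{\ast}\Gamma. \]

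\textbf{Primary case and descendants.} In the primary case $k_i=0$, the map $\pi$ is birational, so the integral is $\int_{[X]\times\beta^{\times n}}\varphi^{\ast}\Gamma$. By invariance of $\varphi^{\ast}\Gamma\in\Rvert(X^{n+1})$ under $\Sp$ and Lemma~\ref{lemma:reconstruction} (after first integrating out the $X$ factor, or applying the Fourier transform argument directly to the $n+1$ factors), this is a polynomial $R(e_1,\ldots,e_h)$, well defined because it depends only on $\chi_\beta=\chi$. For descendants, I expand $\psi_i$ on $C[n]$ as $\pi_i^{\ast}K_C$ plus boundary diagonal divisors $D_S$, as in the proof of Lemma~\ref{lemma:tau1 sigma reduce to quotient}. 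Pushing forward to $C^n$ expresses $\pi_{\ast}\prod\psi_i^{k_i}$ universally as a polynomial in the canonical classes $K_j$ and the diagonals $\Delta_{jl}$ of $C^n$, with coefficients from $\psi$-intersection theory on $\Mbar_{0,m}$ and the fibers. Diagonals reduce the problem to fewer points with merged insertions. Each $K_j$ factor is $(2g-2)[\mathrm{pt}]$ on the $j$-th curve factor, which evaluates the corresponding $\varphi$-coordinate at a point; the $X$-translation lets us place that point at $f(p)$. This again yields integrals of tautological classes over $X\times\beta^{\times m}$ with some $X$-factors pulled back along diagonals/points, and these are polynomials in the $e_i$ by Lemma~\ref{lemma:reconstruction}.

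\textbf{Degree count and main obstacle.} The weighted degree claim follows from Lemma~\ref{lemma:reconstruction}: each free $\beta$ factor contributes weight one, and each $\psi$ (via $K$ or diagonal) removes one $\beta$ factor. This gives degree $n-\sum k_i$. The main obstacle is making the fiber computation rigorous over nodal domains and families. Specifically, one must check that the virtual classes and the description $X\times C[n]$ remain valid uniformly, that $\psi_i$ on the map space agrees with $\psi_i$ on the fiber (no contribution from contracted rational components carrying degree), and that the answer is independent of the point chosen. I would handle this using the constancy of degree-zero classes on connected components together with the boundary expansion of $\psi$ used in Lemma~\ref{lemma:tau1 sigma reduce to quotient}. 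The hypothesis $h\geq 3$ is presumably used to kill correction terms whose tautological degree on $X$ exceeds the bounds.
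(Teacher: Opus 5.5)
Your proposal is correct and follows essentially the same route as the paper. Both push forward along $p_n$ using $p_n^{\ast}[\Mbar_g(\CX,\chi)/\CX]^{\vir} = [\Mbar_{g,n}(\CX,\chi)]^{\vir}$ and identify the fiber with $X \times C[n]$, factoring the evaluation map through $\varphi$. Both then expand the pushforward of $\prod_i \psi_i^{k_i}$ to $C^n$ in diagonals and canonical classes, use $f_{\ast}\kappa = (2g-2)\sigma$, and conclude with Lemma~\ref{lemma:reconstruction} and the degree count $n-\sum_i k_i = \ell - |I|$.

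Two small corrections. First, the domain $C$ is automatically stable because abelian varieties contain no rational curves, so no stabilization step is needed. Second, the paper's argument never uses the hypothesis $h \geq 3$, so there are no correction terms for it to kill.
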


\begin{proof}
The case $k_1=\ldots =k_n =0$ is essentially treated in \cite[Lemma 18]{BOPY};
we give a refinement of that argument.
	By relation \eqref{relation of virtual classes} 
	we have
	\[ \CC_{g,\chi}(\tau_{k_1\cdots k_n}(\Gamma)) =
	\rho_{\ast} p_{n \ast} \left(\prod_i \psi_i^{k_i} \ev^{\ast}(\Gamma) \right). \]
where $p_n : \Mbar_{g,n}(\CX,\chi) \to \Mbar_{g}(\CX,\chi)/\CX$ 
is the forgetful map followed by the quotient map.
	Since $\deg(\Gamma)+\sum_i k_i$ equals the relative dimension $h+n$ of $p_n$, the class $p_{n \ast} ( \prod_i \psi_i^{k_i} \ev^{\ast}(\Gamma))$ is of degree $0$. So to compute it, it suffices to integrate over a fiber.
	
	Let $[f : C \to X] \in \Mbar_{g}(\CX,\chi)$ which also defines a $\BC$-valued point in $\Mbar_{g}(\CX,\chi)/\CX$.
	We have a fiber diagram
	\[
	\begin{tikzcd}
		p_n^{-1}([f]) \ar{r}{\alpha} \ar{d}{\gamma} & \Mbar_{g.n}(\CX,\chi) \ar{r}{\ev} \ar{d} & \CX^{\times n} \\
		X \ar{r}{t} \ar{d} & \Mbar_{g}(\CX,\chi) \ar{d} \\
		\Spec(\BC) \ar{r}{[f]} & \Mbar_{g}(\CX,\chi)/\CX
	\end{tikzcd}
	\]
	where $t(a) = [t_a \circ f]$.
	
	The fiber product 
	$p_n^{-1}([f])$ parametrizes pairs $(a, [\tilde{f} : \tilde{C}\to X, p_1,\ldots,p_n])$ satisfying
	$t_a \circ f \circ \nu = \tilde{f}$, where $\nu : \tilde{C}\to C$ is the stabilization map that contracts the rational components which become unstable after forgetting the markings $p_1,\ldots,p_n$.
	The fiber $p_n^{-1}([f])$ can be described as follows.
	Since $X$ does not admit non-constant maps from rational curves, the curve $C$ is stable, 
	and the tuple $[\tilde{C},p_1,\ldots,p_n]$ defines a $n$-marked stable curve.
	Let $P_n : \Mbar_{g,n} \to \Mbar_g$ be the forgetful morphism on the moduli space of stable curves and let $C[n] = P_n^{-1}([C])$ be the fiber over $C$.
	Then the natural map
\[ p_n^{-1}([f]) \to X \times C[n] \]
given by sending $( [\tilde{f} : \tilde{C}\to X, p_1,\ldots,p_n], a)$ to 
$(a,[\tilde{C}, p_1,\ldots,p_n])$ is an isomorphism
with inverse $([\tilde{C}, p_1,\ldots,p_n], a) \mapsto (a,[t_a \circ f \circ \nu : \tilde{C}, p_1,\ldots,p_n])$.

Let $q_i : C[n] \to C[1] \cong C$ be the morphism that forgets all but the $i$-th marking.
Let $q = (q_1,\ldots,q_n) : C[n] \to C^n$.
Concretely, $q([\tilde{C},p_1,\ldots,p_n]) = (\nu(p_1), \ldots,\nu(p_n))$.
Then $q$ is birational.
By the standard intersection theory of $\psi$-classes on the moduli space of curves
\begin{equation} \label{143rwef}
q_{\ast}\left( \prod_{i=1}^{n} \psi_i^{k_i} \right) 
=
\sum_{\CP} \sum_{I \subset \{ 1, \ldots, \ell(\CP)\}} c_{\CP,I} \Delta_{\CP \ast}\left( \prod_{i \in I} \kappa_i \right) 
\end{equation}
where
\begin{itemize}
\item $\CP$ runs over set partitions $(\CP_{1}, \ldots, \CP_{\ell})$ of $\{ 1, \ldots, n \}$ with $\ell := \ell(\CP)$ many parts,
\item $\Delta_{\CP} : C^{\ell} \to C^n$ is the associated diagonal morphism,
\item $\kappa=c_1(\omega_C)$ and $\kappa_i = \pr_i^{\ast}(\kappa)$,
\item $c_{\CP,I} \in \BQ$ are certain universal combinatorial coefficients (which only depend on $(k_1,\ldots,k_n)$).
\end{itemize}
Note that each summand in the right hand side of \eqref{143rwef} is of dimension $\ell - |I|$.
Comparing with the dimension of $q_{\ast}\left( \prod_{i=1}^{n} \psi_i^{k_i} \right)$ we must have
\[ n-\sum_i k_i = \ell - |I|. \]

The map $\ev \circ \alpha$ sends the tuple $(a, [\tilde{f} : \tilde{C} \to X, p_1,\ldots,p_n])$ to $(a+f(\nu(p_1)), \ldots, a+f(\nu(p_n))$. Hence it factors as
\[ p_n^{-1}([f]) \cong X \times C[n] \xrightarrow{\id_X \times q} X \times C^n \xrightarrow{(\id,f,\dots,f)} X^{n+1} \xrightarrow{\varphi}
X^{n} \]

Hence we find
\begin{align*}
& \int_{[ p_n^{-1}([f]) ]} \prod_i \psi_i^{k_i} \cdot \alpha^{\ast} \ev^{\ast}(\Gamma)  \\
& = 
\int_{[ X \times C[n] ]} \prod_i \psi_i^{k_i} \cdot (\id \times q)^{\ast} (\id,f^{\times n})^{\ast} \varphi^{\ast}(\Gamma) \\
& = 
\int_{X\times X^n} (\id, f^{\times n})_{\ast} (\id \times q)_{\ast}\left( \prod_i \psi_i^{k_i} \right)  \varphi^{\ast}(\Gamma) \\
& = 
\sum_{\CP} \sum_{I} c_{\CP,I} \int_{X\times X^n} \pr_{1 \ldots n}^{\ast} f^{\times n}_{\ast} \Delta_{\CP \ast}\left( \prod_{i \in I} \kappa_i \right) 
\varphi^{\ast}(\Gamma) \\
& = 
\sum_{\CP} \sum_{I} c_{\CP,I} (2g-2)^{|I|} \int_{X\times X^n}  (\id_{X} \times \Delta^{X}_{\CP})_{\ast}\left( \prod_{i \in I} \sigma_i \prod_{i \in \{1 ,\ldots, \ell \} \setminus I} \beta_i \right) 
 \varphi^{\ast}(\Gamma) \\
 & = 
\sum_{\CP} \sum_{I} c_{\CP,I} (2g-2)^{|I|} \int_{X\times X^\ell}
\prod_{i \in I} \sigma_i \prod_{i \in \{1 ,\ldots,  \ell \} \setminus I} \beta_i
\cdot 
(\id_{X} \times \Delta^{X}_{\CP})^{\ast} \varphi^{\ast}(\Gamma)  \\
& = 
\sum_{\CP} \sum_{I} c_{\CP,I} (2g-2)^{|I|}
\int_{\beta^{\ell - |I|}} \pr_{\{1 ,\ldots, \ell \} \setminus I, \ast}\left( 
(\id_{X} \times \Delta^{X}_{\CP})^{\ast} \varphi^{\ast}(\Gamma) \prod_{i \in I} \sigma_i \right)
\end{align*}
where we used that $f_{\ast}(\kappa) = (2g-2) \sigma$ in cohomology. The integrand 
\begin{equation} \label{vfsdf354} 
\pr_{\{1 ,\ldots, \ell \} \setminus I, \ast}\left( 
(\id_{X} \times \Delta^{X}_{\CP})^{\ast} \varphi^{\ast}(\Gamma) \prod_{i \in I} \sigma_i \right)
%	\pr_{\{1 ,\ldots, n\} \setminus I, \ast}\left( 
%\varphi^{\ast}(\Gamma) \prod_{i \in I} \sigma_i \right) 
\end{equation} is again tautological,
so by Lemma~\ref{lemma:reconstruction}
this last expression is given by 
a polynomial $R_{\CP,I}(e_1,\ldots,e_h)$ of degree $\ell - |I| = n-\sum_i k_i$ in the coefficients of the characteristic polynomial of $\beta$. The polynomial depends here only on \eqref{vfsdf354}.
Setting $R_{g,\Gamma,k_1,\ldots,k_n} = \sum_{\CP,I}c_{\CP,I} (2g-2)^{|I|} R_{\CP,I}$ the claim follows.

In the special case $k_1 = \ldots k_n=0$ we have (since $q$ is birational)
\[ q_{\ast}(1) = [C^n] \]
so we obtain
\[
\int_{[ p_n^{-1}([f]) ]} \alpha^{\ast} \ev^{\ast}(\Gamma) =
\int_{[X] \times \beta^{\times n}} \varphi^{\ast}(\Gamma).
\]
\end{proof}

\subsection{Quasimodularity for quotient invariants}
An $r$-cycle was defined to be the class $\Gamma_r = \eta_{12} \eta_{23} \cdots \eta_{r-1, r} \eta_{r1} \in \Rvert(\CX^n)$.
\begin{cor} \label{cor:pk formulation}
	Let $r_1,\ldots,r_s \geq 1$. Then we have
	\[ \CC_{g,\chi}\left( \tau_1(\sigma) \tau_{0^{r_1}}(\Gamma_{r_1}) \cdots \tau_{0_{r_s}}(\Gamma_{r_s})  \right) 
	=
	(2g-2) \left( - \frac{1}{2} \right)^{\sum_i (r_i-1)} p_{r_1} \cdots p_{r_s} N_{g,\chi}^{\CA_{h}}. 
	\]
where $p_k$ are the power sum polynomials written as a polynomial in the coefficients $e_i$ of the characteristic polynomial $\chi$.
\end{cor}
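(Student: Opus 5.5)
This is a direct application of the point descendent reconstruction, Lemma~\ref{lemma:tau1 sigma reduce to quotient}, combined with the cycle evaluation of Lemma~\ref{lemma:intersection computation}. I take $n = r_1 + \cdots + r_s$ markings after the first one, grouped into consecutive blocks of sizes $r_1,\ldots,r_s$. On the $j$-th block I place the cycle $\Gamma_{r_j}$, pulled back along the projection $\pr_{I_j}$ to those coordinates. The insertion $\tau_{0^{r_1}}(\Gamma_{r_1}) \cdots \tau_{0^{r_s}}(\Gamma_{r_s})$ is then, by the convention of Section~3.1, $\tau_{0^n}(\Gamma)$ with
\[ \Gamma = \prod_{j=1}^{s} \pr_{I_j}^{\ast}(\Gamma_{r_j}) \in \Rvert(\CX^{\times n}). \]

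The first step is to check the hypothesis of Lemma~\ref{lemma:tau1 sigma reduce to quotient}, namely that $\Gamma|_{X^n} \in H^2(X)^{\otimes n}$. In the cycle $\eta_{12}\eta_{23}\cdots\eta_{r1}$ every index appears in exactly two factors. Each $\eta_{ab}$ with $a \neq b$ lies in $H^1(X) \otimes H^1(X)$ under the Künneth decomposition of the $(a,b)$ factors. Hence each factor of $X^n$ receives exactly degree $2$. The degenerate case $r=1$, where $\Gamma_1 = \theta_1 \in H^2(X)$, is also fine. Since the blocks use disjoint indices, the product $\Gamma$ has Künneth type $H^2(X)^{\otimes n}$.

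Lemma~\ref{lemma:tau1 sigma reduce to quotient} then gives $(2g-2)\,Q_\Gamma(\chi)\,N_{g,\chi}$, where $Q_\Gamma$ is determined by $\int_{\beta^{\times n}} \Gamma$. Because $\Gamma$ is an exterior product over disjoint blocks of coordinates, this integral factors as
\[ \int_{\beta^{\times n}} \Gamma = \prod_{j=1}^{s} \int_{\beta^{\times r_j}} \Gamma_{r_j}. \]
By Lemma~\ref{lemma:intersection computation}, each factor equals $(-\tfrac12)^{r_j-1} p_{r_j}(e_1,\ldots,e_h)$. The product is therefore $(-\tfrac12)^{\sum_j (r_j-1)} p_{r_1}\cdots p_{r_s}$, which is the claim.

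The only point requiring care is the Künneth degree check together with the factorization over disjoint blocks, and neither is a serious obstacle. The case $r=2$, where the cycle is $\eta_{12}^2$, is also covered: index $1$ appears twice and index $2$ appears twice, as in the general case.
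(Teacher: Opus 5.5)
Your proposal is correct and matches the paper's proof, which simply cites the point descendent reconstruction lemma (Lemma~\ref{lemma:tau1 sigma reduce to quotient}) together with the cycle evaluation (Lemma~\ref{lemma:intersection computation}). Your Künneth-degree check and the factorization of $\int_{\beta^{\times n}}\Gamma$ over disjoint blocks are exactly the details that this one-line citation leaves implicit.
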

\begin{proof}
This follows immediately from Lemma~\ref{lemma:tau1 sigma reduce to quotient} and Lemma~\ref{lemma:intersection computation}.
\end{proof}

Given a polynomial $P$ in variables $e_1,\ldots,e_h$, we let
\[ F^{\CA_h}_g(P(e_1,\ldots,e_h)) = \sum_{\chi} P(e_1,\ldots,e_h) N_{g,\chi} q^{e_1} \]
where we sum over all characteristic polynomials $\chi = t^h - e_1 t^{h-1} + \ldots + e_h (-1)^h$.

For example, $P$ will be below a power sum symmetric function $p_k$ which we view as a polynomial in the elementary symmetric polynomial $e_i$.

We often drop the superscript $\CA_h$ and we will also write $F_g$ for $F_g(1)$.

\begin{cor}
	If Conjectures~\ref{conj:quasimodularity} and~\ref{conj:HAE} hold, then we have
	\[
	F_{g}(e_{r_1} \cdots e_{r_s}) \in \QMod_{2gh + 2 \sum_{i=1}^{h} r_i} \otimes \Chow^{\ast}(\CA_h).
	\]
\end{cor}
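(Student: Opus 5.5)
The idea is to express $F_g(e_{r_1}\cdots e_{r_s})$ as a finite linear combination of descendent series $\CC_g(\tau_1(\sigma)\tau_{0^{r_1}}(\Gamma_{r_1})\cdots\tau_{0^{r_s}}(\Gamma_{r_s}))$. Each such series is covered by Corollary~\ref{cor:weight statement as a consequence of HAE}, which we may assume since Conjectures~\ref{conj:quasimodularity} and~\ref{conj:HAE} hold.

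\emph{Step 1: reduce to power sums.} The elementary symmetric polynomials $e_1,\ldots,e_h$ in $b_1,\ldots,b_h$ are polynomials in $p_1,\ldots,p_h$ over $\BQ$ (Newton identities). Moreover, $e_r$ is a linear combination of products $p_{\mu_1}\cdots p_{\mu_t}$ with $\sum_j \mu_j = r$. Hence $e_{r_1}\cdots e_{r_s}$ is a $\BQ$-linear combination of monomials $p_{\mu_1}\cdots p_{\mu_t}$ with $\sum_j \mu_j = \sum_i r_i$. It therefore suffices to prove
\[ F_g(p_{\mu_1}\cdots p_{\mu_t}) \in \QMod_{2gh+2\sum_j \mu_j}\otimes \Chow^{\ast}(\CA_h). \]

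\emph{Step 2: translate to descendents.} Sum Corollary~\ref{cor:pk formulation} over $\chi$ with weight $q^{e_1}$. This gives
\[ \CC_g\left(\tau_1(\sigma)\tau_{0^{\mu_1}}(\Gamma_{\mu_1})\cdots\tau_{0^{\mu_t}}(\Gamma_{\mu_t})\right) = (2g-2)\left(-\tfrac12\right)^{\sum_j(\mu_j-1)} F_g(p_{\mu_1}\cdots p_{\mu_t}). \]
The left side is the $\psi_0$-twisted pushforward to $\CA_h$ of $\CC_g(\Gamma)$ with $\Gamma=\sigma_0\cdot\prod_j \Gamma_{\mu_j}$ on $\CX^{1+\sum\mu_j}$. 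Concretely, one caps $\CC_g(\Gamma)\in\Chow^\ast(\Mbar_{g,n}\times\CA_h)\otimes\QMod$ with $\psi_0$ and pushes forward to $\CA_h$. This operation preserves the weight. Since $g\ge2$, the prefactor $2g-2$ is nonzero, so we may divide by it.

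\emph{Step 3: weight.} The class $\Gamma$ is an eigenvector, because each $\eta_{ij}$ and each $\theta_i$ satisfies $[m]^\ast\eta_{ij}=m^2\eta_{ij}$. This follows from $(1\times[m])^\ast\eta=m\eta$, its symmetric counterpart, and $\mult(\theta)=2$. The multiplicity is additive on products, so
\[ \mult(\Gamma)=2h+2\sum_j\mu_j, \]
using $\mult(\sigma)=2h$ and that $\Gamma_{\mu}$ is a product of $\mu$ classes $\eta$. Corollary~\ref{cor:weight statement as a consequence of HAE} then gives weight
\[ h(2g-2)+2h+2\sum_j\mu_j = 2gh+2\sum_j\mu_j. \]
Combining Steps 1--3 proves the claim.

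The main point needing care is Step 3. One must confirm that products of eigenvectors under the pullback $[m]^\ast$ on the fiber product are again eigenvectors with additive multiplicity. This holds because $[m]^\ast$ is a ring homomorphism. One must also check that $\eta_{ij}$, pulled back along $\pr_{ij}$, is an eigenvector of multiplicity $2$ under the diagonal $[m]$. This follows from Example~\ref{example:multiplicity under homomorphism} together with $([m]\times[m])^\ast\eta=m^2\eta$.
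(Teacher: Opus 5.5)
Your proposal is correct and follows essentially the same route as the paper: expand $e_{r_1}\cdots e_{r_s}$ homogeneously in power sums, identify $F_g(p_{\mu_1}\cdots p_{\mu_t})$ with the descendent series of Corollary~\ref{cor:pk formulation}, and read off the weight $h(2g-2)+\mult(\Gamma)=2gh+2\sum_j\mu_j$ from Corollary~\ref{cor:weight statement as a consequence of HAE}. Your Step 2 uses without comment that $\psi_0$ on $\Mbar_{g,n}(\pi,d)$ is pulled back from $\Mbar_{g,n}$. This holds because abelian varieties contain no rational curves, so the domains of these stable maps are already stable curves. Your multiplicity bookkeeping in Step 3 correctly spells out what the paper leaves implicit.
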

\begin{proof}
By Corollary~\ref{cor:pk formulation} we have
	\begin{equation} \label{CG and quotient}
	\CC_{g}( \tau_1(\sigma) \tau_{0^{r_1}}(\Gamma_{r_1}) \cdots \tau_{0^{r_s}}(\Gamma_{r_s})) 
	= 
	(2g-2) \left( - \frac{1}{2} \right)^{\sum_i (r_i-1)} F_{g}(p_{r_1} \cdots p_{r_s}).
	\end{equation}
	By Conjecture~\ref{conj:quasimodularity}
	the left hand side is 
	%\[ F_{g}( \tau_1(\sigma) \tau_{0^{r_1}}(\Gamma_{r_1}) \cdots \tau_{0^{r_s}}(\Gamma_{r_s})) \]
	a quasimodular of weight $2gh + \sum_i 2 r_i$.
	The power sum polynomials are ring generators of the ring of symmetric function,
	so any polynomial in elementary symmetric polynomial is a polynomial in the $p_k$.
	Thus any quotient invariant is quasimodular.
	Since $p_r$ is of weighted degree $r$ in the $e_1,\ldots,e_h$ where $e_i$ has weight $i$, the weight statement follows as a consequence of Corollary~\ref{cor:weight statement as a consequence of HAE}.
\end{proof}

\begin{cor} \label{cor:converse modularity}
Assume that $F_{g}(e_{r_1} \cdots e_{r_s}) \in \QMod_{2gh + 2 \sum_{i=1}^{h} r_i}$ for all $s,r_1,\ldots,r_s$.
Then for all $\Gamma \in \Rvert(\CX^{\times n})$ satisfying
$\deg(\Gamma) + \sum_i k_i = h+n$ we have
\[ \CC_{g}(\tau_{k_1 \ldots k_n}(\Gamma)) \in \QMod_{(2g-2)h + \mult(\Gamma)} \otimes \Chow^{\ast}(\CA_h). \]
\end{cor}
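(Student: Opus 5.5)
The plan is to reduce the descendent series directly to quotient series using the reconstruction theorem (Theorem~\ref{thm:reconstruction primary}), and then to match weights. Since $\deg(\Gamma)+\sum_i k_i = h+n$, that theorem applies (in its range $g\ge 2$, $h\ge 3$). It gives, for every characteristic polynomial $\chi$,
\[
\CC_{g,\chi}(\tau_{k_1\ldots k_n}(\Gamma)) = R(e_1,\ldots,e_h)\, N_{g,\chi},
\]
where $R = R_{g,\Gamma,k_1,\ldots,k_n}$ is a polynomial of weighted degree $n-\sum_i k_i$, and the weight of $e_i$ is $i$. Crucially, $R$ is independent of $\chi$.

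Next I sum over $\chi$ with $e_1=d$. This sum is finite by Lemma~\ref{lemma:bound on e_k}. Since the coefficient of $q^d$ in $F_g$ collects exactly these $\chi$, the sum gives
\[
\CC_g(\tau_{k_1\ldots k_n}(\Gamma)) = \sum_{\chi} R(e_1,\ldots,e_h)\, N_{g,\chi}\, q^{e_1} = F_g(R).
\]
Now expand $R$ as a $\BQ$-linear combination of monomials $e_{r_1}\cdots e_{r_s}$. Each monomial is weighted-homogeneous with $\sum_j r_j = n-\sum_i k_i$. The constant monomial is allowed, and occurs only if $n=\sum_i k_i$. By linearity of $P\mapsto F_g(P)$ and the hypothesis, each $F_g(e_{r_1}\cdots e_{r_s})$ lies in $\QMod_{2gh+2\sum_j r_j}\otimes \Chow^*(\CA_h)$. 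All monomials share the same weighted degree, so
\[
\CC_g(\tau_{k_1\ldots k_n}(\Gamma)) \in \QMod_{2gh + 2(n-\sum_i k_i)} \otimes \Chow^*(\CA_h).
\]

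It remains to identify this weight with $(2g-2)h+\mult(\Gamma)$. Here I use that $\Gamma\in\Rvert$ is an eigenvector with $\mult(\Gamma)=2\deg(\Gamma)$. To see this, note that $(1\times[m])^*\eta=m\eta$, and by symmetry $([m]\times 1)^*\eta = m\eta$. Hence $([m]\times[m])^*\eta=m^2\eta$, so each $\eta_{ij}$ (including $\eta_{ii}=\theta_i$) satisfies $[m]^*\eta_{ij}=m^2\eta_{ij}$ under the diagonal multiplication on $\CX^{\times n}$. A monomial of degree $D$ in the $\eta_{ij}$ therefore has multiplicity $2D$. The degree condition forces every monomial in $\Gamma$ to have degree $h+n-\sum_i k_i$ (otherwise we first split $\Gamma$ into homogeneous parts, and only the part of that degree is relevant). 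Hence
\[
\mult(\Gamma) = 2\bigl(h+n-\textstyle\sum_i k_i\bigr),
\]
and therefore
\[
(2g-2)h+\mult(\Gamma) = 2gh + 2\bigl(n-\textstyle\sum_i k_i\bigr),
\]
which is the weight obtained above.

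I expect no serious obstacle, since the content sits in Theorem~\ref{thm:reconstruction primary}. The only points needing care are the following.
\begin{enumerate}
\item The polynomial $R$ must be independent of $\chi$ and weighted-homogeneous, which the theorem guarantees.
\item The hypothesis should be read as weight $2gh+2\sum_j r_j$ for $F_g(e_{r_1}\cdots e_{r_s})$.
\item The multiplicity computation $\mult(\eta_{ij})=2$ must hold in Chow over the base, not just on fibers. This follows from the theorem of the cube on an étale cover, as in Section~\ref{subsec:tautological rings}.
\end{enumerate}
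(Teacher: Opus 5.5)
Your argument is correct and is essentially the paper's proof. Theorem~\ref{thm:reconstruction primary} gives $\CC_{g}(\tau_{k_1\ldots k_n}(\Gamma)) = F_g(R_{g,\Gamma,k_1,\ldots,k_n})$ with $R$ weighted-homogeneous of degree $n-\sum_i k_i=\deg(\Gamma)-h$, and the resulting weight $2gh+2(\deg(\Gamma)-h)$ equals $(2g-2)h+\mult(\Gamma)$ because $\mult(\Gamma)=2\deg(\Gamma)$ for vertical tautological classes. You just spell out what the paper leaves implicit: the finite summation over $\chi$, linearity in the monomials $e_{r_1}\cdots e_{r_s}$, and the multiplicity of the $\eta_{ij}$.
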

\begin{proof}
By Theorem~\ref{thm:reconstruction primary}
we have
\[ 
\CC_{g}(\tau_{k_1 \ldots k_n}(\Gamma)) = F_g( R_{\Gamma,k_1,\ldots,k_n} )
\]
where the polynomial 
$R_{\Gamma,k_1,\ldots,k_n}$ is of degree $n-\sum_i k_i = \deg(\Gamma) - h$.
By the assumption the right hand side is quasimodular of weight
\[ 2gh + 2(\deg(\Gamma) - h) = (2g-2)h + \mult(\Gamma) \]
where $\mult(\Gamma) = 2 \deg(\Gamma)$ since $\Gamma$ is tautological.
\end{proof}

\begin{rmk}
Using the Vandermonde determinant the invariants $N_{g,\chi}^{\CA_h}$ are uniquely determined from the collection of series $F_g^{\CA_h}(P)$.
Hence \eqref{CG and quotient} shows that the (unrefined) descendent Gromov-Witten invariants $\CC_{g,d}(\tau_{k_1 \cdots k_n}(\Gamma))$ determine the quotient invariants.
\end{rmk}

\subsection{Holomorphic anomaly equation for quotient invariants}
The aim of this part is to prove the following result:

\begin{thm} \label{thm:hae for quotient text}
	Assume that $h \geq 3$ and $g \geq 2$. If Conjecture~\ref{conj:HAE} holds, then:
	\[
	\frac{d}{d G_2}F_g( e_{r_1} \cdots e_{r_s} )
	=
	2 \delta_{h=3} F_{g-1}( e_2 e_{r_1} \cdots e_{r_s}) 
	+ F_g( \CL_{g,h} e_{r_1} \cdots e_{r_s} )
	\]
	where $\CL_{g,h}$ is the following differential operator on the ring $\BC[e_1,\ldots,e_h]$, and where we set $e_0=1$ and $e_k = 0$ if $k \notin \{ 0, \ldots, h \}$,
	\begin{equation*}
		\begin{aligned}
			\CL_{g,h}
			={}&-2\sum_{i,j=1}^h
			\left(\sum_{a=0}^{\min(i,j)-1}(i+j-1-2a)e_a e_{i+j-1-a}\right)
			\frac{d}{d e_i} \frac{d}{d e_j} \\
			&\quad -4\sum_{i=1}^h (g-i+1)(h-i+1)e_{i-1} \frac{d}{de_i} .
		\end{aligned}
	\end{equation*}
\end{thm}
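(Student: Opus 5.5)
We apply Conjecture~\ref{conj:HAE} to the descendent series that encode the quotient invariants, namely
\[
\CC_{g}\bigl( \tau_1(\sigma) \tau_{0^{r_1}}(\Gamma_{r_1}) \cdots \tau_{0^{r_s}}(\Gamma_{r_s})\bigr)
=
(2g-2) \Bigl( - \tfrac{1}{2} \Bigr)^{\sum_i (r_i-1)} F_{g}(p_{r_1} \cdots p_{r_s}),
\]
from \eqref{CG and quotient}. Since the $p_r$ generate the symmetric functions, it suffices to prove the statement with $P=p_{r_1}\cdots p_{r_s}$ and then convert $\CL_{g,h}$, which is written in the $e$-basis, by the chain rule. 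The weight/degree bookkeeping follows Corollary~\ref{cor:converse modularity}.

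\textbf{Step 1: push the conjecture to $\CA_h$.} We cap the three terms of Conjecture~\ref{conj:HAE} with $\psi_1$ (pulled back from $\Mbar_{g,n}$, up to boundary corrections handled as in Lemma~\ref{lemma:tau1 sigma reduce to quotient}) and push forward to $\CA_h$.

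\emph{(a) Non-separating term.} We obtain $\CC_{g-1}(\tau_1(\sigma)\prod\tau_0(\Gamma_{r_i})\,\tau_0\tau_0(f_\theta))$. Write $f_\theta=(\theta_1+\theta_2-2\eta_{12})^{h-1}/(h-1)!$. By Lemma~\ref{lemma:vanishing1b} only terms of the expected degree $h+n$ survive. The surviving terms are evaluated by Theorem~\ref{thm:reconstruction primary}, that is, by intersecting against $[X]\times\beta^{\times n}$.

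\emph{(b) Separating terms.} For the product classes, one side has genus $\le g-1$ and the other side inherits a fibre-degree count. The abelian vanishing argument (Proposition~\ref{prop:vanishing in low multiplicity} applied to each factor, since each factor alone has $\mathrm{mult}<2h$ unless nearly all insertions land on it) should kill them. The exception is when one side is genus $0$, which contributes only constant maps and hence classical cup products.

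\emph{(c) $\psi$-term.} We get $-2\sum_i\psi_i\,\CC_g(f^{(i)}_\theta\Gamma)$. We compute $f_\theta^{(0)}$ on the $\sigma$ insertion via Lemma~\ref{lemma:F theta computation}(a), which gives $f_\theta(\theta^h/h!)=\theta^{h-1}/(h-1)!$. On the cycle insertions we use Lemma~\ref{lemma:F theta computation}(b)--(d): $f_\theta$ applied at a vertex of a cycle $\Gamma_r$ either splits it into two cycles (coefficient $-\tfrac12$) or reduces a $\theta$-factor. The resulting classes $\tau_1(\theta^{h-1}/(h-1)!)$ and those with a single $\psi_i$ on a cycle vertex are then reduced to quotient invariants using Theorem~\ref{thm:reconstruction primary} with nonzero $k_i$, together with the string/dilaton relations of Lemma~\ref{lemma:string dilaton equation}.

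\textbf{Step 2: translate to differential operators.} In the $p$-basis the cutting and joining of cycles produces second-order operators. A term $p_ap_b\,\partial_{p_{a+b-1}}$-type term comes from splitting, and a $p_{a+b+1}\partial_{p_a}\partial_{p_b}$-type term comes from the genus-$0$ separating joins. We also get first-order terms with coefficients involving $(g-i+1)(h-i+1)$ from dilaton factors $(2g-2)$ and from Lemma~\ref{lemma:F theta computation}(a). Converting to the $e$-basis via $E(z)=\exp(\sum(-1)^{r-1}z^rp_r/r)$ should give exactly $\CL_{g,h}$. We verify the coefficient $(i+j-1-2a)e_ae_{i+j-1-a}$ by comparing on the roots $b_1,\dots,b_h$, where the operator becomes $\sum_k b_k\,\partial_{b_k}^2$-like expressions in the power-sum picture.

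\textbf{Step 3: the genus-lowering term and the main obstacle.} The genus-lowering term $2\delta_{h=3}F_{g-1}(e_2P)$ arises from (a) by a dimension count. The quotient invariant $N_{g-1,\chi}$ lives in codimension $(g-1)h-(3g-6)$. Matching this with codimension $gh-(3g-3)$ forces $h=3$. The only surviving piece of $f_\theta$ then pairs against $[X]\times\beta\times\beta$ to give $2e_2$ by \eqref{integral formula}.

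The main obstacle is Step 1(c). Controlling the $\psi$-class corrections when $\psi_i$ lands on a cycle insertion requires the explicit coefficients $c_{\CP,I}$ in \eqref{143rwef} and careful tracking of diagonal contributions $\Delta_{\CP}$. If this gets unwieldy, we would first verify the identity on the generating basis $P=e_1^ae_2^b\cdots$ in the split model $X=E_1\times\cdots\times E_h$, where all classes diagonalize in the roots $b_k$.
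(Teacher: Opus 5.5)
Your architecture matches the paper's. You apply the descendent form of Conjecture~\ref{conj:HAE} to $\CC_g(\tau_1(\sigma)\tau_0(\Gamma_{r_1})\cdots\tau_0(\Gamma_{r_s}))$, evaluate the three terms, get an operator in the $p_r$, and convert it to the $e_i$ by generating functions. However, several load-bearing steps fail as written, and the step you flag as the main obstacle is left unresolved.

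\emph{Separating terms.} In (b), Proposition~\ref{prop:vanishing in low multiplicity} does not kill these terms. The side not carrying $\tau_1(\sigma)$ has insertion degree up to $h-1+|B|$: the $f_\theta$-half contributes up to $h-1$, and each cycle vertex contributes one. Its multiplicity can therefore reach $2h-2+2|B|\geq 2h$ once $|B|\geq 1$. What works is Lemma~\ref{lemma:vanishing1b}. The degree $h-1+|B|$ lies below the thresholds $h+|B|$ (genus $1$) and $h+|B|+1$ (genus $\geq 2$), so only a genus-$0$ bubble survives. It carries exactly two markings, because it has no $\psi$-class, and it contributes $f_\theta$ applied to $\Delta_{ij}^*$ of the cycle insertions.

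This term is where \emph{all} the cutting and joining happens. At the merged vertex, Lemma~\ref{lemma:F theta computation}(c),(d) splits a cycle in two, rejoins it to an $(r-1)$-cycle, or fuses two cycles into an $(r_a+r_b-1)$-cycle. This produces both $p_kp_{r-1-k}\partial_r$ and $p_{k+\ell-1}\partial_k\partial_\ell$; your indices for these two operators are reversed.

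\emph{The $\psi$-term.} In (c), by contrast, $f_\theta$ at a single vertex never splits anything. Lemma~\ref{lemma:F theta computation}(b) gives $f_\theta^{(i)}\Gamma_r=-\frac12\Gamma_{r-1}$ (with $\Gamma_0=-2h$). Marking $i$ is left bare with $\psi_i$, and the dilaton equation removes it with factor $2g-2+n$.

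The $\sigma$-term is $\tau_2(\theta^{h-1}/(h-1)!)$, not $\tau_1$. Your fallback to the split model $E_1\times\cdots\times E_h$ cannot help here, because the difficulty is $\psi$-intersection theory on $\Mbar_{g,n}$, not the $\eta$-calculus. No coefficients $c_{\CP,I}$ are needed either:
\begin{itemize}
\item Write $\psi_0=P^*\psi_0+\sum_S D_S$, where $P$ forgets all markings but $0$.
\item By degree only the $D_{0i}$ survive, and the $P^*\psi_0^2$ term vanishes by Proposition~\ref{prop:vanishing in low multiplicity}; this is Lemma~\ref{lemma:tau2 reduce to quotient}.
\item The relation $\frac{\theta_1^{h-1}}{(h-1)!}\Gamma_r=-\frac12\sigma_1(\Gamma_{r-1})_{2\cdots r}$ from Example~\ref{example:some eta relation} then returns you to Corollary~\ref{cor:pk formulation}.
\end{itemize}
This is needed for the first-order part $2\sum_r r(r-2g-1)p_{r-1}\partial_r$. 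It includes the cancellation of the $n$ from the dilaton factor against the cross-cycle terms of (b).

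\emph{Genus-lowering term.} In Step 3, codimension matching is not a vanishing argument. For $h\geq 4$ the genus-$(g-1)$ term has excess insertion degree, so Theorem~\ref{thm:reconstruction primary} does not apply. You need the upper bound of Corollary~\ref{cor:dim constraint} to kill it, since Lemma~\ref{lemma:vanishing1b} only gives the lower bound. For $g=2$ the term dies because genus $1$ would need $h=2$.

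For $h=3$, more than one piece of $f_\theta$ survives. The pieces $\theta_1\theta_2$ and $2\eta_{12}^2$, with $\tau_1(\sigma)$ in genus $g-1$, give only $(2g-4)(p_1^2-p_2)$. You also need the $\theta_1^2$ piece, evaluated via the string equation and primary reconstruction as $2(p_1^2-p_2)$. Only the sum reaches $(2g-2)\cdot 2e_2$, which cancels the normalization $2g-2$ and produces the coefficient $2$.
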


\begin{example}
	Modulo $e_3 = e_4 = \ldots = 0$ we have
	\[ \CL_{2,h}(e_2^r) = -2 r (2h-3+r) e_1 e_2^{r-1} \]
	where in the $r=0$ case the terms with negative exponents are defined to vanish. The same convention applies below. Thus we find
	for $h \geq 3$:
	\[
	\frac{d}{d G_2} F_2(e_2^r) = -2 r (2h-3+r) F_2(e_1 e_2^{r-1})
	\]
	In particular, all $F_2^{\CA_h}(1)$ are modular forms of weight $4h$ for all $h \geq 3$.
\end{example}

\begin{example} We consider $g=3$ and $h \geq 4$. Then we get
	\[ \frac{d}{d G_2} F_3( e_2^r e_3^s)
	=
	-2 r (r + 4s + 4h - 5) F_3( e_1 e_2^{r-1} e_3^s )
	-2 s (s + 2h - 5) F_3(e_2^{r+1} e_3^{s-1} )
	- 6 r (r-1) F_3(e_2^{r-2} e_3^{s+1})
	\]
	In particular, $F_3^{\CA_h}(1)$ is a modular form of weight $6h$ for $h \geq 4$.
\end{example}
\begin{example}
	For $h=3$ and setting $e_4 = e_5 = \ldots = 0$ we get for any $g \geq 2$:
	%\begin{align*}
	\[ \CL_{g,3}(e_2^r e_3^s)
	=
	-2 r (r + 4s + 4g-5) e_1 e_2^{r-1} e_3^s
	- 2 s (s + 2g-5) e_2^{r+1} e_3^{s-1}
	- 6 r (r-1) e_2^{r-2} e_3^{s+1}
	\]
	So we find for $h=3$:
	\begin{multline*}
		\frac{d}{d G_2} F_{g}^{\CA_3}(e_2^{r} e_3^s)
		=
		2 F_{g-1}^{\CA_3}(e_2^{r+1} e_3^s) \\
		-2 r (r + 4s + 4g-5) F_g^{\CA_3}(e_1 e_2^{r-1} e_3^s)
		- 2 s (s + 2g-5) F_g^{\CA_3}(e_2^{r+1} e_3^{s-1})
		- 6 r (r-1) F_g^{\CA_3}(e_2^{r-2} e_3^{s+1})
	\end{multline*}
\end{example}

\subsection{Proof of Theorem~\ref{thm:hae for quotient text}}
We consider the HAE (Conjecture~\ref{conj:HAE}) for the series
\[ \CC_{g}( \tau_1(\sigma) \tau_{0}(\Gamma_{r_1}) \cdots \tau_{0}(\Gamma_{r_s}))
=
(2g-2) \left( - \frac{1}{2} \right)^{\sum_i (r_i-1)} F_{g}(p_{r_1} \cdots p_{r_s}).
\]
We let $n = \sum_i r_i$ so that there are $n+1$ markings. 
There are three terms in the right hand side of the HAE which we denote by
\[ \frac{d}{dG_2} \CC_{g}( \tau_1(\sigma) \tau_{0}(\Gamma_{r_1}) \cdots \tau_{0}(\Gamma_{r_s}))  = I_1 + I_2 + I_3 \]
and consider one-by-one below.

\subsubsection{Term 1}
The first term is
\[ I_1 = \CC_{g-1}( \tau_1(\sigma) \tau_{0}(\Gamma_{r_1}) \cdots \tau_{0}(\Gamma_{r_s}) \tau_0( f_{\theta})) , \quad f_{\theta} = \frac{(\theta_1 + \theta_2 - 2 \eta_{12})^{h-1}}{(h-1)!} \] 
If $g \geq 3$, by Corollary \ref{cor:dim constraint} for this to contribute we must have
\[ h+n+3 = 1 + h + n + \deg(f_{\theta}) \Rightarrow h-1 = 2, \]
so $h=3$. If $g= 2$, by \eqref{cor:dim constraint} again we must have $h=2$ which we excluded from consideration.
Hence we can assume $h=3$ in which case after symmetrizing we get the following $4$ terms:
\begin{align*}
	I_1 & = \CC_{g-1}( \tau_1(\sigma) \tau_{0}(\Gamma_{r_1}) \cdots \tau_{0}(\Gamma_{r_s}) \tau_{00}(\theta_1^2)) \\
	& + 2 \CC_{g-1}( \tau_1(\sigma) \tau_{0}(\Gamma_{r_1}) \cdots \tau_{0}(\Gamma_{r_s}) \tau_{00}(\eta_{12}^2)) \\
	& + \CC_{g-1}( \tau_1(\sigma) \tau_{0}(\Gamma_{r_1}) \cdots \tau_{0}(\Gamma_{r_s}) \tau_0(\theta) \tau_0(\theta)) \\
	& - 4 \CC_{g-1}( \tau_1(\sigma) \tau_{0}(\Gamma_{r_1}) \cdots \tau_{0}(\Gamma_{r_s}) \tau_{00}(\theta_1 \eta_{12})).
\end{align*}

We consider these 4 terms one by one:

\noindent \textbf{Term 1a:} By the string equation (Lemma~\ref{lemma:string dilaton equation}) and the primary reconstruction (Theorem~\ref{thm:reconstruction primary}) we have
\begin{align*}
	\CC_{g-1,\chi}\left( \tau_1(\sigma) \tau_{0}(\Gamma_{r_1}) \cdots \tau_{0}(\Gamma_{r_s}) \tau_{00}(\theta_1^2) \right)
	& = \CC_{g-1,\chi}\left( \tau_0(\sigma) \tau_{0}(\Gamma_{r_1}) \cdots \tau_{0}(\Gamma_{r_s}) \tau_{0}(\theta_1^2) \right) \\
	& = \int_{X \times \beta^{\times (n+2)}} \varphi^{\ast}( \sigma \boxtimes \Gamma_{r_1} \boxtimes \cdots \boxtimes \Gamma_{r_s} \boxtimes \theta^2) N_{g-1,\chi}^{\CA_h} \\
	& \overset{(1)}{=} \int_{\beta^{\times n}} \Gamma_{r_1} \boxtimes \cdots \boxtimes \Gamma_{r_s} \int_{X \times \beta \times \beta} \varphi^{\ast}( \sigma_1 \theta_2^2 ) N_{g-1,\chi}^{\CA_h} \\
	& \overset{(2)}{=} 2 (p_1^2 - p_2) 
	\left( - \frac{1}{2} \right)^{\sum_i (r_i-1)} p_{r_1} \cdots p_{r_s} N_{g-1,\chi}^{\CA_h}.
\end{align*}
where in (1) we used that $\Gamma_{r}$ has degree $2$ in each factor and we integrate over $\beta$ in every factor, so the $\varphi$-pullback has to leave the classes invariant (see Lemma~\ref{lemma:phi pullback eval} for how the $\phi$-pullback acts). 
To see step (2) observe that $\varphi^{\ast}(\sigma_1) = \alpha_{\ast}(1)$ where $\alpha : X^2 \to X^3$ is given by $\alpha(x_0,x_2) = (x_0,-x_0,x_2)$.
Moreover the involution $\iota : X \to X, \iota(x)=-x$ acts by $+1$ on $H^{2h-2}(X)$. Thus
\begin{align*}
	\int_{X \times \beta \times \beta} \varphi^{\ast}( \sigma_1 \theta_2^2 )
	& = \int_{\beta \times \beta} \alpha^{\ast}(\varphi^{\ast}(\theta_2^2)) = \int_{\beta \times \beta} (\theta_1 + \theta_2 + 2 \eta_{12})^2
	= 2 (p_1^2 - p_2)
\end{align*}
where we used Lemma~\ref{lemma:intersection computation} in the last step.

Thus we find
\[
\CC_{g-1}( \tau_1(\sigma) \tau_{0}(\Gamma_{r_1}) \cdots \tau_{0}(\Gamma_{r_s}) \tau_{00}(\theta_1^2))
=
2 \left( - \frac{1}{2} \right)^{\sum_i (r_i-1)} F_{g-1}\left( (p_1^2 - p_2) p_{r_1} \cdots p_{r_s} \right)
\]

\noindent \textbf{Term 1b:}
The insertion is just another cycle of length $2$. So we find
\[
2 \CC_{g-1}( \tau_1(\sigma) \tau_{0}(\Gamma_{r_1}) \cdots \tau_{0}(\Gamma_{r_s}) \tau_{00}(\eta_{12}^2))
=
- (2g-4) \left( - \frac{1}{2} \right)^{\sum_i (r_i-1)} F_{g-1}\left( p_2 p_{r_1} \cdots p_{r_s} \right)
\]
\noindent \textbf{Term 1c:} By the divisor equation (or viewing the last two insertions as cycles of length $1$:
\[ \CC_{g-1}( \tau_1(\sigma) \tau_{0}(\Gamma_{r_1}) \cdots \tau_{0}(\Gamma_{r_s}) \tau_0(\theta) \tau_0(\theta))
=
(2g-4) \left( - \frac{1}{2} \right)^{\sum_i (r_i-1)} F_{g-1}\left( p_1^2 p_{r_1} \cdots p_{r_s} \right) \]
\noindent \textbf{Term 1d:}
\[ \CC_{g-1}( \tau_1(\sigma) \tau_{0}(\Gamma_{r_1}) \cdots \tau_{0}(\Gamma_{r_s}) \tau_{00}(\theta_1 \eta_{12})) = 0 \]
by the string equation.

\noindent \textbf{Sum:} As a sum of the above 4 terms we obtain:
\[
I_1 = (2g-2) \left( - \frac{1}{2} \right)^{\sum_i (r_i-1)} F_{g-1}\left( (p_1^2 - p_2) p_{r_1} \cdots p_{r_s} \right).
\]

\subsubsection{Term 2}
Consider the second term in the HAE which reads
\[
I_2 = 2 \sum_{\substack{g=g_1 + g_2 \\ \{ 1, \ldots, n \} = A \sqcup B }}
\CC_{g_1}(\tau_1(\sigma) \tau_0(\gamma_A) \tau_0((f_{\theta})_0) ) \CC_{g_2}(\tau_0((f_{\theta})_1) \tau_0(\gamma_B))
\]
where we used $\gamma = \Gamma_{r_1} \boxtimes \cdots \boxtimes \Gamma_{r_s}$
and we write $(f_{\theta})_{0}, (f_{\theta})_1$ informally as placeholder for the corresponding $f_{\theta}$ insertion.
Since $\deg \tau_0((f_{\theta})_1) \tau_0(\gamma_B) \leq h-1 + |B|$ by Lemma~\ref{lemma:vanishing1b} only the case $g_2=0$ can contribute.
Since there is no $\psi$-class on the component we must have $|B|=2$.
In this case, since all the insertions are tautological, this is evaluated by integration. So we find
\[
I_2 =2  \sum_{1 \leq i < j \leq n} \CC_g( \tau_1(\sigma) \tau_0\left( f_{\theta}^{(i)}( \Delta_{ij}^{\ast}( \Gamma_{r_1} \boxtimes \cdots \boxtimes \Gamma_{r_s})) \right)
\]
where $\Delta_{ij}: \CX^{n-1} \to \CX^n$ is the inclusion of the big $(i,j)$-diagonal and $f_{\theta}^{(i)}$ is the action of the Lefschetz operator on the $i$-th factor.

The evaluation of the action of $f_{\theta}$ is provided by the following two propositions. Define formally
\[ \Gamma_0 := -2h . \]
We also set $p_0 := h$ so that the the statement of Lemma~\ref{lemma:intersection computation} continues to hold for $r=0$.
%by setting
%\[ p_0 = h. \]
(This is the natural definition here since we are working with power sums in $h$ variables.)
\begin{prop}
	For $1 \leq i < j \leq r$ we have (up to applying suitable permutations)
	\[ f_{\theta}( \Delta_{ij}^{\ast}(\Gamma_r)) = -2 \Gamma_{r-1} - \frac{1}{2} \Gamma_{j-i} \Gamma_{r-(j-i)-1} - \frac{1}{2} \Gamma_{j-i-1} \Gamma_{r-(j-i)}. \]
\end{prop}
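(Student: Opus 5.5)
The plan is to reduce to a local computation at the merged vertex and then apply Lemma~\ref{lemma:F theta computation}(c) and (d). Here $\Delta_{ij}^{\ast}$ substitutes $x_j = x_i$, so only the factors containing index $i$ or $j$ change. I will call the merged factor $0$ and treat it as a vertex of the cycle graph of $\Gamma_r$, whose edges are the $\eta$-factors. Write $A=i-1$, $B=i+1$, $C=j-1$, $D=j+1$, with indices taken cyclically mod $r$.

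\emph{Generic case} $j-i\geq 2$ and $r-(j-i)\geq 2$. After pullback,
\[ \Delta_{ij}^{\ast}\Gamma_r = \eta_{0A}\eta_{0B}\eta_{0C}\eta_{0D}\cdot(\text{path } B\to C)\cdot(\text{path } D \to A), \]
where the two paths are products of $\eta$'s not involving $0$. Since $f_\theta^{(0)}$ acts only in the $0$-th factor, Lemma~\ref{lemma:F theta computation}(d) replaces $\eta_{0A}\eta_{0B}\eta_{0C}\eta_{0D}$ by $-\tfrac12$ times the sum over the six ways of keeping two edges at $0$ and joining the other two endpoints by a new $\eta$. The resulting graphs fall into three types.
\begin{itemize}
\item Keeping $\{0A,0B\}$ with $\eta_{CD}$, keeping $\{0C,0D\}$ with $\eta_{AB}$, and the two crossing choices $\{0A,0C\},\{0B,0D\}$: each yields a single cycle through all $r-1$ vertices. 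Since $\eta$ is symmetric, reversing the direction of a path does no harm, so each is a relabelling of $\Gamma_{r-1}$. Together these give $4\cdot(-\tfrac12)\Gamma_{r-1} = -2\Gamma_{r-1}$.
\item Keeping $\{0B,0C\}$ with $\eta_{AD}$: this gives a $(j-i)$-cycle $0,B,\dots,C$ and a disjoint $(r-(j-i)-1)$-cycle $D,\dots,A$. The contribution is $-\tfrac12\Gamma_{j-i}\Gamma_{r-(j-i)-1}$.
\item Keeping $\{0A,0D\}$ with $\eta_{BC}$: symmetrically this gives $-\tfrac12\Gamma_{j-i-1}\Gamma_{r-(j-i)}$.
\end{itemize}
This is exactly the claimed formula, with ``up to permutations'' accounting for the relabellings.

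\emph{Adjacent case} $j=i+1$ (the case $r-(j-i)=1$, i.e. $i=1$, $j=r$, is the same after cyclic rotation). Now the edge $\eta_{ij}$ becomes $\eta_{00}=\theta_0$, so the merged factor carries $\theta_0\eta_{0A}\eta_{0D}$. Lemma~\ref{lemma:F theta computation}(c) gives
\[ (h-2)\eta_{0A}\eta_{0D} - \tfrac12\theta_0\eta_{AD}. \]
The first term is $(h-2)\Gamma_{r-1}$. The second is $\Gamma_1$ in factor $0$ times an $(r-2)$-cycle on the remaining vertices. The right-hand side of the proposition reads
\[ -2\Gamma_{r-1}-\tfrac12\Gamma_1\Gamma_{r-2}-\tfrac12\Gamma_0\Gamma_{r-1}, \]
and with the convention $\Gamma_0=-2h$ this equals $(h-2)\Gamma_{r-1}-\tfrac12\Gamma_1\Gamma_{r-2}$, matching the computation. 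This consistency check is exactly what forces the normalization $\Gamma_0=-2h$.

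\emph{Degenerate small cases.} The main obstacle is bookkeeping when $r$ is small and the labels $A,B,C,D$ collide, so that some $\eta_{0\ast}$ factors coincide. Examples are $r=2$, where $\Delta_{12}^{\ast}\eta_{12}^2=\theta_0^2$, and $r=3$, where $\theta_0\eta_{0A}^2$ occurs. Here I would not reuse the generic combinatorics but compute directly. Part (a) of Lemma~\ref{lemma:F theta computation} gives $f_\theta(\theta^2)=2(h-1)\theta$. Part (c) with $i_1=i_2$ handles $r=3$. In each case I would check that the result agrees with the formula under the conventions $\Gamma_0=-2h$ and $\Gamma_1=\theta$; for instance, for $r=2$ the formula gives $-2\theta+h\theta+h\theta=2(h-1)\theta$. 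Since the Lemma holds identically in the indices $i_1,\dots,i_4$, including repeated ones, these cases should follow by the same substitution, and only the identification of the resulting graphs needs care.
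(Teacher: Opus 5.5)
Your proposal is correct and follows essentially the same route as the paper: the adjacent case $j=i+1$ via Lemma~\ref{lemma:F theta computation}(c), with $r=2$ and $r=3$ checked directly using parts (a) and (c), and the non-adjacent case via part (d), where four of the six pairings reconnect into one $(r-1)$-cycle and the remaining two split into the two product terms. The paper also relies on the convention $\Gamma_0=-2h$ to make the adjacent-case formula match, and your explicit verification of this is consistent with its computation.
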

\begin{proof}
	Without loss of generality we can take $i=1$.
	We will use Lemma~\ref{lemma:F theta computation} below.
	Consider the case $j=2$. Then for $r=2$ this is
	\[ f_{\theta}(\Delta_{12}^{\ast}(\Gamma_2)) = f_{\theta}( \theta^2 ) = (2h-2) \theta = (2h-2) \Gamma_1 \]
	So the claim holds. If $r = 3$ we get the claim by
	\[
	f_{\theta}(\Delta_{12}^{\ast}(\Gamma_3)) = f_{\theta}( \eta_{31} \theta_1 \eta_{31} ) = (h-2) \eta_{13}^2 - \frac{1}{2} \theta_1 \theta_3
	= (h-2) \Gamma_2 - \frac{1}{2} \Gamma_1^2
	\]
	If $r \geq 4$ we get the claim by
	\[ f_{\theta}(\Delta_{12}^{\ast}(\Gamma_r))
	= f_{\theta}^{(1)}( \eta_{r1} \theta_1 \eta_{13} ) \eta_{34} \cdots \eta_{r-1,r}
	= ((h-2) \eta_{13} \eta_{1r} - \frac{1}{2} \theta_1 \eta_{r3}) \eta_{34} \cdots \eta_{r-1, r}
	= (h-2) \Gamma_{r-1} - \frac{1}{2} \Gamma_1 \Gamma_{r-2} \]
	For $j \geq 3$ and $r \geq j+1$ we get
	\[
	f_{\theta}(\Delta_{1j}^{\ast}(\Gamma_r))
	= f_{\theta}^{(1)}( \eta_{r1} \eta_{12} \eta_{1,j-1} \eta_{1,j+1} ) \eta_{2 3 } \cdots \eta_{j-2,j-1} \cdot \eta_{j+1, j+2} \cdots \eta_{r-1,r}
	\]
	The formula in Lemma~\ref{lemma:F theta computation} now says that the two chains $\eta_{2 3 } \cdots \eta_{j-2,j-1}$ and $\eta_{j+1, j+2} \cdots \eta_{r-1,r}$ are closed up, where in one of the two resulting loops the index $1$ is added, so cycles $\{ (23 \cdots j-1), (1 (j+1) \cdots r) \}$ and $\{ (123 \ldots j-1), (j+1, \ldots, r) \}$, giving $-\frac{1}{2}( \Gamma_{j-2} \Gamma_{r-j + 1} + \Gamma_{j-1} \Gamma_{r-j})$.
	Or they are joined to a one connected cycle of length $r-1$, which happens $4$ times, so gives $-2 \Gamma_{r-1}$.
	This shows the claim.
\end{proof}

\begin{lemma}
	For any $1 \leq i \leq r_1$ and $1 \leq j \leq r_2$ we get
	\[
	f_{\theta}^{(a)}( \Delta_{ab}^{\ast}( \Gamma_{r_1} \Gamma_{r_2}))
	=
	-2 \Gamma_{r_1 + r_2 - 1} - \frac{1}{2} \Gamma_{r_1} \Gamma_{r_2 - 1} - \frac{1}{2} \Gamma_{r_1 - 1} \Gamma_{r_2}.
	\]
\end{lemma}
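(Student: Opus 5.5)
The proof will be a direct local computation. It reduces to Lemma~\ref{lemma:F theta computation}, much like the preceding proposition. After relabeling we may assume the two cycles are $\Gamma_{r_1}$ on the vertices $a=a_1,a_2,\ldots,a_{r_1}$ and $\Gamma_{r_2}$ on the vertices $b=b_1,\ldots,b_{r_2}$. Pulling back along $\Delta_{ab}$ identifies $a$ and $b$ with a single vertex, which we call $0$. In the generic case $r_1,r_2\geq 2$ the factor at $0$ is then
\[ \eta_{0 u_1}\eta_{0u_2}\eta_{0v_1}\eta_{0v_2}, \qquad u_1=a_2,\ u_2=a_{r_1},\ v_1=b_2,\ v_2=b_{r_2}. \]
The remaining factors are two open chains: $\eta_{a_2a_3}\cdots\eta_{a_{r_1-1}a_{r_1}}$ from $u_1$ to $u_2$, and the analogous chain from $v_1$ to $v_2$. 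These chains do not involve the factor $0$. By the projection formula, $f_\theta^{(0)}$ therefore only acts on the four-fold product, and Lemma~\ref{lemma:F theta computation}(d) applies directly. Here $u_1=u_2$ is allowed when $r_1=2$, and likewise for $v$.

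Next I would sort the six terms of Lemma~\ref{lemma:F theta computation}(d) by the pair of edges that stays attached to $0$.
\begin{enumerate}
\item[(i)] Keeping $\eta_{0u_1}\eta_{0u_2}$ and creating $\eta_{v_1v_2}$ closes the first cycle through $0$ with its original length $r_1$. It also closes the second chain into a cycle of length $r_2-1$ that avoids $0$. This term gives $\Gamma_{r_1}\Gamma_{r_2-1}$.
\item[(ii)] Symmetrically, keeping $\eta_{0v_1}\eta_{0v_2}$ gives $\Gamma_{r_1-1}\Gamma_{r_2}$.
\item[(iii)] In each of the four mixed choices we keep $\eta_{0u_i}\eta_{0v_j}$ and create $\eta_{u_{i'}v_{j'}}$. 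The two chains are then joined into a single cycle through $0$ of length $(r_1-1)+(r_2-1)+1=r_1+r_2-1$. Each mixed choice gives $\Gamma_{r_1+r_2-1}$.
\end{enumerate}
Multiplying by the overall factor $-\tfrac12$ from Lemma~\ref{lemma:F theta computation}(d) gives $-2\Gamma_{r_1+r_2-1}-\tfrac12\Gamma_{r_1}\Gamma_{r_2-1}-\tfrac12\Gamma_{r_1-1}\Gamma_{r_2}$ up to permutation of the factors. For $r_1=2$ or $r_2=2$ a closed chain of length $1$ is $\eta_{vv}=\theta_v=\Gamma_1$, so this case is consistent.

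The degenerate cases are where a cycle has length $1$, so that it contributes $\theta_0$ instead of two $\eta$'s. If $r_1=1$ and $r_2\geq2$, the factor at $0$ is $\theta_0\eta_{0v_1}\eta_{0v_2}$. Lemma~\ref{lemma:F theta computation}(c) gives $(h-2)\Gamma_{r_2}-\tfrac12\Gamma_1\Gamma_{r_2-1}$. On the other side, with the convention $\Gamma_0=-2h$ the claimed right-hand side is $-2\Gamma_{r_2}-\tfrac12\Gamma_1\Gamma_{r_2-1}+h\Gamma_{r_2}$, which is the same. If $r_1=r_2=1$, Lemma~\ref{lemma:F theta computation}(a) gives $f_\theta(\theta^2)=2(h-1)\theta$. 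The formula gives $-2\Gamma_1+2h\Gamma_1$, which again agrees.

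The only real obstacle is bookkeeping. One has to check that the pairings in Lemma~\ref{lemma:F theta computation}(d) correspond exactly to the three cycle types, with multiplicities $1,1,4$, and that coincident neighbours ($r_i=2$) and the convention $\Gamma_0=-2h$ behave correctly. I would settle these by the explicit small-case checks above.
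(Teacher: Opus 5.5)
Your proposal is correct and follows the same route as the paper, whose proof only says that the identity follows from Lemma~\ref{lemma:F theta computation}, as in the preceding proposition. You supply the bookkeeping the paper leaves implicit: the six terms of part (d) sort into multiplicities $1,1,4$, and the degenerate cases $r_1=1$ (via part (c)) and $r_1=r_2=1$ (via part (a)) match the formula under the convention $\Gamma_0=-2h$.
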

\begin{proof}
	This follows likewise from Lemma~\ref{lemma:F theta computation}.
\end{proof}

We hence find the following second term in the holomorphic anomaly equation:
\begin{align*}
	I_2 
	& = 2 \sum_{a=1}^{s} \sum_{1 \leq i < j \leq r_a} \CC_g( \tau_{1}(\sigma) \tau_0( -2 \Gamma_{r_a-1} - \frac{1}{2} \Gamma_{j-i} \Gamma_{r_a - (j-i)-1} - \frac{1}{2} \Gamma_{j-i-1} \Gamma_{r_a-(j-i)} ) \prod_{\ell \neq a} \tau_{0}(\Gamma_{r_{\ell}})) \\
	& + 
	2 \sum_{1 \leq a < b \leq s} r_a r_b 
	\CC_{g}(\tau_1(\sigma) \tau_0\left[ -2 \Gamma_{r_a + r_b-1} - \frac{1}{2} \Gamma_{r_a} \Gamma_{r_b - 1} - \frac{1}{2} \Gamma_{r_a - 1} \Gamma_{r_b} \right] 
	\prod_{\ell \neq a,b} \tau_0(\Gamma_{\ell}) ) \\
	& = \sum_{a=1}^{s} \CC_{g}( \tau_1(\sigma) \tau_0\left[ -4 \binom{r_a}{2} \Gamma_{r_a-1} - r_a \sum_{k=1}^{r_a-1} \Gamma_k \Gamma_{r_a-k-1} \right] 
	\prod_{\ell \neq a} \tau_{0}(\Gamma_{r_{\ell}})) \\
	& + \sum_{1 \leq a < b \leq s} r_a r_b 
	\CC_{g}(\tau_1(\sigma) \tau_0\left[ -4 \Gamma_{r_a + r_b-1} - \Gamma_{r_a} \Gamma_{r_b - 1} - \Gamma_{r_a - 1} \Gamma_{r_b} \right] 
	\prod_{\ell \neq a,b} \tau_0(\Gamma_{\ell}) ) \\
	& = 
	(2g-2) \left( -\frac{1}{2} \right)^{\sum_a (r_a - 1)}
	4
	\sum_{a=1}^{s}
	F_g\left( \left[ r_a (r_a-1) p_{r_a - 1} - r_a \sum_{k=1}^{r_a-1} p_k p_{r_a-k-1} \right] \prod_{\ell \neq a} p_{\ell} \right)  \\
	& + 
	(2g-2) \left( -\frac{1}{2} \right)^{\sum_a (r_a - 1)}
	2 \sum_{1 \leq a < b \leq s} r_a r_b F_g\left( \left[ -2 p_{r_a + r_b - 1} + p_{r_a} p_{r_b - 1} + p_{r_a - 1} p_{r_b} \right] \prod_{\ell \neq a,b} p_{\ell} \right),
\end{align*}
where in the first step we used:
\[
\sum_{1 \leq i < j \leq r_a} (
\Gamma_{j-i} \Gamma_{r_a - (j-i)-1} + \Gamma_{j-i-1} \Gamma_{r_a-(j-i)-1} )
=
\sum_{k=1}^{r-1} (r-k) (\Gamma_{k} \Gamma_{r-k-1} + \Gamma_{k-1} \Gamma_{r-k})
= r \sum_{k=1}^{r-1} \Gamma_k \Gamma_{r-k-1}
\]

\subsubsection{Term 3}
Since $f_{\theta}(\sigma) = \theta^{h-1}/(h-1)!$ the third term in the holomorphic anomaly equation is:
\begin{align*}
	I_3 & = 
	-2 \CC_{g}( \tau_2\left( \frac{\theta^{h-1}}{(h-1)!} \right) \tau_0( \Gamma_{r_1}) \cdots \tau_0(\Gamma_{r_s})) \\
	& - 2 \sum_{a=1}^{s} \sum_{i=1}^{r_a} \CC_{g}\left(\tau_1(\sigma) \tau_{(0^{i-1} 1 0^{r_a-i})}(f_{\theta}^{(i)}\Gamma_{r_a}) \prod_{\ell \neq a} \tau_0(\Gamma_{r_{\ell}} ) \right)
\end{align*}

\noindent \textbf{Term 3a:}\
Using Lemma~\ref{lemma:tau2 reduce to quotient} below we have
\begin{align*}
	& - 2 \CC_{g}\left( \tau_2\left( \frac{\theta^{h-1}}{(h-1)!} \right) \tau_0( \Gamma_{r_1}) \cdots \tau_0(\Gamma_{r_s}) \right) \\
	& =
	\frac{-2}{(h-1)!} \sum_{a=1}^{s} 
	\sum_{i=1}^{r_a} 
	\CC_{g}\left( \tau_{0^{i-1} 1 0^{r_a-i}}( \theta_i^{h-1} \Gamma_{r_a} ) \prod_{\ell \neq a} \tau_0(\Gamma_{r_{\ell}}) \right) \\
	& = \sum_{a=1}^{s} r_a 
	\CC_{g}\left( \tau_1(\sigma) \tau_0(\Gamma_{r_a-1}) \prod_{\ell \neq a} \tau_{0}(\Gamma_{r_{\ell}}) \right) \\
	& = -2 (2g-2)  \left( -\frac{1}{2} \right)^{\sum_{a} (r_a -1 )} \sum_{a=1}^{s} r_a  F_{g}\left( p_{r_a-1} \prod_{\ell \neq a} p_{r_\ell} \right)
\end{align*}
where we used the relation
\[ \frac{\theta_1^{h-1}}{(h-1)!} \Gamma_r = -\frac{1}{2} \sigma_1 \cdot (\Gamma_{r-1})_{2 \cdots r}. \]
which follows from Example~\ref{example:some eta relation} (and is valid also if $r=1$ since $\Gamma_0=-2h$).

\noindent \textbf{Term 3b:}
The action of the Lefschetz operator on a cycle is given by
\[ f_{\theta}^{(1)}( \Gamma_r) = -\frac{1}{2} (\Gamma_{r-1})_{2 \cdots r}. \]
Inserting and using the dilaton equation we obtain the term
\begin{align*}
	& - 2 \sum_{a=1}^{r_s} \sum_{i=1}^{r_a} \CC_{g}\left(\tau_1(\sigma) \tau_{(0^{i-1} 1 0^{r_a-i})}(f_{\theta}^{(i)}\Gamma_{r_a}) \prod_{\ell \neq a} \tau_0(\Gamma_{r_{\ell}} ) \right) \\
	& = (2g-2+n) \sum_{a=1}^{r_s} r_a \CC_{g}\left(\tau_1(\sigma) \tau_{0}(\Gamma_{r_a-1}) \prod_{\ell \neq a} \tau_0(\Gamma_{r_{\ell}} ) \right) \\
	& =
	-2 (2g-2 + n) (2g-2) \left( -\frac{1}{2} \right)^{\sum_a (r_a - 1)} \sum_{a=1}^{s} r_a F_g( p_{r_a-1} \prod_{\ell \neq a} p_{r_{\ell}} )
\end{align*}

We hence find the following third term in the holomorphic anomaly equation:
\begin{align*}
	I_3 =
	%& = (-2) (2g-2)  \left( -\frac{1}{2} \right)^{\sum_{a} (r_a -1 )} \sum_{a=1}^{s} r_a  F_{g}\left( p_{r_a-1} \prod_{\ell \neq a} p_{r_\ell} \right) \\
	& -2 (2g-2 + n + 1) (2g-2) \left( -\frac{1}{2} \right)^{\sum_a (r_a - 1)} \sum_{a=1}^{s} r_a F_g( p_{r_a-1} \prod_{\ell \neq a} p_{r_{\ell}} )
\end{align*}

We have used above the following result:
\begin{lemma} \label{lemma:tau2 reduce to quotient}
	With the same condition as in Lemma~\ref{lemma:tau1 sigma reduce to quotient} on $\Gamma$, for all $g \geq 2$ we have
	\[ \CC_{g,\chi}\left( \tau_2\left(  \theta^{h-1} \right) \tau_{0^n}(\Gamma) \right)
	=
	\sum_{i=1}^{n} \CC_{g,\chi}\left( \tau_{0^{i-1} 1 0^{n-i}}( \theta_i^{h-1} \Gamma) \right). \]
\end{lemma}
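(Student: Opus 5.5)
The plan is to mimic the proof of Lemma~\ref{lemma:tau1 sigma reduce to quotient}: compare $\psi_0$ on $\Mbar_{g,n+1}(\CX,\chi)$ with its pullback along the forgetful map $P:\Mbar_{g,n+1}(\CX,\chi)\to\Mbar_{g,1}(\CX,\chi)$ forgetting markings $1,\dots,n$. Since $X$ contains no rational curves, every rational component of a stable map is contracted. The comparison formula is therefore
\[ \psi_0 = P^{\ast}\psi_0 + \sum_{S} D_S, \qquad S\subset\{0,\dots,n\},\ 0\in S,\ |S|\ge 2, \]
where $D_S$ is the divisor of rational tails carrying exactly the markings in $S$. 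We need $\psi_0^2$, so we write
\[ \psi_0^2=\psi_0\cdot P^{\ast}\psi_0+\psi_0\sum_S D_S. \]
On $D_S$ the class $\psi_0$ restricts to the $\psi$-class of the rational tail $\Mbar_{0,|S|+1}$.

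\textbf{Step 1.} We analyse which boundary terms survive. On $D_S$ all markings in $S$ map to the same point. The insertion is $\theta_0^{h-1}\Gamma$, and $\Gamma$ restricts to $H^2(X)^{\otimes n}$ on fibers. Hence restricting to the diagonal $x_0=x_i$ produces $\theta_i^{h-1}\cdot(\text{degree-}2\text{ class in factor } i)$, which has degree $h+1$ in factor $i$ only if two of the markings meet. Stated precisely, we need $\theta_0^{h-1}\Gamma\cdot\Delta_{0S}$ to be nonzero. If $|S|\ge 3$ with $i,j\in S$, then factors $i$ and $j$ together carry degree $4$ on one copy of $X$ on top of $\theta^{h-1}$, giving total degree $h+3>h$, so the term vanishes. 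The same applies to $\psi_0 D_S$. The only boundary terms left are $S=\{0,i\}$, on which $\psi_0|_{D_S}=0$ because $\Mbar_{0,3}$ is a point. So $\psi_0^2=\psi_0P^{\ast}\psi_0$ modulo classes killed by the insertion, and also modulo $D_{\{0,i\}}\cdot P^{\ast}\psi_0$ terms. I will need to redo this bookkeeping carefully using $\psi_0 D_S=P^{\ast}\psi_0 D_S$ for $|S|=2$. The expected outcome is
\[ \psi_0^2\,\ev^{\ast}(\cdots)=\psi_0\,P^{\ast}\psi_0\,\ev^{\ast}(\cdots)=\Big(P^{\ast}\psi_0^2+\sum_i P^{\ast}\psi_0\, D_{\{0,i\}}\Big)\ev^{\ast}(\cdots). \]

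\textbf{Step 2.} The term $P^{\ast}\psi_0^2$ gives a pushforward
\[ \int_{\Mbar_{g,1}}\psi_0^2\,\ev_0^{\ast}\theta^{h-1}\,P_{\ast}\ev^{\ast}\Gamma. \]
By dimension it should vanish by Lemma~\ref{lemma:vanishing1b}. Indeed, $\tau_2(\theta^{h-1})$ has total degree $h+1=h+n$ only for $n=1$, which is the wrong count, and $P_{\ast}\ev^{\ast}\Gamma$ is the constant $Q_\Gamma(\chi)$. So we get $Q_\Gamma\,\CC_{g,\chi}(\tau_2(\theta^{h-1}))$, which vanishes because $2+h-1<h+1$ fails; I would double-check this via the quotient relative dimension $h+1$ of $p_1$. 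I expect it to hold, since the degree is exactly $h+1$ and a degree-$0$ pushforward does not automatically vanish. If it does not vanish, it should be absorbed into the right-hand side.

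\textbf{Step 3.} We identify $D_{\{0,i\}}$ with the image of $\Mbar_{g,n}(\CX,\chi)\times\Mbar_{0,3}$ under gluing. There $P^{\ast}\psi_0$ becomes $\psi_i$, and the insertion becomes $\Delta_{0i}^{\ast}(\theta_0^{h-1}\Gamma)=\theta_i^{h-1}\Gamma$. This yields precisely $\CC_{g,\chi}(\tau_{0^{i-1}10^{n-i}}(\theta_i^{h-1}\Gamma))$, the same mechanism as the string equation with a $\psi$. The main obstacle is Step 1, namely rigorously justifying that all deeper boundary strata die. I would first try a pure degree argument on fibers of $\ev$, using that $\Gamma$ has degree exactly $2$ in each factor, and handle $\psi_0\cdot D_S$ via the restriction of $\psi$ to rational tails.
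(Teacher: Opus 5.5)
Your Steps 1 and 3 follow the paper's proof, but Step 2 has a genuine gap.

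The overall route is the paper's. You compare $\psi_0$ with $P^{\ast}\psi_0$. You discard every $D_S$ with $|S|\ge 3$, because $\Delta_S^{\ast}(\theta_0^{h-1}\Gamma)$ has complex degree $(h-1)+2=h+1>h$ in the merged factor. (You wrote $h+3$, mixing real and complex degree, but the conclusion is right.) You then use $\psi_0\cdot D_{\{0,i\}}=0$ and arrive at $\psi_0^2=(P^{\ast}\psi_0+\sum_i D_{0i})\,P^{\ast}\psi_0$ against the insertion. Step 1 is fine. Drop the identity you propose to use, $\psi_0 D_S=P^{\ast}\psi_0 D_S$ for $|S|=2$: it is false (the left side is zero, while the right side is what produces the answer) and it is not needed.

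\textbf{The gap: Step 2.} You correctly reduce the $P^{\ast}\psi_0^2$ term to $Q_\Gamma(\chi)\,\CC_{g,\chi}(\tau_2(\theta^{h-1}))$. No dimension count kills this term. With one marking, $\deg(\theta^{h-1})+2=h+1=h+n$ is exactly the critical case, so Lemma~\ref{lemma:vanishing1b} says nothing. The term also cannot be ``absorbed into the right-hand side'': the right-hand side has no such term, so if it were nonzero the lemma would be false.

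The missing idea is translation invariance, which is what the paper uses. The class $\theta^{h-1}$ has multiplicity $2h-2<2h$, so the term vanishes by Proposition~\ref{prop:vanishing in low multiplicity}. That proof goes through with the $\psi$-insertion, since $\psi_0$ is pulled back along $t$, and $\xi_{\ast}\theta^{h-1}=\pi_{\ast}\theta^{h-1}=0$.

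An equivalent check: the fiber of $p_1$ over $[f]$ is $X\times C$. On it $\psi_0$ is pulled back from the curve $C$, so $\psi_0^2$ restricts to zero. Hence the degree-zero pushforward $p_{1\ast}(\psi_0^2\,\ev_0^{\ast}\theta^{h-1})$ vanishes.

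\textbf{A smaller omission in Step 3.} On $D_{\{0,i\}}\cong\Mbar_{g,n}(\CX,\chi)$ the class $P^{\ast}\psi_0$ restricts to $P_i^{\ast}\psi_i$, not to $\psi_i$. Here $P_i$ forgets all markings except $i$. You need one more comparison, $P_i^{\ast}\psi_i=\psi_i-\sum_{S\ni i}D_S$. Then apply the same degree argument: $\theta_i^{h-1}\Gamma$ already has degree $h$ in factor $i$, so merging $i$ with any other marking gives zero. The paper carries out this last step explicitly.
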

\begin{proof}
As in the proof of Lemma~\ref{lemma:tau1 sigma reduce to quotient} 
we consider the morphism $P : \Mbar_{g,n+1}(\CX,\chi) \to \Mbar_{g,1}(\CX,\chi)$ forgetting all except the marking labeled $0$.
	As before we have
	\[ \psi_0 = P^{\ast}(\psi_0) + \sum_{\substack{S \subset \{ 0, \ldots, n \} \\ 0 \in S, |S| \geq 2}} D_S. \]
	By the degree condition on $\Gamma$ (and since all classes are tautological), if $|S| \geq 3$, then
	\[ D_S \cdot \ev_0^{\ast}(\theta^{h-1}) \ev_{1 \cdots n}^{\ast}(\Gamma) \cap [\Mbar_{g,n+1}(\CX,\chi)]^{\vir} = 0. \]
	Moreover, $\psi_0 \cdot D_S = 0$ if $|S| = 2$. Thus
	\begin{align*}
		& \psi_0^2 \cdot \ev_0^{\ast}(\theta^{h-1}) \ev_{1 \cdots n}^{\ast}(\Gamma) \cap [\Mbar_{g,n+1}(\CX,\chi)]^{\vir} \\
		& = \psi_0 \cdot P^{\ast}(\psi_0) \cdot \ev_0^{\ast}(\theta^{h-1}) \ev_{1 \cdots n}^{\ast}(\Gamma) \cap [\Mbar_{g,n+1}(\CX,\chi)]^{\vir} \\
		& = \left( P^{\ast}(\psi_0) + \sum_{i=1}^{n} D_{0i} \right) \cdot P^{\ast}(\psi_0) \cdot \ev_0^{\ast}(\theta^{h-1}) \ev_{1 \cdots n}^{\ast}(\Gamma) \cap [\Mbar_{g,n+1}(\CX,\chi)]^{\vir}
	\end{align*}
	
	We have
	\[
	\int_{[\Mbar_{g,n+1}(\CX,\chi)]^{\vir}} P^{\ast}(\psi_0)^2 \ev_0^{\ast}(\theta^{h-1}) \ev_{1 \cdots n}^{\ast}(\Gamma) 
	=
	\int_{[\Mbar_{g,1}(\CX,\chi)]^{\vir}} \ev_0^{\ast}(\theta^{h-1}) \psi_0^{2} P_{\ast}(  \ev_{1 \cdots n}^{\ast}(\Gamma)  ) = 0
	\]
	by Proposition~\ref{prop:vanishing in low multiplicity}, since $P_{\ast}(  \ev_{1 \cdots n}^{\ast}(\Gamma)  )$ is a class of degree $0$.
	
	Thus we find
	\begin{align*}
		\CC_{g,\chi}\left( \tau_2\left(  \theta^{h-1} \right) \tau_{0^n}(\Gamma) \right)
		& =
		\sum_{i=1}^{n} 
		\int_{[\Mbar_{g,n+1}(\CX,\chi)]^{\vir}} D_{0i}  P^{\ast}(\psi_0) \ev_0^{\ast}(\theta^{h-1}) \ev_{1 \cdots n}^{\ast}(\Gamma) \\
		& = 
		\sum_{i=1}^{n} \int_{[\Mbar_{g,n}(\CX,\chi)]^{\vir}} \ev^{\ast}( \theta_i^{h-1} \Gamma) P_i^{\ast}(\psi_i) \\
		& = 
		\sum_{i=1}^{n} \int_{[\Mbar_{g,n}(\CX,\chi)]^{\vir}} \ev^{\ast}( \theta_i^{h-1} \Gamma) \left( \psi_i - \sum_{i \in S} D_S \right) \\
		& = 
		\sum_{i=1}^{n} \int_{[\Mbar_{g,n}(\CX,\chi)]^{\vir}} \ev^{\ast}( \theta_i^{h-1} \Gamma) \psi_i,
	\end{align*}
	where $P_i : \Mbar_{g,n}(\CX,\chi) \to \Mbar_{g,1}(\CX,\chi)$ forgets all but the $i$-th marking
	and we used again the degree condition to conclude $\Delta_{S}^{\ast}(\theta_i^{h-1} \Gamma) = 0$.
\end{proof}

\subsubsection{Conclusion}
Summing up Terms 1-3 we find the following explicit expression:
\begin{align*}
	\frac{d}{dG_2} F_g(p_{r_1} \cdots p_{r_s})
	& =  \delta_{h=3} F_{g-1}\left( (p_1^2 - p_2) p_{r_1} \cdots p_{r_s} \right) \\
	& + 
	4
	\sum_{a=1}^{s}
	F_g\left( \left[ r_a (r_a-1) p_{r_a - 1} - r_a \sum_{k=1}^{r_a-1} p_k p_{r_a-k-1} \right] \prod_{\ell \neq a} p_{\ell} \right)  \\
	& + 
	2 \sum_{1 \leq a < b \leq s} r_a r_b F_g\left( \left[ -2 p_{r_a + r_b - 1} + p_{r_a} p_{r_b - 1} + p_{r_a - 1} p_{r_b} \right] \prod_{\ell \neq a,b} p_{\ell} \right), \\
	& -2 (2g-2 + n + 1) \sum_{a=1}^{s} r_a F_g( p_{r_a-1} \prod_{\ell \neq a} p_{r_{\ell}} )
\end{align*}

Let $\partial_r = \frac{d}{d p_r}$ and $p_0=h$ as before, and define the operators
\[
\mathsf{WT} = \sum_{r \geq 1} r p_r \partial_r, \quad 
\Psi = \sum_{r \geq 1} r p_{r-1} \partial_r.
\]
Then the above formula can be rewritten as:
\begin{align*}
	\frac{d}{dG_2} F_g(p_{r_1} \cdots p_{r_s})
	= & \delta_{h=3} F_{g-1}\left( (p_1^2 - p_2) p_{r_1} \cdots p_{r_s} \right) \\
	& + 
	F_g\left( \left[ 4 \sum_{r \geq 2} r (r-1) p_{r-1} \partial_r - 4 \sum_{r \geq 2} \sum_{k=1}^{r-1} r p_k p_{r-1-k} \partial_r \right] p_{r_1} \cdots p_{r_s} \right) \\
	& + F_g\left( \left[ 2 \Psi \Wt - 2 \sum_{r \geq 1} r^2 p_{r-1} \partial_r - 2 \sum_{k, \ell \geq 1} k \ell p_{k + \ell - 1} \partial_k \partial_{\ell} \right] p_{r_1} \cdots p_{r_s} \right) \\
	& - 2 F_g\left( \Psi (2g-1 +\Wt) p_{r_1} \cdots p_{r_s} \right) \\
	=& \delta_{h=3}  F_{g-1}\left( (p_1^2 - p_2) p_{r_1} \cdots p_{r_s} \right) \\
	& + 
	F_g\left( \left[ 4 \sum_{r \geq 2} r (r-1) p_{r-1} \partial_r - 4 \sum_{r \geq 2} \sum_{k=1}^{r-1} r p_k p_{r-1-k} \partial_r \right] p_{r_1} \cdots p_{r_s} \right) \\
	& + F_g\left( \left[ - 2 \sum_{r \geq 1} r^2 p_{r-1} \partial_r - 2 \sum_{k, \ell \geq 1} k \ell p_{k + \ell - 1} \partial_k \partial_{\ell} \right] p_{r_1} \cdots p_{r_s} \right) \\
	& - 2 F_g\left( \Psi (2g-1) p_{r_1} \cdots p_{r_s} \right) .
\end{align*}

In other words, if we define the Holomorphic anomaly operator
\begin{align*}
	\CL_{g,h}
	& :=
	4 \sum_{r \geq 2} r (r-1) p_{r-1} \partial_r - 4 \sum_{r \geq 2} \sum_{k=1}^{r-1} r p_k p_{r-1-k} \partial_r \\
	& - 2 \sum_{r \geq 1} r^2 p_{r-1} \partial_r - 2 \sum_{k, \ell \geq 1} k \ell p_{k + \ell - 1} \partial_k \partial_{\ell} - 2 (2g-1) \Psi \\
	& = 
	2 \sum_{r \geq 1} r (r-2g-1) p_{r-1} \partial_r - 4 \sum_{r \geq 2} \sum_{k=1}^{r-1} r p_k p_{r-1-k} \partial_r
	- 2 \sum_{k, \ell \geq 1} k \ell p_{k + \ell - 1} \partial_k \partial_{\ell}.
\end{align*}
Then we have (for $g \geq 2$ and $h \geq 3$)
	\[ \frac{d}{dG_2} F_g(p_{r_1} \cdots p_{r_s})
	=  \delta_{h=3} F_{g-1}\left( (p_1^2 - p_2) p_{r_1} \cdots p_{r_s} \right) 
	+ F_g( \CL_{g,h}  (p_{r_1} \cdots p_{r_s})).
	\]
The claim of Theorem~\ref{thm:hae for quotient text} 
then follows from rewriting the operator $\CL_{g,h}$ in terms of the $e_i$.
This is done in Appendix~\ref{appendix:an operator identity}. \qed

\section{Noether-Lefschetz cycles and Gromov-Witten theory}
\label{sec:NL and GW theory}
\subsection{Overview}
In this section we express the Gromov-Witten classes of $\CA_h$ as pushforwards of explicit cycles from the Noether-Lefschetz loci.
We start with some basics on Noether-Lefschetz loci and the behaviour of characteristic polynomials of curve classes under pushfoward. In Section~\ref{subsec:NLdecomp} we then decompose the moduli space of stable maps according to the dimension and polarization type of the smallest abelian subvariety such that a translate contains the image of the curve and derive the precise obstruction classes.
The key formula we obtain is \eqref{full rank decomposition}.
This is used in Section~\ref{subsec:vanishings II} to derive vanishing statements for Gromov-Witten classes.

\subsection{Noether-Lefschetz loci} \label{subsec:NL loci}
We give some standard definitions on Noether-Lefschetz loci following \cite{AitorNL}.
A polarization $\theta$ on an $u$-dimensional abelian variety $B$ has {\em type} $\delta = (\delta_1,\ldots,\delta_u)$ with $\delta_1 | \delta_2 | \cdots | \delta_u$ if the kernel $\ker(\varphi_{\theta})$ of the associated homomorphism
\[ \varphi_{\theta} : B \to \hat{B},\quad  b \mapsto t_{b}^{\ast} \CO(\theta) \otimes \CO(\theta)^{-1} \]
is isomorphic to the finite abelian group
\[ K(\delta) := (\BZ / \delta_1)^2 \times \ldots \times (\BZ/\delta_u)^2. \]
One has
\[ \int_{B} \frac{\theta^{u}}{u!} = \delta_1 \cdots \delta_u. \]

There is a natural symplectic pairing on $\ker(\varphi_{\theta})$ which under a change of basis corresponds to the standard symplectic pairing on $K(\delta)$.
A level structure for $\theta$ is a symplectic isomorphism of groups $f : K(\delta) \xrightarrow{\cong} \ker(\varphi_{\theta})$.
We let $\CA_{u,\delta}$ be the moduli stack of abelian varieties with a polarization of type $\delta$,
and let $\CA_{u,\delta}^{\mathrm{lev}}$ be the moduli stack of triples $(B,\theta,f)$ where $(B,\theta) \in \CA_{u,\delta}$ and $f$ is a level structure.

Given an integer $u \leq h/2$ and a polarization type $\delta = (d_1,\ldots,d_u)$ we define\footnote{This defines the Noether-Lefschetz loci as a set, see below for the definition as a cycle.} the Noether-Lefschetz loci to be
\[ \NL_{h,\delta} = \left\{ (X,\theta) \in \CA_h \middle|  \begin{array}{c} X \text{ has an abelian subvariety } B \text{ of dimension } u \\ \text{ and } \theta|_{B} \text{ is of type } \delta \end{array} \right\}. \]
These loci are irreducible, closed and of codimension $u \cdot (h-u)$, see \cite{DL}.\footnote{The second type of Noether-Lefschetz loci in $\CA_h$ are the abelian varieties with real multiplication,
	which exists for every divisor $k | h$ with $k \neq h$, and are of codimension $h(h-k)/2$.
	We will not require them in this paper.}

For any abelian subvariety $B \subset X$ there exists a complementary abelian subvariety $C \subset X$ of dimension $h-u$. If $\theta|_{B}$ of type $\delta$, then $\theta|_{C}$ has type $\tilde{\delta} = (1^{h-2u}, \delta)$. Moreover, there exists a canonical anti-symplectic isomorphism $\xi : \ker(\theta|_{B}) \cong \ker(\theta|_{C})$ and $X$ is obtained as the quotient $(B \times C)/\Lambda$, where $\Lambda$ is the graph of $\xi$. This has a converse description.
Fix any anti-symplectic isomorphism $\xi_0 : K(\delta) \to K(\tilde{\delta})$. Then there is a morphism
\[
\varphi_{h,\delta} : \CAlev_{u,\delta} \times \CAlev_{h-u,\tilde{\delta}} \to \CA_{h}
\]
given by sending $(B,\theta_B,f), (C,\theta_C,f')$ to the pair $(X,\theta)$, where $X=(B \times C)/\Lambda$, the lattice $\Lambda$ is the graph of the anti-symplectic isomorphism $f' \circ \xi_0 \circ f^{-1}$, and $\theta$ is the descent of $(\theta_B)_1 + (\theta_C)_2$ along the quotient.

We also let $\CA_{u,h-u,\delta}$ be the moduli stack of triples $(X,\theta,B)$ where $(X,\theta)$ is a principal polarized abelian variety and $B \subset X$ is a subvariety of dimension $u$ with $\theta|_{B}$ of type $\delta$. Then
\[ \CA_{u,h-u,\delta} \cong [ \CAlev_{u,\delta} \times \CAlev_{h-u,\tilde{\delta}} / \Sp(K(\delta)) ] \]
and the morphism $\varphi_{h,\delta}$ factors as the quotient map followed by the forgetful map
\[ \varphi'_{h,\delta} : \CA_{u,h-u,\delta} \to \CA_{h}, (X,\theta,B) \to (X,\theta). \]
The map $\varphi'_{h,\delta}$ is generically of degree $1$, except if $h=2u$ where it is degree $2$ because we can interchange $B$ and its complement $C$.
We define $\NL_{h,\delta}$ as a cycle as the pushforward of the fundamental class of $\CA_{u,h-u,\delta}$ by $\varphi_{h,\delta}'$, see \cite{AitorNL}.

We write $\BE_{u}$ for the Hodge bundle on $\CA_{u,\delta}$.
The normal bundle of the maps $\varphi_{h,\delta}$ is
\[ \BE_{h-u}^{\vee} \otimes \BE_{u}^{\vee}, \]
where suppress the pullback under the projections to $\CA_{u,\delta}$ and $\CA_{h-u,\tilde{\delta}}$.
The same holds then also for $\varphi'_{h,\delta}$ by descent.

\subsection{Characteristic polynomial} \label{subsec:char polynomial in the non-principally polarized case}
Let $(B,\theta_B)$ be an abelian variety of dimension $u$ with polarization type $\delta = (\delta_1,\ldots,\delta_u)$. The characteristic polynomial of a class $v \in H^2(B,\BZ)$ is
\[
\chi_{v}(t) = \frac{1}{\delta_1 \cdots \delta_u} \int_{B} \frac{(t \theta_B - v)^u}{u!}.
\]
If $\gamma \in H_2(B,\BZ)$ then its characteristic polynomial $\chi_{\beta}$ is defined to be the characteristic polynomial of $v_{\gamma}$, where $v_{\gamma}$ is the image of $\gamma$ under the map
\[ H_2(B,\BZ) \equiv H^2(\widehat{B},\BZ) \xrightarrow{\varphi_{\theta}^{\ast}} H^2(B,\BZ). \]

Assume now that $B$ is embedded as a subvariety of a principally polarized abelian variety $(X,\theta)$ of dimension $h$ such that $\theta_B= \theta|_{B}$. Let $j : B \to X$ be the inclusion.
Let $\beta = j_{\ast}\gamma$ for a class $\gamma \in H_2(B,\BZ)$.
We have the following relationship between the characteristic polynomials of $\beta$ and $\gamma$:
\begin{lemma} \label{lemma:cbar comparison}
$\chi_{\beta} = \chi_{\gamma}(t) t^{h-u}$.
\end{lemma}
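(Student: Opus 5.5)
The plan is to translate both characteristic polynomials into linear algebra on first (co)homology. Then $\chi_\beta$ and $\chi_\gamma$ become Pfaffian-type characteristic polynomials of skew forms relative to the polarization form, and the identity follows from an orthogonal splitting of $H_1(X,\BQ)$ induced by $B$ and its complement $C$. Since both sides are polynomial identities with rational coefficients, we may work with $\BQ$-coefficients throughout.

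\textbf{Step 1 (linear-algebra reformulation).} For an abelian variety $(A,\theta_A)$ of dimension $u$ with polarization of type $\delta$, write $V=H^1(A,\BQ)$, so that $H^2(A,\BQ)=\wedge^2 V$ and $\theta_A$ is a nondegenerate skew form. For $v \in \wedge^2 V$ let $M_v=\theta_A^{-1}v$, an endomorphism of $V$. Expanding $\int_A (t\theta_A - v)^u/u!$ as a ratio of Pfaffians gives
\[ \chi_v(t) = \mathrm{Pf}(t\theta_A - v)/\mathrm{Pf}(\theta_A), \qquad \chi_v(t)^2 = \det(t - M_v). \]
Here the factor $1/(\delta_1\cdots\delta_u)$ in the definition is exactly the normalization by $\mathrm{Pf}(\theta_A)=\int_A \theta_A^u/u!$. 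So it suffices to compute the endomorphism attached to $v_\beta$.

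\textbf{Step 2 (the endomorphism of a curve class).} The chain $H_2(A)\cong H^2(\widehat A)\xrightarrow{\varphi_{\theta_A}^*} H^2(A)$ is $\wedge^2$ of the isomorphism $H_1(A,\BQ)\to H^1(A,\BQ)$ induced by $\varphi_{\theta_A}$. Up to a universal sign convention, this is contraction with the symplectic form $\theta_A$. Consequently, for $\gamma\in\wedge^2 H_1(A)$, the endomorphism $M_{v_\gamma}$, transported to $H_1(A,\BQ)$, is the composite $H_1 \xrightarrow{\theta_A} H^1 \xrightarrow{\gamma} H_1$. In this form the construction is intrinsic to the pair $(H_1(A,\BQ),\theta_A)$, which is what allows comparing $A=B$ with $A=X$.

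\textbf{Step 3 (splitting).} Let $C\subset X$ be the complementary abelian subvariety. Then $H_1(X,\BQ)=H_1(B,\BQ)\oplus H_1(C,\BQ)$, and this decomposition is orthogonal for the form $\theta$: indeed $(B\times C)\to X$ is an isogeny pulling $\theta$ back to $(\theta_B)_1+(\theta_C)_2$. Moreover $\theta$ restricts to $\theta_B$ on $H_1(B)$. Now $\beta=j_*\gamma$ lies in $\wedge^2 H_1(B)\subset\wedge^2 H_1(X)$. Hence the endomorphism of Step 2 for $\beta$ is
\[ v\mapsto \gamma(\theta(v,-)). \]
It kills $H_1(C)$, since $\theta(v,-)$ vanishes on $H_1(B)$ for $v\in H_1(C)$ and $\gamma$ only pairs with $H_1(B)$. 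It preserves $H_1(B)$, acting there by the endomorphism attached to $\gamma$ with respect to $\theta_B$. Therefore
\[ \det(t-M_{v_\beta}) = \det(t-M_{v_\gamma})\, t^{2(h-u)}. \]
Taking square roots (both sides are monic, and the roots are real since the endomorphisms are self-adjoint) gives $\chi_\beta=\chi_\gamma(t)\,t^{h-u}$.

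\textbf{Main obstacle.} The main point requiring care is Step 2 together with the normalization in Step 1 in the non-principal case. One must check that $\varphi_{\theta_B}^*$ on $B$ really corresponds to contraction with $\theta_B$, not with its inverse or a rescaling. One must also check that the two Poincaré/duality identifications for $B$ and $X$ are compatible with $j_*$ and $j^*$. If this becomes messy, I would instead reduce, as in the proof of the lemma $e_1=\int_\beta\theta$, to a split model via a symplectic basis. Concretely, choose a basis adapted to $B$, $C$ and $\theta$, so that $B$ is isogenous to $\prod E_i$ with $\theta_B=\sum\delta_i\omega^{(i)}$. Then compute both sides directly there, and conclude by density.
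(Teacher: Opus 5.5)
Your proof is correct, and it takes a genuinely different route from the paper's.

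\textbf{The paper's route.} The paper stays with the integral definition throughout. It first shows $j^*v_\beta=v_\gamma$ from a commutative diagram, using the functoriality $\varphi_{\theta|_B}=\hat{j}\circ\varphi_\theta\circ j$. It then pulls back along the isogeny $B\times C\to X$, which has degree $(\delta_1\cdots\delta_u)^2$, and writes the pullback of $t\theta-v_\beta$ as $t(\theta_B)_1+t(\theta_C)_2-\pr_1^*v_\gamma$. Finally it expands the top power. The factor $t^{h-u}$ and the normalization come from $\int_C\theta_C^{h-u}/(h-u)!=\delta_1\cdots\delta_u$, which uses that $\theta_C$ has type $(1^{h-2u},\delta)$.

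\textbf{Your route.} You square everything, $\chi^2=\det(t-M)$, and read off the factorization from the block form of $M_{v_\beta}$ on $H_1(X,\mathbb{Q})$.

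\textbf{What your route buys.}
\begin{itemize}
\item It is intrinsic. The factor $1/(\delta_1\cdots\delta_u)$ is simply $1/\mathrm{Pf}(\theta_B)$, so no isogeny degree and no polarization type of the complement ever enters.
\item You do not need $C$ at all. The $\theta$-orthogonal complement of $H_1(B,\mathbb{Q})$ is already a complement, because $\theta|_{H_1(B)}=\theta_B$ is nondegenerate.
\item It makes explicit a point the paper passes over silently. The paper needs the pullback of $v_\beta$ to $B\times C$ to have no $H^2(C)$ component and no $H^1(B)\otimes H^1(C)$ component. This is exactly your observation that $M_{v_\beta}$ kills $H_1(C)$ and preserves $H_1(B)$.
\end{itemize}
The price is the bookkeeping of your Steps 1--2. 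The sign ambiguities there are harmless: they either square away in $\wedge^2$ or replace $t$ by $-t$ on both sides.

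\textbf{One correction.} Your justification for taking square roots is not valid for arbitrary $\gamma\in H_2(B,\mathbb{Z})$. The endomorphism $M_v$ is only self-adjoint for the symplectic form $\theta$, and that forces real eigenvalues only when $v$ is algebraic (via Rosati positivity). For a transcendental $v$ on an abelian surface with $v\cdot\theta=0$ and $v^2>0$, the roots are not real. The real-roots claim is also unnecessary: $\mathbb{Q}[t]$ is a domain, so two monic polynomials with equal squares are equal.
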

\begin{proof}
Let $\widehat{j}: \widehat{X} \to \widehat{B}$ be the dual map.
We have the commutative diagram
\[
\begin{tikzcd}
		H_2(X,\BZ) \ar{r}{\equiv} & H^2(\widehat{X},\BZ) \ar{r}{\varphi_{\theta}^{\ast}} & H^2(X,\BZ) \ar{d}{j^{\ast}} \\
		H_2(B,\BZ) \ar{u}{j_{\ast}} \ar{r}{\equiv} & H^2(\widehat{B},\BZ) \ar{u}{\widehat{j}^{\ast}} \ar{r}{\varphi_{\theta_{B}}^{\ast}} & H^2(B,\BZ).
	\end{tikzcd}
	\]
	The commutativity of the second square follows here by \cite[Corollary 2.4.6 (iii)]{BL}.
	In particular, 
	\[ j^{\ast} v_{\beta} = v_{\gamma} \]
	where $v_{\gamma}$ is the image of $\gamma$ under the bottom row map.
	
Choose a complementary abelian subvariety $C$ to $B$ and let $\theta_C := \theta|_{C}$. Then $(C,\theta_C)$ has polarization type $\tilde{\delta}$ and the quotient map $B \times C \to X$ has degree $(\delta_1 \ldots \delta_u)^2$. We find that
	\begin{align*}
\chi_{\beta}(t) 
		& = \int_{X} \frac{(t \theta - v_{\beta})^h}{h!} \\
		& = \frac{1}{(\delta_1 \cdots \delta_u)^2} \int_{B \times C} \frac{ (t \theta_B + t \theta_C - \pr_1^{\ast}(v_{\gamma}))^{h}}{h!} \\
		& = \frac{t^{h-u}}{\delta_1 \cdots \delta_u} \int_{B} \frac{(t \theta_B - v_{\gamma})^u}{u!} \\
		& = t^{h-u} \chi_{\gamma}.
	\end{align*}
\end{proof}

\begin{defn}
	A class $\beta \in N_1(X)$ is of {\em rank $u$} if its characteristic polynomial  has rank $u$, i.e. it is of the form
	$\chi_{\beta} = t^h - a_1 t^{h-1} + \ldots + a_{h-u} t^{h-u}$ with $a_{h-u} \neq 0$.
\end{defn}

By Lemma~\ref{lemma:cbar comparison} the pushforward $\beta$ from a class on an abelian subvariety of dimension $u$ is of rank $\leq u$. For algebraic classes the converse is also true.

\begin{lemma}
	If $\beta \in N_1(X)$ is of rank $u$, then it is the pushforward of a class
	from an abelian subvariety $B$ of dimension $u$.
	Hence the rank of $\beta$ is the smallest dimension of an abelian subvariety $j : B \hookrightarrow X$ such that $\beta = j_{\ast} \gamma$ for some $\gamma \in N_1(B)$.
\end{lemma}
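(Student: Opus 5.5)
Our plan is to pass from $\beta$ to its associated symmetric endomorphism and take $B$ to be the image of that endomorphism. Let $v=v_\beta \in \NS(X)$. Consider the endomorphism $\phi = \varphi_\theta^{-1}\circ\varphi_v \in \mathrm{End}(X)$. It exists after multiplying by an integer, so we will in fact work with $\phi_N = N\phi$, which is an honest endomorphism and is symmetric for the Rosati involution. By the discussion preceding Lemma~\ref{lemma:bound on e_k}, $\chi_\beta$ is the characteristic polynomial of $\phi$ in the analytic sense. Self-adjointness makes $\phi\otimes\BQ$ semisimple with real eigenvalues. The condition that $\beta$ has rank $u$ says that the eigenvalue $0$ occurs with multiplicity exactly $h-u$. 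We therefore set
\[ B := \mathrm{im}(\phi_N) \subset X, \]
an abelian subvariety of dimension $h-\dim\ker(\phi_N)^0 = u$, because semisimplicity lets us read off the rank of $\phi_N$ from the characteristic polynomial. Its complement $C=\ker(\phi_N)^0$ is the complementary abelian subvariety with respect to $\theta$, since $\phi$ is symmetric.

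Next we show that $\beta$ lies in the image of $j_\ast: H_2(B,\BZ)\to H_2(X,\BZ)$, first rationally. Using the decomposition $X \sim B\times C$ (an isogeny), $v$ restricts to zero on $C$ and has no mixed $B$–$C$ component. For $C$ this holds because $\phi|_C=0$ means $\varphi_v|_C$ vanishes up to torsion. For the mixed part it holds because $\varphi_v$ maps $B$ into the annihilator of $C$. By the commutative diagram in the proof of Lemma~\ref{lemma:cbar comparison}, run in reverse, a class with $v_\beta$ supported on the $B$-factor comes from $H_2(B,\BQ)$. Concretely, $\beta$ pairs to zero with every class in $H^2(X)$ that restricts to zero on $B$. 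Equivalently, $\beta$ lies in the image of $j_\ast$ over $\BQ$, by Poincaré duality on $X$ and the exact sequence for the pair. Since $\beta$ is algebraic and $j_\ast$ respects Hodge type, the preimage $\gamma$ can be chosen in $N_1(B)_\BQ$.

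The main obstacle is integrality: we need $\gamma\in N_1(B)$ integral, not merely rational. Our plan is to use that $H_1(B,\BZ)\subset H_1(X,\BZ)$ is a saturated sublattice, since $B$ is a subvariety. Hence $H_2(B,\BZ)=\wedge^2 H_1(B,\BZ)\to \wedge^2 H_1(X,\BZ)=H_2(X,\BZ)$ is a primitive embedding. A rational preimage of an integral class under a primitive embedding is then integral. If this saturation argument has issues, we would fall back to choosing a basis adapted to the sublattice $H_1(B,\BZ)$ and checking the claim directly.

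Finally, the minimality statement follows by combining two inputs. Lemma~\ref{lemma:cbar comparison} shows that any $B'$ of dimension $u'$ carrying $\beta$ forces $t^{h-u'}\mid\chi_\beta$, i.e. $u'\ge u$, and the construction above achieves $u$.
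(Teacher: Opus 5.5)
Your proposal is correct, but it takes a different route from the paper. The paper dualizes. It regards $\beta$ as a class $\alpha\in\NS(\widehat{X})$ and quotes Kempf's theorem (via Debarre, Thm.~2.1(c)) to write $\alpha=f^{\ast}\alpha'$ for a surjection $f:\widehat{X}\to Y$ and a polarization $\alpha'$ on $Y$. It then sets $B=\widehat{Y}\hookrightarrow X$ via $\hat f$. The commutative square relating $j_\ast$ on $N_1$ to $f^\ast$ on $\NS$ immediately yields an integral $\gamma$ with $j_\ast\gamma=\beta$. Integrality comes for free from descent of line bundles, and $\dim B=u$ follows from ampleness of $\alpha'$ together with Lemma~\ref{lemma:cbar comparison}.

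You instead stay on $X$ and take $B=\mathrm{im}(\phi)$ with $\phi=\varphi_\theta^{-1}\varphi_{v_\beta}$. This is the same subvariety as the paper's $\widehat{Y}$, namely the $\theta$-complement of $(\ker\phi)^0$. You get $\dim B=u$ from Rosati-semisimplicity, and you do the descent by hand via rational linear algebra plus saturation of $H_1(B,\BZ)\subset H_1(X,\BZ)$. Your route is self-contained and needs no positivity of $\beta$; the cited descent result concerns effective classes, which is all the paper's applications require. The price is bookkeeping that the citation hides. Also, $N=1$ already suffices, since $\varphi_\theta$ is an isomorphism.

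Two steps deserve tightening.

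\textbf{The rational step.} The passage from ``$v_\beta$ is supported on the $B$-factor'' to ``$\beta$ annihilates $\ker j^\ast$'' is asserted rather than derived. Here is a clean version.
\begin{itemize}
\item Up to sign, $v_\beta=\wedge^2E^\flat(\beta)$, where $E^\flat:H_1(X,\BQ)\to H^1(X,\BQ)$ is contraction with $c_1(\theta)$.
\item Since $E_{v}(x,y)=E(\phi x,y)$, the condition $\phi|_C=0$ gives $\iota_c v_\beta=0$ for all $c\in H_1(C)$.
\item Hence $v_\beta\in\wedge^2\mathrm{Ann}(H_1(C))=\wedge^2E^\flat(H_1(B))$, i.e.\ $\beta\in\wedge^2H_1(B,\BQ)$.
\item Saturation then gives $\beta\in\wedge^2H_1(B,\BZ)$ directly.
\end{itemize}

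\textbf{Algebraicity of $\gamma$.} An integral class that is rationally algebraic is not automatically integrally algebraic. So concluding $\gamma\in N_1(B)$ rests on the same convention the paper uses, $N_1(B)\cong\NS(\widehat{B})$, i.e.\ that $N_1$ means integral Hodge classes.
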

\begin{proof}
	Let $\alpha \in \NS(\widehat{X})$ be the class under the natural isomorphism $N_1(X) \cong \NS(\widehat{X})$ corresponding to $\beta$.
	Then by a result of Kempf \cite[Appendix]{MR282975} (see also \cite[Theorem 2.1 (c)]{DebarreCurves} for the exact statement we need here) there exists a polarized abelian variety $(Y,\alpha')$ and a surjective morphism $f : \widehat{X} \to Y$ such that $f^{\ast}(\alpha') = \alpha$.
	Let $B = \widehat{Y}$ which embedds into $X$ via the dual morphism $j = \hat{f} : B \to X$. Consider again the commutative diagram
	\[
\begin{tikzcd}
	N_1(X) \ar{r}{\equiv} & \NS(\widehat{X}) \\
	N_1(B) \ar{u}{j_{\ast}} \ar{r}{\equiv} & \NS(Y) \ar{u}{\hat{j}^{\ast} = f^{\ast}}
\end{tikzcd}
\]
Let $\gamma \in N_1(B)$ the class corresponding to $\alpha'$. Then it follows $j_{\ast} \gamma = \beta$.
\end{proof} 

\begin{lemma} \label{lemma:rank vanishing}	If $\beta$ is of rank $u$, then
$\CC_{g,\chi}(\tau_{k_1 \ldots k_n}(\Gamma))$ vanishes for all $g < u$.
\end{lemma}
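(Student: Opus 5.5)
The plan is to show that the relevant moduli space is empty. The statement should be read as: $\chi$ has rank $u$, i.e. $\chi = t^h - a_1 t^{h-1} + \ldots + a_{h-u} t^{h-u}$ with $a_{h-u} \neq 0$. I claim that when $g<u$, no stable map $f : C \to X$ of genus $g$ has $\chi_{f_\ast[C]} = \chi$. Then $\Mbar_{g,n}(\pi,\chi)$ is empty, its virtual class is zero, and so every $\CC_{g,\chi}(\tau_{k_1\ldots k_n}(\Gamma))$ vanishes.

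\textbf{Step 1: the image of a curve.} Let $f : C \to X$ be a stable map of genus $g$ to a fiber $(X,\theta)$. Rational components map to points, so only the components of positive geometric genus contribute to $\beta = f_\ast[C]$. Let $C_1,\ldots,C_m$ be the normalizations of these components, of genera $g_1,\ldots,g_m$ with $\sum_i g_i \leq g$.

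\textbf{Step 2: factor through an abelian subvariety.} By the universal property of the Jacobian, each $f|_{C_i}$ factors, up to translation, as $C_i \to J(C_i) \to X$. The image of $J(C_i) \to X$ is an abelian subvariety $B_i$ of dimension $\leq g_i$. Hence $f(C_i)$ lies in a translate of $B_i$. Set $B = B_1 + \cdots + B_m$, which has dimension $\leq \sum_i g_i \leq g$. Translations act trivially on homology, so $\beta_i := f_\ast[C_i]$ equals $j_\ast \gamma_i$ for some $\gamma_i \in H_2(B,\BZ)$, where $j : B \hookrightarrow X$ is the inclusion. Summing, $\beta = j_\ast\gamma$ with $\gamma = \sum_i \gamma_i$.

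\textbf{Step 3: compare characteristic polynomials.} Lemma~\ref{lemma:cbar comparison} applies to $j : B \to X$ with the induced polarization $\theta|_B$, of whatever type. It gives $\chi_\beta = t^{h-\dim B}\chi_\gamma$, so $\chi_\beta$ is divisible by $t^{h-\dim B}$. In the notation of the rank definition, this means $a_k = 0$ for all $k > \dim B$, so the rank of $\beta$ is at most $\dim B \leq g < u$. This contradicts $\chi_\beta = \chi$ having rank $u$. Therefore no such map exists and $\Mbar_{g,n}(\pi,\chi) = \emptyset$.

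The step most likely to need care is Step 2, when $C$ is reducible and the components sit in different translates. This is harmless: the homology class is unchanged by translation, and the sum of abelian subvarieties has dimension at most the sum of their dimensions. Beyond that, the only thing to check is that Lemma~\ref{lemma:cbar comparison} is used for a possibly non-principal restricted polarization, and that lemma was stated in exactly that generality.
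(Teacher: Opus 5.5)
Your proof is correct and is the paper's argument. The paper also notes that a genus $g$ stable map lands in a translate of the image of the Jacobian of the normalization of $C$, which is an abelian subvariety of dimension at most $g$, and then applies Lemma~\ref{lemma:cbar comparison} to get that the rank of $f_\ast[C]$ is at most $g$, so $\Mbar_{g,n}(\pi,\chi)$ is empty when $g<u$. Your handling of reducible domains via translation-invariance of homology classes spells out a detail the paper leaves implicit. One small notational point: in the rank definition you copied, the coefficient of $t^{h-u}$ should be indexed $a_u$ rather than $a_{h-u}$, which is how you actually use it in Step 3.
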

\begin{proof}
If $f:C \to X$ is a stable map of genus $g$, then the image of $f$ is contained in (a translate of) an abelian subvariety of dimension $\leq g$. Indeed, it is contained in the image of the Jacobian of the normalization of $C$.
	Thus the rank of $f_{\ast}[C]$ is $\leq g$ by Lemma~\ref{lemma:cbar comparison}.
\end{proof}

\subsection{Decomposing the Gromov-Witten classes according to Noether-Lefschetz strata}
\label{subsec:NLdecomp}
Let $f : C \to X$ be a $n$-marked genus $g$ stable map with characteristic polynomial $\chi$ of rank $u$, where $(X,\theta)$ is principally polarized abelian variety of dimension $h$. We assume that $n>0$.

By the proof of Lemma~\ref{lemma:rank vanishing} we must have $u \leq g$.

Let $B \subset X$ be the smallest abelian subvariety such that $f$ factors through a translate of $B$.
Then $B$ is of dimension $u$. Let $(\delta_1,\ldots,\delta_u)$ be the type of $\theta|_{B}$.
Let $\Mbar_{g,n}(\CX,\chi)_{\delta}$ be the (open and closed) component\footnote{The type $\delta$ of $\theta|_{B}$ is constant in connected families.} of $\Mbar_{g,n}(\CX,\chi)$ such that $\theta|_{B}$ has type $\delta$.
Associating to a stable map the subvariety $B$ induces a morphism to the Noether-Lefschetz locus:\footnote{Here if $u \leq h/2$ this is the standard definition from Section~\ref{subsec:NL loci}.
	If $u > h/2$ we require $\delta = (1^{2h-u},\delta'_1,\ldots,\delta'_u)$ 
	and set $\CA_{u,h-u,\delta} := \CA_{h-u,u,\tilde{\delta}}$
	where $\tilde{\delta} = (\delta'_1,\ldots,\delta'_u)$.}
\[ \rho : \Mbar_{g,n}(\CX,d)_{\delta} \to \CA_{u,h-u,\delta}. \]
Consider the pullback
$\varphi_{h,\delta}^{\prime \ast} \pi : \varphi_{h,\delta}^{\prime \ast} \CX \to \CA_{u,h-u,\delta}$ of the universal family $\pi : \CX\to \CA_h$.
Then we have an isomorphism
\begin{equation} \label{isom moduli 1}
\Mbar_{g,n}(\pi,\chi)_{\delta} \cong \Mbar_{g,n}( \varphi_{h,\delta}^{\prime \ast} \pi, \chi)_{\mathrm{main}}.
\end{equation}
where the subscript '$\mathrm{main}$' stands for taking the (open and closed) component which parametrizes stable maps $(f : C \to X,p_1,\ldots,p_n)$ over a point $(X,\theta,B)$ such that $f$ maps to a translate of $B$ (since $f_{\ast}[C]$ has rank $u$ this implies that $B$ is precisely the smallest abelian subvariety such that $f$ factors through a translate of it).

Recall the gluing morphism
\[ \varphi_{h,\delta} : \CA_{u,\delta}^{\mathrm{lev}} \times \CA_{h-u,\delta}^{\mathrm{lev}} \to 
\CA_{u,h-u,\delta} \]
which is finite and \'etale.
Consider the pullback
$\varphi_{h,\delta}^{\ast} \pi: \varphi_{h,\delta}^{\ast}\CX \to  \CA_{u,\delta}^{\mathrm{lev}} \times \CA_{h-u,\delta}^{\mathrm{lev}}$ of the universal family $\pi$ via $\varphi_{h,\delta}$.
We obtain a fiber diagram:
\[
\begin{tikzcd}
	\Mbar_{g,n}(\varphi_{h,\delta}^{\ast} \pi,\chi)_{\mathrm{main}} \ar{r}{\varphi_{h,\delta}} \ar{d}{\rho} & \Mbar_{g,n}(\CX,\chi)_{\mathrm{main}} \ar{d}{\rho} \\
	\CA_{u,\delta}^{\mathrm{lev}} \times \CA_{h-u,\tilde{\delta}}^{\mathrm{lev}} \ar{r}{\varphi_{h,\delta}} &
	\CA_{u,h-u,\delta}.
\end{tikzcd}
\]
where the subscript '$\mathrm{main}$' stands for the component where the stable maps $f$ over a point $((B,\theta_B,f),(C,\theta_C,f'))$ map to a translate of $B$.
By base change the top horizontal map is again finite and \'etale.

Consider the projections
\[
p_{\delta} : \CA_{u,\delta}^{\mathrm{lev}} \times \CA_{h-u,\tilde{\delta}}^{\mathrm{lev}} \to \CA_{u,\delta},
\quad
p_{\tilde{\delta}} : \CA_{u,\delta}^{\mathrm{lev}} \times \CA_{h-u,\tilde{\delta}}^{\mathrm{lev}} \to \CA_{h-u,\tilde{\delta}}
\]
and let $\pi_{u,\delta} : \CX_{u,\delta} \to \CA_{u,\delta}$ and $\pi_{h-u,\tilde{\delta}} : \CX_{h-u,\tilde{\delta}} \to \CA_{h-u,\tilde{\delta}}$
be the universal families.
There is a canonical short exact sequence of families of abelian varieties
\[ 0 \to p_{\delta}^{\ast}\CX_{u,\delta} \to \varphi_{h,\delta}^{\ast}\CX
\xrightarrow{\epsilon} p_{\tilde{\delta}}^{\ast} \CX_{h-u,\tilde{\delta}}/\Lambda \to 0
\]
where we think of $\Lambda$ here as a constant local system embedded via the level structure.

We may view
$\epsilon : \varphi_{h,\delta}^{\ast}\CX
\to p_{\tilde{\delta}}^{\ast} \CX_{h-u,\tilde{\delta}}/\Lambda$
as a family of $u$-dimensional polarized abelian varieties polarized of type $\tilde{\delta}$ by the restriction of  $\varphi_{h,\delta}^{\ast}(\theta)$ to the fibers of $\epsilon$.
We can hence form the moduli space of stable maps $\Mbar_{g,n}(\epsilon,\chi')$
of stable maps with characteristic polynomial $\chi'$ with respect to the fiberwise polarization.
Since all stable maps in $\Mbar_{g,n}(\varphi_{h,\delta}^{\ast} \pi,\chi)_{\mathrm{main}}$ are contracted by the quotient map $\epsilon$ we have an isomorphism of moduli spaces:
\begin{equation} \label{ABC} 
\Mbar_{g,n}(\varphi_{h,\delta}^{\ast} \pi,\chi)_{\mathrm{main}}
\cong \Mbar_{g,n}(\epsilon, \chi/t^{h-u} )
\end{equation}
where we use the characteristic polynomial $\chi' = \chi/t^{h-u}$ here by Lemma~\ref{lemma:cbar comparison}. We do not need the subscript $\mathrm{main}$ here on the right since the characteristic polynomial has rank $u$.

To simplify further, we use the fiber diagram
\[
\begin{tikzcd}
	p_{\delta}^{\ast} \CX_{u,\delta} \times p_{\tilde{\delta}}^{\ast} \CX_{h-u,\tilde{\delta}} \ar{r}{\zeta} \ar{d}{\widetilde{\epsilon}} & \varphi_{h,\delta}^{\ast} \CX \ar{d}{\epsilon} \\
	p_{\tilde{\delta}}^{\ast} \CX_{h-u,\tilde{\delta}} \ar{r} & p_{\tilde{\delta}}^{\ast} \CX_{h-u,\tilde{\delta}}/\Lambda
\end{tikzcd}
\]
where $\zeta$ is again finite and \'etale of degree $|\Lambda| = |K(\delta)|$. On the level of moduli spaces of stable maps we get hence the fiber diagram
\[
\begin{tikzcd}
\Mbar_{g,n}(\widetilde{\epsilon}, \chi/t^{h-u} ) \ar{r}{\zeta} \ar{d} 
& \Mbar_{g,n}(\epsilon, \chi/t^{h-u} ) \ar{d} \\
p_{\tilde{\delta}}^{\ast} \CX_{h-u,\tilde{\delta}} \ar{r}{\zeta} 
& p_{\tilde{\delta}}^{\ast} \CX_{h-u,\tilde{\delta}}/\Lambda
\end{tikzcd}
\]
The advantage of the family $\widetilde{\epsilon}$ is that its just the base change of the universal family $\pi_{u,\delta}: \CX_{u,\delta} \to \CA_{u,\delta}$. Hence we find the same for the moduli spaces of stable maps, that is we get a square of fiber diagrams
\begin{equation} \label{fiber diagram square}
\begin{tikzcd}
\Mbar_{g,n}(\widetilde{\epsilon},\chi') \ar{rr} \ar{d} &  & 
\Mbar_{g,n}(\pi_{u,\delta}^{\mathrm{lev}},\chi') \ar{d} \ar{r}{\mathrm{Fg}} & \Mbar_{g,n}(\pi_{u,\delta},\chi') \ar{d}\\
p_{\tilde{\delta}}^{\ast} \CX_{h-u,\tilde{\delta}} \ar{r} & \CA_{u,\delta}^{\mathrm{lev}} \times \CA_{h-u,\tilde{\delta}}^{\mathrm{lev}} \ar{r}{p_{\delta}} & \CA^{\mathrm{lev}}_{u,\delta} \ar{r}{\mathrm{Fg}}  & \CA_{u,\delta},
\end{tikzcd}
\end{equation}
where we write
$\pi_{u,\delta}^{\mathrm{lev}}: \CX_{u,\tilde{\delta}}^{\mathrm{lev}} \to \CA_{u,\delta}^{\mathrm{lev}}$ for the universal family over $\CA_{u,\delta}^{\mathrm{lev}}$, which is just the pullback of the universal family over $\CA_{u,\delta}$ under the morphism $\mathrm{Fg}$ forgetting the level structure. In other words, since $p_{\delta}$ is just the projection to the factors,
\begin{equation} \label{product decomp}
\Mbar_{g,n}(\widetilde{\epsilon},\chi') \cong \Mbar_{g,n}(\pi_{u,\delta}^{\mathrm{lev}},\chi') \times \CX_{\tilde{\delta}}^{\mathrm{lev}}.
\end{equation}

Thus we have found a finite and etale map 
\[
\alpha = \varphi_{h,\delta} \circ \widetilde{\zeta} : 
 \Mbar_{g,n}(\pi_{u,\delta}^{\mathrm{lev}},\chi') \times \CX_{\tilde{\delta}}^{\mathrm{lev}}
 \to \Mbar_{g,n}(\pi,\chi)_{\delta}.
\]

Since the map $\alpha$ is \'etale the perfect obstruction theory on $\Mbar_{g,n}(\pi,\chi)_{\delta}$ pulls back to a perfect obstruction theory on the source.
The next step is to identify the corresponding virtual tangent bundle.
This happens essentially in two steps.

In the first step we compare the $K$-theory classes of the natural perfect obstruction theories
under the isomorphism \eqref{isom moduli 1}.
As discussed in Appendix \ref{app:subsec reduced} the obstruction sheaf of $\Mbar_{g,n}(\pi,\chi)_{\delta}$ is obtained from removing a copy of the normal bundle $N = \BE_{u}^{\vee} \otimes \BE_{h-u}^{\vee}$ of $\varphi'_{h,\delta} : \CA_{u,h-u,\delta} \to \CA_{h}$ from the obstruction sheaf of $\Mbar_{g,n}(\varphi_{h,\delta}^{\prime \ast} \pi, \chi)_{\mathrm{main}}$. In $K$-theory this yields, see \eqref{Ktheory compare2},
\[
T^{\vir}_{\Mbar_{g,n}(\pi,\chi)_{\delta}} = T_{\Mbar_{g,n}(\varphi_{h,\delta}^{\prime \ast} \pi,\chi)_{\mathrm{main}}}^{\vir} + \BE_{u}^{\vee} \otimes \BE_{h-u}^{\vee}.
\]
Pulling back via $\varphi_{h,\delta}$ we obtain
\begin{equation} \label{arewfef}
\varphi_{h,\delta}^{\ast} T^{\vir}_{\Mbar_{g,n}(\pi,\chi)_{\delta}} = T_{\Mbar_{g,n}(\varphi_{h,\delta}^{\ast} \pi,\chi)_{\mathrm{main}}}^{\vir} + \BE_{u}^{\vee} \otimes \BE_{h-u}^{\vee}.
\end{equation}

Next we compare the virtual tangent bundles under the isomorphismn \eqref{ABC}
which brings us exactly in the situation discussed in Section~\ref{app:subsec on comparing two relative}.
By \eqref{Ktheory compare1} we have
\[
T^{\vir}_{\Mbar_{g,n}(\varphi_{h,\delta}^{\ast} \pi,\chi)_{\mathrm{main}}}
= T^{\vir}_{\Mbar_{g,n}(\epsilon, \chi/t^{h-u} )} - \BE_{\Mbar_{g,n}}^{\vee} \otimes \BE_{h-u}^{\vee}
\]

Combining with \eqref{arewfef} we hence get
\[
\varphi_{h,\delta}^{\ast} T^{\vir}_{\Mbar_{g,n}(\pi,\chi)_{\delta}}
=
T^{\vir}_{\Mbar_{g,n}(\epsilon, \chi/t^{h-u} )} + \BE_{u}^{\vee} \otimes \BE_{h-u}^{\vee}
 - \BE_{\Mbar_{g,n}}^{\vee} \otimes \BE_{h-u}^{\vee}
\]
Pulling back by $\widetilde{\zeta}$ we hence have 
%and using \eqref{product decomp} hence
\begin{equation} \label{some rel}
\alpha^{\ast} T^{\vir}_{\Mbar_{g,n}(\pi,\chi)_{\delta}}
=
T^{\vir}_{\Mbar_{g,n}(\widetilde{\epsilon}, \chi/t^{h-u} )} + \BE_{u}^{\vee} \otimes \BE_{h-u}^{\vee}
- \BE_{\Mbar_{g,n}}^{\vee} \otimes \BE_{h-u}^{\vee}.
\end{equation}

Because the image of the stable maps in $\Mbar_{g,n}(\widetilde{\epsilon}, \chi/t^{h-u} )$ do not admit maps to translates of proper abelian subvarieties of $B$ pulling back differentials by the stable maps yields the inclusion
\[ \BE_{u} \hookrightarrow \BE_{\Mbar_{g,n}} \]
over the moduli space $\Mbar_{g,n}(\widetilde{\epsilon}, \chi/t^{h-u} )$.
Dually this yields the exact sequence
\[
0 \to (\BE_{\Mbar_{g,n}}/\BE_{u})^{\vee} \to 
\BE_{\Mbar_{g,n}}^{\vee} \to E_{u}^{\vee} \to 0.
\]
The equation \eqref{some rel} hence can be rewritten
\[
\alpha^{\ast} T^{\vir}_{\Mbar_{g,n}(\pi,\chi)_{\delta}}
=
T^{\vir}_{\Mbar_{g,n}(\widetilde{\epsilon}, \chi/t^{h-u} )} 
- (\BE_{\Mbar_{g,n}} / \BE_u)^{\vee} \otimes \BE_{h-u}^{\vee}
\]
Thus the pullback perfect obstruction theory may intuitively be viewed as removing a factor of 
$(\BE_{\Mbar_{g,n}} / \BE_u)^{\vee} \otimes \BE_{h-u}^{\vee}$ from the obstruction sheaf.\footnote{To make this rigorous requires a more careful study of the corresponding comparison maps. We only require the $K$-theory statements here, so we do not attempt to give such a discussion.}
Using Siebert's formula \eqref{Sieberts formula} we find the identity of virtual classes
\[
\alpha^{\ast} [ \Mbar_{g,n}(\pi,\chi)_{\delta}]^{\vir} 
=
e\left( (\BE_{\Mbar_{g,n}} / \BE_u)^{\vee} \otimes \BE_{h-u}^{\vee} \right) 
\cap [\Mbar_{g,n}(\widetilde{\epsilon}, \chi/t^{h-u} )]^{\vir}
\]

The moduli space $\Mbar_{g,n}(\widetilde{\epsilon}, \chi' )$ is just the product of
$\Mbar_{g,n}(\pi^{\mathrm{lev}}_{u,\delta}, \chi')$ with the smooth space $\CX^{\mathrm{lev}}_{h-u,\tilde{\delta}}$. Thus we have
\[
[\Mbar_{g,n}(\widetilde{\epsilon}, \chi' )]^{\vir}
=
[ \Mbar_{g,n}(\pi^{\mathrm{lev}}_{u,\delta}, \chi') ]^{\vir} \times [ \CX^{\mathrm{lev}}_{h-u,\tilde{\delta}} ].
\]
Moreover, using the right square fiber diagram \eqref{fiber diagram square} and that $\mathrm{Fg}$ is finite \'etale we have
\begin{equation} \label{efwefwe-} [ \Mbar_{g,n}(\pi^{\mathrm{lev}}_{u,\delta}, \chi') ]^{\vir}  = 
\mathrm{Fg}^{\ast} [ \Mbar_{g,n}(\pi_{u,\delta}, \chi') ]^{\vir}. \end{equation}

Inserting this we thus arrive at
\[
\alpha^{\ast} [ \Mbar_{g,n}(\pi,\chi)_{\delta}]^{\vir} 
=
(-1)^{(g-u)(h-u)}
e\left( (\BE_{\Mbar_{g,n}} / \BE_u) \otimes \BE_{h-u} \right)
\cap 
\mathrm{Fg}^{\ast} [ \Mbar_{g,n}(\pi_{u,\delta}, \chi/t^{h-u}) ]^{\vir} \times [ \CX^{\mathrm{lev}}_{h-u,\tilde{\delta}} ]
\]
Pushing forward we get the explicit description:
\begin{prop}(Noether-Lefschetz decompositon)
	Let $\chi$ be a characteristic polynomial of rank $u$. Then
\begin{multline} \label{full rank decomposition}
[ \Mbar_{g,n}(\pi,\chi)_{\delta}]^{\vir} 
	=
\sum_{\substack{\delta = (\delta_1,\ldots,\delta_u) \\ \delta_u > 0}}
\frac{(-1)^{(g-u)(h-u)}}{|\deg \varphi_{h,\delta}| \cdot |K(\delta)|}  \\
	\cdot \alpha_{\ast}\left( 
	e\left( (\BE_{\Mbar_{g,n}} / \BE_u) \otimes \BE_{h-u} \right)
	\cap 
	\mathrm{Fg}^{\ast} [ \Mbar_{g,n}(\pi_{u,\delta}, \chi/t^{h-u}) ]^{\vir} \times [ \CX^{\mathrm{lev}}_{h-u,\tilde{\delta}} ]
	\right)
\end{multline}
\end{prop}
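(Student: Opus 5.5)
Most of the work for this statement is already done in the discussion preceding it. What remains is to assemble the pieces: the decomposition over polarization types $\delta$, and the passage from the pulled-back identity for $\alpha^{\ast}$ to a pushforward formula. I would proceed in three steps.

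\textbf{Step 1: decomposition by type.} Since $\chi$ has rank $u$, every stable map in $\Mbar_{g,n}(\pi,\chi)$ factors through a translate of a unique smallest abelian subvariety $B$, and $\dim B=u$ by the lemma relating rank to abelian subvarieties together with Lemma~\ref{lemma:cbar comparison}. The type $\delta$ of $\theta|_B$ is locally constant. Hence $\Mbar_{g,n}(\pi,\chi)=\bigsqcup_\delta \Mbar_{g,n}(\pi,\chi)_\delta$ is a decomposition into open and closed pieces, and the virtual class is the sum of the virtual classes of the pieces. Only finitely many $\delta$ occur: by Lemma~\ref{lemma:cbar comparison} the product $\delta_1\cdots\delta_u$ is bounded in terms of the coefficients of $\chi/t^{h-u}$. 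This justifies the sum over $\delta$ on the left-hand side, which I read as the full class $[\Mbar_{g,n}(\pi,\chi)]^{\vir}$.

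\textbf{Step 2: the pulled-back identity.} For a fixed $\delta$, the map $\alpha=\varphi_{h,\delta}\circ\widetilde\zeta$ is finite and \'etale onto $\Mbar_{g,n}(\pi,\chi)_\delta$, because it is a composite of base changes of the finite \'etale maps $\varphi_{h,\delta}$ and $\zeta$. The $K$-theoretic comparisons \eqref{Ktheory compare2} and \eqref{Ktheory compare1}, combined with the injection $\BE_u\hookrightarrow\BE_{\Mbar_{g,n}}$, show that the pulled-back obstruction theory differs from that of $\Mbar_{g,n}(\widetilde\epsilon,\chi')$ by removing the bundle $(\BE_{\Mbar_{g,n}}/\BE_u)^\vee\otimes\BE_{h-u}^\vee$. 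Siebert's formula \eqref{Sieberts formula} then gives the relation between virtual classes. Here one rewrites the Euler class of the dual bundle, which has rank $(g-u)(h-u)$, as $(-1)^{(g-u)(h-u)}e((\BE_{\Mbar_{g,n}}/\BE_u)\otimes\BE_{h-u})$. Finally the product decomposition \eqref{product decomp} and \eqref{efwefwe-} are inserted. This produces the displayed formula for $\alpha^\ast[\Mbar_{g,n}(\pi,\chi)_\delta]^{\vir}$.

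\textbf{Step 3: push forward.} Since $\alpha$ is finite \'etale and $\Mbar_{g,n}(\pi,\chi)_\delta$ is connected over each component, we have $\alpha_\ast\alpha^\ast=\deg(\alpha)\cdot\id$ on Chow groups with $\BQ$-coefficients. The degree is the product of the degrees of the two maps in the fiber squares. The map $\widetilde\zeta$ is a base change of $\zeta$, which has degree $|\Lambda|=|K(\delta)|$. The map $\varphi_{h,\delta}$ on moduli of stable maps is a base change of $\varphi_{h,\delta}:\CA^{\mathrm{lev}}_{u,\delta}\times\CA^{\mathrm{lev}}_{h-u,\tilde\delta}\to\CA_{u,h-u,\delta}$. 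Dividing by $|\deg\varphi_{h,\delta}|\cdot|K(\delta)|$ and summing over $\delta$ then yields \eqref{full rank decomposition}.

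\textbf{Main obstacle.} I expect the main obstacle to be Step 2. Siebert's formula needs more than a $K$-theory identity, since it is really a statement about cones and obstruction theories. I would argue that the virtual class depends only on the $K$-theory class of the obstruction theory together with the intrinsic normal cone, which is invariant under the \'etale map $\alpha$; Siebert's formula expresses the class through the total Chern class of the virtual tangent bundle and the Fulton class. This makes the $K$-theoretic identity sufficient, as the paper indicates. Two further points need care. One is the degree bookkeeping on stacks, in particular the factor of $2$ when $h=2u$. The other is checking that the "main" components correspond under the isomorphisms \eqref{isom moduli 1} and \eqref{ABC}.
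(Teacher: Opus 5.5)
Your proposal is correct and is essentially the paper's own argument. You split by the type $\delta$ (reading the left-hand side as the full class $[\Mbar_{g,n}(\pi,\chi)]^{\vir}$, as you do), compute $\alpha^{\ast}[\Mbar_{g,n}(\pi,\chi)_{\delta}]^{\vir}$ from \eqref{Ktheory compare2}, \eqref{Ktheory compare1}, the inclusion $\BE_u \hookrightarrow \BE_{\Mbar_{g,n}}$ and Siebert's formula, and push forward along the finite \'etale map $\alpha$ of degree $\deg(\varphi_{h,\delta}) \cdot |K(\delta)|$. One point in your Step 2 should be made explicit: why a $K$-theory identity $T_1^{\vir} = T_2^{\vir} - W$ with different virtual dimensions gives $[M]^{\vir}_1 = e(W) \cap [M]^{\vir}_2$ on the source $M$ of $\alpha$. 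The reason is that $c(T_2^{\vir})^{-1} \cap c_F(M) = c(E_1) \cap s(C)$, for the cone $C \subset E_1$, has no components of dimension below $\dim C - \rk E_1 = \mathrm{vd}_2$, so only the top Chern class of $W$ survives.
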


We hence have reduced the computations of the contributions from the rank $u$ stable maps to the study of the Gromov-Witten classes of $\CA_{u,\delta}$.

	\subsection{Vanishing relations II} \label{subsec:vanishings II}
As a first consequence of \eqref{full rank decomposition} we will prove some vanishing results.
We start with Gromov-Witten invariants in genus $g=1,2,3$.
\begin{prop} Let $h \geq 2$ and $\Gamma \in R_{\mathrm{vert}}^{\ast}(\CX^{\times n})$.
Then for all characteristic polynomials $\chi$ we have
		\begin{enumerate}
			\item[(i)] $\CC_{1,\chi}(\tau_{k_1 \ldots k_n}(\Gamma))=0$ if $\deg(\Gamma) + \sum_i k_i > h+n-1$.
			\item[(2)] $\taut\ \CC_{2,\chi}(\tau_{k_1 \ldots k_n}(\Gamma))=0$ if $\deg(\Gamma) + \sum_i k_i > h+n$ and $h \geq 2$.
			\item[(3)] $\taut\ \CC_{3,\chi}(\tau_{k_1 \ldots k_n}(\Gamma))=0$ if $\deg(\Gamma) + \sum_i k_i > h+n$ and $h \geq 3$.
		\end{enumerate}
	\end{prop}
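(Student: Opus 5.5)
The plan is to combine the Noether–Lefschetz decomposition \eqref{full rank decomposition} with a codimension count on the base. We split $\CC_{g,\chi}$ according to the rank $u\le g$ of $\chi$ (Lemma~\ref{lemma:rank vanishing}) and the polarization type $\delta$. On the stratum of rank $u$ and type $\delta$ the invariant is $\varphi'_{h,\delta\ast}$ of a class on $\CA_{u,\delta}^{\mathrm{lev}}\times\CA^{\mathrm{lev}}_{h-u,\tilde\delta}$. This class is built from three ingredients:
\begin{itemize}
\item the Euler class $e\big((\BE_{\Mbar_{g,n}}/\BE_u)\otimes\BE_{h-u}\big)$;
\item the $\CX^{\mathrm{lev}}_{h-u,\tilde\delta}$-factor, which is integrated out against the part of $\ev^\ast\Gamma$ living on the complementary directions;
\item a Gromov–Witten class of the universal $u$-dimensional family $\pi_{u,\delta}$.
\end{itemize}
Pulling $\Gamma\in\Rvert$ back along $\zeta$, it becomes a polynomial in the $\eta$-classes of the two factors $B$ and $C$, since $\theta$ pulls back to $\theta_B+\theta_C$. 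After the pushforward along $\CX^{\mathrm{lev}}_{h-u,\tilde\delta}$ (relative dimension $h-u$), the $C$-part contributes only through its degree $2(h-u)$ piece. Since all markings map to the same translate of $B$, the $C$-coordinates of the $n$ evaluation points coincide, so the $C$-part is evaluated on the small diagonal. Hence we may assume $\Gamma$ is replaced by a class $\Gamma_B$ with $\deg\Gamma_B=\deg\Gamma-(h-u)$ on $\CX_{u,\delta}^{\times n}$.

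\textbf{Case $g=1$.} Here $u\le 1$, and $u=0$ forces the zero class, which is handled directly. For $u=1$, Lemma~\ref{lemma:vanishing1b}-type reasoning on the elliptic fiber (after translating the first marking to the origin, relative dimension $1+n-1$ fibers over $\Mbar_{1,1}$) shows that the $u=1$ family invariant vanishes unless $\deg\Gamma_B+\sum k_i\le 1+n-1$. Equivalently, it vanishes if $\deg\Gamma+\sum k_i>h+n-1$. This gives (i) without tautological projection, since it is a pure fiber-dimension statement: the insertion exceeds the dimension of the fibers of $\Mbar_{1,n}(\pi_{1},\chi')\to\Mbar_{1,1}(\pi_1,\chi')^0$, which have dimension $n$ plus the $\CX$-translations already used.

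\textbf{Cases $g=2,3$.} Here the excess insertion degree is not killed on fibers. Instead, the output lands in too high a codimension of $\CA_h$ to survive $\taut$. The class on $\CA_h$ has codimension $\deg\Gamma+\sum k_i-n-h+g(h-3)+3$. For $u=g$, the Euler factor is trivial ($\BE_{\Mbar}/\BE_u=0$), and the result is a pushforward from $\NL_{h,\delta}$ of codimension $g(h-g)$ of a class on the product. By the CMOP description, $\taut$ of a pushforward from $\CA_{u}\times\CA_{h-u}$ is computed by pairing against $\lambda$-monomials. For the $h-u$ factor the relevant tautological classes are pulled back $\lambda$'s, whose top nonzero degree on $\CA_{h-u}$ is $\binom{h-u}{2}$. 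On the $\CA_u$ side, the $u$-dimensional family invariant itself is bounded by Lemma~\ref{lemma:vanishing1b} and Corollary~\ref{cor:Vanishing for g >= h >= 4}-type counts: for $u=g\in\{2,3\}$, the family $\CC^{\CA_u}_{g}$ with excess degree has codimension greater than $\dim R^\ast(\CA_u)=\binom{u}{2}$. For $u<g$, the Euler factor $e((\BE_{\Mbar}/\BE_u)\otimes\BE_{h-u})$ is evaluated as in Lemma~\ref{lemma:e(Hodge Hodge) eval} and the subsequent lemma. It contributes $\lambda_{h-u}$-top-type products, which again push the total $\lambda$-degree on the $\CA_{h-u}$ or $\CA_u$ factor beyond its top, so $\taut$ vanishes.

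The main obstacle is this last codimension bookkeeping. It requires knowing that $\taut\circ\varphi'_{h,\delta\ast}$ factors through $\taut\otimes\taut$ on the product, a compatibility from \cite{CMOP,AitorNL}. It also requires checking case by case ($u=1,2,3$; $h\ge g$ versus small $h$) that one of the two factors exceeds its top tautological degree exactly when $\deg\Gamma+\sum k_i>h+n$. I would first do $g=2$, $u=2$ and $u=1$, using the $\mu_2\mu_1\lambda_{h-1}\lambda_{h-2}$ evaluation, then extend to $g=3$.
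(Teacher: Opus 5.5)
Your overall architecture matches the paper's: the decomposition \eqref{full rank decomposition}, the reduction to $\Gamma_1$ with $\deg\Gamma_1=\deg\Gamma-(h-u)$, and, for $u=g\in\{2,3\}$, comparing the codimension of the $\CA_{u,\delta}$-side class with the socle degree $\binom{u}{2}$, then transporting this to $\CA_h$ via \eqref{taut vs taut}. That part is fine. The remaining cases, however, rest on arguments that do not work as stated.

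For $g=1$, and likewise for the $u=1$ strata when $g=2,3$, the vanishing is not a fiber-dimension statement. Lemma~\ref{lemma:vanishing1b} points the wrong way: it kills insertions of too \emph{small} degree. Pushing forward a class of degree $>n$ along a map of relative dimension $n$ gives a class of positive degree downstairs, not zero. If the bound really were a pure fiber-dimension statement, it would hold without $\taut$ for every $u$, and this is exactly what fails for $u\ge 2$. The missing input is that the base is one-dimensional with trivial Chow groups. Since $\CA_{1,\delta}\cong\CA_1$ is a finite quotient of $\BA^1$, we have $\Chow^{\geq 1}(\CA_{1,\delta})_{\BQ}=0$. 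Hence the class $I$ is a multiple of the fundamental class, computable on a single elliptic curve $E$, where $\mathrm{vdim}\,\Mbar_{g,n}(E,d)=2g-2+n$ gives the bound. This is why (i) holds without $\taut$, and it is also how $u=1$ is handled for $g=2,3$.

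Your account of the $u<g$ strata is also off. The $\lambda$-part of $e((\BE_{\Mbar_{g,n}}/\BE_u)\otimes\BE_{h-u})$ never exceeds the top degree on $\CA_{h-u,\tilde\delta}$: for $g=2$, $u=1$ the Euler factor is $\mu_1\lambda_{h-2}$, and $h-2\le\binom{h-1}{2}$. What actually happens is the following. The relation $\lambda_{h-u}=0$ on $\CA_{h-u,\tilde\delta}$ (and, for $g=3$, $u=1$, also $\lambda_{h-2}^2=0$ on $\CA_{h-1}$) kills the terms of low degree in the Chern classes of $\BE_{\Mbar_{g,n}}/\BE_u$. So every surviving term carries extra degree on the map side: at least $1$ for $(g,u)=(2,1),(3,2)$, and at least $3$ for $(3,1)$. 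This extra degree enters the $\CA_{u,\delta}$-side class and pushes it past its socle, which is $0$ for $u=1$ and $1$ for $u=2$. This mechanism also explains why (3) needs $h\ge3$: for $h=u=2$ the Euler factor is $1$, and the class lands exactly in the socle.

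Finally, \eqref{full rank decomposition} only covers positive rank, and you never treat $\chi=t^h$. In that case:
\begin{itemize}
\item $[\Mbar_{g,n}(\pi,t^h)]^{\vir}=e(\BE_h^\vee\otimes\BE_{\Mbar_{g,n}}^\vee)\cap[\Mbar_{g,n}\times\CX]$, and the evaluation map is the diagonal;
\item $\pi_\ast\Delta^\ast\Gamma\ne0$ forces $\mult(\Gamma)=2h$, i.e.\ $\deg\Gamma=h$;
\item Lemma~\ref{lemma:e(Hodge Hodge) eval} together with $\dim\Mbar_{g,n}=3g-3+n$ forces $\sum_ik_i=n-1$ for $g=1$ and $\sum_ik_i=n$ for $g=2,3$.
\end{itemize}
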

	\begin{proof}
Assume first that the rank of $\chi$ is positive.
		We will use the decomposition \eqref{full rank decomposition} and an analysis of the virtual dimension.
		Let $u$ be the rank of $\chi$.
		Capping \eqref{full rank decomposition} with $\ev^{\ast}(\Gamma)$ and using the push-pull formula gets
		\begin{multline} \label{full rank decomposition2}
			\CC_{g,\chi}(\Gamma)
			=
			\sum_{\delta=(\delta_1,\ldots,\delta_u)} 
			\frac{(-1)^{(g-u)(h-u)}}{|\deg \varphi_{h,\delta}| \cdot |K(\delta)|} \\
			\varphi_{h,\delta \ast}
			(\rho_{\delta} \times \pi^{\mathrm{lev}}_{h-u,\tilde{\delta}})_{\ast}
			\left(  \prod_i \psi_i^{k_i} e\left( (\BE_{\Mbar_{g,n}}/\BE_{u}) \otimes \BE_{h-u} \right)
			(\zeta^{\times n} \circ (\ev \times \Delta))^{\ast}(\Gamma)
			[ \Mbar_{g,n}(\pi_{u,\delta}^{\mathrm{lev}},\chi/t^{h-u}) ]^{\vir} \times [\CX^{\mathrm{lev}}_{h-u,\tilde{\delta}} ] \right).
		\end{multline}
	where
\begin{itemize}
\item  $\rho_{\delta} : \Mbar_{g,n}(\widetilde{\epsilon},\chi/t^{h-u}) \to \CA_{u,\delta}^{\mathrm{lev}}$ is the base map,
\item $\Delta : \CX_{h-u,\tilde{\delta}}^{\mathrm{lev}} \to (\CX_{h-u,\tilde{\delta}}^{\mathrm{lev}})^{\times n}$ is the diagonal morphism
\end{itemize}
		
		If $\theta_{\delta}$, $\theta_{\tilde{\delta}}$ are the theta divisors on $\CX_{\delta}$, $\CX_{\tilde{\delta}}$, we have
		\[ \zeta^{\ast}(\theta) = \theta_{\delta} + \theta_{\tilde{\delta}}. \]
		 It follows that
		\[ (\zeta \times \zeta)^{\ast}(\eta) = \eta_{\delta} + \eta_{\tilde{\delta}} \]
		where 
		\[ \eta_{\delta} := \frac{1}{2}( \mathsf{m}^{\ast}(\theta_{\delta}) - (\theta_{\delta})_1 - (\theta_{\delta})_2 ) , \quad \eta_{\tilde{\delta}} = \frac{1}{2}( \mathsf{m}^{\ast}(\theta_{\tilde{\delta}}) - (\theta_{\tilde{\delta}})_1 - (\theta_{\tilde{\delta}})_2 )  \]
		are defined via $\theta_{\delta}, \theta_{\tilde{\delta}}$.
		The pullback $(\zeta^{\times n})^{\ast}(\Gamma)$ is hence a linear combination of elements $\Gamma_1 \otimes \Gamma_2$ in $\Rvert(\CX_{u,\delta}^{\times n}) \otimes \Rvert(\CX_{h-u,\tilde{\delta}}^{\times n})$.
		In the pushforward \eqref{full rank decomposition2} only those terms contribute where the degree of $\Gamma_2$ 
		%in $\Rvert(\CX_{\tilde{\delta}}^{\times n})$
		is $h-u$ (since $\theta_{\tilde{\delta}}^{h-u+1} = 0$), and so we may assume that 
		\begin{equation} \deg(\Gamma_1) = \deg(\Gamma) - (h-u). \label{deg gamma1} \end{equation}
		
		Moreover, by \eqref{efwefwe-} it suffices to work with $\pi_{u,\delta} : \CX_{u,\delta} \to \CA_{u,\delta}$ (not over $\CA_{u,\delta}^{\mathrm{levl}}$).
		
		It hence suffices to show the vanishing of
		\begin{equation} \label{pushforward class} I = (\rho_{\delta})_{\ast} \left( \prod_i \psi_i^{k_i} \cdot e\left( (\BE_{\Mbar_{g,n}}/\BE_{u}) \otimes \BE_{h-u} \right)  \ev^{\ast}(\Gamma_1) \cap [\Mbar_{g,n}(\pi_{u,\delta},\chi/\chi')]^{\vir} \right)
		\end{equation}
		after tautological projection 		where 
$\rho_{\delta} : \Mbar_{g,n}(\pi_{u,\delta},\chi/t^{h-u}) \to \CA_{u,\delta}$ is the forgetful morphism.

		Recall that $u \leq g$ here. We consider the cases $g \in \{ 1,2, 3 \}$ one by one:
		
		($g=1$) We have $u=g=1$ and $\delta \geq 1$. In particular, the Euler factor $e( (\BE_{\Mbar_{g,n}}/\BE_{u}) \otimes \BE_{h-u})$ does not contribute. The moduli space $\CA_{1,\delta} \cong \CA_{1}$ is a finite quotient of $\BA^1$. Thus the class \eqref{pushforward class} is a multiple of the fundamental class
		and can be computed by restricting to a fiber $E$ of a point. 
		The fiber $E$ is an elliptic curve here.
		Thus for it to be non-zero the integral
		\[ \int_{[\Mbar_{1,n}(E,d/\delta)]^{\vir}} \ev^{\ast}(\Gamma_1) \prod_i \psi_i^{k_i} \]
		must be non-zero.
		Since $\Mbar_{1,n}(E,d/\delta)$ has virtual dimension $n$, this can only happen if the dimension constraint
		\[ n = \deg(\Gamma_1) + \sum_i k_i = \deg(\Gamma) - (h-1) + \sum_i k_i \]
		or equivalently
		\[ \deg(\Gamma) + \sum_i k_i = n+h-1 \]
		is satisfied. This shows (1).
		
		($g=2$) Here we have two cases $u=1$ and $u=2$.
		In the case $u=1$ the Euler factor will contribute a factor of $\mu_1=c_1(\BE_{\Mbar_{g,n}})$, so if the pushforward is non-zero, we require the non-vanishing of
		\[ \int_{[\Mbar_{2,n}(E,d/\delta)]^{\vir}} \mu_1 \ev^{\ast}(\Gamma_1) \prod_i \psi_i^{k_i} \]
		which forces us to have
		\[ 2 + n = 1 + \deg(\Gamma_1) + \sum_i k_i = 1 + (\deg(\Gamma) - (h-1)) + \sum_i k_i \]
		and hence
		\[ n+h = \deg(\Gamma) + \sum_i k_i. \]
				
In the case $u=2$, 
if $\deg(\Gamma) + \sum_i k_i > h+n$, then by \eqref{deg gamma1} we get
\[
\deg I = \deg(\Gamma_1) + \sum_i k_i - n - 1 > 1.
\]
Since the socle is in degree $1$ (see \cite[Theorem 1]{AitorNL} for the description of the tautological ring of $\CA_{h,\delta}$ also in the non-principally polarized case), 
we find that $\taut (I) = 0$.
The claim hence follows since
by extending $\varphi_{h,\delta}$ to a map $\overline{\varphi_{h,\delta}}$ on a compactification as in \cite[Section 3.1]{AitorNL}
and since $\lambda$-classes pullback to $\lambda$-classes under $\overline{\varphi_{h,\delta}}$ it is straightforward to show following the ideas of \cite[Section 3.1]{AitorNL} that
for all $\gamma \in \Chow^{\ast}(\CA_{u,\delta})$ and $\widetilde{\gamma} \in R^{\ast}(\CA_{h-u,\tilde{\delta}})$ we have
\begin{equation} \label{taut vs taut} \taut\ (\varphi_{h,\delta})_{\ast}( p_{\delta}^{\ast}(\gamma) \cdot p_{\tilde{\delta}}^{\ast}(\widetilde{\gamma}))  = \taut\ (\varphi_{h,\delta})_{\ast}( p_{\delta}^{\ast}(\taut\ \gamma) \cdot p_{\tilde{\delta}}^{\ast}(\widetilde{\gamma})), \end{equation}
where the tautological projection on $\Chow^{\ast}(\CA_{u,\delta})$ is defined
by a straightforward generalization of \cite{CMOP} (see e.g. \cite{AitorNL} for the methods).

($g=3$) The case is very similar to the genus $2$ case and omitted: In case $u=1$ it follows by the virtual dimension of $\Mbar_{g,n}(E,d)$. In case $u=2$ and $u=3$ one finds that the Euler factors make the degree of $I$ larger than the socle dimension (which is $1$ and $3$ respectively.\\

It remains to consider the case $\chi = t^h$, so a curve class of degree zero.
We have
\[ [\Mbar_{g,n}(\pi,t^h) ]^{\vir} = e(\BE_h^{\vee} \otimes \BE_{\Mbar_{g,n}}^{\vee} ) \cap [\Mbar_{g,n}] \times [\CX]. \]
The evaluation map is the diagonal morphism $\Delta : \CX \to \CX^{\times n}$.
Then we find
\[
\CC_{g,t^h}( \tau_{k_1 \cdots k_n}(\Gamma))
=
(-1)^{gh} \pr_{2 \ast}\left( \prod_i \psi_i^{k_i} \cdot e( \BE_{h} \otimes \BE_{\Mbar_{g,n}}) \right)  \pi_{\ast}(\Delta^{\ast}(\Gamma))
\]
where $\pr_2 : \Mbar_{g,n} \times \CA_h \to \CA_h$ is the projection.

For $\pi_{\ast}(\Delta^{\ast}(\Gamma))$ to be non-zero,
$\Delta^{\ast}(\Gamma)$ must have have multiplicity $2h$ by Example~\ref{example:pushforward lemma},
so $\Gamma$ must be of multiplicity $2h$ by Example~\ref{example:multiplicity under homomorphism}.
Since $\Gamma$ is tautological, it follows that $\deg(\Gamma) = h$.

Assume now that $g=3$ and $h \geq 3$. By Lemma~\ref{lemma:e(Hodge Hodge) eval} we have
\[ \pr_{2 \ast}\left( \prod_i \psi_i^{k_i} \cdot e( \BE_{h} \otimes \BE_{\Mbar_{g,n}}) \right) 
=
\left( \int_{\Mbar_{3,n}} \mu_3 \mu_2 \mu_1 \prod_i \psi_i^{k_i} \right)  \lambda_{h-1} \lambda_{h-2} \lambda_{h-3}. \]
Since $\Mbar_{3,n}$ has dimension $6+n$ this can be non-zero only if $\sum_i k_i = n$.
Thus if $\deg(\Gamma) + \sum_i k_i > h+n$ also the degree zero invariant vanishes.
The cases $g=1$ and $g=2$ are parallel.
	\end{proof}

	\begin{prop}
Assume that $g,h \geq 4$, $\Gamma \in \Rvert(\CX^{\times n})$.Then $\taut\ \CC_{g,\chi}(\tau_{k_1\ldots k_n}(\Gamma)) = 0$ for all $\chi$.
	\end{prop}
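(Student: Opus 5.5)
We follow the same strategy as in the preceding proposition: first dispose of the degree-zero class $\chi=t^h$, then handle characteristic polynomials of positive rank $u$ using the Noether--Lefschetz decomposition \eqref{full rank decomposition}.

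\textbf{Degree-zero case.} For $\chi=t^h$ we have
\[
\CC_{g,t^h}(\tau_{k_1\ldots k_n}(\Gamma))=(-1)^{gh}\,\pr_{2*}\Bigl(\prod_i\psi_i^{k_i}\, e(\BE_h\otimes\BE_{\Mbar_{g,n}})\Bigr)\,\pi_*\Delta^*(\Gamma).
\]
Lemma~\ref{lemma:e(Hodge Hodge) eval} gives $c_{gh}(\BE_{\Mbar_{g,n}}\otimes\BE_h)=0$ when $g,h\geq 4$. The identity used there relies only on Mumford's relation, $\lambda_h=0$ and $c(\BE_h)c(\BE_h^\vee)=1$, so to apply it after $\taut$ we should phrase it as vanishing in $R^*(\CA_h)\otimes\Chow^*(\Mbar_{g,n})$. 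This suffices because $\pr_{2*}$ of such a product is a tautological class. Hence $\taut$ of the degree-zero term vanishes.

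\textbf{Positive rank: reduction to a class on $\CA_{u,\delta}$.} Let $u\geq 1$ be the rank; Lemma~\ref{lemma:rank vanishing} gives $u\leq g$. Pushing forward \eqref{full rank decomposition} as in \eqref{full rank decomposition2} expresses $\CC_{g,\chi}$ as a sum of terms $\varphi_{h,\delta*}$ of products. Each product consists of a class $I$ as in \eqref{pushforward class} on $\CA_{u,\delta}$, times Chern classes of $\BE_{h-u}$, pulled back from the second factor. These Chern classes come from the Euler class $e((\BE_{\Mbar_{g,n}}/\BE_u)\otimes\BE_{h-u})$ after expanding it and pushing forward along $\Mbar_{g,n}$, and they are tautological. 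By \eqref{taut vs taut} we may therefore replace $I$ by $\taut(I)$, whose degree must be at most the socle degree $u(u-1)/2$ of $R^*(\CA_{u,\delta})$. Note also that the expansion of the Euler class involves classes $\lambda_j(\BE_{h-u})$, and the product of these with $\lambda$-classes of $\BE_u$ has to land in a nonzero range.

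\textbf{Positive rank: the codimension count (main obstacle).} The key is a codimension count. The virtual class $[\Mbar_{g,n}(\pi,\chi)]^{\vir}$ pushes forward to codimension at least $g(h-3)+3$ on $\CA_h$; here Lemma~\ref{lemma:vanishing1b} lets us assume $\deg\Gamma+\sum_i k_i\geq h+n$. We distribute this codimension among three pieces:
\begin{itemize}
\item the normal direction of $\NL$, of codimension $u(h-u)$;
\item the $\CA_{h-u}$ factor, carrying a tautological class of degree at most $(h-u)(h-u-1)/2$;
\item the $\CA_{u}$ factor, carrying $\taut(I)$ of degree at most $u(u-1)/2$.
\end{itemize}
Since $\varphi_{h,\delta*}$ raises codimension exactly by $u(h-u)$, a nonzero $\taut$ on $\CA_h$ requires
\[
g(h-3)+3\;\leq\; u(h-u)+\frac{(h-u)(h-u-1)}{2}+\frac{u(u-1)}{2}=\frac{h(h-1)}{2}.
\]
This is the same inequality as in Corollary~\ref{cor:Vanishing for g >= h >= 4}, and it fails for $g,h\geq 4$. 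The step I expect to be the main obstacle is justifying that $\taut$ of a pushforward $\varphi_{h,\delta*}(\text{non-tautological}\otimes\text{tautological})$ can be computed factorwise with these degree bounds. Concretely, one needs \eqref{taut vs taut}, together with the fact that the $\CA_{h-u}$-factor really carries only tautological classes. The latter comes from the Euler class and from $\pi_{h-u*}$ of vertical tautological classes, which are tautological by Lemma~\ref{lemma:10 author lemma}. If any part of this is delicate, I would instead run the degree count directly on $I$ as in the genus $2$ and $3$ cases: compute $\deg I=\deg\Gamma_1+\sum_i k_i+(g-u)(h-u)\cdot(\text{Euler contributions})-\mathrm{vdim}$, and check that it exceeds $u(u-1)/2$ in every admissible configuration with $g,h\geq4$.
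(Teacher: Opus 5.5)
Your degree-zero case is fine and matches the paper. The positive-rank step, however, has a genuine gap. The step you flag as the main obstacle, factorwise tautological projection, is not the problem, since \eqref{taut vs taut} handles it. The problem is the arithmetic.

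\textbf{Why the count fails.} Your three contributions $u(h-u)+\frac{(h-u)(h-u-1)}{2}+\frac{u(u-1)}{2}$ sum to $\frac{h(h-1)}{2}$, which is just the top degree of $R^*(\CA_h)$. So the Noether--Lefschetz decomposition does no work in your count; it is literally the direct count of Corollary~\ref{cor:Vanishing for g >= h >= 4}. Moreover, the inequality $g(h-3)+3>\frac{h(h-1)}{2}$ is \emph{not} true for all $g,h\geq 4$. Since $h^2-(2g+1)h+6g-6=(h-3)(h-2g+2)$, it holds exactly when $3<h<2g-2$. For example, $(g,h)=(4,6)$ gives $15=15$, and $(g,h)=(4,7)$ gives $19<21$. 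The corollary is only stated for $g\geq h$, where the inequality does hold. Hence for $h\geq 2g-2$ your argument gives nothing, and that is exactly where the proposition needs more than a dimension count on $\CA_h$. Your fallback of checking $\deg I>u(u-1)/2$ cannot close this either. For $u=1$ the class $I$ lives on $\CA_1$, and the potentially nonzero terms have $\deg I=0=u(u-1)/2$.

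\textbf{What is missing.} You need to exploit the structure of \eqref{full rank decomposition2}, as the paper does.
\begin{enumerate}
\item[(a)] Run the dimension count on $\CA_{u,\delta}$ rather than on $\CA_h$. The Chern classes of $\BE_{\Mbar_{g,n}}/\BE_u$ in the Euler factor are pulled back from the quotient by translations. So the argument of Lemma~\ref{lemma:vanishing1b}, applied to the $u$-dimensional family, gives $\deg\Gamma_1+\sum_i k_i\geq u+n$. Hence $\mathrm{codim}\, I\geq g(u-3)+3>\frac{u(u-1)}{2}$ whenever $g\geq u\geq 4$. Via \eqref{taut vs taut} this kills all $u\geq 4$. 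The point is that this only needs $g\geq u$, which is automatic, rather than $g\geq h$.
\item[(b)] The cases $u=1,2,3$ need the shape of $e\bigl((\BE_{\Mbar_{g,n}}/\BE_u)\otimes\BE_{h-u}\bigr)$, not just its degree. Since $\lambda_{h-u}=0$ in $R^*(\CA_{h-u})$, the factor $c_{g-u}(\BE_{\Mbar_{g,n}}/\BE_u)$ splits off, as in the proof of Lemma~\ref{lemma:e(Hodge Hodge) eval}.
\begin{itemize}
\item For $u=3$, this pushes $\mathrm{codim}\, I$ above the socle degree $3$.
\item For $u=2$, the socle bound $1$ leaves only the term whose $\BE_{h-2}$-part is $\lambda_{h-3}^{g-2}$. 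This vanishes for $g\geq 4$, because $\lambda_{h-3}^2=0$ in $R^*(\CA_{h-2})$.
\item For $u=1$, the virtual dimension of $\Mbar_{g,n}(E,d)$ together with $\lambda_{h-2}^2=0$ leaves only $\lambda_{h-2}\lambda_{h-3}^{g-2}$ on the $\CA_{h-1}$-factor. This vanishes for $g\geq 4$, because $\lambda_{h-2}\lambda_{h-3}^2=0$ in $R^*(\CA_{h-1})$.
\end{itemize}
\end{enumerate}
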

	\begin{proof}
The case where $\chi$ has rank $0$ follows from the vanishing of $e(\BE_{\Mbar_{g,n}} \otimes \BE_h)$ of Lemma~\ref{lemma:e(Hodge Hodge) eval}.
If the rank $u$ is positive, then $u \leq g$ and we use the decomposition \eqref{full rank decomposition}.
By the same dimension analysis as in Corollary~\ref{cor:Vanishing for g >= h >= 4} and using \eqref{taut vs taut} we can have non-zero terms only for $u=1,2,3$.
Then a case-by-case analysis as in the last proposition shows that all invariants in these cases vanish.
	\end{proof}

We consider the remaining pairs $(g,h)$ with $g \geq 2$ that we have not yet considered:
\begin{lemma}
If $h=2$ and $g \geq 2$, then $\CC_{g,\chi}(\tau_{k_1 \ldots k_n}(\Gamma))=0$ unless $\deg(\Gamma) + \sum_i k_i = g+n$.
\end{lemma}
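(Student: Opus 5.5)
The plan is to split $\Mbar_{g,n}(\pi,\chi)$ according to the rank $u\in\{0,1,2\}$ of $\chi$; this is possible because $u\le \min(g,h)=2$ by Lemma~\ref{lemma:rank vanishing}. In each case I would do a dimension count in which the only surviving intersection number is forced to satisfy $\deg(\Gamma)+\sum_i k_i=g+n$. One fact is used repeatedly: the coarse space of $\CA_{1,\delta}\cong\CA_1$ is $\BA^1$. Hence $\Chow^{\ast}(\CA_{1,\delta})_{\BQ}=\BQ$, concentrated in degree $0$, and in particular $\lambda_1=0$ there. Unlike in the previous propositions, no tautological projection is needed.

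\emph{Rank $0$.} This is the formula $\CC_{g,t^h}=(-1)^{gh}\pr_{2\ast}\bigl(\prod_i\psi_i^{k_i}\,e(\BE_h\otimes\BE_{\Mbar_{g,n}})\bigr)\,\pi_\ast\Delta^\ast\Gamma$ from the proof above. Nonvanishing of $\pi_\ast\Delta^\ast\Gamma$ forces $\deg\Gamma=h=2$, by Examples~\ref{example:pushforward lemma} and~\ref{example:multiplicity under homomorphism}. By Lemma~\ref{lemma:e(Hodge Hodge) eval} with $h=2$, the Euler class equals $\mu_g\mu_{g-1}\lambda_1$ for every $g\ge2$; for $g=2$ the formula reads $\mu_2\mu_1\lambda_1\lambda_0$. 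The remaining integral $\int_{\Mbar_{g,n}}\mu_g\mu_{g-1}\prod_i\psi_i^{k_i}$ is nonzero only if $2g-1+\sum_i k_i=3g-3+n$, that is, $\sum_i k_i=g-2+n$. This gives $\deg\Gamma+\sum_i k_i=g+n$.

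\emph{Rank $1$.} Here I would use the Noether--Lefschetz decomposition \eqref{full rank decomposition}, as in \eqref{full rank decomposition2}, with $u=1$ and $h-u=1$. The Euler factor is $e\bigl((\BE_{\Mbar_{g,n}}/\BE_1)\otimes\BE_{1}'\bigr)$, where $\BE_1'$ is the Hodge line bundle of the complementary factor and $c_1(\BE_1')=\lambda_1'$. This factor equals $c_{g-1}(\BE_{\Mbar_{g,n}}-\BE_1)+c_{g-2}(\BE_{\Mbar_{g,n}}-\BE_1)\lambda_1'$. Since $\lambda_1'=0$ and $\lambda_1=0$ on the two $\CA_1$-factors, it reduces to $\mu_{g-1}$.

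Next, write $(\zeta^{\times n})^\ast\Gamma$ as $\sum\Gamma_1\otimes\Gamma_2$. Pushing forward along the elliptic fibration $\CX^{\mathrm{lev}}_{1,\tilde\delta}\to\CA^{\mathrm{lev}}_{1,\tilde\delta}$ kills every term except those with $\deg\Gamma_2=1$, so $\deg\Gamma_1=\deg\Gamma-1$. The resulting class on $\CA_{1,\delta}$ lives in $\Chow^\ast=\BQ$, so it is computed on a single fiber $E$. It equals the elliptic-curve integral $\int_{[\Mbar_{g,n}(E,d')]^{\vir}}\mu_{g-1}\prod_i\psi_i^{k_i}\ev^\ast\Gamma_1$, where $d'$ is the degree in $E$ determined by $\chi/t^{h-1}$. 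The virtual dimension of this moduli space is $2g-2+n$. So the integral can be nonzero only if $g-1+\deg\Gamma-1+\sum_i k_i=2g-2+n$, which is again $\deg\Gamma+\sum_i k_i=g+n$.

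\emph{Rank $2$.} This is the case I expect to be the main obstacle: the curve class is then a full-rank class on the abelian surfaces themselves, so no Euler factor appears. My approach is to write $\CC_{g,\chi}=\rho_\ast p_{n\ast}\bigl(\prod_i\psi_i^{k_i}\ev^\ast\Gamma\bigr)\cap[\Mbar_g(\pi,\chi)/\CX]^{\vir}$, using \eqref{relation of virtual classes}. I would then argue that $[\Mbar_g(\pi,\chi)/\CX]^{\vir}$ is supported over the Humbert surfaces $\Hum\subset\CA_2$ on which $\beta$ stays algebraic. There it should equal the pushforward of the reduced quotient class of the family of abelian surfaces over $\Hum$, which has relative virtual dimension $g-2$. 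I would establish this via the comparison of obstruction theories in the appendix, by removing the trivial factor $H^{2,0}$.

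Given this, deformation invariance of reduced invariants along $\Hum$ should make $\rho_\ast$ of any class of positive relative dimension vanish. It should also make $\rho_\ast$ of a class of relative dimension zero a constant multiple of $[\Hum]$, mirroring the argument for $\CA_1$ above. The insertion has degree $\deg\Gamma+\sum_i k_i-2-n$ after $p_{n\ast}$, and this must match the relative dimension $g-2$. This again forces $\deg\Gamma+\sum_i k_i=g+n$. The delicate points are making the identification with the reduced class precise, and justifying that the pushforward from $\Hum$ only sees the fiberwise numbers.
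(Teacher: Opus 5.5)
The proposal is incomplete in the rank $2$ case. Your rank $0$ and rank $1$ counts are correct; in rank $1$ you correctly use that $\Chow^{\ge 1}(\CA_1)_{\BQ}=0$. The problem is the claim that the insertion degree after $p_{n\ast}$ ``must match'' the relative dimension $g-2$ of the reduced class over $\Hum$.

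\begin{itemize}
\item If the insertion degree is \emph{smaller} than $g-2$, the class vanishes for trivial dimension reasons.
\item If it is \emph{larger}, $\rho_\ast$ produces a class of positive codimension on $\Hum$. Deformation invariance of reduced invariants says nothing about such a class: it only pins down the codimension-zero part by a fiber computation. Positive-codimension classes need not vanish on a Humbert surface. For example, on a Humbert surface of non-square discriminant, $\lambda_1|_{\Hum}\neq 0$ in $\Chow^1(\Hum)_{\BQ}$, because $\lambda_1^2>0$ on a toroidal compactification while $\lambda_1$ is orthogonal to all boundary curves.
\end{itemize}

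What actually kills these contributions is that their pushforward lies in $\Chow^{\ge 2}(\CA_2)_{\BQ}$, which is zero since $\Chow^\ast(\CA_2)_{\BQ}=\BQ[\lambda_1]/(\lambda_1^2)$. Your proposal never invokes this.

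Two further gaps in rank $2$. First, the case $\chi=(t-d/2)^2$ is not covered by ``supported over $\Hum$''. There $v_\beta=\tfrac d2\theta$ stays Hodge on all of $\CA_2$, and the non-reduced class is $-\lambda_1$ (the Euler class of the obstruction line) times the reduced class. The same $\Chow^{\ge2}(\CA_2)_{\BQ}=0$ input is needed there. Second, the identification with reduced classes, which you flag yourself, is not established.

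Once you use the Chow ring of $\CA_2$, the rank stratification becomes unnecessary, and this is exactly the paper's proof:
\begin{itemize}
\item For $h=2$ the virtual dimension of $\Mbar_{g,n}(\pi,\chi)$ is $g+n+2$. Hence $\CC_{g,\chi}(\tau_{k_1\ldots k_n}(\Gamma))$ has codimension $\deg(\Gamma)+\sum_i k_i-g-n+1$ in the threefold $\CA_2$.
\item Codimension $\ge 2$ vanishes because $\Chow^{\ge2}(\CA_2)_{\BQ}=0$.
\item In codimension $0$ the class is a multiple of $[\CA_2]$, computed on a single fiber. There the non-reduced invariants of an abelian surface vanish for $g\ge2$: the holomorphic $2$-form kills $\beta\neq0$, and $\mu_g^2=0$ handles $\beta=0$.
\item Only codimension $1$ remains, which is $\deg(\Gamma)+\sum_i k_i=g+n$.
\end{itemize}
This argument also applies to arbitrary $\Gamma\in\Chow^\ast(\CX^{\times n})$. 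Your rank $0$ and rank $1$ steps, by contrast, need $\Gamma$ to be vertical tautological, for $\mult(\Gamma)=2\deg(\Gamma)$ and for the K\"unneth splitting $\Gamma_1\otimes\Gamma_2$.
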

\begin{proof}
The Chow ring of $\CA_2$ is $\BQ[\lambda_1]/(\lambda_1^2)$.
If $\deg(\CC_g(\tau_{k_1 \ldots k_n}(\Gamma))) = 0$, then $\CC_g(\tau_{k_1 \ldots k_n}(\Gamma))$ is a multiple of the fundamental class. The multiple can be evaluated by restricting to a fiber.
However, all (non-reduced) Gromov-Witten invariants of an abelian surface are zero in $g \geq 2$.
\end{proof}

\begin{lemma}
If $h=3$, then $\CC_{g,\chi}(\tau_{k_1 \ldots k_n}(\Gamma))=0$ unless $\sum_i k_i + \deg(\Gamma) = h+n$.
\end{lemma}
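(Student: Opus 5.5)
The lower bound $\sum_i k_i + \deg(\Gamma) \geq h+n$ is already Lemma~\ref{lemma:vanishing1b}(ii) for $g \geq 2$. So it remains to prove vanishing when $\sum_i k_i + \deg(\Gamma) > h+n$, with $h=3$ and $g$ arbitrary. The statement has no tautological projection, so we work with the full Chow ring of $\CA_3$, or in cohomology where the needed dimension bounds are sharper.

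\textbf{Codimension count.} First compute the codimension of the class. By the count in Corollary~\ref{cor:Vanishing for g >= h >= 4}, $\CC_{g,\chi}(\tau_{k_1\ldots k_n}(\Gamma))$ has codimension
\[ \deg(\Gamma)+\sum_i k_i - n - h + g(h-3) + 3 . \]
For $h=3$ this equals $\deg(\Gamma)+\sum_i k_i - n$, independent of $g$. If $\deg(\Gamma)+\sum_i k_i > 3+n$, the codimension is at least $4$.

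\textbf{Reduction by rank.} Next split by the rank $u$ of $\chi$, using the decomposition \eqref{full rank decomposition}.

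For $u=0$ (degree zero), the argument at the end of the preceding proposition applies. The insertion must satisfy $\deg(\Gamma)=h=3$. The Hodge factor $\pr_{2\ast}(\prod\psi_i^{k_i} e(\BE_3\otimes\BE_{\Mbar_{g,n}}))$ is computed by Lemma~\ref{lemma:e(Hodge Hodge) eval}. For $g\geq 4$ it is a multiple of $\lambda_2\lambda_1$, forced by $\dim\Mbar_{g,n}$ to have $\sum k_i = n$. For $g=2,3$ it is likewise a multiple of $\lambda_2\lambda_1$ or $\lambda_2\lambda_1\lambda_0$ with the same constraint. In every case $\deg(\Gamma)+\sum k_i = 3+n$.

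For $u\in\{1,2,3\}$, we reduce to the class $I$ of \eqref{pushforward class} on $\CA_{u,\delta}$, cupped with the Euler class factor and pushed forward along $\varphi_{3,\delta}$.
\begin{itemize}
\item $u=1$: the base $\CA_{1,\delta}$ is a finite quotient of $\BA^1$, so $I$ is a multiple of the fundamental class computed on a fixed elliptic curve $E$. The factor $e((\BE_{\Mbar_{g,n}}/\BE_1)\otimes\BE_2)$ is governed by Lemma~\ref{lemma:e(Hodge Hodge) eval} and the lemma following it. Matching against the virtual dimension of $\Mbar_{g,n}(E,d)$ forces $\deg(\Gamma_1)+\sum k_i$, and hence via \eqref{deg gamma1} $\deg(\Gamma)+\sum k_i$, to equal $h+n$.
\item $u=3$: the class lives on $\CA_3$ itself with $\chi$ of full rank. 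Here I would argue that the fiber invariants of a fixed abelian threefold vanish unless the class has codimension $3$, using $\CH^{>6}(\CA_3)=0$ and the fact that $\CA_3$ has dimension $6$.
\item $u=2$: $\CA_{2,\delta}$ has Chow ring concentrated in degrees $\leq 1$ in the relevant sense. This bounds $\deg I$, and the Euler class contributes the remaining $(g-2)\cdot 1$ degrees.
\end{itemize}

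\textbf{Main obstacle.} The hard part is working without $\taut$. For $u=2,3$ we need genuine vanishing in $\Chow^{\ast}(\CA_{u,\delta})$ beyond the socle, not just of tautological projections. I would first try to use that $\CA_2$ and $\CA_3$ have vanishing Chow groups in degrees above $\dim \CA_u - 1$: there are no complete curves at the top, and van der Geer computed $\Chow^*(\CA_2)$ and $\Chow^*(\CA_3)$, which are tautological. The same should hold for $\CA_{2,\delta}$, or at least after pushforward to $\CA_3$. If that fails, a fallback is to restrict the statement to cohomology.
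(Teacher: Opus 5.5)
Your first two steps coincide with the paper's: the lower bound $\sum_i k_i+\deg(\Gamma)\geq 3+n$ is Lemma~\ref{lemma:vanishing1b}(ii), and for $h=3$ the class $\CC_{g,\chi}(\tau_{k_1\ldots k_n}(\Gamma))$ has codimension $\deg(\Gamma)+\sum_i k_i-n$ in $\CA_3$. So in the case you must exclude, this codimension is $\geq 4$.

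After that, however, the proposal has a genuine gap. The rank decomposition cannot finish the job. For $u=3=h$ the decomposition \eqref{full rank decomposition} is trivial ($B=X$, no Euler factor), so that case is literally the original problem. The argument you sketch for it does not work:
\begin{itemize}
\item restricting to a fixed abelian threefold only detects the codimension-$0$ component of a class on $\CA_3$;
\item $\Chow^{>6}(\CA_3)=0$ is vacuous and says nothing about codimensions $4,5,6$.
\end{itemize}
For $u=2$ you would need vanishing results for the Chow ring of $\CA_{2,\delta}$ with non-principal $\delta$, which you do not have. The Euler-class degree count is also never carried out. The threshold you propose (vanishing above $\dim\CA_u-1$) is not the relevant one either.

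The missing step is to apply the fact you mention only in passing directly to the class on $\CA_3$, with no decomposition at all. By van der Geer's computation, the rational Chow ring of $\CA_3$ is tautological. Hence it vanishes above the socle degree $\binom{3}{2}=3$ of $R^{\ast}(\CA_3)$, i.e.\ $\Chow^i(\CA_3)=0$ for $i\geq 4$. Whenever $\sum_i k_i+\deg(\Gamma)>3+n$ the class has codimension $\geq 4$, so it is zero, uniformly in $g\geq 2$ and in $\chi$.

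That is the paper's whole proof. In particular, there is no need to retreat to cohomology, which would only prove a weaker statement than the one claimed.
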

\begin{proof}
By Lemma~\ref{lemma:vanishing1b} for non-vanishing we must have $\sum_i k_i + \deg(\Gamma) \geq 3+n$.
Thus $\deg_{\BC} \CC_g(\tau_{k_1 \ldots k_n}(\Gamma)) = \sum_i k_i + \deg(\Gamma) -n \geq 3$.
But $\Chow^i(\CA_3) = 0$ if $i \geq 4$, see \cite{MR1642753}, so we must have equality.
\end{proof}
%	\begin{rmk}
%		For $h=2$ we have that $\CC_g(\tau_{k_1 \ldots k_n}(\Gamma))=0$ unless $\deg(\Gamma) + \sum_i k_i = g+n$.
%	\end{rmk}

Combining the vanishing statements above we have the following uniform description:
\begin{cor} \label{cor:dim constraint}
Let $h \geq 3$ and let $\Gamma \in \Rvert(\CX^{\times n})$.
\begin{itemize}
	\item[(i)] If $g=1$, then $\taut\ \CC_{g,\chi}(\tau_{k_1 \ldots k_n}(\Gamma))$ is non-zero only if $\sum_i k_i + \deg(\Gamma) = h + n - 1$.
\item[(ii)] If $g \geq 2$, then $\taut\ \CC_{g,\chi}(\tau_{k_1 \ldots k_n}(\Gamma))$ is non-zero only if $\sum_i k_i + \deg(\Gamma) = h + n$. 
\end{itemize}
\end{cor}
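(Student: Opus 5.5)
The corollary is obtained by combining the vanishing results of this section with the lower bounds of Lemma~\ref{lemma:vanishing1b}. The plan is to show that the total degree $D := \sum_i k_i + \deg(\Gamma)$ can deviate from the stated value neither below nor above. Throughout we may assume $\Gamma$ is homogeneous; since $\taut$ is linear, the general case then follows by splitting $\Gamma$ into homogeneous parts.

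\emph{Lower bound.} For the lower bound, Lemma~\ref{lemma:vanishing1b} applies directly. Part (i) gives $\CC_{1,\chi}(\tau_{k_1\ldots k_n}(\Gamma))=0$ whenever $D<h+n-1$. Part (ii) gives the vanishing for $g\geq 2$ whenever $D<h+n$. Both statements hold already before tautological projection, and hence also after it.

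\emph{Upper bound, $g\le 3$.} For the upper bound we split into cases according to $g$.
\begin{itemize}
\item For $g=1$, the first item of the proposition on vanishing relations II gives $\CC_{1,\chi}=0$ whenever $D>h+n-1$. This holds for all $h\ge 2$, so in particular for $h\ge 3$.
\item For $g=2$, the second item gives $\taut\,\CC_{2,\chi}=0$ whenever $D>h+n$.
\item For $g=3$, the third item gives the same vanishing, and it requires $h\ge 3$, which is exactly our hypothesis.
\end{itemize}

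\emph{Upper bound, $g\ge 4$.} Here we distinguish two subcases.
\begin{itemize}
\item If $h\ge 4$, the proposition following vanishing relations II gives $\taut\,\CC_{g,\chi}=0$ for every $\chi$. Statement (ii) then holds vacuously.
\item The remaining case is $h=3$ with $g\ge 4$. Here we invoke the lemma for $h=3$ stated just before the corollary. Its proof combines the lower bound with the vanishing $\Chow^i(\CA_3)=0$ for $i\ge 4$, which forces $D=h+n$. This argument in fact works for every $g\ge 2$, so it also provides a cross-check of the cases $g=2,3$ when $h=3$.
\end{itemize}

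Combining the two directions, a nonzero $\taut\,\CC_{g,\chi}$ forces $D=h+n-1$ when $g=1$ and $D=h+n$ when $g\ge 2$. This is exactly the statement of the corollary.

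\emph{Main obstacle.} I expect the main difficulty to be bookkeeping rather than a new argument. One has to check that every pair $(g,h)$ with $h\ge 3$ is covered by one of the cited results, and that the hypotheses of each result are met. The pair $(g,h)=(3,3)$ appears in two of the cited results, and it is covered by both. The degree-zero class $\chi=t^h$ is handled inside the proof of the vanishing-relations proposition. Any gap would lie in the case-by-case analysis for $g=3$ and for $g\ge 4$, where that proposition only sketches the argument; the corollary itself just inherits those statements.
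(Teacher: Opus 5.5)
Your proposal is correct and matches the paper, which states the corollary as an immediate combination of four results: the lower bounds of Lemma~\ref{lemma:vanishing1b}, the upper bounds of the vanishing-relations-II proposition for $g=1,2,3$, the total vanishing for $g,h\geq 4$, and the lemma for $h=3$ (which implicitly needs $g\geq 2$ and covers the remaining case $h=3$, $g\geq 4$). Your bookkeeping over all pairs $(g,h)$ with $h\geq 3$ is complete.
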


\section{Genus $0$ and $1$ Gromov-Witten classes of $\CA_h$}
As a first example we consider here the Gromov-Witten classes in genus $0$ and $1$.

\subsection{g=0} \label{subsec:genus zero class}
In the case of genus $0$ stable maps (which are all constant) we have
\[ \CC_0(\Gamma) = \pr_2^{\ast} \pi_{\ast}(\Delta^{\ast}(\Gamma)) \]
where $\Delta : \CX \to \CX^{\times n}$ is the diagonal morphism and $\pr_2: \Mbar_{0,n} \times \CA_h \to \CA_h$ is the projection.
It follows by Examples~\ref{example:multiplicity under homomorphism} and~\ref{example:pushforward lemma} that $\CC_0(\Gamma)$ is zero unless $\mult(\Gamma) = \mult(\Delta^{\ast}(\Gamma)) = 2h$, which 
confirms Conjectures~\ref{conj:quasimodularity} and~\ref{conj:HAE}.

\subsection{g=1}
Consider for $h > 0$ the codimension $h-1$ class
\[ \CC_{1}(\tau_0(\sigma)) \in \Chow^{h-1}(\Mbar_{1,1} \times \CA_h), \quad \sigma = \frac{\theta^h}{h!}. \]
The contribution from the constant maps is
\[ \CC_{1,0}(\tau_0(\sigma)) = \frac{1}{24} (-1)^h \lambda_{h-1}. \]
Since the Noether-Lefschetz divisors $\NL_{h,\delta}$ are also of codimension $h-1$,
$\CC_{1,d}(\tau_0(\sigma))$ is a linear combinations of the classes $\NL_{h,\delta}$ for some $\delta|d$.
In fact, the analysis \eqref{full rank decomposition} yields the
that the coefficient of $\NL_{h,\delta}$ is the Gromov-Witten invariant of the elliptic curve $E$ given by
\[ \langle \tau_0(\sigma) \rangle_{1,d/\delta}^{E} = \sigma(d/\delta). \]
Hence we obtain
\[ \CC_{1}(\tau_0(\sigma)) = \frac{1}{24} (-1)^h \lambda_{h-1}
+ \sum_{d \geq 1} \sum_{\delta|d} \sigma(d/\delta) [\NL_{h,\delta}]. \]
Conjecture~\ref{conj:quasimodularity} yields that this is a quasimodular form of weight $2h$,
in agreement with \cite{AitorNL,GreerLian}.

We can also apply the holomorphic anomaly equation,
where all terms vanish:
$\CC_0(\sigma, f_{\theta}) = 0$ because $\mult(\sigma \boxtimes f_{\theta}) = 2h + (2h-2) > 2h$,
and $\CC_1(f_{\theta} \sigma) = 0$ since $f_{\theta}(\sigma)$ has too low multiplicity.

%
%\begin{rmk}
%	For the $g=1$ case one has the identity
%	\[ \sum_{\delta | d} \sigma_1(d/\delta) \delta^{2h-1} \prod_{p|\delta} (1-p^{2-2h}) = \sigma_{2h-1}(d). \]
%\end{rmk}

\section{Modular forms and the Shimura lift}
We introduce the Weil representation following \cite{OWilliams, GWNL}.

\subsection{Modular forms for the Weil representation}
%The metaplectic group $\mathrm{Mp}_2(\mathbb{R})$ is the group of pairs $(M, \phi(\tau))$, where $M = \begin{pmatrix} a & b \\c & d \end{pmatrix} \in \mathrm{SL}_2(\RR)$ and $\phi(\tau)$ is a holomorphic square root of $\tau \mapsto c \tau + d$ on the upper half-plane.
%The group structure on $\mathrm{Mp}_2(\mathbb{R})$ is defined by
%$$(M_1,\phi_1(\tau)) \cdot (M_2, \phi_2(\tau)) = (M_1 M_2, \phi_1(A_2 \tau) \phi_2(\tau)).$$
%The integral metaplectic group $\mathrm{Mp}_2(\mathbb{Z})$ is 
%the preimage of $\SL_2(\BZ)$ under the projection $\mathrm{Mp}_2(\mathbb{R}) \to \SL_2(\BR)$. It is generated by the elements
%\begin{align*}
%	S &= \left( \begin{pmatrix} 0 & -1 \\ 1 & 0 \end{pmatrix}, \sqrt{\tau} \right), \\
%	T & = \left( \begin{pmatrix} 1 & 1 \\ 0 & 1 \end{pmatrix}, +1 \right),
%\end{align*}
%where the square root $\sqrt{\tau}$ has positive imaginary part, modulo the relations 
%$S^8 = (ST)^6 = \id$.
Let $M$ be an even integral lattice of signature $(p, q)$ and rank $n = p+q$ and signature $\sigma = p-q$. The finite Weil representation $\rho$ associated to $M$ is the representation of the integral metaplectic group $\mathrm{Mp}_2(\mathbb{Z})$ on the group algebra $\mathbb{C}[M\dual/M] = \mathbb{C}[e_\gamma: \, \gamma \in M\dual/M]$, defined by
\begin{align*} 
	\rho(S) e_x & = \frac{e^{-\pi i \sigma / 4}}{\sqrt{|M\dual/M|}} \sum_{y \in M\dual/M} e^{-2\pi i \langle x, y \rangle} e_y \\ \rho(T) e_x & = e^{\pi i \langle x, x \rangle} e_x.
\end{align*}
where $S = \left( \begin{pmatrix} 0 & -1 \\ 1 & 0 \end{pmatrix}, \sqrt{\tau} \right)$ and $T  = \left( \begin{pmatrix} 1 & 1 \\ 0 & 1 \end{pmatrix}, +1 \right)$ are the standard generators.

For a half-integer $k \in \frac{1}{2} \BZ$ a modular form for the Weil representation $M$ is a holomorphic function $f : \BH \to \BC[M\dual/M]$, with $\BH=\{ \tau \in \BC | \mathrm{Im}(\tau) > 0 \}$ such that
\[ \forall g = (A,\phi(\tau)) \in \Mp_2(\BZ) \ : \ f(A \tau) = \phi(\tau)^{2k} \cdot \rho(g)(f(\tau)). \]
Since $e_{\gamma}$ for $\gamma \in M\dual/M$ is an eigenbasis for the action of $T$, we have a Fourier expansion
\[ f(\tau) = \sum_{\gamma \in M\dual/M} \sum_{\substack{n \in \BQ \\ n - \frac{1}{2} \langle \gamma, \gamma \rangle \in \BZ}} c(n,\gamma) q^{n} e_{\gamma},
\quad q=e^{2 \pi i \tau}. \]
%where $q=e^{2 \pi i \tau}$. 
We say $f$ is {\em holomorphic} if $c_{n,\gamma} = 0$ for all $n<0$.
Let $\Mod_k(\rho_M)$ be the vector space of holomorphic modular forms of weight $k$ for the Weil representation on $M$. The direct sum
\[ \Mod(\rho_M) = \bigoplus_{k} \Mod_k(\rho_M). \]
is then a module over the algebra of classical modular forms $\Mod= \BC[G_4,G_6]$.
The $\QMod$-module of quasimodular forms for the Weil representation is
\[ \QMod(\rho_M) = \Mod(\rho_M) \otimes_{\Mod} \QMod = \Mod(\rho_M) \otimes \BC [G_2]. \]

\begin{example} \label{example eisenstein series}
The main example we will need is the Eisenstein series $E_{5/2}^{(\delta)}$ of weight $5/2$ for the lattice
$M= (2 \delta)$, which is a holomorphic modular form for $\rho_M$, see \cite{BruinierKuss}.
We have
\[ M^{\vee}/M \cong ( \frac{1}{2 \delta} \BZ ) / \BZ \]
with intersection form $\langle a/2 \delta, b/2 \delta \rangle = ab/2\delta$.
%We write $e_{a/2 \delta}$ for the basis  vector of $\BC[M\dual/M]$ corresponding to $a/2 \delta$.

In case $\delta=1$ the Fourier expansion is
\[ E_{5/2}^{(1)} = 120 \sum_{d \geq 0} H(2,4d) q^d e_{0}
+ 120 \sum_{d \geq 0} H(2,4d+1) q^{d+\frac{1}{4}} e_{1/2} \]
where $H(2,N)$ is the Cohen function given in \cite[Prop.4.1]{Cohen} by
\[ H(2,N) = -\frac{1}{5} \sum_{s \in \BZ} \sigma_1\left( \frac{N - s^2}{4} \right) - \delta_{N \text{ square }} \frac{N}{10} \]
with the convention $\sigma_1(0)=\frac{1}{2} \zeta(-1) = -\frac{1}{24}$ and $\sigma_1(u) = 0$ if $u \notin \BZ_{\geq 0}$.
The Fourier coefficients of $E_{5/2}^{(\delta)}$ for $\delta>1$ are determined from the $\delta=1$ by Hecke operators,
see Theorem~\ref{thm:coeff of e_{5/2}delta}.
\end{example}

\subsection{Shimura lift}
Consider the lattice $M = A_1(\delta) = (2 \delta)$ for some $\delta \geq 1$
and identify $M^{\vee}/M$ with the quotient of $\frac{1}{2 \delta} \BZ$ by $\BZ$.
Consider a formal series
\[ F = \sum_{\gamma \in \frac{1}{2 \delta}\BZ / \BZ } \sum_{\substack{n \in \BQ \\ n - \delta \gamma^2 \in \BZ}} c(n,\gamma) q^n e_{\gamma}. \]
The (formal) {\em Shimura lift} of $F$ in weight $k$ is defined by
\begin{align*}
	\Lift_k(F) 
	%& = -\frac{c(0,0) B_k}{2k} + \sum_{a \geq 1} c\left(\frac{a}{2 \delta}, \frac{a^2}{4 \delta} \right) \sum_{\ell \geq 1} \ell^{k-1} q^{\ell a} \\
	& = -\frac{c(0,0) B_k}{2k} + \sum_{n \geq 1} \sum_{\ell|n} \ell^{k-1} c\left( \frac{n^2}{4 \delta \ell^2}, \frac{n}{2 \delta \ell} \right) q^n.
\end{align*}

\begin{thm} \label{thm:shimura lift}
	Let $k  \geq 2$. If $F \in \QMod_{k+\frac{1}{2}}(\SL_2(\BZ),\rho_M)$ is a quasimodular form of weight $k+ \frac{1}{2}$ for the Weil representation of $M$, then $\Lift_k(F)$ is a quasimodular form of weight $2k$ for
	the congruence subgroup  $\Gamma_0(\delta)$. 
	Moreover, if $k\geq 4$,  then
	\[
	\frac{d}{dG_2} \Lift_k(F) = D_{\tau} \Lift_{k-2}\left( \frac{d}{dG_2} F \right),
	\]
where $D_{\tau} = q \frac{d}{dq}$.
\end{thm}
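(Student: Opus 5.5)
The approach is to realize $\Lift_k$ as a theta lift (Borcherds/Shintani type) and read off both quasimodularity and the $G_2$-derivative formula from the lift's equivariance under the Serre derivative.

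\textbf{Step 1 (modular case).} First assume $F \in \Mod_{k+\frac12}(\rho_M)$ is holomorphic modular. I would identify $\Lift_k(F)$ with the regularized theta integral
\[
\Phi(F)(z)=\int_{\SL_2(\BZ)\backslash \BH} \langle F(\tau), \overline{\Theta_L(\tau,z)}\rangle\, \mathrm{Im}(\tau)^{k+1/2}\,d\mu,
\]
where $L = M \oplus U \oplus U(\delta)$-type lattice of signature $(2,1)$ is chosen so that the associated Shimura variety is $\Gamma_0(\delta)\backslash\BH$. The standard unfolding computation (Borcherds, Thm.\ 14.3; equivalently the Eichler--Zagier/Gritsenko additive lift through Jacobi forms of index $\delta$) gives a Fourier expansion whose $n$-th coefficient is $\sum_{\ell|n}\ell^{k-1}c(n^2/4\delta\ell^2, n/2\delta\ell)$. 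The constant term is $-c(0,0)B_k/2k$, which arises from the degenerate (Eisenstein) part. This shows $\Lift_k(F)\in\Mod_{2k}(\Gamma_0(\delta))$.

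An alternative route avoids analysis: pass to the Jacobi form $\phi(\tau,z)=\sum_\gamma F_\gamma(4\delta\tau)\,\theta_{\delta,\gamma}$ of weight $k+1$ and index $\delta$, and apply the Gritsenko/Maass lift $\sum_\ell \ell^{k-1}\phi|V_\ell$. Restricting to the diagonal gives the same coefficients. I would use whichever normalization matches the paper's conventions.

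\textbf{Step 2 (quasimodular case).} A general $F\in\QMod_{k+1/2}(\rho_M)$ can be written uniquely as $F=\sum_{j} G_2^j F_j$ with $F_j$ modular. I would instead use the Serre-derivative structure: quasimodular forms for $\rho_M$ are spanned by $D^j_\tau f$ with $f$ modular, where $D_\tau$ is applied componentwise. Indeed, using $\vartheta=D_\tau-\frac{w}{12}E_2$ and the fact that $G_2$ generates, one shows $\QMod(\rho_M)=\bigoplus_j D_\tau^j \Mod(\rho_M)$ in weights $\ge 5/2$, and holomorphy is needed to avoid weight-$1/2$ obstructions. On $q$-expansions, $D_\tau$ multiplies $c(n,\gamma)$ by $n$. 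Plugging into $\Lift$, the coefficient $c(n^2/4\delta\ell^2,\cdot)$ gets multiplied by $n^2/(4\delta\ell^2)$, so
\[
\Lift_{k}(D_\tau^j f) = (4\delta)^{-j} D_\tau^{2j}\Lift_{k-2j}(f)\ \text{up to the factor } \ell^{-2j}
\]
absorbed by lowering the weight to $k-2j$. Hence $\Lift_k(D^j_\tau f)=(4\delta)^{-j}D_\tau^{2j}\Lift_{k-2j}(f)$, a derivative of a modular form, and therefore quasimodular of weight $2k$. The constant-term convention must be checked here; it vanishes for $j\geq1$ since $c(0,0)$ is killed.

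\textbf{Step 3 (anomaly equation).} This is a commutation of $\frac{d}{dG_2}$ with the lift. Using the $\mathfrak{sl}_2$ relation $[\frac{d}{dG_2},D_\tau]=-2\,\mathrm{wt}$ on both sides, I compute $\frac{d}{dG_2}$ of $D_\tau^{2j}\Lift_{k-2j}(f)$ and of $D_\tau^j f$. Then I verify that $D_\tau\circ\Lift_{k-2}$ intertwines them. This needs the matching normalization of $\frac{d}{dG_2}$ on vector-valued forms, where weight $k+\frac12$ gives eigenvalue $-2(k+\frac12)$ per commutator.

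The main obstacle is this bookkeeping: constants like $(4\delta)^{-1}$ and the half-integral weight shift must combine exactly into $D_\tau\Lift_{k-2}$. The hypothesis $k\ge4$ is presumably needed so that $k-2\ge2$ and the lifts in Step 1 are defined with holomorphic, convergent Eisenstein constant terms. I would check this first on $F=G_2E^{(1)}_{5/2}$ against explicit $q$-expansions.
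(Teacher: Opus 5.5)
This is a genuine gap, though a repairable one. Your Step 1 is what the paper does: it cites Borcherds, \S 14, for the modular case. Steps 2--3 replace the paper's appeal to Oberdieck--Williams (their Thm.~5.9) with an elementary argument, which is a nicer and more self-contained route. The argument has two parts: the decomposition $F=\sum_j D_\tau^j f_j$ with $f_j\in\Mod_{k-2j+\frac12}(\rho_M)$, and the identity $\Lift_k(D_\tau^j f)=(4\delta)^{-j}D_\tau^{2j}\Lift_{k-2j}(f)$ for $j\ge 1$, which is correct. The decomposition holds in every weight, not only $\ge\frac52$, because the leading $G_2$-coefficient of $D_\tau^p f$ is $(-2)^p w(w+1)\cdots(w+p-1)f\neq 0$ for $w\in\frac12+\BZ_{\ge 0}$.

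The gap is that Step 1 only handles $f_j$ of weight $\ge\frac52$, and you seem to assume no other components occur. They do. In the paper's (Borcherds') convention, the unary theta series $\theta_M=\sum_{r\in\BZ}q^{r^2/4\delta}e_{r/2\delta}$ is a holomorphic weight-$\frac12$ form for $\rho_M$. For $\delta=1$ one checks $E^{(1)}_{5/2}=-24\,(D_\tau\theta_M+G_2\theta_M)$. So already $G_2\theta_M\in\QMod_{5/2}(\rho_M)$ has the component $-D_\tau\theta_M$. For odd $k$ and $\delta>1$, weight-$\frac32$ components may also occur; these need the weight-$2$ lift, which is likewise outside your Step 1.

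For a weight-$\frac12$ component, $\Lift_0$ is undefined, since its constant term divides by $2k=0$. Its non-constant part is
$\sum_{n}\sum_{\ell\mid n}\ell^{-1}c(\tfrac{n^2}{4\delta\ell^2},\tfrac{n}{2\delta\ell})q^n=-\sum_{m\ge 1}c(\tfrac{m^2}{4\delta},\tfrac{m}{2\delta})\log(1-q^m)$.
This is the logarithm of an infinite product, not a modular form. For example, $\Lift_2(D_\tau\theta_M)=\tfrac12 D_\tau G_2$ when $\delta=1$. So your claim that each piece is $D_\tau^{2j}$ of a modular form fails for these components.

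To close the gap, show separately that $D_\tau$ of this logarithm is quasimodular of weight $2$ for $\Gamma_0(\delta)$. One way: the product is the Borcherds product of $f$ (Borcherds, Thm.~13.3). It is holomorphic and non-vanishing on $\BH$ because $f$ has no principal part. For $\theta_M$ it is $\eta(\tau)\eta(\delta\tau)$ up to a $q$-power, so $D_\tau$ of the non-constant part is $\sum_n\sigma_1(n)q^n+\delta\sum_n\sigma_1(n)q^{\delta n}$.

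This weight-$2$ piece has non-zero $\frac{d}{dG_2}$, so Step 3 must track it. It still works, because $D_\tau^N$ kills constants for $N\ge 1$. This is exactly where $k\ge 4$ enters: for $k=2$ the right-hand side would require $\Lift_0$.

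Also carry out the Step 3 bookkeeping explicitly, because the constants do not quite match. Take $f\in\Mod_{m+\frac12}(\rho_M)$, $g=\Lift_m(f)$, $k=m+2j$ and $j\ge 1$. Then:
\begin{itemize}
\item $\frac{d}{dG_2}\Lift_k(D_\tau^jf)=-4j(2m+2j-1)(4\delta)^{-j}D_\tau^{2j-1}g$;
\item $D_\tau\Lift_{k-2}(\frac{d}{dG_2}D_\tau^jf)=-j(2m+2j-1)(4\delta)^{1-j}D_\tau^{2j-1}g$.
\end{itemize}
These agree for $\delta=1$, which is the only case the paper uses. For $\delta>1$ they differ by a factor $\delta^{-1}$; for instance take $F=D_\tau E^{(2)}_{5/2}$ with $k=4$. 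The identity is recovered only if $\frac{d}{dG_2}$ on $\Gamma_0(\delta)$ is taken with respect to $G_2(\delta\tau)$, using that $G_2(\delta\tau)-\delta^{-1}G_2(\tau)$ is modular.

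Two minor points. The signature-$(2,1)$ lattice for $\Gamma_0(\delta)$ is $U\oplus A_1(\delta)$, not $M\oplus U\oplus U(\delta)$. Your Jacobi route does not fit this convention: here $F$ corresponds to skew-holomorphic Jacobi forms. Otherwise $E^{(1)}_{5/2}$ would give a nonzero element of $J_{3,1}=0$.
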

\begin{proof}
	If $F$ is a modular form, this is the well-known Shimura lift viewed here as the theta lifting of Kudla-Milson under the special isomorphism $\mathfrak{o}(2,1) \cong \mathfrak{sl}_2$, see \cite[Section 14]{Borcherds}.
	The case of quasimodular form is treated in \cite{OWilliams} and the compatibility with $\frac{d}{dG_2}$ follows from \cite[Thm 5.9]{OWilliams}.
\end{proof}

\section{Genus 2 Gromov-Witten invariants of $\CA_h$}
The goal of this section is to compute the quotient Gromov-Witten invariants
in genus $2$ over $\CA_h$ for all $h \geq 2$,
and deduce the modularity of the associated generating series.

\subsection{Formula}
By Lemma~\ref{lemma:rank vanishing} it suffices to consider quotient Gromov-Witten invariants for characteristic polynomials of the form
\[ \chi = t^h - d t^{h-1} + a t^{h-2} = t^{h-2}( t^2 - d t + a) \]
where $d$ is the degree of the curve class and $0 \leq a \leq \frac{1}{4} d^2$.
Consider the Fourier expansion of the Eisenstein series $E_{5/2}^{(\delta)}$ introduced in Example~\ref{example eisenstein series}:
\[ E_{5/2}^{(\delta)} = \sum_{ \frac{r}{2 \delta} \in \frac{1}{2\delta} \BZ / \BZ} \sum_{\substack{n \in \BQ_{\geq 0} \\ n - \frac{r^2}{4 \delta} \in \BZ}} c^{(\delta)}(n,\gamma) q^{n} e_{\gamma} . \]

We will use the notation from Section~\ref{sec:NL and GW theory}.
In particular, for $h \geq 4$ and $\delta = (\delta_1,\delta_2)$ we have the diagram
	\[
\begin{tikzcd}
	\CA_{2,\delta}^{\mathrm{lev}} \times \CA_{h-2,\tilde{\delta}}^{\mathrm{lev}} \ar{r}{\varphi_{h,\delta}} \ar{d}{ p_{\delta} \times p_{\tilde{\delta}}} & \CA_h \\
	\CA_{2,\delta} \times \CA_{h-2, \tilde{\delta}} & 
\end{tikzcd}
\]
where $\tilde{\delta} = (1^{h-4},\delta_1,\delta_2)$ is the complementary polarization type.

\begin{thm}  \label{thm:main formula genus 2}
	\begin{enumerate}
		\item[(i)] (curve class zero) For all $h \geq 2$
		\[ N_{2,t^{h}}^{\CA_h} = \frac{1}{5760}\lambda_{h-1} \lambda_{h-2}. \]
		\item[(ii)] (curve classes of rank 1) For all $h \geq 2$:
		\[ N_{2,t^{h}-d t^{h-1}}^{\CA_h}
		=
		\frac{(-1)^{h-1} }{24}
		\sum_{\delta \geq 1}
		\sigma_3(d/\delta)
		\NL_{h, \delta} \lambda_{h-2}
		\]
		\item[(iii)] (curve classes of rank $2$) Let $a > 0$.
		For $h \geq 4$ we have
\[
			N_{2,t^{h}-d t^{h-1} + a t^{h-2}}^{\CA_h} 
	= - \sum_{\substack{\delta_1, \delta_2 > 0 \\ \delta_1 | \delta_2 \\ \delta_1 | d \\
			\delta_1 \delta_2 | a}}
	\sum_{\substack{k \geq 1 \\ k | d/\delta_1 \\ k^2 | \frac{a}{\delta_1 \delta_2}}}
	k^3 \sigma_1\left( \frac{a}{\delta_1 \delta_2 k^2} \right)
	c^{(\delta_2/\delta_1)}\left( \frac{d^2 - 4 a}{4 \delta_1 \delta_2 k^2}, \frac{d}{2 k \delta_2} \right) \cdot 
		\frac{\varphi_{h,(\delta_1,\delta_2) \ast}( p_{(\delta_1,\delta_2)}^{\ast}\lambda_1)}{ \deg(\varphi_{h,(\delta_1,\delta_2)})}
\]
For $h=3$ we have:
\begin{align*}
	N_{2,t(t^{2}-d t + a)}^{\CA_3}
	= - \sum_{\substack{\delta > 0 \\ 
			\delta | a}}
	\sum_{\substack{k \geq 1 \\ k | d, k^2 | \frac{a}{\delta}}}
	k^3 \sigma_1\left( \frac{a}{\delta k^2} \right)
	c^{(\delta)}\left( \frac{d^2 - 4a}{4 \delta k^2}, \frac{d}{2 k \delta} \right) \cdot 
	\NL_{3,\delta} \lambda_1
\end{align*}
For $h=2$ we have:
\begin{align*}
	N_{2,t^{2}-d t + a}^{\CA_2}
	= - \sum_{\substack{k \geq 1 \\ k | d, k^2 | a}}
	k^3 \sigma_1\left( \frac{a}{k^2} \right)
	c^{(1)}\left( \frac{d^2 - 4a}{4 k^2}, \frac{d}{2 k} \right) \cdot 
\lambda_1
\end{align*}

\end{enumerate}

\end{thm}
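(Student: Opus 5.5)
The plan is to treat the three cases by rank $u\in\{0,1,2\}$ of $\chi$. In each case we write $N_{2,\chi}$ through the definition \eqref{virtual class quotient}, i.e. as $\frac{1}{2}\CC_{2,\chi}(\tau_1(\sigma))$. We then use the Noether--Lefschetz decomposition \eqref{full rank decomposition} to reduce to Gromov--Witten theory of $\CA_{u,\delta}$ twisted by $e\big((\BE_{\Mbar_{2,1}}/\BE_u)\otimes\BE_{h-u}\big)$. Pushing forward the diagonal $\Delta^*$ of the insertion $\sigma=\theta^h/h!$ along the $\CX^{\mathrm{lev}}_{h-u,\tilde\delta}$ factor, only the part $\sigma_{\delta}\boxtimes\sigma_{\tilde\delta}$ survives, since $\zeta^*\theta=\theta_\delta+\theta_{\tilde\delta}$. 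Thus the $\tilde\delta$-factor contributes $1/\delta_1\cdots\delta_u$ times a point, and we are left with $\CC^{\CA_{u,\delta}}_{2,\chi/t^{h-u}}(\tau_1(\sigma_\delta))$ with the Euler class inserted.

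\textbf{Rank 0.} Here $[\Mbar_{2,1}(\pi,t^h)]^{\vir}=e(\BE_h^\vee\otimes\BE^\vee_{\Mbar_{2,1}})\cap[\Mbar_{2,1}]\times[\CX]$, as in the proof of the vanishing proposition. By Lemma~\ref{lemma:e(Hodge Hodge) eval} the Euler class equals $\mu_2\mu_1\lambda_{h-1}\lambda_{h-2}$, up to the sign $(-1)^{2h}=1$. So we get $\frac{1}{2}\int_{\Mbar_{2,1}}\psi_1\mu_1\mu_2\cdot\lambda_{h-1}\lambda_{h-2}$. Since $\int_{\Mbar_{2,1}}\psi_1\mu_1\mu_2=2\int_{\Mbar_2}\mu_1\mu_2=2/2880$ by the dilaton equation and Faber's value, this gives $1/5760$.

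\textbf{Rank 1.} Now $u=1$, $B=E$ is an elliptic curve of type $\delta$, and the Euler factor is $e\big((\BE_{\Mbar_{2,1}}/\BE_1)\otimes\BE_{h-1}\big)$. The quotient $\BE_{\Mbar}/\BE_1$ is a line bundle $L$, so the factor is $c_{h-1}(L\otimes\BE_{h-1})=\sum_i c_1(L)^i\lambda_{h-1-i}$. The expected dimension count leaves only the term $c_1(L)\lambda_{h-2}$. Here $\lambda_{h-2}$ pulls back to $\lambda_{h-2}$ on $\NL_{h,\delta}$, modulo $\lambda_1$ of $\CA_1$, which dies because $\CA_1$ has no positive-degree Chow classes. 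The remaining number $\int_{[\Mbar_{2,1}(E,d/\delta)]^{\vir}}\psi_1\ev^*(\omega)\,c_1(L)$ should be handled as follows. Via $c_1(\BE_{\Mbar})=c_1(\BE_1)+c_1(L)$ with $\BE_1$ trivial on a fixed $E$, it becomes the $\lambda_1$-twisted genus-$2$ descendent invariant of $E$. By the known formula (Okounkov--Pandharipande, or the $\lambda_{g-1}$ computation) this equals a multiple of $\sigma_3(d/\delta)$; matching the constant and the sign $(-1)^{h-1}$ gives (ii). I expect this step to require only a known elliptic-curve evaluation.

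\textbf{Rank 2.} Now $u=2=g$, so the Euler factor $(\BE_{\Mbar}/\BE_2)\otimes\BE_{h-2}$ has rank $0$ and disappears. We are reduced to $\frac{1}{2\delta_1\delta_2}\CC^{\CA_{2,\delta}}_{2,t^2-dt+a}(\tau_1(\sigma_\delta))\in\Chow^1(\CA_{2,\delta})$, whose tautological projection is a multiple of $\lambda_1$ (this is where \eqref{taut vs taut} enters). The coefficient we compute by restricting to a test curve, or equivalently by a further Noether--Lefschetz decomposition within $\CA_{2,\delta}$. Genus-$2$ maps of rank $2$ to an abelian surface of type $\delta$ are, up to translation, isogenies from Jacobians. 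Hence the quotient invariant is a count of sublattices: the pairs $(J(C)\to B)$ with fixed pullback characteristic polynomial. In turn this is the degree of Humbert-type loci in $\CA_{2,\delta}$, i.e. the coefficients $c^{(\delta')}$ of $E_{5/2}$ by van der Geer's theorem, extended via Hecke operators. The factors $k^3\sigma_1(a/\delta_1\delta_2k^2)$ should arise as follows: $k$ is the divisibility of the class, and $\sigma_1$ counts isogenies of the appropriate degree. Equivalently, one uses the multiple cover formula for the primitive reduced invariant of abelian surfaces, together with the reduction from type $(\delta_1,\delta_2)$ to $(1,\delta_2/\delta_1)$ by scaling. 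This rank-$2$ identification, namely relating the $\lambda_1$-coefficient in $\CA_{2,\delta}$ to Humbert degrees with the correct multiplicities, is the main obstacle. I would first do the primitive $\delta=(1,1)$, $k=1$ case via van der Geer, then bootstrap. For $h=3,2$ the same argument applies with the complementary factor of dimension $1$ or $0$.
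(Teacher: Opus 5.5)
Your treatment of (i) and (ii) follows the paper. For (i) you use the constant-map virtual class with Lemma~\ref{lemma:e(Hodge Hodge) eval}. For (ii) you use the decomposition \eqref{full rank decomposition}, in which only $\mu_1\lambda_{h-2}$ survives, together with the fact that $\sum_a\langle\tau_1(\pt)\mu_1\rangle^E_{2,a}q^a$ is a multiple of $E_4$ with constant term $\int_{\Mbar_{2,1}}\psi_1\mu_1\mu_2=1/2880$; this gives $\tfrac{1}{12}\sigma_3(a)$. Two small corrections:
\begin{itemize}
\item Since $\int_{\Mbar_2}\mu_1\mu_2=1/5760$, the integral is $1/2880$, not $2/2880$. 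As written, your chain gives $N=1/2880$.
\item We have $\theta_\delta^u/u!=(\delta_1\cdots\delta_u)\sigma_\delta$ and likewise for $\tilde\delta$ (with $\sigma_\delta,\sigma_{\tilde\delta}$ the zero sections). Hence $\zeta^*\sigma=|K(\delta)|\,\sigma_\delta\boxtimes\sigma_{\tilde\delta}$, and the prefactor $1/|K(\delta)|$ cancels completely. Your factor $1/\delta_1\cdots\delta_u$ is only correct if your $\sigma_\delta$ means $\theta_\delta^u/u!$.
\end{itemize}

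The genuine gap is (iii). You reduce correctly to the class $I'\in\Chow^1(\CA_{2,(1,\delta')})$ with $\delta'=\delta_2/\delta_1$, but then leave its computation open. The heuristic you offer (count isogenies $J(C)\to B$, read off Humbert degrees, bootstrap from $\delta=(1,1)$) does not compute a virtual class, and it would miss terms.
\begin{itemize}
\item When $d^2=4a$ the class is proportional to $\theta_B$ and stays of Hodge type over all of $\CA_{2,\delta}$. Its contribution is $-\lambda_1$ times the reduced invariant, where $-\lambda_1$ is the Euler class of the fiberwise $H^2(\mathcal O)$ quotient of the obstruction sheaf. These are exactly the terms in (iii) where $c^{(\delta')}$ is evaluated at $n=0$. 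They are not degrees of any Noether--Lefschetz locus; the paper sets $\Hum_{s,d}=-\lambda_1$ for them by convention.
\item The factor $\sum_k k^3\sigma_1(s/k^2)$ is not an isogeny count either. It is the reduced genus $2$ invariant in a class of divisibility $\ell$, including contributions from nodal domains, and the sum over $k$ only emerges after regrouping over $\ell$.
\end{itemize}

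The missing ingredient is the GW/NL correspondence of \cite{GWNL}, whose proof carries over from K3 to abelian surfaces. For the family over $\CA_{2,(1,\delta')}$ it reads
\[ I'=\sum_{\ell\mid d/\delta_1,\ \ell^2\mid s}\Hum_{s,d/\delta_1,\ell}\cdot\langle\tau_1(\sigma)\rangle^{\mathrm{red}}_{2,s,\ell},\qquad s=\frac{a}{\delta_1\delta_2}, \]
with $s$ and the degree $d/\delta_1$ read off from Lemma~\ref{lemma:a and s}. Combine this with the evaluation from \cite{BOPY},
\[ \langle\tau_1(\sigma)\rangle^{\mathrm{red}}_{2,s,\ell}=2\sum_{k\mid\ell,\,k^2\mid s}k^3\sigma_1(s/k^2). \]
Exchanging the sums and using \eqref{basicrel1}--\eqref{basicrel2} gives $\sum_{\tilde\ell}\Hum_{s,d/\delta_1,k\tilde\ell}=\Hum_{s/k^2,d/k\delta_1}$. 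Lemma~\ref{lemma:Humbert degree} (Bruinier--K\"uhn) then identifies this with $-c^{(\delta')}(\cdot,\cdot)\lambda_1$ directly for every $\delta'$, so no Hecke bootstrap from van der Geer's result on $\CA_2$ is needed.

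Finally, (iii) is an identity of cycles with $\varphi_{h,\delta*}(p_\delta^*\lambda_1)$ on the right. The paper gets this because each Humbert class is itself written as a multiple of $\lambda_1$. Your route through \eqref{taut vs taut} or test curves would only give the tautologically projected statement.
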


\begin{cor} 
\label{cor:genus 2 quotient invariants}	
	For any $r \geq 0$ and $h \geq 2$ we get
\[ \taut \ F_{2}^{\CA_h}(e_2^r)
=
-\frac{1}{360} \frac{(h-1) h}{|B_{2h-2}| |B_{2h}|}
\Lift_{2(h+r)}( D_{\tau}^r(G_{2h-2}) E_{5/2}^{(1)}) \lambda_{h-1} \lambda_{h-2}.
\]
In particular, we have the holomorphic anomaly equation:
\[
\frac{d}{dG_2} \taut\ F_{2}^{\CA_h}(e_2^r)
=
-2 r (r+2h-3)
\taut\ F_{2}^{\CA_h}(e_1 e_2^{r-1})
\]
\end{cor}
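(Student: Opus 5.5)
Assuming Theorem~\ref{thm:main formula genus 2}, the plan is to sum the three contributions (i)--(iii) of that theorem with weight $e_2^r q^{e_1}$, apply $\taut$ term by term, and recognize the result as the stated Shimura lift.

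\textbf{Step 1: rank $0$ and rank $1$ classes.} Only the summand $r=0$ sees the zero class, since it has $e_2=0$; its contribution is $\frac{1}{5760}\lambda_{h-1}\lambda_{h-2}$. Rank-one classes also have $e_2=0$, so they too contribute only when $r=0$. There I would use Iribar Lopez's formula $\taut\,\NL_{h,\delta}=c_h\,\delta^{2h-2}\prod_{p|\delta}(1-p^{2-2h})\lambda_{h-1}$. With the normalization implicit in his theorem on $\CC_1(\sigma)$, the sum $\sum_{\delta|d}\sigma_3(d/\delta)\,\taut\NL_{h,\delta}$ becomes a Dirichlet convolution. After multiplying by $\lambda_{h-2}$ this gives a multiple of the coefficients of $G_4$ convolved with the multiplicative function attached to $G_{2h-2}$.

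\textbf{Step 2: rank $2$ classes.} Apply \eqref{taut vs taut} to $\varphi_{h,\delta\ast}(p_\delta^\ast\lambda_1)$. The tautological projection of $\lambda_1$ on $\CA_{2,\delta}$ is an explicit degree, and the complementary factor $\CA_{h-2,\tilde\delta}$ contributes $\taut$ of a codimension-$2(h-2)$ NL cycle. Following \cite{AitorNL}, I expect this to be $\lambda_{h-1}\lambda_{h-2}$ times a multiplicative arithmetic function of $(\delta_1,\delta_2)$, of size roughly $(\delta_1\delta_2)^{2h-4}$ times Euler factors. The cases $h=2,3$ should be handled separately but in the same way.

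Substituting $e_1=d$ and $e_2=a$ turns the generating series into a triple sum over $\delta_1\mid\delta_2$, over $k$, and over $(d,a)$, with weights $a^r k^3\sigma_1(a/\delta_1\delta_2k^2)\,c^{(\delta_2/\delta_1)}(\ldots)$. Next I would reindex with $d=k\delta_1 d'$ and $a=k^2\delta_1\delta_2a'$. This factors out $\ell^{2(h+r)-1}$-type sums, namely the $\ell|n$ sum of the lift, where $\ell$ absorbs $k\delta_1$. What remains should pair $\sigma_1$- and $a'^r$-coefficients against $c^{(\delta_2/\delta_1)}$.

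\textbf{Step 3: the main obstacle.} The remaining sum must collapse to the $e_0,e_{1/2}$ coefficients of $D_\tau^r(G_{2h-2})E^{(1)}_{5/2}$. The $\sigma$-terms are themselves coefficients of $D^r_\tau G_{2h-2}$ after reorganization, and the sum over $\delta_2/\delta_1$ of the $c^{(\delta)}$ with NL-weights has to reduce to $E^{(1)}_{5/2}$. This is exactly where the Hecke relation of Theorem~\ref{thm:coeff of e_{5/2}delta} and the modular identity from the appendix with Williams enter. I would first check the $r=0$ case, including absorbing the rank $0$ and rank $1$ terms as the $a=0$ coefficients, i.e.\ the $c(0,0)$ and $n=d^2/4$ terms of the lift. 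I would then extend to general $r$, using that multiplication by $a^r$ acts on the relevant index as a $D_\tau^r$ on the $G_{2h-2}$ factor. Matching the overall constant $-\frac{1}{360}\frac{(h-1)h}{|B_{2h-2}||B_{2h}|}$ is a bookkeeping check against the constant term $\frac{1}{5760}$.

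\textbf{Step 4: the holomorphic anomaly equation.} Apply Theorem~\ref{thm:shimura lift}. Writing $E^{(1)}_{5/2}$ as modular, one has $\frac{d}{dG_2}(D_\tau^rG_{2h-2}\cdot E_{5/2})=\frac{d}{dG_2}(D^r_\tau G_{2h-2})E_{5/2}$. The standard commutator $[\frac{d}{dG_2},D_\tau]=-2\,\mathrm{wt}$ gives $-2r(2h-2+r-1)D^{r-1}_\tau G_{2h-2}$, using $\frac{d}{dG_2}G_{2h-2}=0$ for $h\ge3$ and treating $h=2$ separately. Then $\frac{d}{dG_2}\Lift_{2(h+r)}=D_\tau\Lift_{2(h+r)-2}(\cdots)$, and by the formula just proved for $r-1$ with an extra $D_\tau$, i.e.\ $e_1$, this is $-2r(r+2h-3)\taut F_2(e_1e_2^{r-1})$.
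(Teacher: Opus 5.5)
Your overall route is the paper's. You sum Theorem~\ref{thm:main formula genus 2} against $e_2^r q^{e_1}$, apply $\taut$, recognize a constant times the series $F_{h,r}$ of Appendix~\ref{appendix:modular form identity}, and invoke Theorem~\ref{thm:modular identity}. The anomaly equation then follows from Theorem~\ref{thm:shimura lift} and $[\frac{d}{dG_2},D_\tau]=-2\,\mathrm{wt}$, exactly as in your Step~4. Relying on the appendix for Step~3 is legitimate, but it only closes once the assembled series is \emph{exactly} a constant times $F_{h,r}$. This is where Step~2 (for $h\geq 4$) is a genuine gap.

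Your mechanism does not compute $\taut\,\varphi_{h,\delta\ast}(p_\delta^\ast\lambda_1)$. First, \eqref{taut vs taut} is vacuous here, since $\lambda_1$ is already tautological on $\CA_{2,\delta}$. Second, the target class has codimension $2h-3$ and is proportional to $\lambda_{h-1}\lambda_{h-2}$. So it is not ``a degree times $\taut\,\NL_{h,(\delta_1,\delta_2)}$'', because the latter has codimension $2h-4$ and is proportional to $\lambda_{h-1}\lambda_{h-3}$. Your guessed arithmetic weight, roughly $(\delta_1\delta_2)^{2h-4}$, is also incorrect.

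The missing input is Lemma~\ref{lemma:some taut projections}.
\begin{itemize}
\item For $\delta=(1,1)$, one uses $[\CA_1\times\CA_1]=10\lambda_1$ on $\CA_2$. Hence $\varphi_{h,(1,1)\ast}(p^\ast\lambda_1)=\frac{1}{10}[\CA_1\times\CA_1\times\CA_{h-2}]$. Its projection $\frac{\gamma_1^2\gamma_{h-2}}{\gamma_h}\lambda_{h-1}\lambda_{h-2}$, with $\gamma_k=\prod_{i=1}^k\frac{|B_{2i}|}{4i}$, is known from \cite{CMOP}.
\item General $\delta$ reduces to $(1,1)$ via the maps $\varphi_\delta\times\varphi_{\tilde\delta}$ to $\CA_2\times\CA_{h-2}$ and the degree computations of \cite{AitorNL}.
\end{itemize}
The resulting weight is $\delta_1^{2h-5}\delta_2^{2h-1}\prod_{p|\delta_1}(1-p^{6-2h})\prod_{p|\delta_2}(1-p^{4-2h})\prod_{p|\delta_2/\delta_1}(1+p^{-2})$, and these precise factors are what make the computation work. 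The factor $(\delta_2/\delta_1)^2\prod_{p|\delta_2/\delta_1}(1+p^{-2})$ contained in it cancels the normalization $\delta^{-2}\prod_{p|\delta}(1+p^{-2})^{-1}$, with $\delta=\delta_2/\delta_1$, in the Hecke formula for $c^{(\delta)}$. What remains is $(\delta_1\delta_2)^{2h-3}\prod_{p|\delta_1}(1-p^{6-2h})\prod_{p|\delta_2}(1-p^{4-2h})$, which is exactly what Proposition~\ref{prop:identity2} is set up to collapse.

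Two smaller points.
\begin{itemize}
\item Deriving $\taut\,\NL_{h,(\delta)}$ from the genus-one formula, as you propose, gives $\delta^{2h-1}\prod_{p|\delta}(1-p^{2-2h})$, not $\delta^{2h-2}$.
\item When you treat $h=2$ separately in Step~4, you will find the anomaly equation holds for $r\geq1$ but fails at $r=0$. There $\frac{d}{dG_2}\Lift_4(G_2E^{(1)}_{5/2})=D_\tau\Lift_2(E^{(1)}_{5/2})=-10\,D_\tau G_4\neq0$. This is consistent with $\taut\,\CC_2(\tau_1(\sigma))$ being only quasimodular for $h=2$, so the second display should be read for $h\geq3$ or $r\geq1$.
\end{itemize}
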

\begin{proof}
The tautological projections of the Noether-Lefschetz cycles in $\CA_h$ were computed by \cite{AitorNL} and \cite{CMOP}. One has
\[	\taut( \NL_{h,(\delta)} )
	=
	\frac{h \delta^{2h-1}}{6 |B_{2h}|} \prod_{p|\delta} (1-p^{2-2h}) \lambda_{h-1} \]
and for $h \geq 3$
\begin{multline*}
	\taut( \NL_{h,(\delta_1,\delta_2)} ) = 
	\frac{1}{360} \frac{h(h-1)}{|B_{2h-2}| |B_{2h}|} \\ \cdot
	\delta_1^{2h-5} \delta_2^{2h-1} \prod_{p|\delta_1} (1-p^{6-2h}) \prod_{p|\delta_2} (1-p^{4-2h})
	\prod_{p|\delta_2/\delta_1} \frac{1-p^{-4}}{1-p^{-2}} 
	\cdot \lambda_{h-1} \lambda_{h-3} 
	\end{multline*} 
Moreover the tautological projection of
$\varphi_{h,\delta\, \ast}( p_{(\delta_1,\delta_2)}^{\ast} \lambda_1 )$
is computed in Lemma~\ref{lemma:some taut projections} below.
The first part follows hence from Theorem~\ref{thm:modular identity} for $h \geq 3$, and directly from the definition of the Shimura lift in case $h=2$.
The holomorphic anomaly equation follows from Theorem~\ref{thm:shimura lift} and a straightforward computation using that the commutator $[\frac{d}{dG_2}, D_{\tau}]$ acts by $-2k$ on quasimodular forms of weight $k$, see \cite{vIOP}.
\end{proof}

\begin{proof}[Proof of Theorem~\ref{thm:genus 2 quotient quasimodularity} and Corollary~\ref{cor:genus 2}]
Theorem~\ref{thm:genus 2 quotient quasimodularity} follows from Corollary~\ref{cor:genus 2 quotient invariants}
and observing that
\[ F_g(e_1 P) = q \frac{d}{dq} F_g(P). \]
To prove Corollary~\ref{cor:genus 2}
observe that by Proposition~\ref{prop:dim constraint} we must have $h+n = \sum_i k_i + \deg(\Gamma)$,
so the claim follows from Corollary~\ref{cor:converse modularity}.
\end{proof} 

\begin{lemma} \label{lemma:some taut projections}
For $h \geq 2$ we have:                                
	\begin{multline*}
		\taut \frac{\varphi_{h,(\delta_1,\delta_2)\, \ast}( p_{(\delta_1,\delta_2)} \lambda_1 )}{|\mathrm{Sp}(K(\delta))|}  \\
		=
		\frac{1}{360}
		\frac{(h-1) h}{|B_{2h-2}| |B_{2h}|}
		\cdot 
		\delta_1^{2h-5} \delta_2^{2h-1} \prod_{p|\delta_1} (1-p^{6-2h}) \prod_{p|\delta_2} (1-p^{4-2h})
		\prod_{p|\delta_2/\delta_1} \frac{1-p^{-4}}{1-p^{-2}}
		%\delta_1^{2h-1} \delta_2^{2h-5} \prod_{p|\delta_1} (1-p^{6-2h}) \prod_{p|\delta_2} (1-p^{4-2h})
		%\prod_{p|\delta_2/\delta_1} \frac{(1-p^{-2})}{(1-p^{-4})}
		\cdot  \lambda_{h-1} \lambda_{h-2}.
	\end{multline*}
\end{lemma}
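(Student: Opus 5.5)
The plan is to mimic the computation of $\taut(\NL_{h,(\delta_1,\delta_2)})$ in \cite{AitorNL}, inserting the extra class $p_{\delta}^{\ast}\lambda_1$ pulled back from the abelian-surface factor.

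First, extend the gluing map to toroidal compactifications,
\[ \overline{\varphi_{h,\delta}} : \overline{\CA}^{\mathrm{lev}}_{2,\delta} \times \overline{\CA}^{\mathrm{lev}}_{h-2,\tilde\delta} \to \overline{\CA}_h, \]
as in \cite[Section 3.1]{AitorNL}. On the image, the Hodge bundle splits: $\overline{\varphi}^{\ast}\BE_h = \BE_2 \oplus \BE_{h-2}$. By the definition of $\taut$ in \cite{CMOP}, $\taut(\alpha)$ for $\alpha$ of codimension $2h-3$ is the multiple $c\,\lambda_{h-1}\lambda_{h-2}$ of the generator of $R^{2h-3}(\CA_h)$. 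The constant $c$ is detected by pairing against the complementary tautological classes, i.e. by integrals of the form
\[ \int_{\overline{\CA}_h} \alpha \cdot \lambda_{h-1}\lambda_{h-2}\cdots \]
normalized by the value of the same pairing on $\lambda_{h-1}\lambda_{h-2}$. So we need the $\lambda$-monomial of complementary degree, computed via Hirzebruch–Mumford proportionality.

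Second, pull the test class back. Write $c(\BE_h)$ as $c(\BE_2)c(\BE_{h-2})$ and expand. Multiplied by $\lambda_1(\BE_2)$, the only terms that survive on $\overline{\CA}_{2,\delta}$ are those of degree $3$ in the $\BE_2$-classes, namely $\lambda_1^{(2)}\lambda_2^{(2)}$. This factorizes the integral into
\[ \int_{\overline{\CA}^{\mathrm{lev}}_{2,\delta}} \lambda_1\lambda_2 \quad \text{times} \quad \int_{\overline{\CA}^{\mathrm{lev}}_{h-2,\tilde\delta}} (\text{top }\lambda\text{-monomial}). \]
This is the same factorization that appears for $\NL_{h,\delta}$ itself, where the surface factor instead contributes $\lambda_1\lambda_2$ arising from the $\lambda_{h-1}\lambda_{h-3}$-pairing. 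The expectation is therefore that the answer differs from $\taut\NL_{h,(\delta_1,\delta_2)}$ only in the final $\lambda$-class, with the same arithmetic factor:
\[ \delta_1^{2h-5}\delta_2^{2h-1}\prod_{p\mid\delta_1}(1-p^{6-2h})\prod_{p\mid\delta_2}(1-p^{4-2h})\prod_{p\mid\delta_2/\delta_1}\frac{1-p^{-4}}{1-p^{-2}}. \]
This factor comes from the volumes of $\CA^{\mathrm{lev}}_{2,\delta}$ and $\CA^{\mathrm{lev}}_{h-2,\tilde\delta}$, i.e. the index of the paramodular/level subgroup, together with $|\Sp(K(\delta))|$, which accounts for the normalization in the statement.

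Third, evaluate the constants using the $\lambda$-integrals $\int_{\overline{\CA}_g}\lambda_1\lambda_2\cdots\lambda_g = \prod_i |B_{2i}|/(4i)$. Compare the result with the normalization of $\lambda_{h-1}\lambda_{h-2}$ versus $\lambda_{h-1}\lambda_{h-3}$ in $R^{\ast}(\CA_h)$; the rational prefactor $\frac{1}{360}\frac{h(h-1)}{|B_{2h-2}||B_{2h}|}$ should come out of these Bernoulli ratios. The cases $h=2$ and $h=3$ must be handled separately. For $h=2$ the map is essentially $\CA^{\mathrm{lev}}_{2,\delta}\to\CA_2$ and the claim reduces to a degree count. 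For $h=3$ the complement is an elliptic curve, with $\CA_{1,\tilde\delta}$ of dimension one, and the same factorization applies.

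The main obstacle will be the second step: controlling the boundary, so that the test-class pairing on the compactification genuinely factorizes. This requires the pulled-back $\lambda$-classes to be the extended Hodge classes on each factor, and the boundary corrections to vanish as in \cite{AitorNL}. A further difficulty is tracking exactly which $\lambda$-monomial pairs nontrivially, so that $\lambda_{h-1}\lambda_{h-2}$ rather than $\lambda_{h-1}\lambda_{h-3}$ emerges with the claimed rational constant. I would first check the $h=3$ case numerically against Theorem~\ref{thm:modular identity} to fix the constants.
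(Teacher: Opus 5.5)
Your route (pair $\varphi_{h,\delta\,\ast}(p_\delta^\ast\lambda_1)$ directly against test classes on toroidal compactifications and evaluate by Hirzebruch--Mumford proportionality) differs from the paper's and can be completed. As written, however, the first step has a genuine gap.

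\textbf{The first step.} It is not true that $R^{2h-3}(\CA_h)$ is spanned by $\lambda_{h-1}\lambda_{h-2}$. The monomials $\lambda_I$, $I\subseteq\{1,\ldots,h-1\}$, form a basis of $R^\ast(\CA_h)$, so for instance $R^{7}(\CA_5)$ contains both $\lambda_4\lambda_3$ and $\lambda_4\lambda_2\lambda_1$. A single test pairing therefore fixes only one coordinate of $\taut$. The proportionality to $\lambda_{h-1}\lambda_{h-2}$, which is part of the claim, is left unproven for $h\geq 5$.

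The test classes must also be of the form $\beta\lambda_h$ with $\deg\beta=\binom{h-2}{2}$. A test class containing $\lambda_{h-1}\lambda_{h-2}$, as you wrote, pairs to zero against $\lambda_{h-1}\lambda_{h-2}$, because $\lambda_{h-1}^2=2\lambda_{h-2}\lambda_h$ and $\lambda_h^2=0$.

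To close the gap you must pair with every such $\beta$. For $\delta=(1,1)$ your factorization gives $\gamma_2\int_{\overline{\CA}_{h-2}}\beta(\BE_{h-2})\lambda_{h-2}$. Here $\gamma_k=\prod_{i=1}^k|B_{2i}|/(4i)$, and $\beta(\BE_{h-2})$ means $\beta$ with $\lambda_i$ replaced by $c_i(\BE_{h-2})$. You then need
\[
\int_{\overline{\CA}_h}\lambda_h\lambda_{h-1}\lambda_{h-2}\,\beta=\frac{\gamma_h}{\gamma_{h-2}}\int_{\overline{\CA}_{h-2}}\lambda_{h-2}\,\beta(\BE_{h-2})\qquad\text{for all }\beta .
\]
This follows from proportionality together with one fact about the Lagrangian Grassmannian: the top class $\lambda_g$ is represented by a sub-Grassmannian $LG(g-1,2g-2)$, on which the tautological Chern classes restrict to those of rank $g-1$. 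Applying this twice gives the constant $\gamma_2\gamma_{h-2}/\gamma_h=\frac{1}{360}\frac{h(h-1)}{|B_{2h-2}||B_{2h}|}$.

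The paper bypasses all of this. It writes $\lambda_1=\frac{1}{10}[\CA_1\times\CA_1]$ on $\CA_2$ and quotes the CMOP evaluation $\taut[\CA_1\times\CA_1\times\CA_{h-2}]=\frac{\gamma_1^2\gamma_{h-2}}{\gamma_h}\lambda_{h-1}\lambda_{h-2}$, which already contains the proportionality statement.

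\textbf{General $\delta$ and the constants.} You argue only by analogy that the arithmetic factor equals that of $\taut\NL_{h,(\delta_1,\delta_2)}$, and leave the constants as something that ``should come out''. The mechanism actually needed is the one the paper uses, following Iribar Lopez. The isogeny maps $\varphi_\delta:\CA^{\mathrm{lev}}_{2,\delta}\to\CA_2$ and $\varphi_{\tilde\delta}:\CA^{\mathrm{lev}}_{h-2,\tilde\delta}\to\CA_{h-2}$ pull Hodge bundles back to Hodge bundles. Hence every $\lambda$-pairing is $\deg\varphi_\delta\cdot\deg\varphi_{\tilde\delta}$ times the corresponding pairing for $\delta=(1,1)$. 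The explicit arithmetic factor then comes from Iribar Lopez's degree formulas combined with $|\Sp(K(\delta))|$.

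Two of your remarks should be dropped:
\begin{itemize}
\item The comparison of $\lambda_{h-1}\lambda_{h-2}$ with $\lambda_{h-1}\lambda_{h-3}$ plays no role, since these classes live in different degrees.
\item Fixing constants by checking against Theorem~\ref{thm:modular identity} would be circular. That theorem is a pure $q$-series identity whose side $F_{h,r}$ is built from exactly the factors this lemma is meant to establish, so agreement cannot confirm them.
\end{itemize}
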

\begin{proof}
Let $\delta = (\delta_1,\delta_2)$ and let $\tilde{\delta}$ be the complementary polarization.
We first check the statement for $\delta=(1,1)$, where the morphism is of the form $\varphi_{h,(1,1)} : \CA_2 \times \CA_{h-2}\to \CA_h$.
In $\CA_2$ we have $[\CA_1 \times \CA_1] = 10 \lambda_1$ (Here we use that $[\CA_1 \times \CA_1]$ is twice the underlying locus \cite{AitorNL}.) Thus in $\CA_2 \times \CA_{h-2}$ we have $p_{\delta}^{\ast} c_1(\BE_{2}) = \frac{1}{10} [ \CA_1 \times \CA_1 \times \CA_{h-2} ]$.
	In \cite{CMOP} it was computed that
	\[ \taut( \CA_1 \times \CA_1 \times \CA_{h-2}) =
	\frac{\gamma_1 \gamma_1 \gamma_{h-2}}{\gamma_h} \lambda_{h-1} \lambda_{h-2} = 
	\frac{1}{36} \frac{(h-1) h}{|B_{2h-2}| |B_{2h}|} \lambda_{h-1} \lambda_{h-2}, \]
	where $\gamma_k = \prod_{i=1}^{k} \frac{|B_{2i}|}{4i}$. The claim follows for $\delta=(1,1)$.
	
	In general, one can use the strategy parallel to \cite{AitorNL} to reduce to $\delta=(1,1)$.
	Consider the diagram
	\[
	\begin{tikzcd}
		\CA_{2,\delta}^{\mathrm{lev}} \times \CA_{h-2,\tilde{\delta}}^{\mathrm{lev}} \ar{r}{\varphi_{h,\delta}} \ar{d}{ \varphi_{\delta} \times \varphi_{\tilde{\delta}} } & \CA_h \\
		\CA_2 \times \CA_{h-2} & 
	\end{tikzcd}
	\]
	where the morphism $\varphi_{\delta} : \CA_{u,\delta}^{\mathrm{lev}} \to \CA_u$ sends a triple $(B,\theta_B,f)$ to $B / f(\{ 0 \} \times \BZ^u / \delta)$ with the induced principal polarization, see \cite[Section 2.1]{AitorNL}.

	Then by the same argument as in \cite[Proof of Theorem 3]{AitorNL} one has
	\[
	\frac{1}{|\mathrm{Sp}(K(\delta))|}
	\taut\left( \varphi_{h,\delta\, \ast}( p_{\delta}^{\ast}\lambda_1 ) \right)
	= 
	\frac{\deg(\phi_{\delta}) \deg(\phi_{\tilde{\delta}})}{|\mathrm{Sp}(K(\delta))|}
	\taut\left( \varphi_{h,(1,1)\, \ast}( p_{(1,1)}^{\ast} \lambda_1 ) \right)
	\]
	By Proposition 26,27 in \cite{AitorNL} we have
	\[
	\deg(\phi_{\tilde{\delta}}) = (\delta_1 \delta_2)^{2h-3} \prod_{p|\delta_1} (1-p^{6-2h}) \prod_{p|\delta_2} (1-p^{4-2h}) \]
	and
	\[
	\frac{|\mathrm{Sp}(K(\delta))|}{\deg(\phi_{\delta})}
	= \delta_1^{2} \delta_2^{-2} \prod_{p|\delta_2/\delta_1} \frac{1-p^{-2}}{1-p^{-4}}
	\]
	Hence we find (Note that there is a typo in Theorem 3 of \cite{AitorNL})
	\[
	\frac{\deg(\phi_{\delta}) \deg(\phi_{\tilde{\delta}})}{|\mathrm{Sp}(K(\delta))|}
	=
	\delta_1^{2h-5} \delta_2^{2h-1} \prod_{p|\delta_1} (1-p^{6-2h}) \prod_{p|\delta_2} (1-p^{4-2h})
	\prod_{p|\delta_2/\delta_1} \frac{1-p^{-4}}{1-p^{-2}}. \qedhere
	\]
\end{proof}

\subsection{A basic lemma}
\begin{lemma} \label{lemma:a and s}
Let $j : B \to X$ be the inclusion of an abelian surface into a principally polarized abelian variety $(X,\theta)$ of dimension $h$.
Let $\beta = j_{\ast}\gamma$ for a class $\gamma \in H_2(B,\BZ)$.
Let $\theta_B = \theta|_{B}$ be of type $(\delta_1, \delta_2)$.
Then $\beta$ has characteristic polynomial
\[
\chi_{\beta}/t^{h-2} = t^2 - (\gamma \cdot \theta_B) t + \delta_1 \delta_2 \frac{(\gamma \cdot \gamma)}{2}.
\]
%In particular, $\delta_1 \delta_2 | \frac{1}{2} (\gamma,\gamma)$.
\end{lemma}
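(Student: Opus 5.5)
By Lemma~\ref{lemma:cbar comparison} we have $\chi_\beta = t^{h-2}\chi_\gamma$, where $\chi_\gamma$ is the characteristic polynomial of $\gamma$ on the polarized surface $(B,\theta_B)$ of type $(\delta_1,\delta_2)$. It therefore suffices to compute
\[ \chi_\gamma(t) = \frac{1}{\delta_1\delta_2}\int_B \frac{(t\theta_B - v_\gamma)^2}{2} = \frac{1}{\delta_1\delta_2}\Big( t^2 \frac{\theta_B^2}{2} - t\,(\theta_B\cdot v_\gamma) + \frac{v_\gamma^2}{2}\Big). \]
The leading coefficient is $1$, because $\int_B\theta_B^2/2=\delta_1\delta_2$. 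This leaves two identities to prove: $\theta_B\cdot v_\gamma = \delta_1\delta_2\,(\gamma\cdot\theta_B)$ and $v_\gamma^2 = \delta_1^2\delta_2^2\,(\gamma\cdot\gamma)$.

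Both are intersection-theoretic statements about the map $H_2(B,\BZ)\cong H^2(\widehat B,\BZ)\xrightarrow{\varphi_{\theta_B}^\ast} H^2(B,\BZ)$, and by continuity we may work with real coefficients. Mirroring the proof of the $e_1=\int_\beta\theta$ lemma, we would choose a symplectic-type basis $dx_1,dy_1,dx_2,dy_2$ of $H^1(B,\BR)$ with $\theta_B=\delta_1\,dx_1\wedge dy_1+\delta_2\,dx_2\wedge dy_2$. For a dense set of $\gamma$ we would also diagonalize $v_\gamma=b_1\,dx_1\wedge dy_1+b_2\,dx_2\wedge dy_2$ simultaneously. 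This is possible because $\varphi_{\theta_B}^{-1}\varphi_{v_\gamma}$ is self-adjoint with respect to $\theta_B$; in effect, $B$ is isogenous to a product $E_1\times E_2$ with $\theta_B$ pulled back from $\delta_1\omega^{(1)}+\delta_2\omega^{(2)}$.

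On $E_1\times E_2$ everything becomes explicit. The dual of $\delta_i\omega^{(i)}$ on $E_i$ is multiplication by $\delta_i$, and $\varphi_{\delta_i\omega}^\ast$ acts on $H^2$ of an elliptic curve by $\delta_i^2$. The Poincaré duality identification sends $[E_i]$ to the point class of $\widehat{E_i}$. Hence $\gamma=c_1[E_1]+c_2[E_2]$ maps to $v_\gamma=\delta_1^2c_1\omega^{(1)}+\delta_2^2c_2\omega^{(2)}$. Now $\theta_B\cdot v_\gamma=\delta_1\delta_2^2c_2+\delta_2\delta_1^2c_1=\delta_1\delta_2(\delta_1c_1+\delta_2c_2)=\delta_1\delta_2(\gamma\cdot\theta_B)$. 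Likewise $v_\gamma^2=2\delta_1^2\delta_2^2c_1c_2=\delta_1^2\delta_2^2(\gamma\cdot\gamma)$. Substituting into the displayed formula gives $t^2-(\gamma\cdot\theta_B)t+\delta_1\delta_2(\gamma\cdot\gamma)/2$.

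The main obstacle is the reduction to the product situation when $\delta_1\neq\delta_2$ or the roots are not distinct. One has to make sure that the normalization of $\varphi_\theta^\ast$ in the real simultaneous-diagonalization basis really gives the factor $\delta_i^2$, and not $\delta_i$ or $\delta_1\delta_2$. If that check is delicate, an alternative is a coordinate-free argument. The general identity $\varphi_L^\ast\circ\widehat{\varphi_L}$-type relations and $\varphi_{\theta_B}^\ast$ acting on $H^2=\wedge^2H^1$ as $\wedge^2$ of a map of determinant $\delta_1^2\delta_2^2$ give $v_\gamma\cdot v_{\gamma'}=\delta_1^2\delta_2^2(\gamma\cdot\gamma')$ directly. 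Since $v_\gamma\cdot\theta_B$ is linear in $\gamma$, the same argument gives $\theta_B\cdot v_\gamma=\delta_1\delta_2(\gamma\cdot\theta_B)$, because $v_{\gamma_\theta}$ is proportional to $\theta_B$ for the class $\gamma_\theta$ dual to $\theta_B$. Both identities then extend from a dense set of $\gamma$ by continuity.
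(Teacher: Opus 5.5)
Your proposal is correct and is essentially the paper's proof. It reduces to $\chi_\gamma$ via Lemma~\ref{lemma:cbar comparison}, isolates the identities $v_\gamma\cdot\theta_B=\delta_1\delta_2(\gamma\cdot\theta_B)$ and $v_\gamma\cdot v_\gamma=(\delta_1\delta_2)^2(\gamma\cdot\gamma)$, and verifies them after simultaneously diagonalizing $\theta_B$ and $\gamma$; your product-of-elliptic-curves computation is the paper's basis computation with $a_i=\delta_i$ and $b_i=c_i$.

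The paper settles your normalization worry by the one-line computation $\varphi_{\theta_B}^{\ast}(e_i\wedge f_i)=\theta_B(e_i)\wedge\theta_B(f_i)=a_i^2\,e_i^{\ast}\wedge f_i^{\ast}$, valid for any real $a_i$, and only $a_1a_2=\delta_1\delta_2$ enters. One correction to your wording: the set of $\gamma$ where $\chi_\gamma$ has distinct real roots (needed for real diagonalization; self-adjointness alone does not give it) is open but not dense. So extend the identities by noting they are polynomial in $\gamma$ and a nonempty open set is Zariski dense, not by continuity.
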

\begin{proof}
By Lemma~\ref{lemma:cbar comparison} we have
\[ \frac{1}{t^{h-2}} \chi_{\beta}(t) = \chi_{\gamma}(t) 
= \frac{1}{\delta_1 \delta_2} \int_{B} \frac{(t \theta_B + v_{\gamma})^2}{2} \\
= \frac{1}{\delta_1 \delta_2} \left( \delta_1 \delta_2 t^2 - (\theta_B\cdot v_{\gamma})t + \frac{1}{2} (v_{\gamma} \cdot v_{\gamma}) \right).
\]
It suffices hence to prove
\[ 
v_{\gamma} \cdot \theta_{B} = \delta_1 \delta_2 (\gamma \cdot \theta_B), \quad 
v_{\gamma} \cdot v_{\gamma} = \left(\frac{\theta_B \cdot \theta_B}{2} \right)^2 (\gamma \cdot \gamma). \]
To do so, it is enough to check this on a Zariski open subset of $H_2(B,\BR)$. So we may assume
\[ \theta_B= a_1 e_{1}^{\ast} \wedge f_1^{\ast} + a_2 e_2^{\ast} \wedge f_2^{\ast} \]
\[ \gamma = b_1 e_1 \wedge f_1 + b_2 e_2 \wedge f_2 \]
where $e_1,f_1,e_2,f_2$ is a basis of $H_1(B,\BZ)$ and $e_i^{\ast},f_i^{\ast}$ denotes the dual basis.
In particular, $\gamma \cdot \gamma = 2 b_1 b_2$ and $\theta_B\cdot \theta_B = 2 a_1 a_2$.
Hence
\begin{align*}
\varphi_{\theta_B}^{\ast}(\gamma) & = b_1 \theta_B(e_1) \wedge \theta_B(f_1) + b_2 \theta_B(e_2) \wedge \theta_B(f_2) \\
& = -b_1 a_1^2 f_1^{\ast} \wedge e_1^{\ast} - b_2 a_2^2 f_2^{\ast} \wedge e_2^{\ast} \\
& = b_1 a_1^2 e_1^{\ast} \wedge f_1^{\ast} + b_2 a_2^2 e_2^{\ast} \wedge f_2^{\ast}.
\end{align*}
We find hence
\[ v_{\gamma} \cdot v_{\gamma} = 2 b_1 b_2 a_1^2 a_2^2 = (\gamma,\gamma) \cdot \left( \frac{(\theta_B \cdot \theta_B)}{2} \right)^2 \]
and 
\[ v_{\gamma} \cdot \theta_{B} = a_1 a_2 (a_1 b_1 + a_2 b_2) = \delta_1 \delta_2 (\gamma \cdot \theta_B) \]
as desired.
\end{proof}

\subsection{Humbert divisors}
Let $U = \binom{0\ 1}{1 \ 0}$ be the hyperbolic lattice, let $v_0 \in U^3$ be a vector of square $2 \delta$,
and let $M = v_0^{\perp}$ be the orthogonal complement.
By transitivity of the orthogonal group on primitive vectors of a given square,
we may assume that $v_0$ lies in the third copy of $U$, so
\[ M = U \oplus U \oplus (-2 \delta). \]
The period domain is
\[ \CD = \{ Z \in \p(M) | Z \cdot \overline{Z} > 0, Z \cdot Z = 0\}^{+} \]
where $+$ stands for one of the two connected component.
Let $\Gamma := \widehat{\mathrm{SO}}^{+}(M) \leq SO(M)$ be the subgroup of the special orthogonal group which has spinor norm $1$ and acts trivially on the discriminant $M^{\vee}/M$.
The group $\Gamma$ acts naturally on $\CD$ and the quotient
is the moduli space of abelian surfaces of polarization type $(1,\delta)$,
\begin{equation} \label{isomo} \CA_{2,(1,\delta)} \cong \CD/\Gamma, \end{equation}
see for example \cite[Section 1.3]{GN2}.
Indeed, if $(X,\theta)$ is a polarized abelian surface with $\theta$ of type $(1,\delta)$, then $H^2(X,\BZ)$ together with the intersection pairing is isomorphic to $U^3$, so there exists an isometry $\varphi : H^2(X,\BZ) \to U^3$ sending $\theta$ to $v_0$.
Then the period mapping $\CA_{2,(1,\delta)} \to \CD/\Gamma$ which realizes the isomorphism \eqref{isomo} sends $(X,\theta)$ to
\[ \varphi([\sigma_X]) \in \CD/\Gamma, \]
where $\sigma_X \in H^0(X,\Omega_X^2)$ is the symplectic form on $X$.
The quotient by $\Gamma$ shows the map is independent of the choice of $\varphi$.

Let $n > 0$ and $\gamma \in M^{\vee}/M$. 
The Hegner divisors on $\CD/\Gamma$ are defined by 
\[ H_{n,\gamma} = \left( \bigcup_{\substack{v \in M^{\vee} \\ [v]=\gamma, \frac{1}{2} v \cdot v=-n}} v^{\perp} \right)/\Gamma. \]
We also set
$H_{0,0} = -\lambda_1$ amd $H_{0,\gamma} = 0$ if $\gamma \neq 0$.
By a result of Bruinier and K\"uhn \cite[Prop. 4.8]{BruinierKuehn} the degrees of the Hegner divisors are recorded by the Eisenstein series $E_{5/2}^{(\delta)}$ (which can be viewed as the Eisenstein series of $U \oplus U \oplus (-2 \delta)$ with respect to the dual Weil representation):
\[ \forall \gamma \in M^{\vee}/M\ \forall n \geq 0: \quad H_{n,\gamma} = -c(n,\gamma) \lambda_1. \]

For our geometric applications we also need a further construction.
Given integers $s \geq 0$ and $d \geq 0$ consider the determinant
\[ \Delta = \det \begin{pmatrix} 2 \delta & d \\ d & 2s \end{pmatrix} = 4 \delta s-d^2. \]
The matrix is meant to record the intersection numbers of a pair of classes $(\theta,\beta)$ in $H^2(X,\BZ)$ where $\beta$ is degree $d$ and self-intersection $\beta^2=2s$. The determinant $\Delta$ vanishes if and only if $\beta$ is colinear to $\theta$.
We define the Humbert divisors\footnote{These are just the Noether-Lefschetz divisors in the moduli space of abelian surfaces, but we use the classical terminology of Humbert divisor here to distinguish them from the classes $\NL_{h,(\delta_1,\ldots,\delta_u)}$ in $\CA_h$ that we defined earlier.} $\Hum_{s,d}$ in $\CA_{2,(1,\delta)}$ as follows:
\begin{itemize}
	\item If $\Delta \neq 0$ we set
	\[ \Hum_{s,d} = \left( \bigcup_{\substack{\beta \in U^3 \\ \beta \cdot v_0 = d \\ \beta \cdot \beta = 2s }}
	\beta^{\perp} \cap \CD \right)/\Gamma. \]
	\item If $\Delta = 0$ we set
	\[ \Hum_{s,d} = c_1(\BE^{\vee}) = -\lambda_1. \]
\end{itemize}

\begin{lemma} \label{lemma:Humbert degree}
	$\Hum_{s,d} = H_{n,\gamma}$ where $n=\frac{d^2-4 \delta s}{4 \delta}$ and $\gamma =d/2 \delta$ modulo $\BZ$. In particular,
	\[ \Hum_{s,d}  = -c^{(\delta)}\left( \frac{d^2 - 4 \delta s}{4 \delta}, \frac{d}{2 \delta}\right) \lambda_1. \]
\end{lemma}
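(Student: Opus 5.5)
The plan is to identify the two definitions directly, treating the cases $\Delta \neq 0$ and $\Delta = 0$ separately.

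\textbf{Case $\Delta\neq 0$.} The claim is that for a given $\beta \in U^3$ with $\beta\cdot v_0 = d$ and $\beta\cdot\beta = 2s$, its orthogonal projection to $M\otimes\BQ$ lies in $M^\vee$ and has the right square and discriminant class.

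Write
\[ \beta = \frac{d}{2\delta} v_0 + w, \qquad w := \beta - \frac{d}{2\delta}v_0 \in v_0^{\perp}\otimes\BQ = M\otimes \BQ. \]
Then $\beta^\perp\cap\CD = w^\perp\cap\CD$, since every $Z\in\CD\subset \p(M)$ is orthogonal to $v_0$. Next, $w\in M^\vee$: for $m\in M$ we have $w\cdot m=\beta\cdot m\in\BZ$. Its square is
\[ w\cdot w = 2s - \frac{d^2}{2\delta}, \qquad \text{so} \qquad -\tfrac12 w\cdot w = \frac{d^2-4\delta s}{4\delta} = n. \]
Finally, its class in $M^\vee/M\cong \frac{1}{2\delta}\BZ/\BZ$ is determined by pairing with a generator of the $(-2\delta)$ summand. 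With $v_0 = e+\delta f$ in the third copy of $U$, take $m_0 = e-\delta f$, which has square $-2\delta$. Then $w\cdot m_0=\beta\cdot m_0$, and one checks that $\beta\cdot m_0 \equiv -d \pmod{2\delta}$, using $\beta\cdot v_0 = d$ and writing $\beta$'s third-$U$ component as $xe+yf$. Hence $[w] = \pm d/2\delta$. The sign is irrelevant, since $-1$ acts on the set of vectors and $H_{n,\gamma}=H_{n,-\gamma}$.

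Conversely, every $w\in M^\vee$ with $[w]=\gamma=d/2\delta$ and $-\frac12 w\cdot w=n$ arises this way. The vector $\frac{d}{2\delta}v_0 + w$ is integral because $U^3$ is unimodular and the overlattice $M\oplus \BZ v_0 \subset U^3$ glues $M^\vee/M$ with $(\BZ v_0)^\vee/\BZ v_0$ along the anti-isometry sending $d/2\delta$ to $\frac{d}{2\delta}v_0$. This gluing is exactly the standard description of a unimodular lattice containing $M\oplus\BZ v_0$ primitively.

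These two maps set up a $\Gamma$-equivariant bijection of index sets, with $\Gamma$ fixing $v_0$ and acting trivially on the discriminant. Therefore the unions of hyperplanes coincide, giving $\Hum_{s,d}=H_{n,\gamma}$ as cycles. Here one should double-check multiplicity conventions: both unions are taken over vectors, so $\pm$ pairs appear on both sides consistently.

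\textbf{Case $\Delta=0$.} Here $n=0$ and $d^2=4\delta s$. By definition $\Hum_{s,d}=-\lambda_1$, and $H_{0,\gamma}$ equals $-\lambda_1$ if $\gamma=0$ and $0$ otherwise. One must check that $\gamma = d/2\delta \equiv 0$. This uses that $\Hum_{s,d}$ is intended for actual classes $\beta\in U^3$ with the given intersection numbers. In that situation $\Delta=0$ forces $\beta = \frac{d}{2\delta}v_0$. Since $v_0$ is primitive, $\frac{d}{2\delta}\in\BZ$, so $\gamma=0$.

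\textbf{Conclusion and main obstacle.} The last formula then follows from the Bruinier--K\"uhn statement $H_{n,\gamma}=-c(n,\gamma)\lambda_1$ recalled above. I expect the main obstacle to be the lattice gluing step, namely verifying that the discriminant class of $w$ is exactly $d/2\delta$ and that every such $w$ lifts to an integral $\beta$. I would handle it by the explicit coordinates $v_0=e+\delta f$, $M=U^2\oplus\BZ(e-\delta f)$.
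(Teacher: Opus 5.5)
Your treatment of $\Delta\neq0$ is exactly the paper's argument, which follows \cite[Lemma 3]{GWNL}: project $\beta$ to $w\in M^{\vee}$ and invert by $w\mapsto w+\frac{d}{2\delta}v_0$. The details you add are fine, apart from a harmless sign. With third component $xe+yf$ one has $\beta\cdot m_0=d-2x\delta\equiv d$ modulo $2\delta$, not $-d$, and your $\pm$ remark absorbs this. When $\Delta>0$ one has $n<0$, and both sides vanish: $w^\perp$ has signature $(1,3)$ and misses $\CD$, while $c(n,\gamma)=0$ for $n<0$. This deserves a sentence, since $H_{n,\gamma}$ is only defined for $n\geq 0$.

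The gap is the case $\Delta=0$. Your claim that $\Delta=0$ forces $\beta=\frac{d}{2\delta}v_0$ is false in $U^3$. The lattice is indefinite, so a degenerate Gram matrix for $(v_0,\beta)$ only says that $w=\beta-\frac{d}{2\delta}v_0$ is isotropic, not that $w=0$. Colinearity would follow from the Hodge index theorem for classes in $\NS(X)$, but $\Hum_{s,d}$ is defined using all of $U^3$.

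Here is a counterexample. Take $\delta=4$, $v_0=e+4f$, and $\beta=e_1+f_1+4f$, where $e_1,f_1$ span the first copy of $U$. Then $d=4$, $s=1$ and $\Delta=0$, yet $d/2\delta=\frac12\notin\BZ$. So the existence of an actual class does not give $\gamma=0$. Arithmetic does give $2\delta\mid d$ when $\delta$ is squarefree, but not in general. Read literally, with $\Hum_{s,d}=-\lambda_1$ whenever $\Delta=0$, the statement fails here. Indeed $c^{(4)}(0,\frac12)=0$, because the constant term of the Eisenstein series is supported on $e_0$; you can confirm this with the coefficient formula in Appendix~\ref{appendix:modular form identity}.

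What is true, and what the degenerate convention should encode, is that the excess class $-\lambda_1$ comes only from $\beta=\frac{d}{2\delta}v_0$. This is the unique class whose orthogonal complement contains $\CD$. A class with $\Delta=0$ and $w\neq 0$ contributes nothing. The reason is that the orthogonal complement of a nonzero isotropic vector in $M\otimes\BR$, which has signature $(2,3)$, contains no positive definite $2$-plane, so $w^\perp\cap\CD=\emptyset$.

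You should therefore take the convention $\Hum_{s,d}=-\lambda_1$ if $\Delta=0$ and $2\delta\mid d$, that is, if $\frac{d}{2\delta}v_0\in U^3$, and $\Hum_{s,d}=0$ otherwise. For $d>0$ this agrees with the refined definition combined with \eqref{basicrel2}, since $s=\delta\ell^2$ and $d^2=4\delta s$ force $d=2\delta\ell$. With this reading, $\Hum_{s,d}=H_{0,\gamma}$ is immediate. The paper's own proof treats only $\Delta\neq 0$ and leaves this case implicit.
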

\begin{proof}
	See \cite[Lemma 3]{GWNL}. For fixed $s,d$ with $\Delta \neq 0$, there is a bijection between the set
	\[ \{ \beta \in U^3 | \beta \cdot \beta = 2s, \beta \cdot \theta = d \} \]
	and the set $v \in M^{\vee}$ such that $-\frac{1}{2} v \cdot v = \frac{d^2-4 \delta s}{4 \delta}$ and $[v]=\{ d/ 2 
	\delta \}$.
	Indeed, $\beta$ is sent to the projection onto $M$, with inverse sending $v$ to $v + \frac{d}{2 \delta} v_0$.
\end{proof}
\begin{example}
Let $\delta=1$ so consider the case $\CA_2$.
	The divisor $\Hum_{0,1}$ parametrizes abelian surfaces $(X,\theta)$ which contain a class in $\NS(X)$ of degree $1$ and square $0$, so an elliptic curve of degree $1$, which forces $X = E \times F$.
	Since there are two curve classes on $E \times F$ of norm $0$ degree $1$ against the polarization $H$,
	$\Hum_{0,1}$ is two times the underlying reduced divisor. Using \eqref{E52_1} the degree is
	\[ \Hum_{0,1} = 10 \lambda_1. \]
	Indeed, $\Hum_{0,1}$ is precisely the zero divisor of the weight $10$ Igusa cusp form. 
\end{example}

We can also defined refined Noether-Lefschetz divisors $\Hum_{s,d,\ell}$ in $\CA_{2,(1,\delta)}$ which also record the multiplicity $\ell$ of the curve class as follows:
\begin{itemize}
	\item If $\Delta \neq 0$ we set
	\[ \Hum_{s,d,\ell} = \left( \bigcup_{\substack{\beta \in U^3, \div(\beta)=\ell \\ \beta \cdot v_0 = d \\ \beta \cdot \beta = 2s }}
	\beta^{\perp} \cap \CD \right)/\Gamma. \]
	%where $\mu(s,d ; L_v)$ is the number of classes $\beta \in L_v$ such that $\beta \cdot v_0 = d$ and $\beta \cdot \beta = 2s$.
	\item If $\Delta = 0$, we define $\Hum_{s,d,\ell} = c_1(\BE^{\vee}) = -\lambda_1$ in case $s=\delta \ell^2$ and equal to zero otherwise.
\end{itemize}

The refined Noether-Lefschetz divisors satisfy the basic relations:
\begin{align}
	\label{basicrel1} \Hum_{s,d,\ell} & = 
	\begin{cases} \Hum_{s/\ell^2, d/\ell, 1} & \text{ if } \ell|d, \ell^2 | s \\ 0 & \text{ otherwise} \end{cases} \\
	\label{basicrel2} \Hum_{s,d} & = \sum_{\ell \geq 1} \Hum_{s,d,\ell}.
\end{align}
In fact, these determine the classes of the refined Humbert divisors in terms of the classical ones via
\[ \Hum_{s,d,1} = \sum_{\substack{m \geq 1 \\ m | d, m^2|s}} \mu(m) \Hum_{\frac{s}{m^2},\frac{d}{m}} \]
where $\mu(m)$ is the M\"obius function, but we will not need this below.

\begin{example}
	On $\CA_2$ (case $\delta=1$) the Humbert divisor $\Hum_{0,d,1}$ parametrizes abelian surface with a primitive curve of arithmetic genus $1$ and degree $d$ against the polarization, so in other words, an elliptic curve embedded of degree $d$. Thus
	\[ \Hum_{0,d,1} = \NL_{2,(d)} \]
	and therefore
	\[ \Hum_{0,d} = \sum_{\ell|d} \NL_{2,(\ell)}. \]
	Recall that 
	\[ \NL_{2,(\ell)} = 10 \ell^{3} \prod_{p|\ell} (1-p^{-2}) \lambda_1 \]
	thus one gets 
	\[ -c^{(1)}\left( \frac{d^2}{4}, \frac{d}{2} \right) = \sum_{\ell | d} 10 \ell^{3} \prod_{p|\ell} (1-p^{-2}). \]
\end{example}

\subsection{Proof of Theorem~\ref{thm:main formula genus 2}}
Recall that
\[ N_{2,\chi}^{\CA_h} = \frac{1}{2} \CC_{2,\chi}( \tau_1(\sigma) ). \]
%which are also the quotient Gromov-Witten invariants.

\subsubsection{Curve class zero}
Let $h \geq 2$. We have
\[ [\Mbar_{2,1}(\CX,0)]^{\vir} = e(T_{\CX/\CA_h} \otimes \BE_{\Mbar_{2,1}}^{\vee}) \cap [\Mbar_{2,1} \times \CX]. \]
By Lemma~\ref{lemma:e(Hodge Hodge) eval} we have
\[ e(T_{\CX/\CA_h} \otimes \BE_2^{\vee}) = c_{2h}(\BE_{\CA_h}^{\vee} \otimes \BE_{\Mbar_{2}}^{\vee}) 
%= c_{2h}( \BE_{\CA_h}\otimes \BE_{\Mbar_{2}})
= \lambda_{h-1} \lambda_{h-2} \mu_1 \mu_2 \]
One has
\[
\int_{\Mbar_{2,1}} \psi_1 \mu_1 \mu_2 = 2 \int_{\Mbar_2} \mu_1 \mu_2  = \frac{1}{2880}.
\]
Thus we get
\[ N_{2,t^{h}}^{\CA_h} = \frac{1}{2} \cdot \frac{1}{2880} \lambda_{h-1} \lambda_{h-2}. \]

\subsubsection{Curve classes of rank $1$}
We use \eqref{full rank decomposition} to obtain the evaluation
\[ N_{2,t^{h} - d t^{h-1}}^{\CA_h}
=
\frac{1}{2}
\sum_{\delta \geq 1}
(-1)^{(h-1)} 
\left\langle \tau_{1}(\pt) \mu_1 \right\rangle^{E}_{2,d/\delta}
\cdot \frac{\varphi_{1,\delta \, \ast}(p_{\tilde{\delta}}^{\ast}(\lambda_{h-2}))}{|\deg \varphi_{1,\delta}|}.
\]
The Gromov-Witten invariants of the elliptic curve $E$
are computed in Lemma~\ref{lemma:GW E evaluation} below. 
Moreover, since $H^{\ast}(\CA_1) = \BQ$ we have
\[ \varphi_{1,\delta}^{\ast}(\lambda_{k}) = \pr_2^{\ast}(\lambda_k). \]
Thus we obtain
\[ N_{2,t^{h} - d t^{h-1}}^{\CA_h}
=
\frac{(-1)^{h-1} }{24}
\sum_{\delta \geq 1}
\sigma_3(d/\delta)
\NL_{h, \delta} \lambda_{h-2}
\]

\begin{lemma} \label{lemma:GW E evaluation} For any $a > 0$ we have $\langle \tau_1(\pt) \mu_1 \rangle^{E}_{2,a} = \frac{1}{12} \sigma_3(a)$.
\end{lemma}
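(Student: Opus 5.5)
The plan is to use the known quasimodularity and holomorphic anomaly equation for a fixed elliptic curve, which in dimension $h=1$ is the case of Conjectures~\ref{conj:quasimodularity} and~\ref{conj:HAE} proven in \cite{HAE}. By Corollary~\ref{cor:weight statement as a consequence of HAE} this should pin the generating series down to a multiple of $G_4$. Set
\[ F(q) = \sum_{a \geq 0} \langle \tau_1(\pt)\mu_1 \rangle^E_{2,a} q^a, \]
where $\mu_1 = \tau^{\ast}c_1(\BE_{\Mbar_{2,1}})$ is pulled back from $\Mbar_{2,1}$. Then $F$ is obtained from the cycle-valued series $\CC_2(\tau_1(\pt))$ by capping with $\mu_1$ and integrating over $\Mbar_{2,1}$. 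Since $\mult(\pt)=2h=2$, it is quasimodular of weight $2\cdot 2-2+2=4$. The weight $4$ quasimodular forms are spanned by $G_4$ and $G_2^2$, so it suffices to show two things: that $\frac{d}{dG_2}F=0$, and what the constant term is.

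For the anomaly, apply Conjecture~\ref{conj:HAE} (a theorem for $h=1$) to $\CC_2(\tau_1(\pt))$, using $f_\theta=\pi^{\ast}\pi_{\ast}$ for $h=1$.
\begin{itemize}
\item The third term involves $f_\theta(\pt)=1$. This gives $\psi_1\cdot\CC_2(\tau_1(1))$, which vanishes by Proposition~\ref{prop:vanishing in low multiplicity}, since $\mult(1)=0<2$.
\item The first term is $\iota_{\ast}\CC_1(\tau_1(\pt)\,\tau_0\tau_0(f_\theta))$ with $f_\theta=\pt_1+\pt_2$. By the string equation and genus one vanishing this reduces to genus $1$ classes of the wrong multiplicity, or to classes that die after capping with $\mu_1$. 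Here I use that $\iota^{\ast}\mu_1$ is $\mu_1$ of the genus $1$ curve, and that $\mu_1^2=0$ on $\Mbar_{1,n}$ after dimension counting.
\item In the splitting terms, the side without $\pt$ carries only an $f_\theta$-insertion of multiplicity $0$ or $2$ on a non-stable-range component. These vanish by Proposition~\ref{prop:vanishing in low multiplicity} or Lemma~\ref{lemma:vanishing1b}, exactly as in the Term~2 analysis of the proof of Theorem~\ref{thm:hae for quotient text}.
\end{itemize}
Hence $F=c\,G_4$ for a constant $c$.

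To fix $c$, compute the degree $0$ term. There $[\Mbar_{2,1}(E,0)]^{\vir}=e(T_E\otimes\BE^{\vee})\cap[\Mbar_{2,1}\times E]=\mu_2\cap[\Mbar_{2,1}\times E]$, up to the sign $(-1)^{2}$, since $T_E$ is trivial. Integrating $\pt$ over $E$ leaves
\[ \int_{\Mbar_{2,1}}\psi_1\mu_1\mu_2 = 2\int_{\Mbar_2}\mu_1\mu_2 = \tfrac{1}{2880}, \]
which agrees with the value used in part~(i). Comparing with the constant term $-B_4/8=\tfrac{1}{240}$ of $G_4$ gives $c=\tfrac{1}{12}$. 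The $q^a$ coefficient of $G_4$ is $\sigma_3(a)$, which is the claim.

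The main obstacle is making the vanishing of all anomaly terms rigorous after capping with $\mu_1$, in particular the restriction of $\mu_1$ to boundary strata. If that becomes messy, the fallback is to bypass the anomaly equation and use Mumford's relation $\lambda_1=\tfrac{1}{10}\delta_{\mathrm{irr}}+\tfrac15\delta_1$ on $\Mbar_2$. This reduces $\langle\tau_1(\pt)\mu_1\rangle^E_{2,a}$ via the splitting and degeneration formulas to genus $1$ invariants of $E$, such as $\langle\tau_1(\pt)\tau_0\tau_0(\Delta)\rangle_{1}$ and $\langle\tau_1(\pt)\rangle_1\langle\ \rangle_1$. These are known explicitly in terms of $\sigma_1$ and $\sigma_3$ (e.g.\ via \cite{OP_VirCurves}), and summing them should produce $\tfrac{1}{12}\sigma_3(a)$ directly.
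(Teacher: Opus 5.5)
Your argument is correct in substance, but it takes a different route from the paper.

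The paper does not run the anomaly equation. It quotes \cite[Prop 6.8]{QHilbEll1} for the fact that $\sum_a\langle\tau_1(\pt)\mu_1\rangle^E_{2,a}q^a$ is a multiple of $E_4=240\,G_4$. It then fixes the scalar by the same degree-zero computation $\int_{\Mbar_{2,1}}\psi_1\mu_1\mu_2=\frac{1}{2880}$ that you do; your normalization $c\cdot\frac{1}{240}=\frac{1}{2880}$ is equivalent.

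You instead derive the ``multiple of $G_4$'' statement from the quasimodularity and holomorphic anomaly equation for a fixed elliptic curve proven in \cite{HAE}, i.e.\ the $h=1$ case of the paper's conjectures. This makes the lemma self-contained within the paper's framework and exhibits it as an instance of the same mechanism. The cost is that you must check every anomaly term dies after capping with $\psi_1\mu_1$; the citation is shorter but opaque.

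That check contains a slip you should fix. For $h=1$ the Lefschetz dual is $f_\theta=\mathsf{d}^{\ast}(\theta^{0}/0!)=1\in\Chow^0(E\times E)$. This agrees with $f_\theta=\pi^{\ast}\pi_{\ast}$, which you correctly use in the third term. The class $\pt_1+\pt_2$ has the wrong degree and would make the first term dimensionally inconsistent. With $f_\theta=1$ everything is clean:

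\begin{itemize}
\item \textbf{Setup.} Apply the anomaly equation to the primary class $\CC_2(\pt)$ on $\Mbar_{2,1}$ and cap with $\psi_1\mu_1$. This is legitimate because domains of stable maps to $E$ are already stable, so the map-space $\psi_1$ is pulled back from $\Mbar_{2,1}$.
\item \textbf{First term.} It is $\iota_{\ast}$ of $\CC_1(\pt\boxtimes 1\boxtimes 1)$ on $\Mbar_{1,3}$, and $\iota^{\ast}(\psi_1\mu_1)=\psi_1\mu_1$. For $d>0$, pulling back $dz$ trivializes the Hodge bundle on $\Mbar_{1,3}(E,d)$, so $\mu_1=0$ there. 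For $d=0$, the virtual class already carries a factor $\mu_1$, and $\mu_1^2=0$.
\item \textbf{Splitting term.} The only one is over $\Mbar_{1,2}\times\Mbar_{1,1}$, with insertion $1$ on the $\Mbar_{1,1}$ side. It vanishes because $\CC_1(1)=0$ by Proposition~\ref{prop:vanishing in low multiplicity}. There is no multiplicity-$2$ case, so drop that hedge. Also, $\Mbar_{1,1}$ is stable, so instability plays no role.
\item \textbf{Third term.} It vanishes exactly as you say.
\end{itemize}

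With these corrections the Mumford-relation fallback is unnecessary.
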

\begin{proof}
	By \cite[Prop 6.8]{QHilbEll1} the series
	\[ \sum_{a \geq 0} \langle \tau_1(\pt) \mu_1 \rangle^{E}_{2,a} q^a \]
	is a multiple of the Eisenstein series $E_4(q) = 1 + 240 \sum_{n \geq 1} \sigma_3(n) q^n$. 
	The constant term is
	\[
	\langle \tau_1(\pt) \mu_1 \rangle^{E}_{2,0} = \int_{\Mbar_{2,1}} \psi_1 \mu_1 \mu_2 = \frac{1}{2880}
	\]
	so the claim follows.
\end{proof}

\subsubsection{Curve classes of rank $2$}
Let $\chi = t^{h-2}(t^2 - dt + a)$ with $a > 0$, and set $\chi' = \chi/t^{h-2}$.
Assume first $h \geq 4$
We use the formula \eqref{full rank decomposition} to get
\[
\CC_{2,\chi}( \tau_1(\sigma) )
=
\sum_{\substack{\delta_1, \delta_2 > 0 \\ \delta_1 | \delta_2 \\ \delta_1 | d \\ \delta_1 \delta_2 | a}}
\frac{1}{\deg(\varphi_{h,\delta})} \varphi_{h,\delta \ast}\left( \underbrace{\rho_{\ast}\left(
\ev_1^{\ast}(\sigma) \psi_1 \cap  [\Mbar_{2,1}(\pi_{2,\delta}^{\mathrm{lev}}, \chi' ) ]^{\vir} \right)}_{I} \times [\CA^{\mathrm{lev}}_{h-2,\tilde{\delta}}] \right)
\]
where $\delta = (\delta_1,\delta_2)$ and
$\rho : \Mbar_{2,1}(\pi_{2,\delta}^{\mathrm{lev}}, \chi' ) \to \CA_{2,\delta}^{\mathrm{lev}}$
is the map to the base.
The conditions $\delta_1 | d$ and  $\delta_1 \delta_2 | a$ here follow from Lemma~\ref{lemma:a and s} (for the first observe that $\theta_B$ is divisible by $\delta_1$).

Let $\delta_{\mathrm{prim}} := \delta_2/\delta_1$. We have the composition of maps
\begin{equation} \label{4sdf33} 
\CA_{2,\delta}^{\mathrm{lev}} \xrightarrow{\mathrm{Fg}} \CA_{2,\delta} \cong \CA_{2,(1,\delta_{\mathrm{prim}})}
\end{equation}
where the first map forgets the level structure and the second is an isomorphism.

The class $I$ is the pullback under \eqref{4sdf33} of the class
\[
I' = \rho_{\ast}\left(
\ev_1^{\ast}(\sigma) \psi_1 \cap  [\Mbar_{2,1}(\pi_{2,(1,\delta_{\mathrm{prim}})}, \chi'_{\mathrm{prim}}) ]^{\vir} \right)
\]
where
\begin{itemize}
	\item $\pi_{2,(1,\delta_{\mathrm{prim}})} : \CX_{2,(1,\delta_{\mathrm{prim}})} \to \CA_{2,(1,\delta_{\mathrm{prim}})}$ is the universal family,
	\item $\rho : \Mbar_{g,1}(\pi_{2,(1,\delta_{\mathrm{prim}})},\chi_{\mathrm{prim}}') \to \CA_{2,(1,\delta_{\mathrm{prim}})}$ is the base map,
	\item $\chi'_{\mathrm{prim}}$ is the characteristic polynomial with respect to $\theta_B/\delta_1$ given by
	\[ \chi'_{\mathrm{prim}}(t) = \frac{1}{\delta_1^2} \chi'( \delta_1 t) = t^2 - \frac{d}{\delta_1} t + \frac{a}{\delta_1^2}. \]
\end{itemize}

By Lemma~\ref{lemma:a and s} the curve class $\beta = f_{\ast}[C]$
of a stable map in $\Mbar_{g,1}(\pi_{2,(1,\delta_{\mathrm{prim}})},\chi_{\mathrm{prim}}')$
has degree $d/\delta_1$ and self-intersection
\[ s = \frac{1}{2} \beta \cdot \beta = \frac{a}{\delta_1 \delta_2}. \]

We now use the fact proven in \cite{GWNL} that the Gromov-Witten invariants of family of smooth symplectic surfaces can be expressed through the Humbert divisors and the reduced Gromov-Witten invariants of a fiber.
The relation reads
\[
I' = \sum_{\substack{\ell \geq 1 \\ \ell| \frac{d}{\delta_1}, \ell^2 |s}} \Hum_{s,d/\delta_1,\ell} \cdot \langle \tau_1(\sigma) \rangle^{\mathrm{red}}_{2,s,\ell}
\]
where we write
\begin{equation} \label{Red invariants}
\left\langle \tau_1(\sigma) \right\rangle^{\mathrm{red}}_{g,s,\ell}
=
\int_{[\Mbar_{g,1}(X,\beta)]^{\mathrm{red}}} \psi_1 \ev^{\ast}(\sigma).
\end{equation}
for the reduced Gromov-Witten invariants of a principally polarized abelian surface $(X,\theta)$
in a curve class $\beta$ satsifying
\[ \beta \cdot \beta = 2s, \quad \div(\beta) = \ell. \]
This relationship was proven in \cite{GWNL} for K3 surfaces but the proof works the same also for abelian surfaces,
see \cite{O_Ab2} for an extended discussion. By monodromy invariance
the integral on the right of \eqref{Red invariants} only depends upon $(X,\beta)$ through $s,\ell$, which explains the choice of notation on the left, see \cite[Sec 2.2]{OP_MCF} and \cite[Section 1]{BOPY}.

By \cite[Theorem 6]{BOPY} we have
\[
\langle \tau_1(\sigma) \rangle^{\mathrm{red}}_{2,s,\ell}
=
2\sum_{\substack{k \geq 1 \\ k|\ell, k^2|s}} k^3 \sigma_1\left( \frac{s}{k^2} \right).
\]

Inserting and using Lemma~\ref{lemma:Humbert degree} we find
\begin{align*}
I' & = 2 \sum_{\substack{\ell \geq 1 \\ \ell| \frac{d}{\delta_1}, \ell^2 |s}} \Hum_{s,d/\delta_1,\ell}
\sum_{\substack{k \geq 1 \\ k|\ell, k^2|s}} k^3 \sigma_1\left( \frac{s}{k^2} \right) \\
& = 2 \sum_{\substack{k \geq 1 \\ k| \frac{d}{\delta_1}, k^2|s}} k^3 \sigma_1\left( \frac{s}{k^2} \right)
\sum_{\tilde{\ell} \geq 1} \Hum_{s,d/\delta_1,k \tilde{\ell}} \\
& = 2 \sum_{\substack{k \geq 1 \\ k| \frac{d}{\delta_1}, k^2|s}} k^3 \sigma_1\left( \frac{s}{k^2} \right)
\sum_{\tilde{\ell} \geq 1} \Hum_{s/k^2,d/k \delta_1,\tilde{\ell}} \\
& = 2 \sum_{\substack{k \geq 1 \\ k| \frac{d}{\delta_1}, k^2|s}} k^3 \sigma_1\left( \frac{s}{k^2} \right)
\Hum_{s/k^2,d/k \delta_1} \\
& = -2\sum_{\substack{k \geq 1 \\ k| \frac{d}{\delta_1}, k^2|s}} k^3 \sigma_1\left( \frac{s}{k^2} \right)
c^{(\delta_{\mathrm{prim}})}\left( \frac{d^2/\delta_1^2 - 4 \delta_{\mathrm{prim}} s}{4 k^2 \delta_{\mathrm{prim}}}, \frac{d}{2 \delta_1 \delta_{\mathrm{prim}} k} \right) \lambda_1 \\
& = - 2\sum_{\substack{k \geq 1 \\ k| \frac{d}{\delta_1}, k^2| \frac{a}{\delta_1 \delta_2} }} k^3 \sigma_1\left( \frac{a}{\delta_1 \delta_2 k^2} \right)
c^{(\delta_2/\delta_1)}\left( \frac{d^2 - 4 a}{4 \delta_1 \delta_2 k^2}, \frac{d}{2 \delta_2 k} \right) \lambda_1.
\end{align*}
where the Humbert divisors $\Hum_{s,d,\ell}$ are taken on $\CA_{2,(1,\delta_{\mathrm{prim}})}$.
Inserting this for $I$ we thus find
\[
\CC_{2,\chi}( \tau_1(\sigma) )
=
-2
\sum_{\substack{\delta_1, \delta_2 > 0 \\ \delta_1 | \delta_2 \\ \delta_1 | d \\ \delta_1 \delta_2 | a}}
\sum_{\substack{k \geq 1 \\ k| \frac{d}{\delta_1} \\ k^2| \frac{a}{\delta_1 \delta_2} }} k^3 \sigma_1\left( \frac{a}{\delta_1 \delta_2 k^2} \right)
c^{(\delta_2/\delta_1)}\left( \frac{d^2 - 4 a}{4 \delta_1 \delta_2 k^2 }, \frac{d}{2 \delta_2 k} \right) 
\frac{\varphi_{h,\delta \ast}\left( p_{\delta}^{\ast}\lambda_1 \right)}{\deg(\varphi_{h,\delta})}
\]
which is exactly what we wanted to prove.

The case $h=3$ is the same except that
if $B \subset X$ is an abelian subsurface, then the complimentary abelian variety is an elliptic curve $E$. Hence $\theta|_{B}$ must have type $(1,\delta)$ where $\delta$ is the degree of $\theta|_{E}$.
Thus we obtain the same terms but sum only over the terms with $\delta_1=1$.
Moreover, since $\CA_{1,\tilde{\delta}}$ is a quotient of $\BA^1$, we have
$\varphi_{h,\delta}^{\ast}(\lambda_1) = p_{(1,\delta)}^{\ast}\lambda_1$, so
\[
\frac{ \varphi_{h,\delta \ast}\left( p_{(1,\delta)}^{\ast}\lambda_1\right)}{\deg(\varphi_{h,\delta})}
=
\lambda_1 \frac{\varphi_{h,\delta \ast}\left( p_{(1,\delta)}^{\ast}(1) \right)}{\deg(\varphi_{h,\delta})}
=
\lambda_1 \NL_{3,\delta}.
\]

In the case $h=2$ only the term $\delta_1=\delta_2=1$ contributes and $\varphi_{h,\delta}=\id$.
\qed

\appendix

\section{Perfect obstruction theories}
\label{app:pot}
\subsection{Definitions}
Let $\pi : \CX\to B$ be a smooth projective morphism to a smooth variety $B$.
Let $\pi : M \to B$ be a connected component of the moduli space of genus $g$ stable maps to the fibers of $\pi$.
Then $M$ comes equipped with a relative perfect obstruction theory 
\[ \phi : E^{\bullet} \to \BL_{M/B}, \]
which is compatible with base change.
In particular, for $M_b$ the fiber of $M$ over a point $b \in B$ the restriction
\[ \phi|_{M_b} : E^{\bullet}|_{M_b} \to \BL_{M/B}|_{M_b} \cong \BL_{M_b} \]
 is just the usual perfect obstruction theory of the component $M_b$ of the moduli space of stable maps to $X_b$. In particular, $\rk(E^{\bullet}) = \mathrm{vdim}(M_b)$.

The virtual class on $M$ is constructed as follows. One sets
\[ E^{\bullet}_{\mathrm{abs}} := \mathrm{Cone}( E^{\bullet} \to \BL_{M/B} \to \pi^{\ast} \Omega_B[1] )[-1] \]
which fits into a morphism of distinguished triangles
\[
\begin{tikzcd}
\pi^{\ast} \Omega_B \ar{r} \ar{d}{=} &  E^{\bullet}_{\mathrm{abs}} \ar{r} \ar{d} & E^{\bullet} \ar{d}{\phi} \ar{r} & \ldots [1] \\
\pi^{\ast} \Omega_B \ar{r} & \BL_{M} \ar{r} & \BL_{M/B} \ar{r} & \ldots [1]
\end{tikzcd}
\]
Then $E^{\bullet}_{\mathrm{abs}} \to \BL_M$ is an (absolute) perfect obstruction theory of virtual dimension $\mathrm{vd} := \rk (T^{\vir}_{M/B,\mathrm{abs}} = \dim(B) + \rk(E^{\bullet})$. (This is a special case of the virtual pullback construction, but can be seen entirely elementary).
We denote the relative and absolute virtual tangent bundles by
\[ T_{M/B}^{\vir} = E^{\vee}, \quad T^{\vir}_{M/B,\mathrm{abs}} = (E_{\mathrm{abs}})^{\vee}. \]

The virtual fundamental class associated to the absolute perfect obstruction theory
can be expressed in terms of the virtual tangent bundle by Siebert's formula:
\begin{equation} \label{Sieberts formula}
[M]^{\vir} = \left[ \frac{1}{c(T^{\vir}_{M/B,\mathrm{abs}})} \cap c_F(M) \right]_{\mathrm{vd}}
\end{equation}
where  $[ - ]_{m}$ stands for taking the dimension $m$ component and $c_F(M)$ is the Fulton canonical class, see \cite[Sec.C1]{PTKKV}. In particular, the virtual class only depends on the $K$-theory class of the obstruction theory. The comparison of virtual tangent bundles derived below hence yield immediately formulas comparing the associated virtual fundamental classes.

\subsection{Comparison to relative obstruction theory obtained from a fibration $B \to C$}
\label{app:subsec on comparing two relative}
Let $p : B \to C$ be a smooth projective morphism to a smooth variety $C$.
Then $M$ is also isomorphic to a component in the moduli space of stable maps to the fibers of $p \circ \pi$. Consider the relative perfect obstruction theory
\[ F^{\bullet} \to \BL_{M/C} \]
associated to the morphism $p \circ \pi$. Let $F_{\mathrm{abs}}^{\bullet}$ be the associated absolute perfect obstruction theory as before. Let
\[ T_{M/C}^{\vir} = F^{\vee}, \quad T^{\vir}_{M/C,\mathrm{abs}} = (F_{\mathrm{abs}})^{\vee}. \]
be the associated virtual tangent bundles.
We want to compare here $T^{\vir}_{M/C,\mathrm{abs}}$ and $T^{\vir}_{M/B,\mathrm{abs}}$.

If $\rho : \CC \to M$ is the universal family and $f : \CC \to \CX$ is the universal map, there is a distinguished triangle
\[ R \rho_{\ast}( f^{\ast} T_{\CX/B} ) \to R \rho_{\ast}( f^{\ast} T_{\CX/C}  ) \to R \rho_{\ast}(f^{\ast} \pi^{\ast} T_{B/C}) \xrightarrow{[1]} \ldots . \]
This compares the obstruction theories relative to $B$ and $C$, relative to the moduli stack of prestable curves, and yields the distinguished triangle
\[ 
R \rho_{\ast}( f^{\ast} \pi^{\ast} T_{B/C} )^{\vee} \to F^{\bullet} \to E^{\bullet} \xrightarrow{[1]} \ldots \,. \]
%or dually
%\[ T_{E}^{\vir} \to T_{F}^{\vir} \to R \rho_{\ast}(f^{\ast} \pi^{\ast} T_{B/C}) \xrightarrow{[1]} \ldots . \]
Observe that
\[
R \rho_{\ast}(f^{\ast} \pi^{\ast} T_{B/C}) = \pi^{\ast} T_{B/C} \otimes R \rho_{\ast}(\CO_{\CC}).
\]
and by relative Serre dualty
\[
\rho_{\ast} \CO_{\CC} \cong \CO, \quad \quad 
R^1 \rho_{\ast}(\CO_{\CC}) \cong \BE_{g}^{\vee}
\]
where $\BE_g$ is the Hodge bundle over the moduli space $M$.
By taking the cones of the vertical maps in the following map of exact triangles
\[
\begin{tikzcd}
R \rho_{\ast}(f^{\ast} \pi^{\ast} T_{B/C})^{\vee} \ar{r} \ar{d} & F^{\bullet} \ar{r} \ar{d} & E^{\bullet} \ar{d} \ar{r}{[1]} & \ldots \\
\pi^{\ast} \Omega_{B/C} \ar{r} & (p \circ \pi)^{\ast} \Omega_{C}[1] \ar{r} & \pi^{\ast} \Omega_{B}[1] \ar{r}{[1]} & \ldots 
\end{tikzcd}
\]
we obtain the exact triangle
\[ \pi^{\ast}(\Omega_{B/C}) \otimes \BE_{g}[1] \to F_{\mathrm{abs}}^{\bullet} \to E_{\mathrm{abs}}^{\bullet} \to \]
and hence in $K$-theory
\begin{equation} \label{Ktheory compare1} T^{\vir}_{M/C,\mathrm{abs}} = T^{\vir}_{M/B,\mathrm{abs}} - \pi^{\ast}(T_{B/C}) \otimes \BE_{g}^{\vee}. \end{equation}

\subsection{Comparison to relative obstruction theory obtained from an inclusion $B_0 \hookrightarrow B$}
\label{app:subsec reduced}
Let $B_0 \subset B$ be a smooth closed subscheme and assume that $M$ is isomorphic to a component of the moduli space of stable maps to the fibers of the restriction $\CX|_{B_0} \to B_0$.
In particular, for maps $f : C \to X_b$ for $b \in B_0$ in $M$ all deformations of $f$ outside of the $B_0$ locus are (even infinitesimally) obstructed. 
Consider the relative perfect obstruction theory
\[ F^{\bullet} \to \BL_{M/B_0} \]
associated to the morphism $M \to B_0$ and let
$F_{\mathrm{abs}}^{\bullet}$ be the associated absolute perfect obstruction theory
with virtual tangent bundle 
\[ T_{M/B_0,\mathrm{abs}}^{\vir} = (F_{\mathrm{abs}}^{\bullet})^{\vee}. \]
Our goal here is to compare
$T_{M/B,\mathrm{abs}}^{\vir}$ and $T_{M/B_0,\mathrm{abs}}^{\vir}$.

Consider the exact sequence
\[ 0 \to N^{\ast} \to \Omega_{B}|_{B_0} \to \Omega_{B_0} \to 0 \]
where $N$ is the normal bundle of $B_0 \subset B$.
Moreover, since relative perfect obstruction theories are compatible under base change
$E^{\bullet} \cong E^{\bullet}|_{B_0} \cong F^{\bullet}$.
Using the definition of the absolute perfect obstruction theory this gives an exact triangle
\[ N^{\ast} \to E^{\bullet}_{\mathrm{abs}} \to F^{\bullet}_{\mathrm{abs}} \to \pi_{B_0}^{\ast} N^{\ast}[1] \to \ldots \,. \]
We hence get in $K$-theory
\begin{equation} \label{Ktheory compare2}
T_{M/B,\mathrm{abs}}^{\vir} = T_{M/B_0,\mathrm{abs}}^{\vir} + N.
\end{equation}

\begin{rmk}
Since deformations away from $B_0$ are obstructed, we have pointwise a short exact sequence
\[ 0 \to N \to h^1(T^{\vir}_{M/B_0,\mathrm{abs}}) \to h^1(T_{M/B,\mathrm{abs}}^{\vir}) \to 0. \]
The obstruction theory $T_{M/B,\mathrm{abs}}^{\vir}$ hence may be viewed as a reduced obstruction theory of $T^{\vir}_{M/B_0,\mathrm{abs}}$. 
We refer to \cite{KT1} for more details on this construction.
\end{rmk}

\section{Modular form identities (joint with Brandon Williams)}
\label{appendix:modular form identity}
\subsection{Formula}
Let $M_{\delta} = A_1(\delta) = (2 \delta)$ and let $E_{5/2}^{(\delta)}$ be the Eisenstein series of weight $5/2$ for the Weil representation associated to the lattice $M_{\delta}$, see \cite{BruinierKuss}.
The Eisenstein series for the finite Weil representation of $M_{\delta}$ has a Fourier expansion of the form
\[
E_{5/2}^{(\delta)}(\tau)
=
\sum_{\gamma\in M_{\delta}^{\vee}/M_{\delta}}
\sum_{\substack{n \geq 0 \\ n - \frac{1}{2} \langle \gamma, \gamma \rangle \in \BZ}}
c^{(\delta)}(n,\gamma)q^n\e_\gamma .
\]
%The coefficients were explicitly written down in \cite{BruinierKuss}
%and an implementation can be found in \cite{WeilRepProgram}.
We normalize the Eisenstein series to have constant coefficient $1$, i.e. $c^{(\delta)}(0,0)=1$.

Let $\sigma_k(n) = \sum_{d|n} d^k$ be the standard divisor functions.

For $h \geq 3$ and $r \geq 0$ define power series
\[ F_{h,r}(q) \in \BQ[[q]] \]
as follows:
\begin{align*}
	F_{3,r}(q) := & \delta_{r0} \left( \frac{1}{4}
	\frac{B_{6} B_{6-2}}{6 (6-2)}
	- \frac{1}{24} 
	\sum_{d \geq 1} q^d
	\sum_{\delta|d}
	\sigma_3(d/\delta)
	\delta^5 \prod_{p|\delta} (1-p^{-4}) \right) \\
	& +\sum_{d \geq 1} q^d \sum_{\delta \geq 1} 
	\sum_{k | d} k^{3+2r} \sum_{i \geq 1} c^{(\delta)}\left( \frac{d^2}{4 \delta k^2} - i, \frac{d}{2 k \delta} \right) \sigma_1(i) i^r \delta^{5+r} \prod_{p|\delta} (1-p^{-4}) 
\end{align*}
and for $h \geq 4$ 
\begin{align*}
	F_{h,r}(q) & = 
	\delta_{r0} \left(
	\frac{1}{4}
	\frac{B_{2h} B_{2h-2}}{2h(2h-2)} 
	+
	5 \frac{B_{2h-2}}{(2h-2)}
	\sum_{d \geq 1} q^d
	\sum_{\delta | d}
	\sigma_3(d/\delta) \cdot
	\delta^{2h-1} \prod_{p|\delta} (1-p^{2-2h}) \right) \\
	&  
	+ \sum_{d \geq 1} q^d 
	\sum_{\substack{\delta_1,\delta_2 \geq 1 \\ \delta_1 | \delta_2 \\ \delta_1 | d \\ }}
	%\sum_{\substack{(\delta_1,\delta_2) \\ \delta_1 | \delta_2 \\ \delta_1 | d \\ \delta_2 > 0}}
	\sum_{k | d/\delta_1} k^{3+2r} 
	\sum_{i \geq 1} c^{(\delta_2/\delta_1)}\left( 
	\frac{d^2}{4 \delta_1 \delta_2 k^2} - i, \frac{d}{2 \delta_2 k} \right) \sigma_1(i) i^r \\
	& \quad \quad \quad \quad 
	\cdot 
	\delta_1^{2h-5+r} \delta_2^{2h-1+r} \prod_{p|\delta_1} (1-p^{6-2h}) \prod_{p|\delta_2} (1-p^{4-2h})
	\prod_{p|\delta_2/\delta_1} \frac{1-p^{-4}}{1-p^{-2}}.
\end{align*}

Recall for even $k$ the weight $k$ Eisenstein series
$G_k(q) = - \frac{B_k}{2 k} + \sum_{n \geq 1} \sum_{d|n} d^{k-1} q^n$.

Recall that for $k \geq 2$ the Shimura lift maps
a (quasi)modular form $F$ of weight $k+\frac{1}{2}$ for the Weil representation of $M_1$ with Fourier expansion
\[ F = \sum_{n \geq 0} \sum_{\gamma \in M^{\vee}/M} c(n,\gamma) q^n e_{\gamma} \]
to the (quasi)modular form for $\SL_2(\BZ)$ of weight $2k$ with Fourier expansion:
\begin{align*}
	\Lift_k(F) 
	& = -\frac{c(0,0) B_k}{2k} + \sum_{d \geq 1} \sum_{\ell|d} \ell^{k-1} c\left(\frac{d^2}{4 \ell^2}, \frac{d}{2\ell} \right)  q^{d}
\end{align*}
where we identify $M^{\vee}/M = \frac{1}{2} \BZ / \BZ$.
See \cite{OWilliams}.

Let $D_{\tau} = q \frac{d}{dq}$ where $q=e^{2 \pi i t}$.

The goal of this appendix is to prove the following result:
\begin{thm} \label{thm:modular identity} For any $h \geq 3$ and $r \geq 0$ we have
	\[
	F_{h,r}(q) = \Lift_{2(r+h)}( D_{\tau}^r(G_{2h-2}) E_{5/2}^{(1)})
	\]
	In particular, $F_{h,r}$ is a quasimodular form
	of weight $4(h+r)$.
\end{thm}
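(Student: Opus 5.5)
We prove the identity coefficientwise in $q^d$, comparing both sides through explicit formulas for the coefficients $c^{(\delta)}(n,\gamma)$. The key input is the description of $E_{5/2}^{(\delta)}$ in terms of $E_{5/2}^{(1)}$ by Hecke-type operators (Theorem~\ref{thm:coeff of e_{5/2}delta}, which we take as given). It expresses each $c^{(\delta)}(n,\gamma)$ as a sum over divisors of $\delta$ of terms of the form $c^{(1)}(\cdot,\cdot)$ weighted by multiplicative factors.

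On the right-hand side, expand $D_\tau^r(G_{2h-2})E_{5/2}^{(1)}$. Its $e_\gamma$-coefficient at $q^n$ is
\[
-\tfrac{B_{2h-2}}{2(2h-2)}\delta_{r0}\,c^{(1)}(n,\gamma)+\sum_{i\ge1} i^r\sigma_{2h-3}(i)\,c^{(1)}(n-i,\gamma).
\]
Apply $\Lift_{2(h+r)}$ to this. The $q^0$ terms match the constant $\frac14\frac{B_{2h}B_{2h-2}}{2h(2h-2)}$ at once. The $c^{(1)}(d^2/4\ell^2,\cdot)$ terms from the constant part of $G_{2h-2}$ should reproduce the $\delta_{r0}$ sum involving $\sigma_3(d/\delta)\delta^{2h-1}\prod(1-p^{2-2h})$. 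For this we use the Example identity $-c^{(1)}(d^2/4,d/2)=\sum_{\ell|d}10\ell^3\prod(1-p^{-2})$. The remaining terms form a double sum over $\ell\mid d$ and $i\ge1$ involving $\ell^{2h+2r-1}i^r\sigma_{2h-3}(i)\,c^{(1)}(d^2/4\ell^2-i,d/2\ell)$.

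On the left-hand side, insert the Hecke formula for $c^{(\delta_2/\delta_1)}$. This turns it into a sum over $\delta_1\mid\delta_2$, $k$, $i$ and a divisor from the Hecke formula, all of terms in $c^{(1)}$. Then reparametrize: write $n=d^2/(4\delta_1\delta_2k^2)-i$, rescale the index $i$, and collect by the argument of $c^{(1)}$. The goal is to show that the total multiplicative weight in front of each $c^{(1)}(m,\gamma)$ equals the one on the right. On the right that weight is $\sum_{\ell} \ell^{2h+2r-1}i^r\sigma_{2h-3}(i)$. On the left it should appear as a convolution of $\sigma_1(i)i^r$ with the $\delta$-factors $\delta_1^{2h-5+r}\delta_2^{2h-1+r}\prod\cdots$.

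The main obstacle is this arithmetic step. One has to verify an identity of multiplicative (Dirichlet-series) type, namely that the Euler factors $\prod_{p|\delta_1}(1-p^{6-2h})\prod_{p|\delta_2}(1-p^{4-2h})\prod_{p|\delta_2/\delta_1}\frac{1-p^{-4}}{1-p^{-2}}$, convolved with $\sigma_1$ and the Hecke coefficients, give $\sigma_{2h-3}$ and the powers $\ell^{2h+2r-1}$. We would reduce to prime powers by multiplicativity and check the local identity at each prime $p$ using generating functions in $p^{-s}$. The factors involving $h$ and $r$ appear only as powers of $p$, so the check is a rational-function identity in $X=p^{2h}$ and $p^r$.

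The case $h=3$ is handled the same way, using the simpler sum over $\delta$ alone. Here the $\sigma_3\,\delta^5\prod(1-p^{-4})$ term plays the role of the constant-coefficient contribution. Quasimodularity of weight $4(h+r)$ then follows from Theorem~\ref{thm:shimura lift}, since $D_\tau^r(G_{2h-2})E_{5/2}^{(1)}$ is quasimodular of weight $2h-2+2r+5/2$.
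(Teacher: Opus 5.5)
Your proposal is correct and follows the paper's proof. Both expand the lift, match the $\delta_{r0}$ part through the evaluation of $c^{(1)}(d^2/4,d/2)$, insert the Hecke-operator formula for $c^{(\delta)}$, reparametrize, and reduce to a multiplicative divisor-sum identity checked prime by prime. The paper makes explicit the step you leave open, via the substitution $\ell=abk$, $\delta=a^2bc$, $i=jb/c$ (resp.\ $\ell=ab\delta_1k$, $\delta_2=a^2b\delta_1c$); the $r$-dependence then cancels, and what remains is exactly Propositions~\ref{prop:identity1} and~\ref{prop:identity2}.
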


\subsection{Acknowledgements}
The stated identity in Theorem~\ref{thm:modular identity} was found by the authors based on numerical computations.
During the SwissMAP workshop {\em Moduli of curves and abelian varieties} in Les Diablerets, May 2026, the task to find a proof was submitted as a problem to the AI-benchmark site IMProofBench \cite{schmitt2025improofbench}. With further support of Johannes Schmitt the model ChatGPT 5.5 (using an early version of the harness developed in \cite{ProofCouncil}) provided then a 14-page document claiming a proof. We did not fact check the AI document, but after understanding the proof strategy, we then wrote our own cleaner, but essentially equivalent version.
In particular, the key numer-theoretic identites Propositions~\ref{prop:identity1}
and ~\ref{prop:identity2} were suggested by AI. 
%The work shows the potential of AI in mathematics for concrete algebro/combinatorical problems.
We thank Johannes Schmitt very much for his help as well as discussions on using AI in mathematics.

\subsection{Fourier coefficients of Eisenstein series}
The first step in the proof is to write the coefficients of the Eisenstein series $E_{5/2}^{(\delta)}$ in terms of the coefficients of $E_{5/2}^{(1)}$:

Write an element of $M_{\delta}^{\vee}/M_{\delta}$ as
$\gamma=\frac{r}{2\delta}$, where $r\in \Z/2\delta\Z$ and
for $n \geq 0$ let
\[
N=N(n,r):=n-\frac{r^2}{4\delta}.
\]
If \(N\notin \Z\), then the coefficient $c^{(\delta)}(n,\gamma)$ is zero. Hence assume from now on that
\(N\in\Z\).

\begin{theorem} \label{thm:coeff of e_{5/2}delta}
	For $\gamma=r/(2\delta)\in M_\delta^{\vee}/M$, one has
	\[
	c^{(\delta)}\!\left(n,\frac{r}{2\delta}\right)
	=
	\delta^{-2}
	\prod_{p\mid \delta}\left(1+p^{-2}\right)^{-1}
	\sum_{\substack{a>0\\ a^2\mid \delta\\ a\mid r}}
	\mu(a)
	\sum_{\substack{b>0\\ b\mid N\\ b\mid r/a\\ b\mid \delta/a^2}}
	b^2\,
	c^{(1)}\!\left(
	\frac{\delta n}{a^2 b^2},
	\frac{r}{2ab}
	\right),
	\]
	where $\mu$ is the M\"obius function.
\end{theorem}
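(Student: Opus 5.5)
The plan is to deduce the formula from the explicit description of Fourier coefficients of the Eisenstein series $E_{5/2}^{(\delta)}$ given by Bruinier--Kuss. Equivalently, we can use the theory of Jacobi forms and Hecke-type operators relating the lattices $A_1(\delta)$ and $A_1$.

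Concretely, I would write both $c^{(\delta)}(n,\gamma)$ and $c^{(1)}$ via the Bruinier--Kuss formula. This expresses $c^{(\delta)}(n,r/2\delta)$ as a product of:
\begin{itemize}
\item an archimedean factor $\propto (4\delta N')^{3/2}$ with $N' = -N$, up to the sign convention for the discriminant $D = r^2 - 4\delta n$;
\item a special value $L(2,\chi_{D_0})$ for the fundamental discriminant $D_0$ attached to $D$;
\item a finite product of local factors over primes $p \mid 2\delta D$.
\end{itemize}
Since the archimedean factor and the $L$-value depend only on $D$ (up to the square factor $f$ with $D = D_0 f^2$), the whole identity reduces to an identity of local factors, prime by prime. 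Both sides of the claimed formula are multiplicative in the appropriate sense once $D$ is fixed. The right-hand side involves $c^{(1)}(\delta n/(a^2b^2), r/(2ab))$, whose discriminant is $(r^2 - 4\delta n)/(a^2b^2) = D/(ab)^2$. So each term is the $\delta=1$ coefficient at discriminant $D/(ab)^2$, and the sum over $a,b$ is a Möbius-twisted divisor sum.

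Alternatively, and perhaps more cleanly, I would use the isomorphism between vector-valued modular forms for $\rho_{A_1(\delta)}$ and Jacobi forms of index $\delta$. Under this isomorphism $E_{5/2}^{(\delta)}$ corresponds to the Jacobi--Eisenstein series $E_{3,\delta}$ of weight $3$ and index $\delta$. The Eichler--Zagier formula expresses $E_{k,m}$ through the index-raising operators $U_\ell$ and $V_\ell$ applied to $E_{k,1}$:
\[
E_{k,m} \text{ as a M\"obius combination } \sum_{a^2\mid m} \mu(a)\, E_{k,1}\mid U_a V_{m/a^2},
\]
normalized by $\sigma_{k-1}$-type factors. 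This is exactly the shape of the claimed formula. Here the sum over $a$ with $a^2\mid\delta$ and $a\mid r$ is the $U_a$ part, and the sum over $b$ with $b\mid N$, $b\mid r/a$ and $b\mid \delta/a^2$, weighted by $b^{k-1}=b^2$, is the explicit coefficient formula for $V_{\delta/a^2}$. The steps I would follow are:
\begin{enumerate}
\item Identify $E_{5/2}^{(\delta)}$ with $E_{3,\delta}$, checking that the normalization $c(0,0)=1$ matches.
\item Quote the Eichler--Zagier Theorem 2.3 and its Möbius inversion.
\item Expand the coefficients of $V_{m}$ and $U_a$ to obtain the double sum.
\item Fix the constant by comparing the $n=0$, $r=0$ term.
\end{enumerate}
The constant comes out as $\delta^{-2}\prod_{p\mid\delta}(1+p^{-2})^{-1}$, which is $\sigma_{2}$-like normalization after Möbius inversion: $\sum_{a^2\mid\delta}\mu(a)\sigma_2(\delta/a^2)$ is multiplicative with value $\delta^2\prod_{p\mid\delta}(1+p^{-2})$.

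I expect the main obstacle to be the normalization and the $U_a$ condition. Eichler--Zagier's formula holds cleanly for squarefree index, and for general $m$ one must carefully verify the Möbius-twisted combination, that is, that $E_{k,m}$ is the correct Eisenstein combination with no cusp-form contribution. In weight $3$ for small index there could a priori be Jacobi cusp forms, but Eisenstein series are characterized by being orthogonal to cusp forms, and the $V$, $U$ operators preserve the Eisenstein subspace. Should this route be problematic, I would fall back on the Bruinier--Kuss local computation, verifying the prime-by-prime identity of local Euler factors separately for $p\nmid\delta$ (trivial), for $p\|\delta$, and for $p^2\mid\delta$.
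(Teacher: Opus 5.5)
Your structural idea is exactly the shape of the paper's proof: write the index-$\delta$ Eisenstein series as $\sum_{a^2\mid\delta}\mu(a)\,E^{(1)}_{5/2}|U_a|V_{\delta/a^2}$, use that the index-raising operators preserve the Eisenstein space, then normalise. But Step 1 fails as stated.

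The coefficients of $E^{(\delta)}_{5/2}$ sit at exponents $n\equiv r^2/4\delta \pmod 1$ (e.g.\ $q^{1/4}e_{1/2}$ for $\delta=1$). The theta decomposition of a holomorphic Jacobi form of index $\delta$ produces exponents $\equiv -r^2/4\delta$ instead. Moreover $J_{3,1}=0$: odd weight forces $c(n,-r)=-c(n,r)$, while $-r\equiv r \bmod 2$. Finally, Eichler--Zagier's $E_{k,m}$ exists only for even $k$, since the defining series vanishes for odd $k$.

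So there is no holomorphic $E_{3,\delta}$ to identify with, and Eichler--Zagier cannot be quoted. The correct Jacobi-side objects are skew-holomorphic Jacobi forms of weight $3$. There the coefficient formulas for $U_a$ and $V_l$ do have the shape you describe, with weight factor $b^{k-1}=b^2$. However, the M\"obius identity for the Eisenstein series has to be proved in that setting, not cited. The paper avoids Jacobi forms altogether: it works with vector-valued forms and the Bouchard--Creutzig--Joshi operators $U_N,V_N$, which preserve cusp forms and the Eisenstein space.

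Second, your Step 4 does not pin down the identity. Once the M\"obius combination is known to lie in the Eisenstein space, you must compare the whole constant term, not only the $(0,0)$ coefficient. When $\delta$ is not squarefree there are nonzero isotropic classes $\gamma=r/2\delta$ (those with $4\delta\mid r^2$). Then the weight-$5/2$ Eisenstein space for $\rho_{A_1(\delta)}$ is spanned by several $E_{5/2,\beta}$, and $E^{(1)}_{5/2}|V_\delta$ alone has nonzero constant term at these $e_\gamma$.

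The $\mu(a)\,U_a$ terms are precisely what cancel these extra constant terms. For $\delta=4$, $E^{(1)}_{5/2}|V_4$ has constant term $21e_0+e_{1/2}$ and $E^{(1)}_{5/2}|U_2$ has $e_0+e_{1/2}$. The difference is $20e_0=\delta^2\prod_{p\mid\delta}(1+p^{-2})\,e_0$. This check at every isotropic component is the real content of the paper's argument.

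Your Bruinier--Kuss fallback would be a genuinely different and in principle valid route, but as written it is only a plan. All the work lies in the local comparison at $p=2$ and at primes with $p^2\mid\delta$.
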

\begin{proof}
	This is a special case of a general identity relating the vector-valued Eisenstein series attached to a lattice $L$, with quadratic form $Q$, and the rescaled lattice $L(\delta)$ with quadratic form $\delta \cdot Q$. Suppose $k \in \frac{1}{2}\mathbb{Z}$, $k \ge 5/2$ is a weight for which $\kappa = k + 1 - \mathrm{rank}(L)/2$ is an integer and $k + 1 + \mathrm{sgn}(L)/2$ is even, and $\beta \in L'/L$ is a coset of norm $0$ mod $\mathbb{Z}$. Then one defines the Eisenstein series $$E_{k, L, \beta}(\tau) := \sum_{\gamma \in \mathrm{Mp}_2(\mathbb{Z}) / \Gamma_{\infty}} (e_{\beta} + e_{-\beta}) \Big|_{k, \rho} \gamma,$$ where $e_{\beta}$ are basis elements of the group ring $\mathbb{C}[L'/L]$, and where $|_{k, \rho}$ is the Petersson slash operator with respect to the Weil representation $\rho$. Similarly to classical modular forms, the Eisenstein series $E_{k, L, \beta}$ span the orthogonal complement of the space of cusp forms as $\beta$ runs through the isotropic cosets of $L'/L$. 
	
	Bouchard, Creutzig and Joshi \cite{BCJ19} have defined natural index-raising Hecke operators that are analogous to the operators $U_N$, $V_N$ on Jacobi forms, and we also use the notation $U_N$ and $V_N$ for them. Explicitly, if $f$ has Fourier expansion $$f(\tau) = \sum_{\gamma \in L'/L} \sum_{n \in \mathbb{Z} - Q(\gamma)} c_f(n, \gamma) q^n e_{\gamma} \in M_k(\rho_L),$$ then $f|V_N$ is a vector-valued modular form for the lattice $L(N)$ with Fourier expansion $$f \Big| V_N = \sum_{\beta \in L(N)' / L(N)} \sum_{n \in \mathbb{Z} - Q_N(\beta)} a(n, \beta) q^n e_{\beta} \in M_k(\rho_{L(N)}),$$ where $$a(n, \beta) = \sum_{\substack{ad= N \\ d \beta \in L', nd/a \in \mathbb{Z} - Q(d\beta)}} a^{\kappa} c_f \Big( \frac{nd}{a} - Q\left( d\beta \right), d \beta \Big).$$ $f|U_N$ is a vector-valued modular form for the lattice $L(N^2)$ with Fourier expansion $$f \Big| U_N = \sum_{\beta \in L(N^2)' / L(N^2)} \sum_{n \in \mathbb{Z} - Q_{N^2}(\beta)} a(n, \beta) q^n e_{\beta} \in M_k(\rho_{L(N^2)}),$$ where $$a(n, \beta) = \begin{cases} c(n, N \cdot \beta): & N\beta \in L'; \\ 0: & \text{otherwise}. \end{cases}$$ By the construction of \cite{BCJ19}, the operators $U_N$ and $V_N$ map cusp forms into cusp forms and the Eisenstein space (or orthogonal complement of cusp forms) into the Eisenstein space. To prove the general identity $$E_{k, L(\delta), 0} = \Big( \delta^{1-\kappa} \prod_{p | \delta} (1 + p^{1 - \kappa})^{-1} \Big) \cdot \sum_{a^2 | \delta} \mu(a) E_{k, L, 0} \Big| U_a \Big| V_{\delta / a^2},$$ since both sides belong to the Eisenstein space, one only has to check that the constant terms match. Theorem~\ref{thm:coeff of e_{5/2}delta} is the special case of this identity for the lattice $L = M = A_1$ and $k = 5/2$.
\end{proof}

We will also need the following evaluation:
\begin{lemma} \label{lemma:c1 square coefficients} For all $d \geq 1$ we have
	\[ c^{(1)}\left( \frac{d^2}{4}, \frac{d}{2} \right)
	= -10 \sum_{a|d} \mu(a) a \sigma_3(d/a). \]
\end{lemma}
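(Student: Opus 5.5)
The plan is to deduce the identity from the geometric evaluation of Humbert divisors on $\CA_2$ already carried out in the examples above, and then finish with an elementary multiplicative number-theory computation.

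\textbf{Step 1 (geometric input).} Take $\delta=1$. By Lemma~\ref{lemma:Humbert degree} we have $\Hum_{0,d} = -c^{(1)}(d^2/4,d/2)\lambda_1$. The relation \eqref{basicrel2} gives $\Hum_{0,d}=\sum_{\ell|d}\Hum_{0,d,\ell}$. By \eqref{basicrel1}, each $\Hum_{0,d,\ell}$ equals $\Hum_{0,d/\ell,1}$, and this is $\NL_{2,(d/\ell)}$, since a primitive class of square $0$ and degree $m$ is an elliptic curve of degree $m$. Reindexing the sum over divisors, and inserting the Iribar Lopez evaluation $\NL_{2,(\ell)} = 10\ell^3\prod_{p|\ell}(1-p^{-2})\lambda_1$ (which uses that $\lambda_1\neq 0$ in $\Chow^1(\CA_2)$), we obtain
\[
c^{(1)}\Big(\frac{d^2}{4},\frac{d}{2}\Big) = -10\sum_{\ell|d} \ell^3\prod_{p|\ell}(1-p^{-2}),
\]
exactly as recorded in the example preceding this section.

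\textbf{Step 2 (arithmetic identity).} It remains to show
\[
\sum_{\ell|d}\ell^3\prod_{p|\ell}(1-p^{-2}) = \sum_{a|d}\mu(a)\,a\,\sigma_3(d/a).
\]
Write $f(\ell)=\ell^3\prod_{p|\ell}(1-p^{-2}) = \ell\, J_2(\ell)$, where $J_2(\ell)=\sum_{b|\ell}\mu(b)(\ell/b)^2$ is Jordan's totient. Hence $f=\mathrm{id}\cdot(\mu * \mathrm{id}^2)$, and its Dirichlet series is $\zeta(s-3)/\zeta(s-1)$. The left side is $1*f$, with Dirichlet series $\zeta(s)\zeta(s-3)/\zeta(s-1)$. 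The right side is $(\mu\cdot\mathrm{id})*(1*\mathrm{id}^3)$, whose Dirichlet series is $\zeta(s-1)^{-1}\cdot\zeta(s)\zeta(s-3)$. The two series agree, so the identity holds. Equivalently, one can check it on prime powers $p^e$, where both sides equal $\sum_{j\le e}p^{3j} - p\sum_{j\le e-1}p^{3j}$.

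\textbf{Main obstacle.} The arithmetic step is routine. The delicate part is Step 1: the multiplicity conventions for the Humbert divisors, namely that $\Hum_{0,1}$ counts the two square-zero degree-one classes on $E\times F$, and that $\Hum_{0,d,\ell}$ equals $\NL_{2,(d/\ell)}$ with multiplicity one. Any slip there changes the normalization. As an independent check I would compare small $d$ with \eqref{E52_1}. For $d=1$ the right side is $-10$, matching the coefficient of $q^{1/4}e_{1/2}$; for $d=2$ it is $-10(9-2)=-70$, matching $-70q$.

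If one prefers a purely analytic proof, one could instead start from Cohen's formula with $N=d^2$. This means evaluating $-24\big(\sum_s\sigma_1((d^2-s^2)/4)+d^2/2\big)$, which is a classical but harder divisor-sum evaluation. I would use this route only as a fallback.
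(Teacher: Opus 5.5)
Your argument is correct, but it takes a different route from the paper.

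\textbf{The paper's proof.} It is purely modular. By Theorem~\ref{thm:shimura lift} with $k=2$, $\Lift_2(E_{5/2}^{(1)})$ is a modular form of weight $4$ for $\SL_2(\BZ)$, hence a multiple of $G_4$. Comparing constant terms ($-B_2/4=-1/24$ against $-10\cdot(-B_4/8)=-10/240$) gives $\Lift_2(E_{5/2}^{(1)})=-10\,G_4$. Reading off coefficients, this says $\sum_{\ell\mid d}\ell\, c^{(1)}\bigl(\tfrac{d^2}{4\ell^2},\tfrac{d}{2\ell}\bigr)=-10\,\sigma_3(d)$. M\"obius inversion of this convolution then yields the lemma.

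\textbf{Your proof.} You import the geometric Example from the Humbert-divisor subsection. This combines Bruinier--K\"uhn for $\Hum_{0,d}$, the decomposition $\Hum_{0,d}=\sum_{\ell\mid d}\NL_{2,(\ell)}$, and Iribar Lopez's degree of $\NL_{2,(\ell)}$. You then verify the convolution identity $1*(\mathrm{id}\cdot J_2)=(\mu\cdot\mathrm{id})*\sigma_3$, which is right both via Dirichlet series and on prime powers.

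\textbf{What each route buys.} The paper's route needs only that weight-$4$ forms for $\SL_2(\BZ)$ form a one-dimensional space and that the lift preserves modularity. This keeps the appendix independent of geometry. In particular it avoids the multiplicity conventions you correctly flag as fragile: the factor $2$ from swapping $B$ with its complement, counting $\pm v$ in $H_{n,\gamma}$, and the identification $\Hum_{0,d,1}=\NL_{2,(d)}$. Your route makes the lemma depend on exactly those normalizations, which the paper asserts in an Example rather than proves. In exchange, it gives the identity a geometric meaning. Read alongside the paper's proof, it also amounts to a consistency check between Bruinier--K\"uhn and the $h=2$ case of Iribar Lopez's formula.
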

\begin{proof}
	Since $\Lift_{2}(E_{5/2}^{(1)})$ is a modular form of weight $4$,
	checking the constant term, it must be $-10 G_4(q)$.
	Writing out this identity and using M\"obius inversion, the claim follows.
\end{proof}

\subsection{Two number-theoretic identities}
For a positive integer $\delta \geq 1$ define
\[ P_t(\delta) = \prod_{p | \delta} (1-p^{-t}) \]
where the product is over prime divisors of $\delta$.

\begin{prop} \label{prop:identity1}
	For any integers $k,t \geq 1$ and $\ell,j \geq 1$
	\begin{equation} \label{claimed equality 1}
		\sum_{\substack{a,b\geq 1\\ ab\mid \ell}}
		\sum_{c\mid j}
		a^{k+t} b^t c^{k+t} \mu(a)\,
		\sigma_k\!\left(\frac{jb}{c}\right)
		P_t(a^2bc)
		=
		\ell^t \sigma_{k+t}(j).
	\end{equation}
\end{prop}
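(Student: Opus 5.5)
The identity will follow from a reduction to prime powers, after which both sides become explicit finite sums that can be compared through generating functions in two variables.

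\emph{Reduction to prime powers.} Write $\Phi(\ell,j)$ for the left hand side of \eqref{claimed equality 1}. I will show that $\Phi$ is multiplicative in the joint sense
\[
\Phi(\ell,j)=\prod_p \Phi\bigl(p^{v_p(\ell)},p^{v_p(j)}\bigr),
\]
and observe that the right hand side $\ell^t\sigma_{k+t}(j)$ is multiplicative in the same sense.

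Decompose $a=\prod_p p^{\alpha_p}$, $b=\prod_p p^{\beta_p}$ and $c=\prod_p p^{\gamma_p}$. The constraints $ab\mid\ell$ and $c\mid j$ are then local at each prime $p$. Each factor of the summand also splits over primes:
\begin{itemize}
\item $a^{k+t}b^tc^{k+t}$ is completely multiplicative;
\item $\mu$ is multiplicative;
\item $\sigma_k(jb/c)$ is multiplicative in the integer $jb/c$, whose $p$-adic valuation is $v_p(j)+\beta_p-\gamma_p$;
\item $P_t(a^2bc)=\prod_p P_t\bigl(p^{2\alpha_p+\beta_p+\gamma_p}\bigr)$, since $P_t$ only depends on the set of prime divisors.
\end{itemize}
Hence the summand is a product of local summands, and the sum factors as claimed.

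\emph{The local identity.} It remains to prove, for a prime $p$ and $L,J\geq 0$,
\[
\sum_{\substack{\alpha\in\{0,1\},\ \beta\geq 0\\ \alpha+\beta\leq L}}\ \sum_{0\leq\gamma\leq J}
(-1)^\alpha p^{(k+t)\alpha+t\beta+(k+t)\gamma}\,
\sigma_k\bigl(p^{J+\beta-\gamma}\bigr)\,P_t\bigl(p^{2\alpha+\beta+\gamma}\bigr)
= p^{tL}\,\sigma_{k+t}(p^J).
\]
Put $x=p^{-t}$. The factor $P_t\bigl(p^{2\alpha+\beta+\gamma}\bigr)$ equals $1$ when $\alpha=\beta=\gamma=0$ and equals $1-x$ otherwise. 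I will therefore write the local sum as $(1-x)S + x\,T$, where:
\begin{itemize}
\item $S$ is the full sum with the $P_t$-factor removed;
\item $T$ is the single term $\alpha=\beta=\gamma=0$, namely $T=\sigma_k(p^J)$.
\end{itemize}

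\emph{Generating functions.} To evaluate $S$, I form the generating series $\sum_{L,J\geq0} (\,\cdot\,) X^LY^J$. The condition $\alpha+\beta\leq L$ produces a factor $1/(1-X)$. The sums over $\beta$ and $\gamma$ are geometric series combined with
\[
\sum_{m\geq 0}\sigma_k(p^m)Y^m=\frac{1}{(1-Y)(1-p^kY)}.
\]
The shift $J+\beta-\gamma$ is handled by reindexing $m=J-\gamma$ and treating $\beta$ as a raising of the exponent. This turns $\sigma_k(p^{m+\beta})$ into a combination of $p^{k\beta}$ and $1$ via
\[
\sigma_k(p^n)=\frac{p^{k(n+1)}-1}{p^k-1}.
\]
The target side has generating series
\[
\frac{1}{(1-p^tX)(1-Y)(1-p^{k+t}Y)}.
\]
The proof then reduces to an identity of rational functions in $X$ and $Y$. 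The $\alpha=1$ terms supply the factor $1-p^{k+t}X\cdot(\cdots)$ that cancels the spurious poles.

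\emph{Main obstacle.} I expect the main difficulty to be this last rational-function verification. The $P_t$ correction term makes the bookkeeping delicate, and the cancellation of the extra poles coming from $\sigma_k$ at $p^kY$ against the $\alpha=1$ Möbius terms is not transparent.

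I will first check the cases $L=0$ (only $\alpha=\beta=0$ contributes) and $J=0$ by hand. As a fallback to the closed form, I will prove the identity by induction on $L$: I compute $\Phi(p^L,p^J)-p^t\Phi(p^{L-1},p^J)$ and show that it vanishes for $L\geq2$ and matches at $L=1$. For fixed $L$, the remaining $J$-dependence is then a finite geometric-sum check.
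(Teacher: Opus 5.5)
Your approach is correct, and after the shared reduction it proves the local identity by a different route. The reduction to prime powers is the same as the paper's, and your justification of the factorization is more careful than its footnote.

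\textbf{The paper's route.} The paper telescopes in $L$. Since $L$ enters only through the bound $\alpha+\beta\le L$, the difference $\Phi(p^L,p^J)-\Phi(p^{L-1},p^J)$ consists only of the boundary terms $(\alpha,\beta)\in\{(0,L),(1,L-1)\}$. On these terms $P_t\equiv 1-p^{-t}$, and the two M\"obius-paired terms collapse via $\sigma_k(p^n)-p^k\sigma_k(p^{n-1})=1$ to $p^{tL}(1-p^{-t})\sigma_{k+t}(p^J)$. The base case $L=0$ is the convolution identity $\sum_{c\mid m}c^tP_t(c)=m^t$.

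That difference is a better fallback than your $\Phi(p^L,p^J)-p^t\Phi(p^{L-1},p^J)$, which isolates no boundary layer. Also, since the right-hand side satisfies $g_L=p^tg_{L-1}$, your difference must vanish already at $L=1$, not only for $L\ge 2$.

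\textbf{Your generating-function route.} It works, and the check you expected to be the main obstacle is short. Summing over $L\ge\alpha+\beta$ and $J\ge\gamma$, and using your closed form for $\sigma_k(p^n)$, gives
\[
\sum_{L,J\ge 0} S\,X^LY^J=\frac{1-p^{k+t}XY}{(1-X)(1-p^tX)(1-Y)(1-p^kY)(1-p^{k+t}Y)} .
\]
Here the M\"obius factor $1-p^{k+t}X$ coming from $\alpha\in\{0,1\}$ has cancelled the pole $1-p^{k+t}X$ produced by the shift in $\sigma_k(p^{m+\beta})$.

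Now add the correction term $x\sum_{L,J}T\,X^LY^J=\frac{x}{(1-X)(1-Y)(1-p^kY)}$ with $x=p^{-t}$. Over the common denominator the numerator becomes
\[
(1-x)(1-p^{k+t}XY)+x(1-p^tX)(1-p^{k+t}Y)=(1-X)(1-p^kY),
\]
which leaves exactly $\frac{1}{(1-p^tX)(1-Y)(1-p^{k+t}Y)}$. So, contrary to your ``main obstacle'' paragraph, the poles at $1-X$ and $1-p^kY$ are removed by the $P_t$-correction term, not by the $\alpha=1$ terms.

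\textbf{Comparison.} Your route is more mechanical: it treats $L$ and $J$ simultaneously and makes the pole cancellations explicit. Your splitting of $P_t$ also replaces the paper's separate base case. The paper's telescoping avoids rational-function algebra altogether and needs only the one-line identity $\sigma_k(p^n)-p^k\sigma_k(p^{n-1})=1$.
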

\begin{proof}
	We first observe that both sides are multiplicative\footnote{An expression $f(\ell,j)$ is multiplicative, if for all decompositions
		$\ell=\ell_1 \ell_2$ and $j=j_1 j_2$ with $\gcd( \ell_1 j_1 , \ell_2 j_2)$ we have
		$f(\ell, j) = f(\ell_1,j_1) f(\ell_2,j_2)$.},
	so we may assume that
	\[ \ell = p^u, \quad j = p^v, \quad a=p^{\alpha}, \quad b=p^{\beta}, \quad c=p^{\gamma} \]
	for a prime prime. 
	Let $f_{u,v}(\ell,j)$,$g_{u,v}(\ell,j)$ be the left and right hand side of \eqref{claimed equality 1} respectively.
	
	If $u=0$ or equivalently $\ell=1$ then
	\begin{align*}
		f_{0,v}(\ell,j) & = \sum_{c | j} c^{k+t} \sum_{e| \frac{b}{c}} e^k P_t(c) \\
		& = \sum_{m | j} m^k \sum_{c|m} c^t P_t(c) \\
		& = \sum_{m | j} m^{k} m^t \\
		%& = \sigma_{k+t}(j) \\
		& = g_{0,v}(\ell,j),
	\end{align*}
	where $\sum_{c|m} c^t P_t(c) = m^t$ holds since it holds for $c=p^u$ by a direct heck and both sides are multiplicative.
	%in $c$ and the case $c=p^u$ is a direct check.
	For all $u \geq 1$ we have
	\[
	f_{u,v}(\ell,j)
	=
	\sum_{\substack{\alpha,\beta \geq 0 \\ \alpha + \beta \leq u}} \sum_{\gamma \in v}
	p^{(k+t) \alpha + t \beta + (k+t) \gamma} \mu(p^{\alpha}) \sigma_k(p^{v + \beta - \gamma}) P_t(p^{2 \alpha + \beta + \gamma})
	\]
	Observe that $u$ only appears here in the bound $\alpha + \beta \leq u$.
	Hence $f_{u,v}(\ell,j) - f_{u-1,v}(\ell,j)$ is the same sum except now we sum over the $\alpha,\beta$ with $\alpha + \beta = u$.
	Moreover, $\mu(p^{\alpha}) = 0$ if $\alpha \geq 2$, so we have $\alpha \in \{ 0, 1 \}$. Thus we get
	\begin{align*}
		f_{u,v}(\ell,j) - f_{u-1,v}(\ell,j)
		& = p^{tu} (1-p^{-t}) \sum_{\gamma \leq v} p^{(k+t) \gamma} ( \sigma_k(p^{v + u - \gamma}) - p^k \sigma_k(p^{v+u-1-\gamma})) \\
		%& = p^{tu} (1-p^{-t}) \sum_{\gamma \leq v} p^{(k+t) \gamma} \\
		& = p^{tu} (1-p^{-t}) \sigma_{k+t}(p^{v})  \\
		& = g_{u,v}(\ell,j) - g_{u-1,v}(\ell,j).
	\end{align*}
	Thus the claim follows for all $u \geq 0$.
\end{proof}

\begin{prop} \label{prop:identity2}
	For all $t,r,k \geq 1$ and $\ell,j\geq 1$ one has
	\[
	\sum_{\substack{a,b,\delta\geq 1\\ ab\delta | \ell}}
	\sum_{c | j}
	a^{t+r+k} b^t c^{t+k} \delta^{t+r+k}
	\mu(a) 
	\sigma_k\left(\frac{jb}{c}\right)
	P_r(\delta)\,
	P_t(a^2 b\delta c) 
	= \ell^t \sigma_{t+k}(j).
	\]
\end{prop}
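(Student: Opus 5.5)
The natural route is to reduce Proposition~\ref{prop:identity2} to Proposition~\ref{prop:identity1} by isolating the new variable $\delta$. Write $S(\ell,j)$ for the left hand side. For fixed $\delta\mid\ell$, the inner sum over $a,b$ with $ab\mid \ell/\delta$ resembles the left hand side of \eqref{claimed equality 1} with $\ell$ replaced by $\ell/\delta$. It differs in two ways: there is an extra weight $a^{r}\delta^{t+r+k}P_r(\delta)$, and the factor $P_t(a^2bc)$ is replaced by $P_t(a^2b\delta c)$. As in the proof of Proposition~\ref{prop:identity1}, I would first check that $S(\ell,j)$ and $\ell^t\sigma_{t+k}(j)$ are multiplicative in the pair $(\ell,j)$. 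All summands are products of multiplicative functions of $a,b,c,\delta$, and the divisibility conditions factor over primes. This reduces the claim to $\ell=p^u$, $j=p^v$, $a=p^\alpha$ with $\alpha\in\{0,1\}$, $b=p^\beta$, $c=p^\gamma$, $\delta=p^{\epsilon}$.

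In the prime-power case I would run the same telescoping in $u$ as before. The exponent $u$ enters only through the constraint $\alpha+\beta+\epsilon\le u$. So $f_{u,v}-f_{u-1,v}$ is the sum over triples with $\alpha+\beta+\epsilon=u$, and it suffices to show this equals $p^{tu}(1-p^{-t})\sigma_{t+k}(p^v)$ for $u\ge1$. The base case $u=0$ forces $\alpha=\beta=\epsilon=0$, and it is literally the $u=0$ case of Proposition~\ref{prop:identity1}.

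For $u\ge 1$, split the shell $\alpha+\beta+\epsilon=u$ according to whether $\epsilon=0$ or $\epsilon\ge1$.

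When $\epsilon=0$, the terms are exactly the shell terms of Proposition~\ref{prop:identity1}, except that $a=p$ now carries the extra factor $p^{r}$. Following the earlier computation, the $\alpha=0$ part contributes $p^{tu}\sum_\gamma p^{(k+t)\gamma}\sigma_k(p^{v+u-\gamma})P_t(p^{u+\gamma})$. The $\alpha=1$ part contributes $-p^{k+t+r}p^{t(u-1)}\sum_\gamma p^{(k+t)\gamma}\sigma_k(p^{v+u-1-\gamma})P_t(p)$.

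When $\epsilon\ge1$, the factor $P_t(a^2b\delta c)=1-p^{-t}$ is constant and $P_r(\delta)=1-p^{-r}$. The sum over $\epsilon$ from $1$ to $u$ (with $\beta=u-\alpha-\epsilon$) is then a finite geometric series in $p^{k+r}$, multiplied by the same $\gamma$-sums of $\sigma_k$.

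The key step is to show the $r$-dependent pieces cancel. Concretely, I would check that the $\epsilon\ge1$ contributions combine with the $p^{r}$-weighted $\alpha=1,\epsilon=0$ term to reproduce the unweighted $\alpha=1$ term of Proposition~\ref{prop:identity1}. The mechanism I expect is that the factor $(1-p^{-r})$ telescopes the geometric series $\sum_{\epsilon} p^{(k+r)\epsilon}\cdots$ against the $\alpha=1$ terms, since $\mu(p)=-1$. I would first verify this by hand for $u=1$ and $u=2$ to pin down the combinatorics, then write it for general $u$ using $\sigma_k(p^{m})-p^k\sigma_k(p^{m-1})=1$ as in the earlier proof.

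Once the cancellation is established, the shell difference coincides with that of Proposition~\ref{prop:identity1}, giving $p^{tu}(1-p^{-t})\sigma_{k+t}(p^v)$ and completing the induction on $u$. The main obstacle is this bookkeeping of the $\epsilon$-geometric series against the $\alpha=1$ terms, in particular handling the boundary cases $\gamma=v$ and $\beta=0$.

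Alternatively, there is a generating-function route: for each fixed $\delta$, the sum over $(a,b)$ with $a^{r}$ is a Dirichlet convolution. Summing over $\delta$ with weight $\delta^{t+r+k}P_r(\delta)$ should, via Möbius inversion, absorb the $a^r$ factor. I would use this as a cross-check if the direct computation becomes messy.
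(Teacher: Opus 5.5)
Your route is correct, but it is not the paper's. The paper does not localize a second time. It substitutes $x=a\delta$ and notes that $a^{t+r+k}\delta^{t+r+k}=x^{t+r+k}$ and that $P_t(a^2b\delta c)=P_t(x^2bc)$, since $P_t$ only sees prime divisors. It then applies the convolution identity $\sum_{a\delta=x}\mu(a)P_r(\delta)=\mu(x)x^{-r}$. This turns the left-hand side verbatim into the left-hand side of Proposition~\ref{prop:identity1}. That is the ``alternative'' you mention at the end, and it is the entire proof. It buys brevity, and it uses Proposition~\ref{prop:identity1} as a black box with no second multiplicativity or telescoping argument. Your version is a self-contained prime-by-prime check that ends up rediscovering the same identity locally.

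The cancellation you anticipate does hold, but your description of it is off. It is not a geometric series in $p^{k+r}$ ``multiplied by the same $\gamma$-sums of $\sigma_k$''. The reason is that $\beta=u-\alpha-\epsilon$, so the factor $\sigma_k(p^{v+\beta-\gamma})$ changes with $\epsilon$. The clean bookkeeping is to group the shell $\alpha+\beta+\epsilon=u$ by $m=\alpha+\epsilon$, which fixes $\beta=u-m$ and hence the $\sigma_k$ factor. On the shell, $P_t\equiv 1-p^{-t}$ throughout, and the $(\alpha,\epsilon)$-dependence collapses to
\[
p^{tu}\,p^{(r+k)m}\sum_{\alpha+\epsilon=m}(-1)^{\alpha}P_r(p^{\epsilon}),
\]
where the inner sum equals $1$, $-p^{-r}$ and $0$ for $m=0$, $m=1$ and $m\ge 2$ respectively. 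This is the local form of the convolution identity above. Consequently the pairs $(\alpha,\epsilon)=(0,m)$ and $(1,m-1)$ cancel for $m\ge 2$. For $m=1$ you are left with $-p^{k}p^{tu}$, which is exactly the unweighted $\alpha=1$ coefficient in Proposition~\ref{prop:identity1}.

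The shell difference is then $(1-p^{-t})p^{tu}\sum_{\gamma\le v}p^{(t+k)\gamma}\bigl(\sigma_k(p^{v+u-\gamma})-p^k\sigma_k(p^{v+u-1-\gamma})\bigr)=(1-p^{-t})p^{tu}\sigma_{t+k}(p^v)$. The boundary cases $\gamma=v$ and $\beta=0$ that you flagged cause no trouble: $v+u-\gamma\ge 1$, and each cancelling pair shares the same $\beta$.
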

\begin{proof}
	This reduces to Proposition~\ref{prop:identity1} 
	by setting $x= a \delta$ and using
	$P_t(a^2 b \delta c) = P_t(x^2 b c)$ (which follows since $P_t(n)$ only depends on the prime factors of $n$)
	and $\sum_{x=a \delta} \mu(a) P_r(\delta) = \mu(x) x^{-r}$ 
	(which follows by observing that both sides are multiplicative so checking it for $x=p^u$ gives the claim).
\end{proof}

\subsection{Proof of Theorem~\ref{thm:modular identity}}
%We first conside the case $h=3$. 
By definition of the lift we have
\begin{equation} \label{03fsdf}
	\begin{aligned}
		\Lift_{2(h+r)}( D_{\tau}^r(G_{2h-2}) E_{5/2}^{(1)}) = &  
		\delta_{r0} \left( \frac{1}{4} \frac{B_{2h} B_{2h-2}}{2h \cdot (2h-2)}
		-\frac{B_{2h-2}}{2(2h-2)} \sum_{d \geq 1} q^d \sum_{\ell|d} \ell^{2h-1} c^{(1)}\left( \frac{d^2}{4 \ell^2}, \frac{d}{2 \ell} \right) \right) \\
		& +  \sum_{d \geq 1} q^d \sum_{\ell|d} \sum_{j \geq 1} \ell^{2r+2h-1} c^{(1)}\left( \frac{d^2}{4 \ell^2} - j, \frac{d}{2 \ell} \right) \sigma_{2h-3}(j) j^r
	\end{aligned}
\end{equation}
The first term on the right is exactly the first term on the right in the definition of $F_{h,r}$ since the constants match and by Lemma~\ref{lemma:c1 square coefficients} we have
\begin{align*}
	\frac{-1}{10} \sum_{\ell|d} \ell^{2h-1} c^{(1)}\left( \frac{d^2}{4 \ell^2}, \frac{d}{2 \ell} \right)
	& =
	\sum_{\ell|d} \ell^{2h-1} \sum_{a| \frac{d}{\ell}} \mu(a) a \sigma_3(d/a \ell) \\
	& = 
	\sum_{\delta | d} \sigma_3(d/\delta) \delta^{2h-1} \sum_{a|\delta} \frac{\mu(a)}{a^{2h-2}} \\
	& = 
	\sum_{\delta | d} \sigma_3(d/\delta) \delta^{2h-1}  P_{2h-2}(\delta).
\end{align*}

We hence have to show that the second term in \eqref{03fsdf} matches the second term in the definition of $F_{h,r}$. 
For that we insert Theorem~\ref{thm:coeff of e_{5/2}delta} into the latter term and use Propositions~\ref{prop:identity1} and \ref{prop:identity2}.

Concretely, let us start with the case $h=3$. Then we obtain
\begin{align*}
	& \sum_{d \geq 1} q^d \sum_{\delta \geq 1} 
	\sum_{k | d} k^{3+2r} \sum_{i \geq 1} c^{(\delta)}\left( \frac{d^2}{4 \delta k^2} - i, \frac{d}{2 k \delta} \right) \sigma_1(i) i^r \delta^{5+r} \prod_{p|\delta} (1-p^{-4})  \\
	& = 
	\sum_{d \geq 1} q^d \sum_{\delta \geq 1} \sum_{k|d} \sum_{i \geq 1} \sum_{ \substack{a \\ a | d/k \\ a^2 | \delta}} \sum_{\substack{b \\ b | d/ak \\ b| \delta /a^2 \\ b| i}} c^{(1)}\left( \frac{d^2}{4 a^2 b^2 k^2} - \frac{\delta i}{a^2 b^2}, \frac{d}{2 a b k} \right) k^{2r+3} \sigma_1(i) i^r \delta^{3+r} P_2(\delta) \mu(a) b^2
\end{align*}
where we used that $N = -i$. Setting
\[ \ell = a b k, \quad \delta = a^2 b c, \quad i = \frac{j b}{c} \]
we see that $b|i$ is equivalent to $c|j$, so the above becomes
\begin{align*}
	=\sum_{d \geq 1} q^d \sum_{\ell|d} \ell^{3+2r} \sum_{j \geq 1}
	j^r
	c^{(1)}\left( \frac{d^2}{4 \ell^2} - j, \frac{d}{2 \ell} \right)
	\sum_{\substack{a,b,c \geq 1 \\ ab|\ell \\ c | j}} a^3 b^2 c^3 \mu(a) \sigma_1(jb/c) P_2(a^2 b c)
\end{align*}
By the case $k=1,t=2$ of Proposition~\ref{prop:identity1} 
this is exactly the second term in \eqref{03fsdf} as desired. This completes case $h=3$.

In the case $h \geq 4$ setting $\delta = \delta_2 / \delta_1$ we obtain
\begin{align*}
	& 
	\sum_{d \geq 1} q^d \sum_{\substack{\delta_1,\delta_2 \geq 1 \\ \delta_1 | \delta_2 \\ \delta_1 | d \\ }}
	\sum_{k | \frac{d}{\delta_1}} k^{3+2r} 
	\sum_{i \geq 1} c^{(\delta_2/\delta_1)}\left( 
	\frac{d^2}{4 \delta_1 \delta_2 k^2} - i, \frac{d}{2 \delta_2 k} \right) \sigma_1(i) i^r \\
	& \quad \quad \quad \quad \cdot
	\delta_1^{2h-5+r} \delta_2^{2h-1+r}  P_{2h-6}(\delta_1) P_{2h-4}(\delta_2)
	\prod_{p|\delta_2/\delta_1} \frac{1-p^{-4}}{1-p^{-2}} \\
	= & 
	\sum_{d \geq 1} q^d 
	\sum_{\substack{\delta_1,\delta_2 \geq 1 \\ \delta_1 | \delta_2 \\ \delta_1 | d \\ }}
	\sum_{k | \frac{d}{\delta_1}} \sum_{i \geq 1}
	\sum_{\substack{a > 0 \\ a| \frac{d}{\delta_1 k} \\ a^2 | \delta}}
	\sum_{\substack{b > 0 \\ b|i \\ b|\frac{d}{\delta_1 a k} \\ b | \frac{\delta}{a^2}}}
	c^{(1)}\left( \frac{d^2}{4 \delta_1^2 a^2 b^2 k^2} - \frac{\delta i}{a^2 b^2}, \frac{d}{2 a b \delta_1 k} \right) k^{2r+3} \sigma_1(i) i^r \\ & \quad \quad \quad \quad \cdot (\delta_1 \delta_2)^{2h-3+r} 
	P_{2h-6}(\delta_1) P_{2h-4}(\delta_2) \mu(a) b^2
\end{align*}
where we again used that $N = -i$. Setting
\[ \ell = a b \delta_1 k, \quad \delta_2 = a^2 b \delta_1 c, \quad i = \frac{b}{c}j \]
we again get that $b|i$ is equivalent to $c|j$, so the above becomes
\[
\sum_{d \geq 1} q^d \sum_{\ell | d} \ell^{2r+3} \sum_{j \geq 1} j^r 
c^{(1)}\left( \frac{d^2}{4 \ell^2} - j, \frac{d}{2 \ell} \right)
\sum_{\substack{a,b,\delta_1,c \\ a b \delta_1 | \ell \\ c|j}}
\mu(a) \sigma_1\left( \frac{b j}{c} \right) P_{2h-6}(\delta_1) P_{2h-4}(a^2 b \delta_1 c) a^{4h-9} b^{2h-4} c^{2h-3} \delta_1^{4h-9}.
\]
By the case $r=2h-6$, $t=2h-4$, $k=1$ of Proposition~\ref{prop:identity2} 
this is exactly the second term in \eqref{03fsdf} as desired. This completes the case $h \geq 4$. \qed

		\section{An operator identity for symmetric functions}
		\label{appendix:an operator identity}
Let $\Lambda$ be the ring of symmetric functions in $h$ variables $x_1,\ldots,x_h$.
Let $e_0,\ldots, e_h$ be the elementary symmetric polynomials defined by the identity
\[ E(t) = \sum_{i=0}^{h} e_i t^{i} = \prod_{i=1}^{h} (1 + x_i t). \]
In particular $e_0 = 1$. We also set $e_k = 0$ if $k \notin \{ 0, \ldots, h \}$.
Let $p_k$ be the power sum symmetric polynomials defined by $p_k = \sum_i x_i^k$. 
In particular, $p_0=h$.
We have the identity
\[ E(t) = \exp\left( \sum_{r \geq 1} \frac{(-1)^{r-1}}{r} z^r p_r \right). \]
We will also require below the generating series
\[
H(t) = \frac{t E'(t)}{E(t)} = t \frac{d}{d t} \log E(t) = \sum_{r \geq 1} (-1)^{r-1} t^r p_r 
K(t) = h - H(t) = \sum_{r \geq 0} (-1)^r p_r t^r.
\]

We have $\Lambda = \BC[e_1,\ldots, e_h] = \BC[p_1,\ldots,p_h]$.

Set $\partial_r = \frac{d}{d p_r}$ for $r \in \{ 1, \ldots, h \}$ and $\partial_r = 0$ for $r>h$.

Consider the following differential operator acting on $\Lambda$:
\[
\CL_{g,h}
= 
2 \sum_{r \geq 1} r (r-2g-1) p_{r-1} \partial_r - 4 \sum_{r \geq 2} \sum_{k=1}^{r-1} r p_k p_{r-1-k} \partial_r
- 2 \sum_{k, \ell \geq 1} k \ell p_{k + \ell - 1} \partial_k \partial_{\ell}
\]

The goal here is to prove that:

\begin{prop}
	\begin{equation}\label{eq:Lgh in e}
		\begin{aligned}
			\CL_{g,h}
			={}&-2\sum_{i,j=1}^h
			\left(\sum_{a=0}^{\min(i,j)-1}(i+j-1-2a)e_a e_{i+j-1-a}\right)
			\frac{d}{d e_i} \frac{d}{d e_j} \\
			&\quad -4\sum_{i=1}^h (g-i+1)(h-i+1)e_{i-1} \frac{d}{de_i} .
		\end{aligned}
	\end{equation}
\end{prop}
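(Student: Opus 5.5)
The plan is to change variables by the chain rule, working with generating series in an auxiliary variable. Since $\Lambda=\BC[e_1,\ldots,e_h]=\BC[p_1,\ldots,p_h]$, both sides of \eqref{eq:Lgh in e} are second-order differential operators. Two such operators agree once their first-order parts agree and their symbols (bilinear parts) agree. The symbol of $\CL_{g,h}$ is determined by the bracket
\[
\Gamma(e_i,e_j)=\tfrac12\bigl(\CL(e_ie_j)-e_i\CL e_j-e_j\CL e_i\bigr),
\]
and the first-order part by the values $\CL(e_i)$. So it suffices to check
\[
\CL_{g,h}(e_i)=-4(g-i+1)(h-i+1)e_{i-1},
\]
and that $\Gamma(e_i,e_j)$ equals $-2$ times the symmetrized coefficient $\sum_{a}(i+j-1-2a)e_ae_{i+j-1-a}$, with the appropriate factor $2$ for the diagonal term.

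The derivatives of $e_i$ in the power sums are encoded in generating series. From
\[
E(t)=\exp\Bigl(\sum_r \tfrac{(-1)^{r-1}}{r}t^rp_r\Bigr)
\]
we get $\partial_r E(t)=\tfrac{(-1)^{r-1}}{r}t^rE(t)$ and $\partial_k\partial_\ell E(t)=\tfrac{(-1)^{k+\ell}}{k\ell}t^{k+\ell}E(t)$. One caveat: these formulas hold in the algebra of symmetric functions, where there are infinitely many free $p_r$. In $h$ variables, $p_r$ for $r>h$ is a polynomial in $p_1,\ldots,p_h$, and the convention $\partial_r=0$ for $r>h$ has to be reconciled with this. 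I would first do the computation in the infinite-variable ring, with $p_0=h$ treated as a parameter. I would then check that truncating to coefficients $t^i$ with $i\le h$ only uses $\partial_r$ with $r\le h$, since $e_i$ for $i\le h$ involves only $p_1,\ldots,p_i$. I expect this to resolve the issue consistently, but it needs checking.

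\textbf{First-order part.} Apply $\CL_{g,h}$ to $E(t)$. Let $\theta=t\,d/dt$. Then:
\begin{itemize}
\item The term $2\sum_r r(r-2g-1)p_{r-1}\partial_r$ gives $2\sum_r(-1)^{r-1}(r-2g-1)p_{r-1}t^rE$. This equals $2t\,(\theta+1-2g-1)\bigl(\sum_s(-1)^sp_st^s\bigr)\cdot E$, which is expressible through $K(t)=h-H(t)$.
\item The quadratic term $-4\sum_r\sum_k rp_kp_{r-1-k}\partial_r$ gives $-4\sum(-1)^{r-1}p_kp_{r-1-k}t^rE$, which is roughly $4t(K-h)^2E$ up to boundary terms with $k=0$ and $k=r-1$.
\item The second-order term gives $-2\sum_{k,\ell}(-1)^{k+\ell}p_{k+\ell-1}t^{k+\ell}E=-2\sum_m(m-1)(-1)^mp_{m-1}t^mE$.
\end{itemize}
Use $H=tE'/E$, so $K=h-tE'/E$, together with $\theta H$. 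The total should collapse to a differential expression in $E$ of the form $-4t(g-\theta)(h-\theta)E$ with suitable shifts. Extracting the coefficient of $t^i$ then gives $-4(g-i+1)(h-i+1)e_{i-1}$. The quadratic terms in $H$ must cancel exactly, and this cancellation is the first consistency check.

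\textbf{Symbol.} Compute
\[
\Gamma\bigl(E(t),E(s)\bigr)=-2\sum_{k,\ell}k\ell\,p_{k+\ell-1}\,\partial_kE(t)\,\partial_\ell E(s),
\]
where only the second-order term contributes. This gives
\[
-2\,E(t)E(s)\sum_{k,\ell}(-1)^{k+\ell}p_{k+\ell-1}t^ks^\ell
= -2\,E(t)E(s)\,ts\,\frac{tK'(t)\text{-type}}{t-s},
\]
more precisely a divided difference $\frac{tH(t)-sH(s)}{t-s}$ up to sign and constants. Substituting $H=tE'/E$ turns this into
\[
E(t)E(s)\frac{tH(t)-sH(s)}{t-s}=\frac{t^2E'(t)E(s)-s^2E(s)'E(t)}{t-s}.
\]
The coefficient of $t^is^j$ in this is exactly $\sum_{a}(i+j-1-2a)e_ae_{i+j-1-a}$, a standard divided-difference identity checked by expanding $E(t)=\sum e_at^a$.

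\textbf{Main obstacle.} I expect the hardest step to be the first-order computation: tracking the boundary terms ($p_0=h$, $k=0$, $k=r-1$), the signs, and the $\partial_r=0$ truncation so that everything assembles to $(g-i+1)(h-i+1)$. I would first sanity-check this against the examples given earlier, namely $\CL_{2,h}(e_2^r)$ and the $h=3$ formula.
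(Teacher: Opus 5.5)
Your route is the paper's. The generating functions you propose to compute, $\CL_{g,h}(E(t))$ and $\Gamma(E(t),E(s))$, are exactly the paper's $\sum_m A_m t^m$ and $\sum_{i,j} B_{ij} t^i u^j$. The paper obtains them by substituting $\partial_r=\frac{(-1)^{r-1}}{r}\sum_m e_{m-r}\frac{d}{de_m}$ into $\CL_{g,h}$, which is the same bookkeeping as your bracket $\Gamma$. Your resolution of the truncation issue is also correct; the paper leaves it implicit. However, several explicit formulas in your sketch are wrong, and the symbol step fails as written. The discrepancy is not just ``sign and constants''.

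\textbf{First-order part.} The quadratic term is exactly $-4t(K-h)K\,E=4tHK\,E$, not $+4t(K-h)^2E$. The only boundary term is $k=r-1$, where $p_0=h$ enters. The $H^2$-terms also do not cancel: the total is $4tE\bigl[-gh+(g+h)H-tH'-H^2\bigr]$. What makes this work is that, with your $\theta=t\frac{d}{dt}$, one has $\theta E=HE$ and $\theta^2E=(tH'+H^2)E$. Hence the total equals $-4t(\theta-g)(\theta-h)E$, whose $t^i$-coefficient is $-4(g-i+1)(h-i+1)e_{i-1}$.

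\textbf{Symbol.} Since $\sum_{k+\ell=m+1,\ k,\ell\geq 1}t^ks^\ell=ts\,\frac{t^m-s^m}{t-s}$, one gets
\[
\sum_{k,\ell\geq 1}(-1)^{k+\ell}p_{k+\ell-1}t^ks^\ell=\frac{ts}{t-s}\bigl(H(t)-H(s)\bigr),
\qquad
\Gamma\bigl(E(t),E(s)\bigr)=-\frac{2ts}{t-s}\bigl(tE'(t)E(s)-sE'(s)E(t)\bigr).
\]
This is not an expression in $tH(t)-sH(s)$. Your expression $\frac{t^2E'(t)E(s)-s^2E'(s)E(t)}{t-s}$ has the wrong total degree and does not have the claimed coefficients. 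For $h=1$ it equals $e_1(t+s)+e_1^2ts$, whereas the $ts$-coefficient should be $e_1$ (indeed $\Gamma(e_1,e_1)=-2e_1$).

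With the correct formula, write $tE'(t)E(s)-sE'(s)E(t)=\sum_{a<b}(b-a)e_ae_b(t^bs^a-t^as^b)$ and expand
\[
\frac{ts(t^bs^a-t^as^b)}{t-s}=t^{a+1}s^b+t^{a+2}s^{b-1}+\cdots+t^bs^{a+1}.
\]
The monomial $t^is^j$ occurs exactly when $a\leq\min(i,j)-1$ and $b=i+j-1-a$. This yields the stated coefficient $-2\sum_{a=0}^{\min(i,j)-1}(i+j-1-2a)e_ae_{i+j-1-a}$.

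Finally, no extra diagonal factor $2$ is needed. The sum in the statement runs over ordered pairs with symmetric coefficients $B_{ij}$, so $\Gamma(e_i,e_j)=B_{ij}$ for all $i,j$, including $i=j$.
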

\begin{proof}
	Since 
	\[ \frac{d}{d p_r} E(t) = \frac{(-1)^{r-1}t^r}{r} E(t) \]
	we have %$\frac{d}{d p_r} e_m = e_{m-r} \frac{(-1)^{r-1}}{r}$. Thus
	\[
	\frac{d}{d p_r} = \frac{(-1)^{r-1}}{r}\sum_{m} e_{m-r} \frac{d}{d e_m}.
	\]
	Inserting we find that
	\begin{align*}
		\CL_{g,h} = & 2 \sum_{r \geq 1} \sum_{m} (r-2g-1) p_{r-1} (-1)^{r-1} e_{m-r} \frac{d}{d e_m} \\
		& -4 \sum_{r \geq 2} \sum_{k=1}^{r-1} \sum_m p_k p_{r-1-k} (-1)^{r-1} e_{m-r} \frac{d}{d e_m} \\
		& - 2 \sum_{k,\ell \geq 1} \sum_{m} (-1)^{k+ \ell} p_{k+\ell - 1} e_{m-k-\ell} \frac{d}{d e_m} \\
		& - 2 \sum_{k,\ell \geq 1} \sum_{m, \tilde{m}} p_{k+\ell - 1} (-1)^{k+\ell} e_{m-k} e_{\tilde{m}-\ell} \frac{d}{d e_m} \frac{d}{d e_{\tilde{m}}}
	\end{align*}
	We hence have
	\[ \CL_{g,h} = \sum_{m} A_m \frac{d}{d e_m} + \sum_{i,j} B_{ij} \frac{d}{d e_i} \frac{d}{d e_j} \]
	where the $A_m, B_{ij}$ are computed as follows by forming generating series. For the $A$-terms:
	\begin{align*}
		\sum_{m} A_m t^m = & 2 t E(t) (-2g K(t) + t K'(t)) + 4 t H(t) K(t) E(t) -2 t^2 H'(t) E(t) \\
		= & 4 t E(t) \left[ -gh + (g+h) H(t) - t H'(t) - H(t)^2 \right] \\
		= & -4 t \left[ gh E(t) - (g+h) t E'(t) + \left( t \frac{d}{d t} \right)^2 E(t) \right] \\
		= & - 4 t \sum_{a=0}^{h} (g-a)(h-a) e_a t^a.
	\end{align*}
	Thus $A_m = -4 (g-m+1)(h-m+1) e_{m-1}$.
	
	For the $B_{ij}$-terms we use:
	\begin{align*}
		\sum_{i,j \geq 0}B_{ij} t^i u^j
		= & -2 E(t) E(u) \sum_{k,\ell \geq 1} (-1)^{k+\ell} p_{k+\ell-1} t^k u^{\ell} \\
		= & -2 E(t) E(u) \frac{t u}{t - u} (H(t) - H(u)) \\
		= & - 2 \frac{tu}{t-u} (t E'(t) E(u) - E(t) u E'(u)) \\
		= & - 2 \frac{tu}{t-u} \sum_{a,b \geq 0} (a-b) e_a e_b t^a u^b \\
		= & - 2 \frac{tu}{t-u} \sum_{a<b} (b-a) e_{a} e_{b} (t^b u^a - t^a u^b) \\
		= & -2 \sum_{a<b} e_{a} e_{b} (t^b u^{a+1} + t^{b-1} u^{a+2} + \cdots + t^{a+1} u^b)
	\end{align*}
	If we extract now the $t^i u^j$ coefficient, then each summand contributes if we have
	$a+1 \geq j$ and $a+1 \leq i$, so $a \leq \min(i,j) - 1$. Since $i+j = a+b+1$ we get
	$b-a = i+j-2a-1$ in this case. So
	\[ B_{ij} = -2 \sum_{a=0}^{\min(i,j)-1} (i+j-2a-1) e_a e_{i+j-a-1}.  \qedhere \]
\end{proof}

\bibliography{references}
\bibliographystyle{amsplain}
%\printbibliography

%\bibliographystyle{siamplain}
%\bibliographystyle{siam}

\footnotesize

\end{document}